\documentclass[english,american]{article}
\usepackage[T1]{fontenc}
\usepackage{textcomp}
\usepackage[utf8]{inputenc}
\usepackage{geometry}
\usepackage{parskip}
\usepackage{babel}
\usepackage{enumitem}
\usepackage{amsmath}
\usepackage{amsthm}
\usepackage{amssymb}
\usepackage[all]{xy}
\usepackage[bookmarks=true,bookmarksnumbered=false,bookmarksopen=false,
 breaklinks=false,pdfborder={0 0 0},pdfborderstyle={},backref=false,colorlinks=false]
 {hyperref}
\hypersetup{
 pdfauthor={M. Praderio}}

\makeatletter

\newcommand{\mathcircumflex}[0]{\mbox{\^{}}}

\theoremstyle{plain}
\newtheorem{thm}{\protect\theoremname}[section]
\theoremstyle{plain}
\newtheorem{prop}[thm]{\protect\propositionname}
\theoremstyle{plain}
\newtheorem*{conjecture*}{\protect\conjecturename}
\theoremstyle{definition}
\newtheorem*{defn*}{\protect\definitionname}
\newcommand\thmsname{\protect\theoremname}
\newcommand\nm@thmtype{theorem}
\theoremstyle{plain}

\newenvironment{namedthm}[1][Undefined Theorem Name]{
  \ifx{#1}{Undefined Theorem Name}\renewcommand\nm@thmtype{theorem*}
  \else\renewcommand\thmsname{#1}\renewcommand\nm@thmtype{namedtheorem}
  \fi
  \begin{\nm@thmtype}}
  {\end{\nm@thmtype}}
\theoremstyle{definition}
\newtheorem{defn}[thm]{\protect\definitionname}
\theoremstyle{definition}
\newtheorem{example}[thm]{\protect\examplename}
\theoremstyle{plain}
\newtheorem{lem}[thm]{\protect\lemmaname}
\theoremstyle{remark}
\newtheorem{notation}[thm]{\protect\notationname}
\theoremstyle{remark}
\newtheorem{rem}[thm]{\protect\remarkname}
\theoremstyle{plain}
\newtheorem{cor}[thm]{\protect\corollaryname}
\theoremstyle{remark}
\newtheorem*{rem*}{\protect\remarkname}

\date{}

\usepackage[all]{xy}

\newcommand{\bjarrow}{
  \stackrel{\cong}{\to}
}

\usepackage{mathtools}
\newcommand{\lui}[2]{\prescript{#1}{}{#2}}

\newcommand{\Aut}{\operatorname{Aut}}
\newcommand{\coker}{\operatorname{coker}}
\newcommand{\Ext}{\operatorname{Ext}}
\newcommand{\GL}{\operatorname{GL}}
\newcommand{\Graph}{\operatorname{Graph}}
\newcommand{\Hom}{\operatorname{Hom}}
\newcommand{\Id}{\operatorname{Id}}
\newcommand{\img}{\operatorname{img}}
\newcommand{\limn}[1][n]{\operatorname{lim}^{#1}}
\newcommand{\limi}{\limn[i]}
\newcommand{\Mod}{\operatorname{-Mod}}
\renewcommand{\mod}{\operatorname{-mod}}
\newcommand{\Nat}{\operatorname{Nat}}
\newcommand{\Ob}{\operatorname{Ob}}
\newcommand{\Out}{\operatorname{Out}}
\newcommand{\Rep}{\operatorname{Rep}}
\newcommand{\Syl}{\operatorname{Syl}}
\newcommand{\SL}{\operatorname{SL}}
\newcommand{\Tot}{\operatorname{Tot}}

\newcommand{\R}{\mathcal{R}}
\newcommand{\Fp}[1][p]{\mathbb{F}_{#1}}

\newcommand{\F}[1][F]{\mathcal{#1}}
\renewcommand{\H}{\F[H]}
\newcommand{\G}{\F[G]}
\newcommand{\D}{\F[D]}
\newcommand{\Fc}[1][F]{\F[#1]^{\operatorname{c}}}
\newcommand{\Fcr}[1][F]{\F[#1]^{\operatorname{cr}}}

\newcommand{\C}{\mathcal{C}}

\newcommand{\OF}[1][\F]{\mathcal{O}\left(#1\right)}
\newcommand{\OFD}[2]{\mathcal{O}_{#2}\left(#1\right)}
\newcommand{\OFC}[1][\F]{\OFD{#1}{\C}}
\newcommand{\OFc}[1][\F]{\OF[#1^{\operatorname{c}}]}

\newcommand{\FAB}[2]{\F_{#1}\left(#2\right)}
\newcommand{\FSG}[1][G]{\FAB{S}{#1}}

\newcommand{\T}{\mathcal{T}}

\newcommand{\CGp}[1][G]{C_{#1}^{p',p}}
\newcommand{\CGT}[2]{C_{#1}^{#2}}
\newcommand{\CGR}[1][G]{\CGT{#1}{\R}}
\newcommand{\CFp}[1][\Lambda]{\CGp[#1]}
\newcommand{\CFR}[1][\Lambda]{\CGR[#1]}

\makeatother

\usepackage[style=numeric,doi=false,isbn=false,url=false,eprint=false, date=year, giveninits=true, maxbibnames=9]{biblatex}
\addto\captionsamerican{\renewcommand{\conjecturename}{Conjecture}}
\addto\captionsamerican{\renewcommand{\corollaryname}{Corollary}}
\addto\captionsamerican{\renewcommand{\definitionname}{Definition}}
\addto\captionsamerican{\renewcommand{\examplename}{Example}}
\addto\captionsamerican{\renewcommand{\lemmaname}{Lemma}}
\addto\captionsamerican{\renewcommand{\notationname}{Notation}}
\addto\captionsamerican{\renewcommand{\propositionname}{Proposition}}
\addto\captionsamerican{\renewcommand{\remarkname}{Remark}}
\addto\captionsamerican{\renewcommand{\theoremname}{Theorem}}
\addto\captionsenglish{\renewcommand{\conjecturename}{Conjecture}}
\addto\captionsenglish{\renewcommand{\corollaryname}{Corollary}}
\addto\captionsenglish{\renewcommand{\definitionname}{Definition}}
\addto\captionsenglish{\renewcommand{\examplename}{Example}}
\addto\captionsenglish{\renewcommand{\lemmaname}{Lemma}}
\addto\captionsenglish{\renewcommand{\notationname}{Notation}}
\addto\captionsenglish{\renewcommand{\propositionname}{Proposition}}
\addto\captionsenglish{\renewcommand{\remarkname}{Remark}}
\addto\captionsenglish{\renewcommand{\theoremname}{Theorem}}
\providecommand{\conjecturename}{Conjecture}
\providecommand{\corollaryname}{Corollary}
\providecommand{\definitionname}{Definition}
\providecommand{\examplename}{Example}
\providecommand{\lemmaname}{Lemma}
\providecommand{\notationname}{Notation}
\providecommand{\propositionname}{Proposition}
\providecommand{\remarkname}{Remark}
\providecommand{\theoremname}{Theorem}

\begin{document}
\title{An inductive approach to the Diaz-Park sharpness conjecture}
\author{Praderio Bova, Marco (TU Dresden)}
\maketitle
\begin{abstract}
We develop tools which use common fusion systems building techniques
in order to compute higher limits over the centric orbit category.
We apply these tools in order to study both the Diaz-Park sharpness
conjecture (see \cite{DiazPark15}) as well as the weaker cohomological
sharpness conjecture which predicts vanishing of higher limits only
for the cohomology Mackey functors . Our approach leads to proving
cohomological sharpness (but not sharpness) for all saturated fusion
systems over $p$-groups of either maximal nihlpotency or of rank
$2$ and all polynomial, Henke-Shpectorov and van Beek fusion systems.
This list includes all but $2$ of the cases for which cohomological
sharpness was previously known as well as most currently known families
of exotic fusion systems. For the polynomial, Henke-Shpectorov and
$6$ of the van Beek fusion systems, sharpness is also approximated
by proving vanishing of all but the first higher limits of any Mackey
functor. The distinction our approach makes between sharpness and
cohomological sharpness is somewhat surprising and interesting by
itself. Our approach draws a new connection between cohomological
sharpness and fusion system building techniques. We believe that this
connection will lead to a better understanding of both fusion systems
and Mackey functors over them.
\end{abstract}

\section{Introduction}\label{sec:Introduction}

Saturated fusion systems are a type of category that was first introduced
by Puig (see \cite{Puig06}) as a common framework to study fusion
of both $p$-subgroups and blocks of a finite group. Although originally
intended as a tool for modular representation theory, fusion systems
have found multiple applications in various other areas of both algebra
and topology (see \cite{AKO11} for more detail).

Let $G$ be a finite group, let $p\mid\left|G\right|$ be a prime
and let $S\in\Syl_{p}\left(G\right)$. The most common example of
saturated fusion system is the category $\FSG$ with objects all subgroups
of $S$ and morphisms given by conjugation via an element in $G$.
The saturated fusion systems obtained in this manner are called realizable.
Those that cannot be ``realized'' in such a manner are called exotic.
The search and classification of exotic fusion systems has been a
subject of much research (see for example \cite{AschCher10,DiazRuizViruel07,GraPar25,GPSB26,HenkeShpectorovUnpublished,LeviOliver02,Oliver14,OliverRuiz21,VanBeek25}).
However, despite some partial results (see \cite{DiazRuizViruel07,GraPar25,Oliver14}),
we are still far from obtaining a classification of (simple) saturated
fusion systems. One of the main challenges faced when looking for
exotic fusion systems is proving that the obtained system is indeed
saturated (see Definition \ref{def:saturated-fs}). Among the many
methods employed to prove saturation, two have proven to be particularly
efficient. We refer to them as the amalgamation method and the pruning
method.

\hspace*{0.5cm}The amalgamation method. Although not every saturated
fusion system $\F$ is realizable, it is always possible to build
an iterated amalgam of finite groups $G$ containing a unique (up
to $G$-conjugacy) maximal $p$-subgroup $S$ and satisfying $\F=\FSG$
(see \cite{Robinson07}). In \cite[Lemma 4.2]{BLO06}, Broto, Levi
and Oliver give sufficient conditions for fusion systems realized
by an iterated amalgam of finite groups to be saturated. The following
result, due to Clelland and Parker, is inspired by their work.
\begin{prop}[{\cite[Theorem 3.2]{ClellandParker10}}]
\label{prop:amalgamation-method}Let $p$ be a prime, let $G_{\boldsymbol{e}}\le G_{\boldsymbol{1}},G_{\boldsymbol{2}}$
be groups admitting a unique (up to $G_{x}$-conjugacy) maximal $p$-subgroup
(denoted $S_{x}$), define the amalgam $G:=G_{\boldsymbol{1}}*_{G_{\boldsymbol{e}}}G_{\boldsymbol{2}}$,
let $\Gamma$ be its associated tree (with right $G$-action) and,
for every $P\le G$, let $\Gamma^{P}\subseteq\Gamma$ be the subtree
of points fixed by $P$. Assume that:
\begin{enumerate}
\item $S_{\boldsymbol{e}}=S_{\boldsymbol{2}}$ and $\F_{\boldsymbol{1}}:=\F_{S_{\boldsymbol{1}}}\left(G_{\boldsymbol{1}}\right)$,
$\F_{\boldsymbol{2}}:=\F_{S_{\boldsymbol{2}}}\left(G_{\boldsymbol{2}}\right)$
and $\F_{\boldsymbol{e}}:=\F_{S_{\boldsymbol{2}}}\left(G_{\boldsymbol{e}}\right)$
are all saturated,
\item $\Fcr_{x}\subseteq\Fc$ for every $x\in\left\{ \boldsymbol{1},\boldsymbol{2},\boldsymbol{e}\right\} $
(see Definition \ref{def:fully-normalized-centric-and-centric-radical-essential}).
Here we define $\F:=\F_{S_{\boldsymbol{1}}}\left(G\right)$, and
\item the quotient graph $\Gamma^{P}/C_{G}\left(P\right)$ is a tree for
every $P\in\Fc$.
\end{enumerate}
Then $\F$ is saturated.
\end{prop}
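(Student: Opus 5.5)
This is a special case of Broto--Levi--Oliver's criterion \cite[Lemma 4.2]{BLO06}. That criterion says: if $G$ acts on a tree with finite vertex stabilizers $G_v$, each $\F_{S_v}(G_v)$ is saturated, and for every $P\le S$ that is $\F$-centric the quotient $\Gamma^{P}/C_G(P)$ is a tree, then $\F_S(G)$ is saturated. Here $S$ is a maximal $p$-subgroup of $G$ and the fusion systems are embedded compatibly.

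My plan is to reduce our hypotheses to that lemma's. If the lemma turns out to require more, I would instead verify the saturation axioms directly, using Roberts--Shpectorov / BCGLO: it suffices to check the saturation axioms on $\F$-centric subgroups when $\F$ is generated by $\F$-centric subgroups.

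\textbf{Setup.} First I fix $S:=S_{\boldsymbol{1}}$. Since $S_{\boldsymbol{e}}=S_{\boldsymbol{2}}$ is a $p$-subgroup of $G_{\boldsymbol{1}}$, after conjugating in $G_{\boldsymbol{1}}$ we may assume $S_{\boldsymbol{2}}\le S_{\boldsymbol{1}}$. Every finite $p$-subgroup of $G$ fixes a vertex of $\Gamma$, so it is conjugate into $G_{\boldsymbol{1}}$ or $G_{\boldsymbol{2}}$, hence into $S_{\boldsymbol{1}}$. So $S$ is the unique maximal $p$-subgroup up to conjugacy, and $\F=\F_S(G)$ is a fusion system over $S$.

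\textbf{Generation.} By Bass--Serre theory, together with the fact that $\Gamma^P/C_G(P)$ is connected for centric $P$, the fusion system $\F$ is generated by $\F_{\boldsymbol{1}}$ and $\F_{\boldsymbol{2}}$. Concretely, a morphism $c_g\colon P\to Q$ with $P$ centric corresponds to a path in $\Gamma^{P}$ from the base vertex to its $g$-translate. Decomposing this path edge by edge writes $c_g$ as a composite of morphisms lying in the vertex groups' fusion systems.

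Next, hypothesis 2 together with Alperin's theorem for the saturated $\F_x$ shows that each $\F_x$ is generated by automorphisms of its centric radical subgroups, and these are $\F$-centric. Therefore $\F$ is generated by automorphisms of $\F$-centric subgroups. This reduces saturation of $\F$ to saturation on $\Fc$.

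\textbf{Axioms on centric subgroups.} For $P\in\Fc$ I would use the tree $\Gamma^{P}/C_G(P)$. The idea is that $N_G(P)/C_G(P)$ acts on this finite tree, with vertex stabilizers $N_{G_v}(P)/C_{G_v}(P)$. Choose $P$ fully normalized in $\F$. A finite group acting on a tree fixes a vertex, so a Sylow $p$-subgroup of $\Aut_G(P)$ is realized inside some $\Aut_{G_v}(P)$. Conjugating so that $P$ is fully normalized in the relevant $\F_v$, saturation of $\F_v$ then gives both the Sylow axiom and the extension axiom for $P$ in $\F$. For the extension axiom, an element of $N_\varphi$ is realized in $G_v$ at a fixed vertex of the corresponding $p$-group; the tree property is needed here to move that vertex into a common $C_G(P)$-orbit.

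\textbf{Main obstacle.} I expect the hardest step to be this last one: showing that the tree condition for $\Gamma^{P}/C_G(P)$ forces $\Aut_S(P)$ to be Sylow in $\Aut_\F(P)$. This requires the standard fixed-point argument for finite groups acting on trees, applied in the quotient modulo $C_G(P)$, and careful bookkeeping of which vertex the base point $S$ occupies. I would follow the proof of \cite[Lemma 4.2]{BLO06} closely at this point.
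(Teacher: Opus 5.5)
The paper does not prove this statement; it quotes it from Clelland--Parker, who adapt \cite{BLO06}. Your preliminary steps are the standard ones and they are fine:
\begin{itemize}
\item $S_{\boldsymbol 1}$ is Sylow in $G$, because finite $p$-subgroups fix vertices.
\item $\F=\langle \F_{\boldsymbol 1},\F_{\boldsymbol 2}\rangle_S$, by cutting a geodesic of $\Gamma^P$ into edges. Only connectedness of $\Gamma^P$ itself is used here, not of the quotient.
\item Hypothesis (2) together with Alperin's theorem in each $\F_x$ makes $\F$ generated by automorphisms of $\F$-centric subgroups. So by \cite{BCGLO05} it suffices to check the axioms on $\Fc$.
\end{itemize}
The Sylow part is also fine, and it is not the hard step. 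The finite group $\Aut_G(P)$ acts without inversion on the tree $\Gamma^P/C_G(P)$, so all of it fixes a vertex $[v]$, whose stabilizer is $\Aut_{G_v}(P)$. Hence $\Aut_\F(P)=\Aut_{G_v}(P)$. An order count then gives $\Aut_S(P)\in\Syl_p(\Aut_\F(P))$, using saturation of $\F_{\boldsymbol 1}$ or $\F_{\boldsymbol 2}$, $C_S(P)=Z(P)$, and full normalization.

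The genuine gap is the extension axiom. Your claim that saturation of $\F_v$ yields it does not follow: $\varphi\colon P\to P'$ is an arbitrary $\F$-morphism lying in no $\F_v$, and knowing $\Aut_\F(P')=\Aut_{G_v}(P')$ does not extend it. Write $\varphi=c_g|_P$ with $P'=P^g$. Then $R:=N_\varphi^{g}$ is a finite $p$-subgroup of $N_S(P')C_G(P')$ containing $P'$. What must be shown is that $R^{c}\le S$ for some $c\in C_G(P')$; then $c_{gc}$ extends $\varphi$ to all of $N_\varphi$ as a single homomorphism. Realizing elements of $N_\varphi$ one at a time, as your sketch suggests, does not produce a homomorphism. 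Sylow's theorem is unavailable here because $C_G(P')$ is in general infinite.

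The missing argument runs as follows.
\begin{enumerate}
\item The image $\overline{R}$ of $R$ in $\Aut_G(P')$ lies in $\Aut_S(P')$. So it fixes the class of the base vertex $v_{\boldsymbol 1}$ (as $N_S(P')\le G_{\boldsymbol 1}$), and also the class of a vertex $w$ fixed by $R$, in $\Gamma^{P'}/C_G(P')$.
\item Since this quotient is a tree, $\overline{R}$ fixes the whole geodesic joining these two classes. This is where hypothesis (3) really enters.
\item Lift that geodesic edge by edge starting at $w$. At an $R$-fixed vertex $u$, the edges over a given edge of the quotient form one $C_{G_u}(P')$-orbit (by bipartiteness). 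This orbit is permuted by $R$ and has cardinality $[C_{G_u}(P'):C_{G_e}(P')]$.
\item Because $P'$ is $\F$-centric, $Z(P')$ is a central Sylow $p$-subgroup of both centralizers. So this index is prime to $p$, and $R$ fixes one such edge and hence its far endpoint.
\item You arrive at an $R$-fixed vertex $v_{\boldsymbol 1}c^{-1}$ with $c\in C_G(P')$. Since $C_{G_{\boldsymbol 1}}(P')=Z(P')\times O_{p'}(C_{G_{\boldsymbol 1}}(P'))$, this gives $R^{c}\le N_S(P')\,O_{p'}(C_{G_{\boldsymbol 1}}(P'))$.
\item Sylow's theorem in this finite group conjugates $R^c$ into $N_S(P')$ by an element of $O_{p'}(C_{G_{\boldsymbol 1}}(P'))\le C_G(P')$.
\end{enumerate}
Your remark about moving a vertex into a $C_G(P)$-orbit points in this direction, but it supplies neither the geodesic lifting nor the $p'$-index input from centricity.

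Two smaller points.
\begin{itemize}
\item Your paraphrase of the Broto--Levi--Oliver criterion has no counterpart of hypothesis (2), which is exactly what gives $\Fc$-generation. So it cannot be cited as you state it.
\item The index count above uses finite vertex groups. The statement only assumes each $G_x$ has a unique maximal $p$-subgroup up to conjugacy (the paper adds finiteness separately later), so in that generality this step needs a different justification.
\end{itemize}
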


This method is used for instance to prove saturation of the Clelland-Parker
(see \cite{ClellandParker10}), the Parker-Stroth (see \cite{ParkerStroth15})
and some of the other polynomial (see \cite{GPSB26}) fusion systems.

\vspace{.5\baselineskip}

\hspace*{0.5cm}The pruning method. Let $\F$ be a saturated fusion
system over a finite $p$-group $S$. The Alperin-Goldschmidt fusion
theorem tells us that there exists a family $\mathcal{E}\left(\F\right)$
of $\F$-essential subgroups of $S$ (see Definition \ref{def:fully-normalized-centric-and-centric-radical-essential})
such that $\F$ is the smallest fusion system over $S$ containing
the $\F$-automorphisms of every $P\in\mathcal{E}\left(\F\right)\cup\left\{ S\right\} $.
In \cite{ParkerSemeraro21}, Parker and Semeraro prove that, under
certain minimality conditions, some $\F$-essentials can be ``pruned''
in order to obtain a saturated fusion subsystem of $\F$. More precisely,
for every $P\le S$, they define $H_{\F}\left(P\right)\le\Aut_{\F}\left(P\right)$
as the subgroup generated by all those $\F$-automorphism that can
be extended to isomorphisms between strictly larger subgroups of $S$.
With this notation they prove the following.
\begin{prop}[{\cite[Lemmas 6.4 and 6.5]{ParkerSemeraro21}}]
\label{prop:pruning-method}Let $\H\subseteq\F$ be fusion systems
over $S$ and let $\mathcal{A}$ be a family of subgroups of $S$
such that $\F=\left\langle \H,\Aut_{\F}\left(P\right)|P\in\mathcal{A}\right\rangle _{S}$.
That is, $\F$ is the smallest fusion system over $S$ containing
$\H$ and all the listed automorphims. Assume that every $P\in\mathcal{A}$
satisfies either:
\begin{itemize}
\item $H_{\H}\left(P\right)=\Aut_{\H}\left(P\right)$ and $P\cong p_{+}^{1+2}$
is extraspecial of order $p^{3}$ and exponent $p$, or
\item for every $Q\lneq P$ then $C_{S}\left(Q\right)\not\le Q$.
\end{itemize}
If $\F$ is saturated, then $\H$ is also saturated.
\end{prop}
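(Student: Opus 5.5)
Since $\H$ and $\F$ live over the same $S$, the plan is to check saturation of $\H$ with the standard criterion: it suffices to verify the Sylow and extension axioms for $\H$-centric subgroups, and by Alperin-type arguments only on $\H$-conjugacy classes. Induct on subgroups from the top, i.e. by decreasing order (equivalently, use the Roberts--Shpectorov / BCGLO criterion: a fusion system generated by automorphism groups of a conjugation-closed family is saturated if the axioms hold on that family and every subgroup outside it is "not centric-radical"). The key input is that $\F$ is saturated and $\F=\langle \H,\Aut_\F(P)\mid P\in\mathcal{A}\rangle_S$. For any $Q\le S$ not $\H$-conjugate to an element of $\mathcal{A}$, I claim morphisms of $\F$ into/out of $Q$ that matter for saturation already factor through $\H$. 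Those morphisms that do not are composites in which some step is a restriction of an element of $\Aut_\F(P)\setminus\Aut_\H(P)$ to a proper subgroup.

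First, handle the second type of $P\in\mathcal{A}$: if $C_S(Q)\not\le Q$ for every $Q\lneq P$, then restrictions of $\Aut_\F(P)$ to proper subgroups only affect non-centric subgroups. The idea is to remove $\Aut_\F(P)$ and replace it by the subgroup generated by $\Aut_{\H}(P)$ and restrictions coming from larger subgroups. Then show that $\H$ and $\F$ agree on all $\H$-centric subgroups other than the $\F$-class of $P$. Saturation of $\H$ at such subgroups is inherited from $\F$: centralizers and normalizers inside $S$ are the same, and the extension axiom is checked via $\F$-extensions, which lie in $\H$ because the relevant overgroups are centric. At $P$ itself, $\Aut_\H(P)$ must contain a Sylow subgroup. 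This holds because $\Aut_S(P)\le\Aut_\H(P)$ and $P$ is fully normalized in $\F$. The extension axiom is clear, since extensions of $\H$-automorphisms of $P$ to $N_\varphi$ come from $\F$ and restrict back into $H_\H(P)\le\Aut_\H(P)$-compatible pieces.

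For the first type, $P\cong p_+^{1+2}$ with $H_\H(P)=\Aut_\H(P)$. Here every $\H$-automorphism of $P$ extends to a strictly larger subgroup, so $P$ is not $\H$-essential. The work is to show $\Aut_\H(P)$ still has $\Aut_S(P)$ as a Sylow subgroup and that proper subgroups of $P$ (order $p^2$, abelian, self-centralizing only when they equal their centralizer) cause no new obstructions. I will use the explicit structure $\Out(P)\cong \GL_2(p)$ and the fact that $H_\H(P)$ is generated by normalizer-type subgroups: $\Aut_\H(P)$ then lies in a Borel-type subgroup whose Sylow $p$ is $\Out_S(P)$.

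The main obstacle is the bookkeeping showing that $\H$-morphisms between subgroups $Q$ that are $\H$-centric but meet the $\F$-classes of $\mathcal{A}$ through restriction do not break the Sylow axiom, i.e. that $\Aut_\H(Q)$ is not "too small." I would attack this first by choosing, in each $\H$-class, a representative fully normalized in $\F$, and comparing $|\Aut_\H(Q)|_p$ with $|\Aut_\F(Q)|_p$ using that $\Aut_S(Q)\le\Aut_\H(Q)$.
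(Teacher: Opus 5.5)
The paper gives no proof of this proposition; it quotes \cite{ParkerSemeraro21}. So your sketch has to stand on its own, and it has two genuine gaps.

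\textbf{First gap: the opening reduction.} Checking the Sylow and extension axioms only on $\mathcal{H}$-centric subgroups (as in the criterion of \cite{BCGLO05}) requires that $\mathcal{H}$ be generated by restrictions of morphisms between $\mathcal{H}$-centric subgroups. Nothing in the hypotheses gives you this, and it cannot be dropped. For $p$ odd, let $S=\langle a\rangle\times\langle b\rangle\cong C_p\times C_p$ and let $\alpha\in\Aut(S)$ invert $a$ and centralize $b$. Put $\mathcal{F}:=\mathcal{F}_S(S\rtimes\langle\alpha\rangle)$ and $\mathcal{A}:=\{S\}$; the second bullet holds because $S$ is abelian. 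Let $\mathcal{H}:=\langle\varphi\rangle_S$ with $\varphi:=\alpha|_{\langle a\rangle}$. Then $\langle\mathcal{H},\Aut_{\mathcal{F}}(S)\rangle_S=\mathcal{F}$ is saturated. But every morphism of $\mathcal{H}$ is a composite of restrictions of $\varphi^{\pm1}$ and inclusions, so $\Aut_{\mathcal{H}}(S)=1$ although $N_\varphi=S$, and $\mathcal{H}$ is not saturated. Such a generation hypothesis must therefore be added; it holds in the usual pruning setting, where the smaller system is generated by automorphism groups of centric subgroups.

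Granting it, your second-type case can be completed, but not for the reason you give: centricity of an overgroup does not by itself put an $\mathcal{F}$-morphism into $\mathcal{H}$. What works is a chain argument.
\begin{itemize}
\item Write $\psi\in\Hom_{\mathcal{F}}(R,S)$, with $R$ $\mathcal{H}$-centric, as a composite of restrictions of $\mathcal{H}$-morphisms and of elements $\beta\in\Aut_{\mathcal{F}}(P')$, $P'\in\mathcal{A}$.
\item The first factor not in $\mathcal{H}$ restricts some $\beta$ to an $\mathcal{H}$-conjugate of $R$ lying in $P'$. If $P'$ is of the second type, this conjugate is self-centralizing and so must equal $P'$.
\item Hence, if all members of $\mathcal{A}$ are of the second type: $\Hom_{\mathcal{F}}(R,S)=\Hom_{\mathcal{H}}(R,S)$ for every $\mathcal{H}$-centric $R$ not $\mathcal{H}$-conjugate to a member of $\mathcal{A}$, and $P^{\mathcal{H}}=P^{\mathcal{F}}$ for every $\mathcal{H}$-centric $P\in\mathcal{A}$.
\item Both axioms then pass from $\mathcal{F}$ to $\mathcal{H}$. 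In the extension axiom, $N_\varphi$ either equals the source or is such an $R$.
\end{itemize}

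\textbf{Second gap: the extraspecial case is not treated, and it is exactly where the chain argument breaks.} A maximal subgroup $A\cong C_p\times C_p$ of $P\cong p_{+}^{1+2}$ may be self-centralizing in $S$. Then $N_S(A)/A$ embeds in $\Aut(A)\cong\GL_2(p)$, whose Sylow $p$-subgroups have order $p$, so $N_S(A)=P$. Consequently the Sylow axiom at $A$ is automatic, and $A$ is fully $\mathcal{H}$-normalized if it is $\mathcal{H}$-centric. What must be shown is, in particular, that every $\alpha\in N_{\Aut_{\mathcal{H}}(A)}(\Aut_P(A))$ extends to an element of $\Aut_{\mathcal{H}}(P)$.

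This cannot be read off from $\mathcal{F}$. If $P<S$, some maximal subgroup $A_0$ of $P$ is $N_S(P)$-invariant, so $|N_S(A_0)|>p^3$ and $A_0$ is not self-centralizing. When $\Aut_{\mathcal{F}}(P)$ is transitive on the maximal subgroups of $P$ (as when $P$ is $\mathcal{F}$-essential), none of them is $\mathcal{F}$-centric. Yet after pruning, the other $p$ of them can be $\mathcal{H}$-centric; this happens for extraspecial pearls in $p$-groups of maximal class. Such an $A$ is neither fully $\mathcal{F}$-normalized nor fully $\mathcal{F}$-centralized, so saturation of $\mathcal{F}$ gives no extension statement for it. Moreover its $\mathcal{H}$-class contains no fully $\mathcal{F}$-normalized subgroup, so the plan in your last paragraph cannot even be started.

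An argument specific to $p_{+}^{1+2}$ that controls $\Aut_{\mathcal{H}}(A)$ is needed here, and none is given. Your Borel-subgroup remark applies the hypothesis $H_{\mathcal{H}}(P)=\Aut_{\mathcal{H}}(P)$ only to $P$ itself. There the Sylow and extension axioms are already inherited from $\mathcal{F}$ by the chain argument, because a composite leaving $\mathcal{H}$ on a subgroup of order $p^3$ must do so at a member of $\mathcal{A}$ itself.
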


This method is used to prove saturation of some van Beek (see \cite{VanBeek25})
and polynomial (see \cite{GPSB26}) fusion systems. It can also be
used to obtain saturated fusion systems over $p$-groups of maximal
nihlpotency class from others (see Proposition \ref{prop:classification-essentials-maximal-class}).

\medskip{}

Another main obstacle in the search for new exotic fusion systems
is proving that the obtained fusion system is not realizable. At the
time of writing, the only known method to distinguish between realizable
and exotic fusion systems is via the classification of finite simple
groups (see \cite[Section III.7]{AKO11}). A potential method to remedy
this would be to find a counterexample to the Diaz-Park sharpness
conjecture.

\hspace*{0.5cm}Sharpness conjecture. It is known (see \cite{BLO03,Cher13}),
that any saturated fusion system $\F$ admits a unique ``$p$-completed
classifying space'' $B\F$. If $\F=\FSG$ is realizable, then $B\F$
coincides with the $p$-completion $BG_{p}^{\mathcircumflex}$ of
the classifying space of $G$ (see \cite[Proposition 1.1]{BLO030}).
Furthermore, $B\F$ can be described as the $p$-completion of a certain
homotopy colimit (see \cite[Proposition 2.2]{BLO03}). This leads
to the existence of a Bousfield-Kan spectral sequence of the form
\begin{equation}
\limi_{\OFc}\left(H^{j}\left(-;\Fp\right)\right)\Rightarrow H^{i+j}\left(B\F;\Fp\right),\label{eq:cohomology-spectral-sequence}
\end{equation}
where $\Fp$ denotes the field of $p$-elements and $\OFc$ is the
centric orbit category of $\F$ (see Definition \ref{def:orbit-category}).
In \cite[Theorem B]{DiazPark15}, Diaz and Park prove that this spectral
sequence is sharp for realizable fusion systems. In other words $\limi_{\OFc[\FSG]}\left(H^{j}\left(-;\Fp\right)\right)=0$
for every $i\ge1$ and every $j\ge0$. This sharpness provides a proof
of the well known stable elements theorem (i.e. $\lim_{\OFc[\FSG]}\left(H^{n}\left(-;\Fp\right)\right)\cong H^{n}\left(B\FSG;\Fp\right)$).
We know from \cite[Theorem 5.8]{BLO03}, that an analogous stable
elements theorem holds also for exotic fusion system. It is therefore
natural to ask whether the spectral sequence of Equation (\ref{eq:cohomology-spectral-sequence})
is sharp also for them. At the time of writing, this question remains
open. In \cite{DiazPark15}, Diaz and Park introduce the concept of
Mackey functor over a fusion systems and conjecture that the following
stronger result actually holds.
\begin{conjecture*}[Sharpness for fusion systems]
Let $S$ be a finite $p$-group, let $\F$ be a saturated fusion
system over $S$ and let $M=\left(M_{*},M^{*}\right)$ be a Mackey
functor over $\F$ with coefficients in $\Fp$ (see \cite[Definition 2.1]{DiazPark15}).
Then $\limn_{\OFc}\left(M^{*}\downarrow_{\OFc}^{\OF}\right)=0$ for
every $n\ge1$.
\end{conjecture*}
\begin{defn*}
We say that a fusion system $\F$ satisfies sharpness if the above
conjecture holds for $\F$. If the spectral sequence of Equation (\ref{eq:cohomology-spectral-sequence})
is sharp for $\F$ and every $j\ge0$ we say that $\F$ satisfies
cohomological sharpness. 
\end{defn*}
Since any cohomology functor is the contravariant part of a Mackey
functor, then any saturated fusion system satisfying sharpness also
satisfies cohomological sharpness. In \cite[Theorem B]{DiazPark15},
Diaz and Park prove that realizable fusion systems satisfy sharpness.
It follows that a counterexample to the sharpness conjecture would
provide a method (independent of the classification) to distinguish
between realizable and exotic fusion systems. On the other hand, a
positive answer to the sharpness conjecture would provide an easy
proof of the stable elements theorem and deepen our understanding
of exotic fusion systems. Therefore any answer to this conjecture
would be of interest. This has prompted much research on the subject
(see \cites{CarrDiaz25}{DiazPark15}{GlaLyn25}{GraMar23}{HLL23}{Pra26}{Yal22}).
To the author's best knowledge, all known results (other than \cite[Theorem C]{Pra26})
which prove cohomological sharpness for at least some fusion systems,
rely on clever applications of \cite[Corollary 5.16]{JackowskiMcClure92}.
During this document we deviate from this trend by relying on the
amalgamation and pruning methods in order to provide an inductive
approach to the sharpness conjecture. By doing so, we hope not only
to get closer to proving it, but also to gain a deeper understanding
of the nature of saturated fusion systems.

\hspace*{0.5cm}Our results. Assume that the conditions of Proposition
\ref{prop:amalgamation-method} are satisfied and that $G_{\boldsymbol{1}}$,
$G_{\boldsymbol{2}}$ and $G_{\boldsymbol{e}}$ are all finite. The
author proved in \cite[Theorem B]{Pra26} that, under these conditions,
$\F$ satisfies cohomological sharpness. This result relies on the
vanishing of the functor $\CGp:=H_{1}\left(\Gamma^{P}/C_{G}\left(P\right);\Fp\right)$
and, in our opinion, has two major limitations. First; since amalgamated
products are (in general) non finite, then \cite[Theorem B]{Pra26}
cannot be iteratively applied to study fusion systems arising via
the amalgamation method. Second; if $\CGp\not=0$, then the tools
provided in \cite{Pra26} are insufficient to prove cohomological
sharpness. In this document we aim to fix these limitations.

Let $S'\le S$ be finite $p$-groups, let $\mathcal{C}$ be a family
of subgroups of $S$, let $\F_{\boldsymbol{1}}$ be a fusion system
over $S$ and let $\F_{\boldsymbol{2}}$ and $\F_{\boldsymbol{e}}$
be fusion systems over $S'$ such that $\F_{\boldsymbol{e}}\le\F_{\boldsymbol{1}},\F_{\boldsymbol{2}}$.
Define the fusion system $\F:=\left\langle \F_{\boldsymbol{1}},\F_{\boldsymbol{2}}\right\rangle _{S}$,
the set $\Lambda:=\left\{ \F_{\boldsymbol{1}},\F_{\boldsymbol{2}},\F_{\boldsymbol{e}}\right\} $,
for every $\H\in\Lambda$ define the functor $\underline{\Fp}_{\OFC[\H]}^{\OFC}$
as the induction from $\OFC[\mathcal{H}]$ to $\OFC$ (see Definition
\ref{def:orbit-category}) of the constant functor $\underline{\Fp}_{\OFC[\H]}:\OFC[\H]^{\operatorname{op}}\to\Fp\mod{}$
(see Definition \ref{def:const-functor}), define the functors $CX_{0},CX_{1}:\OFC^{\operatorname{op}}\to\Fp\mod{}$
as
\begin{align}
CX_{0}: & =\underline{\R}_{\OFC[\F_{\boldsymbol{1}}]}^{\OFC}\oplus\underline{\R}_{\OFC[\F_{\boldsymbol{2}}]}^{\OFC}, & CX_{1} & :=\underline{\R}_{\OFC[\F_{\boldsymbol{e}}]}^{\OFC},\label{eq:intro-def-CX0-CX1}
\end{align}
and define the natural transformation $f:CX_{1}\to CX_{0}$ by setting
\begin{equation}
f\left(\lambda\otimes_{\OFC[\F_{\boldsymbol{e}}]}\overline{\varphi}\right):=\left(-\lambda\otimes_{\OFC[\F_{\boldsymbol{1}}]}\overline{\varphi},\lambda\otimes_{\OFC[\F_{\boldsymbol{2}}]}\overline{\varphi}\right).\label{eq:intro-def-f}
\end{equation}

It follows from \cite[Definition 2.19]{Pra26} and Lemma \ref{lem:definition-f-ker-f},
that, whenever $\F_{\boldsymbol{1}}$, $\F_{\boldsymbol{2}}$ and
$\F_{\boldsymbol{e}}$ are all realizable, the functor $\CGp$ appearing
in \cite{Pra26} coincides with the functor $\CFp:=\ker\left(f\right)$.
This leads to the following generalization of \cite[Theorem A]{Pra26}.

\phantomsection
\label{thm:A}
\begin{namedthm}[Theorem A]
Let $\mathcal{C}$ be a family of $\F$-centric subgroups of $S$
closed under $\F$-conjugacy and taking overgroups, let $M^{*}$ be
the contravariant part of a Mackey functor over $\F$ with coefficients
in $\Fp$ and write $M:=M^{*}\downarrow_{\OFc}^{\OF}$. Assume that
$\limn_{\OFc[\H]}\left(M^{*}\downarrow_{\OFc[\H]}^{\OF}\right)$ for
every $n\ge1$ and every $\H\in\Lambda$. Assume moreover that $\Fcr[E]\subseteq\mathcal{C}$
(see Definition \ref{def:fully-normalized-centric-and-centric-radical-essential})
for every $\mathcal{E}\in\Lambda\cup\left\{ \F\right\} $ . Then
\begin{enumerate}
\item \label{enu:thm-a-iso}For every $n\ge1$ there exists an isomorphism
\[
\Ext_{\OFC}^{n}\left(\CFp,M\downarrow_{\OFC}^{\OFc}\right)\cong\limn[n+2]_{\OFc}\left(M\right),
\]
where, $\Ext_{\OFC}^{n}$ is the $n^{\text{th}}$ right derived functor
of $\Nat_{\OFC}\left(-,M\right):\OFC^{\operatorname{op}}\to\Fp\mod{}$.
\item \label{enu:thm-A-exact-seq}Let $\varUpsilon:\coker\left(f^{*}\right)\to\underset{\OFC}{\Nat}\left(\CFp,M\downarrow_{\OFC}^{\OFc}\right)$
be the morphism arising from the short exact sequence $0\to\CFp\to CX_{1}\to CX_{0}$
via the functor $\Nat_{\OFC}\left(-,M\downarrow_{\OFC}^{\OFc}\right)$.
There exists an exact sequence of the form
\[
0\to\underset{\OFc}{\limn[1]}\left(M\right)\to\coker\left(f^{*}\right)\stackrel{\varUpsilon}{\to}\underset{\OFC}{\Nat}\left(\CFp,M\downarrow_{\OFC}^{\OFc}\right)\to\underset{\OFc}{\limn[2]}\left(M\right)\to0.
\]
\end{enumerate}
\end{namedthm}
As an application of Theorem \hyperref[thm:A]{A}, we prove in Proposition
\ref{prop:sharpness-2-essentials} that cohomological sharpness holds
for any saturated fusion systems $\F$ having at most $2$ $\F$-essentials
of which one is weakly $\F$ closed while the other is minimal in
$\Fcr$. It turns out that, in situations like those arising from
the pruning method, the arguments of Proposition \ref{prop:sharpness-2-essentials}
can be applied inductively. More precisely, with notation as in Proposition
\ref{prop:pruning-method}, let $P\in\mathcal{A}$ be fully $\F$-normalized
and define $\F_{\boldsymbol{1}}:=\H$, $\F_{\boldsymbol{2}}:=N_{\F}\left(P\right)$
and $\F_{\boldsymbol{e}}:=N_{\H}\left(P\right)$ (see Definition \ref{def:normalizer-fs}).
In Corollary \ref{cor:whole-vanishing-of-CFR}, we prove that $\CFp=0$.
It follows from Theorem \hyperref[thm:A]{A} and \cite[Lemma 3.6]{Pra26}
that $\G:=\left\langle \F_{\boldsymbol{1}},\F_{\boldsymbol{2}}\right\rangle _{S}$
satisfies cohomological sharpness. By redefining $\H=\G$ and $\mathcal{A}=\mathcal{A}-\left\{ P\right\} $
we can repeat the process inductively in order to obtain our second
main result.

\phantomsection
\label{thm:B}
\begin{namedthm}[Theorem B]
Let $\H\subseteq\F$ be saturated fusion systems over $S$ and let
$\mathcal{A}\subseteq\Fc$ be a family of groups such that $\F=\left\langle \H,\Aut_{\F}\left(P\right)\,|\,P\in\mathcal{S}\right\rangle $.
Assume that every $P\in\mathcal{A}$ satisfies either
\begin{itemize}
\item $H_{\H}\left(P\right)=\Aut_{\H}\left(P\right)$ and $P\cong p_{+}^{1+2}$
is extraspecial of order $p^{3}$ and exponent $p$ and , or
\item for every $Q\lneq P$ then $C_{S}\left(Q\right)\not\le Q$.
\end{itemize}
If $\H$ satisfies cohomological sharpness then so does $\F$.
\end{namedthm}
We consider the parallelism between Proposition \ref{prop:pruning-method}
and Theorem \hyperref[thm:B]{B} to be worthy of remark. It is also
worth remarking that, unlike Proposition \ref{prop:sharpness-2-essentials},
Theorem \hyperref[thm:B]{B} cannot be proved using \cite[Theorem A]{Pra26}
but the stronger Theorem \hyperref[thm:A]{A} is in fact needed.

In each inductive step of the proof of Theorem \hyperref[thm:B]{B}
we use that $\CFp=0$. However, in cases as the one studied in Subsection
\ref{subsec:last-van-beek-fusion-system}, this condition is not satisfied.
In this situation, we can still use the description of the natural
transformation $\varUpsilon$ in Theorem \hyperref[thm:A]{A} in order
to prove cohomological sharpness.

Let $Q\le S'$ be a fully $\F$-normalized element in $\mathcal{C}$.
For every $x\in\left\{ \boldsymbol{1},\boldsymbol{2},\boldsymbol{e}\right\} $
define the fusion system $\G_{x}:=N_{\F_{x}}\left(Q\right)$, the
set $\Xi:=\left\{ \G_{\boldsymbol{1}},\G_{\boldsymbol{2}},\G_{\boldsymbol{e}}\right\} $
and the fusion system $\G:=\left\langle \G_{\boldsymbol{1}},\G_{\boldsymbol{2}}\right\rangle _{N_{S}\left(Q\right)}$.
Proceeding as in Equations (\ref{eq:intro-def-CX0-CX1}) and (\ref{eq:intro-def-f})
we can also define the functors
\begin{align}
CY_{0} & :=\underline{\Fp}_{\OFC[\G_{\boldsymbol{1}}]}^{\OFC[\G]}\oplus\underline{\Fp}_{\OFC[\G_{\boldsymbol{2}}]}^{\OFC[\G]}, & CY_{1} & :=\underline{\Fp}_{\OFC[\G_{\boldsymbol{e}}]}^{\OFC[\G]},\label{eq:CY0-CY1}
\end{align}
the natural transformation $g:CY_{1}\to CY_{0}$ and $\CFp[\Xi]:=\ker\left(g\right)$.
By applying the induction functor $\uparrow_{\OFC[\G]}^{\OFC}$ to
$\CFp[\Xi]$, $CY_{0}$ and $CY_{1}$ we obtain the following commutative
diagram
\[
\xymatrix{\CFp[\Xi]\uparrow_{\OFC[\G]}^{\OFC}\ar@{->}[r]^{\phantom{..}\iota_{Y}\uparrow_{\OFC[\G]}^{\OFC}} & CY_{1}\uparrow_{\OFC[\G]}^{\OFC}\ar@{->}[r]^{\phantom{.......}g\uparrow_{\OFC[\G]}^{\OFC}}\ar@{->>}[d]^{\pi_{1}} & CY_{0}\ar@{->>}[d]^{\pi_{0}}\\
\CFp\ar@{->}[r]^{\iota_{X}} & CX_{1}\ar@{->}[r]^{f} & CX_{0}
}
,
\]
where $\iota_{X}$ and $\iota_{Y}$ denote the natural inclusions
and $\pi_{1}$ and $\pi_{0}$ denote the natural projections arising
from the inclusions $\G_{x}\subseteq\F_{x}$. It follows from the
above diagram that $\pi_{1}\iota_{Y}\uparrow_{\OFC[\G]}^{\OFC}$ has
image contained in $\CFp$. This leads to the canonical natural transformation
\begin{equation}
\Gamma:\underset{x\otimes\overline{\varphi}}{\CFp[\Xi]\uparrow_{\OFC[N_{\F}\left(Q\right)]}^{\OFC}}\underset{\rightarrow\vphantom{x\otimes\overline{\varphi}}}{\longrightarrow\vphantom{\CFp[\Xi]\uparrow_{\OFC[N_{\F}\left(Q\right)]}^{\OFC}}}\underset{x\overline{\varphi}}{\CFp\vphantom{\uparrow_{\OFC[N_{\F}\left(Q\right)]}^{\OFC}}}.\label{eq:def-gamma}
\end{equation}
On the other hand, for any functor $M:\OFC^{\operatorname{op}}\to\Fp\mod{}$,
the epimorphism $\pi_{1}$ leads to a monomorphism $\pi_{1}^{*}:\Nat_{\OFC}\left(CX_{1},M\downarrow_{\OFC}^{\OFc}\right)\hookrightarrow\Nat_{\OFC}\left(CY_{1}\uparrow_{\OFC[\G]}^{\OFC},M\downarrow_{\OFC}^{\OFc}\right)$.
Our third main result is the following.

\phantomsection
\label{thm:C}
\begin{namedthm}[Theorem C]
Adopt the notation of Theorem \hyperref[thm:A]{A} and let $Q$,
$\G$, $\Gamma$ and $\pi_{1}^{*}$ be as above. Assume that $\mathcal{E}$
is saturated and that $\Fcr[E]\subseteq\mathcal{C}$ for every $\mathcal{E}\in\Lambda\cup\left\{ \F\right\} $
and that $\limn_{\OFc[\H]}\left(M^{*}\downarrow_{\OFc[\H]}^{\OF}\right)$
for every $n\ge1$ and every $\H\in\Lambda$. Assume moreover that
$N_{\F}\left(Q\right)=\G$, that $\Gamma$ is an isomorphism and that
$\pi_{1}^{*}$ is a split monomorphism. Then $\limn_{\OFc}\left(M\right)=0$
for every $n\ge2$.

In particular, if each $\F_{x}$ satisfies cohomological sharpness,
then so does $\F$.
\end{namedthm}
In Corollary \ref{cor:relation-induction-no-induction} we provide
sufficient conditions for the morphism $\Gamma$ in the above to be
an isomorphism. On the other hand, the condition $N_{\F}\left(Q\right)=\G$
is satisfied if $Q$ is weakly $\F$ closed (as in Subsection \ref{subsec:last-van-beek-fusion-system})
or in fusion systems like the Benson-Solomon fusion systems (see \cite[Lemma 7.10(a)]{AschCher10}).
In both of these cases we also have that $Q\trianglelefteq S$. Finally,
in Proposition \ref{prop:sufficient-condition-for-splitting.} we
prove that $\pi_{1}^{*}$ is a split monomorphism whenever $M^{*}=H^{n}\left(-;\Fp\right)$
for some $n\ge0$, the fusion system $\F_{\boldsymbol{e}}$ is realizable
and $Q\trianglelefteq S'$. Once again these conditions are met in
fusion systems as the one in Subsection \ref{subsec:last-van-beek-fusion-system}
or the Benson-Solomon fusion systems. It is worth noticing that the
proof of Proposition \ref{prop:sufficient-condition-for-splitting.}
relies heavily on the stable elements theorem (see \cite[Theorem 5.8]{BLO03})
and on $H^{n}\left(-;\Fp\right)$ being both a global and a cohomological
Mackey functor. This suggests that, if sharpness were false and cohomological
sharpness true, then a counterexample should be found among simple
Mackey functors $S_{V,P}$ that either cannot be lifted to global
Mackey functors or, if they do, satisfy $\lim_{\OFc[\FSG]}\left(S_{V,P}^{*}\right)\not\cong S_{V,P}^{*}\left(G\right)$
for some realizable fusion subsystem.

Our fourth and last main result consists on a case by case application
of Theorems \hyperref[thm:B]{B} and \hyperref[thm:C]{C}.

\phantomsection
\label{thm:D}
\begin{namedthm}[Theorem D]
Let $\F$ be one of the following fusion systems:
\begin{enumerate}
\item \label{enu:theorem-D-polynomial}Any polynomial fusion system (in
the sense of \cite{GPSB26}).
\item \label{enu:theorem-D-maximal-class}Any saturated fusion system over
a $p$-group of maximal nihlpotency class.
\item \label{enu:theorem-D-rank-2}Any saturated fusion system over a $p$-group
of rank $2$ for $p$ odd.
\item \label{enu:theorem-D-henke-Shpectorov}Any Henke-Shpectorov fusion
system (as defined in \cite{HenkeShpectorovUnpublished} and then
again in \cite[Theorem 5.8]{ParkerSemeraro21} and characterized in
\cite[Theorem 1.1]{GPSB26}).
\item \label{enu:theorem-D-van-beek}Any of the $16$ van Beek fusion system
(as defined in \cite{VanBeek25}).
\end{enumerate}
Then $\F$ satisfies cohomological sharpness.
\end{namedthm}
\begin{proof}
Item (\ref{enu:theorem-D-polynomial}) is proven in Proposition \ref{prop:sharpness-polynomial},
Item (\ref{enu:theorem-D-maximal-class}) is proven in Proposition
\ref{prop:sharpness-maximal-nihlpotency-class}, Item (\ref{enu:theorem-D-rank-2})
is proven in Proposition \ref{prop:sharpness-rank-2}. Item (\ref{enu:theorem-D-henke-Shpectorov})
is proven in Proposition \ref{prop:sharpness-Henke-Shpectorov} and
Item (\ref{enu:theorem-D-van-beek}) is proven in Propositions \ref{prop:sharpness-van-beek-F3},
\ref{prop:sharpness-van-beek-E1-F3}, \ref{prop:sharpness-van-beek-M},
\ref{prop:sharpness-van-beek-E1-M} and \ref{prop:Sharpness-last-Van-Beek}.
\end{proof}
To the author's best knowledge, the only fusion systems listed in
Theorem \hyperref[thm:D]{D} for which cohomological sharpness was
already known are:
\begin{itemize}
\item Fusion systems over $p$-groups with an abelian subgroup of index
$p$ (see \cite[Theorem C]{DiazPark15}). This includes some (but
not all) polynomial fusion systems, fusion systems over some some
(but not all) groups of maximal class and fusion systems over some
(but not all) $p$-groups of rank $2$ with $p$ odd.
\item The Clelland-Parker and the Parker-Stroth fusion systems (see \cite[Theorems 1.1 and 1.5]{HLL23}
and \cite[Theorem C]{PraderioPhD24}). These are strictly contained
in the family of polynomial fusion systems.
\item Fusion systems over Sylow $p$-subgroups of $G_{2}\left(p\right)$
with $p\ge5$ (see \cite{GraMar23}). This includes some (but not
all) fusion systems over a $p$-group of maximal nihlpotency class.
\end{itemize}
The proofs of cohomological sharpness that we provide in Theorem \hyperref[thm:D]{D}
is independent from previous results with the sole exception of realizable
fusion systems and of fusion systems over a Sylow $7$ subgroup of
$G_{2}\left(7\right)$. On the other hand, at the time of writing
and to the authors best knowledge, there exist only $2$ known (families
of) fusion systems for which cohomological sharpness is known to hold
and are not covered in Theorem \hyperref[thm:D]{D}. These are:
\begin{enumerate}
\item Saturated fusion systems over a $p$-group with a maximal abelian
subgroup of index $p$ (see \cite[Theorem C]{DiazPark15}).
\item The smallest Benson-Solomon fusion system (see \cite[Theorems 1.1 and 4.1]{HLL23}).
\end{enumerate}
Attempting to prove cohomological sharpness in the first of the above
cases by using the methods developed in this document requires a study
on the centric-radical subgroups that falls outside the scope of this
document. The same statement holds for the exotic fusion systems over
the Sylow $7$-subgroups of $G_{2}\left(7\right)$ or the Oliver-Ruiz
fusion systems (see \cite{OliverRuiz21}). In the later case cohomological
sharpness is still not known. The study of cohomological sharpness
for the Benson-Solomon fusion systems (see \cite{AschCher10,LeviOliver02})
using Theorem \hyperref[thm:C]{C} is deferred to future work. 

\medskip{}

\hspace*{0.5cm}Organization of the paper: We start in Section \ref{sec:trees-of-fusion-systems}
with a quick review on fusion systems and of trees of fusion systems
(see \cite{Semeraro14}). Section \ref{sec:Higher-limits-and-amalgams}
constitutes the core of this document and, in it, we prove Theorem
\hyperref[thm:A]{A}. In Section \ref{sec:sharpness_via_pruning}
we provide some sufficient conditions for the functor $\CFp$ to vanish
(see Corollary \ref{cor:whole-vanishing-of-CFR}) and prove Theorem
\hyperref[thm:B]{B}. It this section we also prove that cohomological
sharpness holds for all but one of the fusion systems listed in Theorem
\hyperref[thm:D]{D}. For general Mackey functors over either a polynomial,
Henke-shpectorov or $6$ of the van Beek fusion systems, we also prove
vanishing of all but the first higher limits (see Propositions \ref{prop:sharpness-polynomial},
\ref{prop:sharpness-Henke-Shpectorov}, \ref{prop:sharpness-van-beek-F3},
\ref{prop:sharpness-van-beek-E1-F3} and \ref{prop:sharpness-van-beek-M}).
We conclude in Section \ref{sec:sharpness_via_normalizers} where
we prove Theorem \hyperref[thm:C]{C} and use is to prove cohomological
sharpness for the remaining fusion system listed in Theorem \hyperref[thm:D]{D}.

\hspace*{0.5cm}Acknowledgment. The author would like to thank Chris
Parker, Jason Semeraro, Martin van Beek, Valentina Grazian, Justin
Lynd, Andrew Chermak, Guille Carrión and Antonio Díaz for their comments
and insightful conversations.

During the development of this article the author was supported by
the TU Dresden university via the grants U-000902-830-C11-1010103,
F-013038-541-000-1010103 and L-000150-721-110-1010100.

\section{Fusion systems and trees of fusion systems}\label{sec:trees-of-fusion-systems}

In this section we briefly recall some of the most common notation
regarding fusion systems as well as the concept of trees of fusion
systems. We refer the interested reader to \cite[Part I]{AKO11} for
a more in depth introduction to fusion systems and to \cite{Semeraro14}
for a more through study of trees of fusion systems.
\begin{defn}
Let $p$ be a prime and let $S$ be a finite $p$-group. A fusion
system over $S$ is a category $\F$ whose objects are all the subgroups
of $S$ and whose morphisms are injective group morphisms such that:
\begin{enumerate}
\item $\F$ contains all morphisms obtained via conjugation by an element
in $S$.
\item If a morphism $\varphi$ in $\F$ is a group isomorphism then $\varphi^{-1}$
is also a morphism in $\F$.
\item For every morphism $\varphi:P\to Q$ in $\F$, the morphism $\tilde{\varphi}:P\to\varphi$$\left(P\right)$
obtained by restricting $\varphi$ to its image is also a morphism
in $\F$.
\end{enumerate}
\end{defn}

\begin{example}
\label{exa:realization-fs}Let $G\ge S$ be a group. The fusion system
on $S$ realized by $G$ is the fusion system $\FSG$ over $S$ whose
morphisms are given by conjugation by elements of $G$.
\end{example}

Some notable subgroups of $S$ related to a fusion system are the
following.
\begin{defn}
\label{def:fully-normalized-centric-and-centric-radical-essential}Let
$\F$ be a fusion system over $S$ and let $P\le S$. We say that:
\begin{enumerate}
\item $P$ is fully $\F$-normalized if for every $\F$-conjugate ($Q\cong_{\F}P$)
of $P$ the inequality $\left|N_{S}\left(P\right)\right|\ge\left|N_{S}\left(Q\right)\right|$
holds.
\item $P$ is $\F$-centric if for every $Q\cong_{\F}P$ then $C_{S}\left(Q\right)\le Q$.
We denote by $\Fc$ the family of all $\F$-centric subgroups of $S$.
\item $P$ is $\F$-radical if $\Aut_{P}\left(P\right)=O_{p}\left(\Aut_{\F}\left(P\right)\right)$,
where $\Aut_{P}\left(P\right)$ denotes the automorphisms of $P$
given by conjugation by an element of $P$.
\item $P$ is $\F$-centric-radical if it is $\F$-centric and $\F$-radical.
We denote by $\Fcr$ the family of all $\F$-centric-radical subgroups
of $S$.
\item $P$ is $\F$-essential if $\Out_{\F}\left(P\right):=\Aut_{\F}\left(P\right)/\Aut_{P}\left(P\right)$
contains a strongly $p$-embedded subgroup. That is $p\mid\left|\Out_{\F}\left(P\right)\right|$
and there exists $Q\in\Syl_{p}\left(\Out_{\F}\left(P\right)\right)$
and $Q\le H\le\Out_{\F}\left(P\right)$ such that $Q^{x}\cap H=\left\{ \Id\right\} $
for every $x\in\Out_{\F}\left(P\right)-H$. We denote by $\mathcal{E}\left(\F\right)$
the family of all $\F$-essential subgroups of $S$.
\end{enumerate}
\end{defn}

Throughout this document we often work with the orbit category and
the normalizer subsystems of a fusion system.
\begin{defn}
\label{def:orbit-category}Let $\F$ be a fusion system. The orbit
category of $\F$ is the category whose objects are the same as the
objects of $\F$ and whose morphisms are the cosets $\Hom_{\OF}\left(P,Q\right):=\Aut_{Q}\left(Q\right)\backslash\Hom_{\F}\left(P,Q\right)$.
Here the action is given by post composition. Given a family $\mathcal{C}$
of groups we define the $\mathcal{C}$-orbit category of $\F$ as
the full subcategory $\OFC$ of $\OF$ whose objects are the subgroups
of $S$ contained in $\mathcal{C}$. If $\mathcal{C}=\Fc$ we define
$\OFc:=\OFC$ and call it the centric orbit category of $\F$.
\end{defn}

\begin{defn}
\label{def:normalizer-fs}Let $P\le S$. We define the normalizer
of $\F$ on $P$ as the fusion system $N_{\F}\left(P\right)$ over
$N_{S}\left(P\right)$ with morphisms
\[
\Hom_{N_{\F}\left(P\right)}\left(A,B\right):=\left\{ \varphi\in\Hom_{\F}\left(A,B\right)\,|\,\exists\hat{\varphi}\in\Hom_{\F}\left(AP,BP\right)\text{ s.t. }\hat{\varphi}\left(P\right)=P\text{ and }\hat{\varphi}\left(x\right)=\varphi\left(x\right)\;\forall x\in A\right\} .
\]
\end{defn}

In the literature, it is not uncommon for authors to simply call fusion
system what it also known as saturated fusion system.
\begin{defn}
\label{def:saturated-fs}Let $\F$ be a fusion system over $S$. For
every $\varphi:P\to Q$ in $\F$ define
\[
N_{\varphi}:=\left\{ y\in N_{S}\left(P\right)\mid\exists z\in N_{S}\left(\varphi\left(P\right)\right)\text{ s.t. }\varphi\left(x^{y}\right)=\varphi\left(x\right)^{z}\;\forall x\in P\right\} .
\]
We say that $\F$ is saturated if the following are satisfied:
\begin{description}
\item [{Sylow$\;$axiom}] $\Aut_{S}\left(S\right)\in\Syl_{p}\left(\Aut_{\F}\left(S\right)\right)$.
\item [{Extension$\;$axiom}] For every $\varphi:P\to S$ in $\F$, if
$\varphi\left(P\right)$ is fully $\F$-normalized then there exists
$\hat{\varphi}:N_{\varphi}\to S$ such that $\varphi\left(x\right)=\hat{\varphi}\left(x\right)$
for every $x\in P$.
\end{description}
\end{defn}

The following are well known examples of saturated fusion systems.
\begin{example}
$\phantom{.}$
\begin{itemize}
\item If $G$ is finite and $S\in\Syl_{p}\left(G\right)$ then $\FSG$ is
saturated. We call saturated fusion systems of this form realizable.
A non realizable saturated fusion system is called exotic.
\item If $\F$ is saturated and $P\le S$ is fully $\F$-normalized then
$N_{\F}\left(P\right)$ is saturated.
\end{itemize}
\end{example}

Throughout this document we often use the following elementary result.
\begin{lem}
\label{lem:sharpness-for-normalizers}Let $P\in\Fc$ be fully $\F$-normalized.
If $\F$ is saturated then $N_{\F}\left(P\right)$ satisfies sharpness.
\end{lem}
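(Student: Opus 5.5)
The plan is to reduce $N_{\F}\left(P\right)$ to a realizable fusion system and then apply \cite[Theorem B]{DiazPark15}, which gives sharpness for every realizable fusion system. Since $P$ is fully $\F$-normalized and $\F$ is saturated, $N_{\F}\left(P\right)$ is a saturated fusion system over $N_{S}\left(P\right)$. By construction, $P\trianglelefteq N_{S}\left(P\right)$ is normal in $N_{\F}\left(P\right)$: every morphism extends to one normalizing $P$.

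The first step is to show that $P$ is $N_{\F}\left(P\right)$-centric. For every $N_{\F}\left(P\right)$-conjugate of $P$ (which is $P$ itself) we have $C_{N_{S}\left(P\right)}\left(P\right)\le C_{S}\left(P\right)\le P$, because $P\in\Fc$. Thus $N_{\F}\left(P\right)$ is a saturated fusion system with a normal centric subgroup, i.e.\ it is constrained.

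The second step invokes the model theorem for constrained fusion systems \cite[Theorem III.5.10]{AKO11}. It yields a finite group $G$ with $N_{S}\left(P\right)\in\Syl_{p}\left(G\right)$, $P\trianglelefteq G$, $C_{G}\left(P\right)\le P$ and $N_{\F}\left(P\right)=\F_{N_{S}\left(P\right)}\left(G\right)$. Hence $N_{\F}\left(P\right)$ is realizable, and \cite[Theorem B]{DiazPark15} gives $\limn_{\OFc[N_{\F}\left(P\right)]}\left(M^{*}\downarrow\right)=0$ for every $n\ge1$ and every Mackey functor $M$ over $N_{\F}\left(P\right)$ with coefficients in $\Fp$. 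That is precisely sharpness.

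The only non-routine point is the appeal to the model theorem, in particular checking that its hypotheses (saturation plus a normal centric subgroup) hold. The first is the standard fact recalled in the examples above; the second is the centralizer computation in the first step. No further obstacle is expected.
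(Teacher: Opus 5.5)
Your proposal is correct and follows the paper's route: the paper cites \cite[Proposition C]{BCGLO05}, which is the model theorem for constrained fusion systems, to conclude that $N_{\F}\left(P\right)$ is realizable, and then applies \cite[Theorem B]{DiazPark15}. You do the same via the version of the model theorem in \cite{AKO11}, and you also make explicit the check the paper leaves implicit, namely that $P$ is normal and centric in the saturated system $N_{\F}\left(P\right)$.
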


\begin{proof}
We know from \cite[Proposition C]{BCGLO05} that $N_{\F}\left(P\right)$
is realizable. The result follows from \cite[Theorem B]{DiazPark15}.
\end{proof}
we also make use of the following well known result.
\begin{lem}[{\cite[Proposition 4.46]{Craven11}}]
\label{lem:centric-radical-contained}Let $P\le S$ be fully $\F$-normalized.
If $\F$ is saturated then every $N_{\F}\left(P\right)$-centric-radical
contains $P$.
\end{lem}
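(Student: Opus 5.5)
The plan is to prove the statement, \cite[Proposition 4.46]{Craven11}: if $\F$ is saturated and $P\le S$ is fully $\F$-normalized, then every $N_{\F}\left(P\right)$-centric-radical subgroup $R\le N_{S}\left(P\right)$ contains $P$.

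Write $\mathcal{N}:=N_{\F}\left(P\right)$, which is saturated over $N:=N_{S}\left(P\right)$, and fix $R\in\mathcal{N}^{\operatorname{cr}}$. Set $T:=RP$; this is a subgroup because $P\trianglelefteq N$ and $R\le N$. The goal is $T=R$. Suppose instead that $R\lneq T$. Since $T$ is a $p$-group, $K:=N_{T}\left(R\right)$ then satisfies $R\lneq K$.

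First, $P\le N_S(R)$ would finish things, but in general that is not given, so I use the normalizer structure of $\mathcal{N}$. Every $\varphi\in\Aut_{\mathcal{N}}\left(R\right)$ extends to $\hat{\varphi}\in\Hom_{\F}\left(RP,RP\right)$ with $\hat{\varphi}\left(P\right)=P$. Such a $\hat{\varphi}$ normalizes $T=RP$ and $R$, and hence also normalizes $K=N_{T}\left(R\right)$. So $\Aut_{K}\left(R\right)$ is normalized by $\Aut_{\mathcal{N}}\left(R\right)$: for $k\in K$ we have $\varphi c_{k}\varphi^{-1}=c_{\hat{\varphi}\left(k\right)}$ with $\hat{\varphi}\left(k\right)\in K$. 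Therefore $\Aut_{K}\left(R\right)$ is a normal $p$-subgroup of $\Aut_{\mathcal{N}}\left(R\right)$, and so $\Aut_{K}\left(R\right)\le O_{p}\left(\Aut_{\mathcal{N}}\left(R\right)\right)=\Inn\left(R\right)$ by radicality.

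Now pick $k\in K\setminus R$. Since $c_k|_R$ is inner, some $r\in R$ has $kr^{-1}\in C_{N}\left(R\right)$. Centricity of $R$ in $\mathcal{N}$ gives $C_{N}\left(R\right)\le R$, so $kr^{-1}\in R$ and hence $k\in R$, a contradiction. Thus $R=T\ge P$.

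The delicate point is the extension step. The definition of $N_{\F}\left(P\right)$ supplies $\hat\varphi$ defined on $RP$, and I must check that $\hat\varphi$ maps $RP$ onto itself. This holds because $\hat{\varphi}\left(R\right)=R$ and $\hat{\varphi}\left(P\right)=P$. I must also check that $\hat\varphi$ restricted to $K$ conjugates inner-by-$K$ automorphisms correctly, which is a routine computation: $\hat\varphi c_k\hat\varphi^{-1}=c_{\hat\varphi(k)}$ on $RP$, and this restricts to $R$. Saturation is not really needed beyond ensuring the objects are well behaved.
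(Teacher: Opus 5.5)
Your proof is correct. Extending each $\varphi\in\Aut_{N_{\mathcal{F}}(P)}(R)$ to an automorphism of $RP$ that fixes $P$ makes $\Aut_{N_{RP}(R)}(R)$ a normal $p$-subgroup of $\Aut_{N_{\mathcal{F}}(P)}(R)$, hence inner by radicality, and centricity then forces $N_{RP}(R)=R$, so $P\le R$. The paper gives no proof of its own beyond the citation to Craven, and your argument is the standard one behind that result: a subgroup normal in a fusion system lies in every centric-radical subgroup, here applied to $P\trianglelefteq N_{\mathcal{F}}(P)$. As you note, it uses neither saturation nor full normalization.
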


\medskip{}

Building on some previous work of Broto, Levi and Oliver (see \cite{BLO06}),
Semeraro developed in \cite{Semeraro14} the concept of tree of fusion
systems and used it in order to study sufficient conditions for saturation.
Throughout this document we are mostly concerned with the simplest
non-trivial case of tree of fusion system satisfying what Semeraro
calls Hypothesis (H). This simplest of cases is described by adopting
the following notation.
\begin{notation}
\label{nota:simple-tree-of-fusion-systems}$\phantom{.}$
\begin{enumerate}
\item $\T$ is the tree with only two vertexes $\left\{ \boldsymbol{1},\boldsymbol{2}\right\} $
and a single edge $\boldsymbol{e}$ between them.
\item $\T$ can be viewed as a category with objects $\Ob\left(\T\right)=\left\{ \boldsymbol{1},\boldsymbol{2},\boldsymbol{e}\right\} $
and only two non identity morphisms $\boldsymbol{e}\to\boldsymbol{1}$
and $\boldsymbol{e}\to\boldsymbol{2}$.
\item $S\ge S'$ are finite $p$ groups and $\F_{\boldsymbol{1}}$ and $\F_{\boldsymbol{2}}$
be fusion systems over $S$ and $S'$ respectively.
\item $\F=\left\langle \F_{\boldsymbol{1}},\F_{\boldsymbol{2}}\right\rangle _{S}$
is the smallest fusion system over $S$ containing both $\F_{\boldsymbol{1}}$
and $\F_{\boldsymbol{2}}$ and $\F_{\boldsymbol{e}}$ is a fusion
system over $S'$ such that $\F_{\boldsymbol{e}}\subseteq\F_{\boldsymbol{1}}\cap\F_{\boldsymbol{2}}$.
\item Define $S_{\boldsymbol{1}}:=S$ and $S_{\boldsymbol{2}}:=S_{\boldsymbol{e}}:=S'$.
\item Define $\Lambda:=\left\{ \F_{\boldsymbol{1}},\F_{\boldsymbol{2}},\F_{\boldsymbol{e}}\right\} $.
\end{enumerate}
\end{notation}

\begin{rem}
With the above notation it is possible to define a functor $\boldsymbol{\Lambda}$
(resp, $\mathcal{S}$) from $\T$ to the category of $p$ fusion system
(resp. $p$-groups) which sends $x$ to $\F_{x}$ (resp. $S_{x}$)
and the morphisms to the natural inclusions. It follows from \cite[Lemma 2.6]{Semeraro14},
that $\boldsymbol{\Lambda}$ admits a colimit $\boldsymbol{\Lambda}_{\T}$
and that $\boldsymbol{\Lambda}_{\T}=\F$.
\end{rem}

In \cite[Theorem 4.1]{Semeraro14}, Semeraro shows that if the $\F_{x}$
are saturated, $\Fcr_{x}\subseteq\Fc$ and a certain graph (which
we denote as $\Rep_{\F}\left(P,\Lambda\right)$) is a tree for every
$P\in\Fc$ then $\F$ is saturated. This graph can be defined as follows.
\begin{defn}
Let $S'\le S$ be finite $p$-groups and let $\H\subseteq\F$ be fusion
systems over $S'$ and $S$ respectively. For every $P\le S$ we define
the set
\[
\Rep_{\F}\left(P,\H\right):=\Hom_{\F}\left(P,S'\right)/\sim,
\]
where $\varphi\sim\psi$ if and only if there exists an isomorphism
$\theta:\varphi\left(P\right)\to\psi\left(P\right)$ in $\H$ such
that $\theta\left(\varphi\left(x\right)\right)=\psi\left(x\right)$
for every $x\in P$. For every $\varphi\in\Hom_{\F}\left(P,S'\right)$
we denote by $\left[\varphi\right]_{\H}$ its equivalence class in
$\Rep_{\F}\left(P,\H\right)$.
\end{defn}

Adopting Notation \ref{nota:simple-tree-of-fusion-systems}, we can
view $\Rep_{\F}\left(P,\F_{x}\right)$ as a discrete topological space.
This allows us to define the homotopy colimit of the resulting functor
$\Rep_{\F}\left(P,-\right):\T\to\operatorname{Top}$. As shown in
\cite[Lemma 2.8]{Semeraro14}, this homotopy colimit can be described
as the geometric realization of the graph $\Rep_{\F}\left(P,\Lambda\right)$.
\begin{defn}
\label{def:RepF-P-lambda}Adopt Notation \ref{nota:simple-tree-of-fusion-systems}.
For every $P\le S$, we define $\Rep_{\F}\left(P,\Lambda\right)$
as the bipartite graph whose vertex and edge sets are given by
\begin{align*}
V & :=\Rep_{\F}\left(P,\F_{\boldsymbol{1}}\right)\sqcup\Rep_{\F}\left(P,\F_{\boldsymbol{2}}\right), & E & :=\Rep_{\F}\left(P,\F_{\boldsymbol{e}}\right),
\end{align*}
where $\left[\varphi\right]_{\F_{\boldsymbol{e}}}\in E$ has incident
vertices $\left[\varphi\right]_{\F_{\boldsymbol{2}}}$ and $\left[\iota_{S'}^{S}\varphi\right]_{\F_{\boldsymbol{1}}}$.
\end{defn}

For every $\varphi:Q\to P$ in $\F$ there is a natural morphism $\Rep_{\F}\left(P,\Lambda\right)\to\Rep_{\F}\left(Q,\Lambda\right)$
sending $\left[\psi\right]_{\H}$ to $\left[\psi\varphi\right]_{\H}$
for every $\H\in\Lambda$. Moreover, if $Q=P$ and $\varphi$ is given
by conjugation by an element in $P$, then this morphism is the identity
morphism. In other words, we obtain the following result, with which
we conclude this section.
\begin{lem}
\label{lem:functor-to-graph}Adopt Notation \ref{nota:simple-tree-of-fusion-systems}.
There exists a functor $\Rep_{\F}\left(-,\Lambda\right):\OF^{\operatorname{op}}\to\Graph$
which sends every $P\le S$ to the graph $\Rep_{\F}\left(P,\Lambda\right)$.
\end{lem}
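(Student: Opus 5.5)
To prove Lemma \ref{lem:functor-to-graph}, I will define the functor on objects and morphisms, check that it is well defined on equivalence classes, check that it respects the graph structure, and finally check that it factors through $\OF$.

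\textbf{Definition and well-definedness.} On objects, set $P\mapsto\Rep_{\F}\left(P,\Lambda\right)$. Take a morphism $\varphi:Q\to P$ in $\F$ and a class $\left[\psi\right]_{\H}$, where $\psi\in\Hom_{\F}\left(P,S_{x}\right)$ and $\H=\F_{x}\in\Lambda$.
\begin{itemize}
\item Send $\left[\psi\right]_{\H}$ to $\left[\psi\varphi\right]_{\H}$; this makes sense because $\psi\varphi\in\Hom_{\F}\left(Q,S_{x}\right)$.
\item Suppose $\psi\sim\psi'$ via an isomorphism $\theta:\psi\left(P\right)\to\psi'\left(P\right)$ in $\H$.
\item Restrict $\theta$ to $\psi\varphi\left(Q\right)\le\psi\left(P\right)$ and then to its image $\psi'\varphi\left(Q\right)$. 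This uses the restriction axiom in the definition of a fusion system.
\item The resulting map is an isomorphism in $\H$ satisfying $\theta\left(\psi\varphi\left(y\right)\right)=\psi'\varphi\left(y\right)$. Hence $\psi\varphi\sim\psi'\varphi$, and the map on vertices and edges is well defined.
\end{itemize}

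\textbf{Compatibility with the graph structure.} An edge $\left[\psi\right]_{\F_{\boldsymbol{e}}}$ has endpoints $\left[\psi\right]_{\F_{\boldsymbol{2}}}$ and $\left[\iota_{S'}^{S}\psi\right]_{\F_{\boldsymbol{1}}}$. It is sent to $\left[\psi\varphi\right]_{\F_{\boldsymbol{e}}}$, whose endpoints are $\left[\psi\varphi\right]_{\F_{\boldsymbol{2}}}$ and $\left[\iota_{S'}^{S}\psi\varphi\right]_{\F_{\boldsymbol{1}}}$. These are exactly the images of the original endpoints, so incidence is preserved and we obtain a graph morphism. Functoriality is immediate: composition of $\F$-morphisms is associative, so $\psi\left(\varphi\varphi'\right)=\left(\psi\varphi\right)\varphi'$, and identities act trivially. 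This gives a contravariant functor $\F\to\Graph$.

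\textbf{Factoring through $\OF$ (main step).} Morphisms in $\OF$ are classes in $\Aut_{P}\left(P\right)\backslash\Hom_{\F}\left(Q,P\right)$, so it suffices to show that each $c_{g}\in\Aut_{P}\left(P\right)$ with $g\in P$ acts as the identity on $\Rep_{\F}\left(P,\Lambda\right)$. Then $\left(c_{g}\varphi\right)^{*}=\varphi^{*}c_{g}^{*}=\varphi^{*}$.
\begin{itemize}
\item Let $\psi\in\Hom_{\F}\left(P,S_{x}\right)$. Then $\psi c_{g}=c_{\psi\left(g\right)}\psi$.
\item Since $\psi\left(g\right)\in\psi\left(P\right)\le S_{x}$, the map $\theta:=c_{\psi\left(g\right)}$ restricted to $\psi\left(P\right)$ is an automorphism of $\psi\left(P\right)$ given by conjugation by an element of $S_{x}$.
\item By the first axiom of a fusion system applied to $\F_{x}$, this $\theta$ lies in $\F_{x}$.
\item We have $\theta\left(\psi\left(y\right)\right)=\psi c_{g}\left(y\right)$ for all $y\in P$, so $\psi\sim\psi c_{g}$ in $\Rep_{\F}\left(P,\F_{x}\right)$.
\end{itemize}
Applying this for $x\in\left\{ \boldsymbol{1},\boldsymbol{2},\boldsymbol{e}\right\}$ shows that $c_{g}$ acts as the identity on both the vertex set and the edge set. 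Therefore the functor descends to $\OF^{\operatorname{op}}\to\Graph$.

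The only real point in the argument is this last step, namely that inner automorphisms of $P$ become conjugations by elements of $S_{x}$ after composing with $\psi$. Everything else is routine.
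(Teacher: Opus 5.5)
Your proof is correct and takes the same route as the paper. The paper's justification is the paragraph just before the lemma: precomposition $[\psi]_{\mathcal{H}}\mapsto[\psi\varphi]_{\mathcal{H}}$ gives a graph morphism, and conjugation by an element of $P$ induces the identity. You fill in the details the paper leaves implicit, namely well-definedness on classes, preservation of incidence, and the identity $\psi c_g=c_{\psi(g)}\psi$ with $c_{\psi(g)}|_{\psi(P)}\in\mathcal{F}_x$, and all of these check out.
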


\section{Isomorphisms and exact sequences for higher limits}\label{sec:Higher-limits-and-amalgams}

Throughout this section we adopt the following notation
\begin{notation}
\label{nota:C-included-in-notation}Notation \ref{nota:simple-tree-of-fusion-systems}
is adopted and we use the symbol $\mathcal{C}$ in order to denote
a family of subgroups of $S$ containing $S'$ and closed under $\F$-conjugation
and taking overgroups.
\end{notation}

Many results in this section are slight generalizations of those presented
by the author in \cite[Section 3]{Pra26}. However, in contrast with
\cite[Section 3]{Pra26}, we take a more algebraic approach and compute
the morphism $\varUpsilon$ of Theorem \hyperref[thm:A]{A}.
\begin{defn}[{\cite[Definition 2.19]{Pra26}}]
\label{def:CFR}Let $\R$ be a commutative ring. By viewing graphs
as $CW$-complexes in the natural way, we can compose the functor
$\Rep_{\F}\left(-,\Lambda\right)$ of Lemma \ref{lem:functor-to-graph}
with the first homology functor $H_{1}\left(-;\R\right)$ to define
the functor $\CFR:\OFC^{\text{op}}\to\R\mod{}$. When $\R=\Fp$ is
the field of $p$-elements we write $\CFp:=\CGT{\Lambda}{\Fp}$.
\end{defn}

For every $\mathcal{H}\in\Lambda$ and every $P\in\mathcal{C}$, define
$\R\Rep_{\F}\left(P,\mathcal{H}\right)$ as the free $\R$-module
with basis indexed by $\Rep_{\F}\left(P,\mathcal{H}\right)$ and let
$CX_{*}$ be the $\R$ cellular chain complex of $\Rep_{\F}\left(P,\Lambda\right)$
(seen as a $CW$-complex). Since $\Rep_{\F}\left(P,\Lambda\right)$
is the geometric realization of a graph, then $CX_{n}=0$ for every
$n\not=0,1$ while
\begin{align*}
CX_{0}\left(P\right) & :=\R\Rep_{\F}\left(P;\F_{\boldsymbol{1}}\right)\oplus\R\Rep_{\F}\left(P;\F_{\boldsymbol{2}}\right), & CX_{1}\left(P\right) & :=\R\Rep_{\F}\left(P;\F_{\boldsymbol{e}}\right).
\end{align*}
Moreover, the differential $f:CX_{1}\to CX_{0}$ is given by setting
\[
f\left(\lambda\left[\varphi\right]_{\F_{\boldsymbol{e}}}\right):=\left(-\lambda\left[\iota_{S'}^{S}\varphi\right]_{\F_{\boldsymbol{1}}},\lambda\left[\varphi\right]_{\F_{\boldsymbol{2}}}\right),
\]

for every $P\in\mathcal{C}$, every $\lambda\in\R$ and every $\varphi\in\Hom_{\F}\left(P,S'\right)$.
Let us provide a more suitable interpretation of $CX_{0}$, $CX_{1}$
and $f$.
\begin{defn}
\label{def:const-functor}Let $\R$ be a commutative ring . We define
$\underline{\R}:\OFC^{\text{op}}\to\R\mod{}$ as the constant functor
sending every object to $\R$. For every $\mathcal{H}\in\Lambda$,
we further define $\underline{\R}_{\OFC[\mathcal{H}]}^{\OFC}:\OFC^{\text{op}}\to\R$
as the functor obtained by first restricting $\underline{\R}$ to
$\OFC[\mathcal{H}]$ and the inducing it back to $\OFC$.
\end{defn}

\begin{rem}
\label{rem:equivalence-of-index-cat-RC-mod}Let $\boldsymbol{C}$
be a small category. It is well known that there exists an equivalence
between the category of right $\R\boldsymbol{C}$-modules and the
category $\left(\R\Mod\right)^{\boldsymbol{C}^{\operatorname{op}}}$
of contravariant functors from $\boldsymbol{C}$ to $\R\Mod$. This
equivalence is what we use in Definition \ref{def:const-functor}
when we talk about induction and restriction. More precisely, $\underline{\R}_{\OFC[\mathcal{H}]}^{\OFC}$
is obtained by viewing $\underline{\R}$ as a right $\R\OFC[\H]$-module,
viewing $\R\OFC$ simultaneously as a left $\R\OFC[\H]$-module and
a right $\R\OFC$-module and then viewing their tensor product (i.e.
the induced module) as a functor.
\end{rem}

As shown in \cite[Lemma 4.7]{Yal22}, for every $P\in\mathcal{C}$
we have that 
\[
\underline{\R}_{\OFC[\mathcal{H}]}^{\OFC}\left(P\right)=\left\langle \left[\varphi\right]_{\H}\mid\varphi\left(P\right)\in\Ob\left(\H\right)\right\rangle \subseteq\R\Rep_{\F}\left(P,\H\right).
\]
We can therefore redefine $CX_{0}$ and $CX_{1}$ as the functors
\begin{align*}
CX_{0} & =\underline{\R}_{\OFC[\F_{\boldsymbol{1}}]}^{\OFC}\oplus\underline{\R}_{\OFC[\F_{\boldsymbol{2}}]}^{\OFC}, & CX_{1} & =\underline{\R}_{\OFC[\F_{\boldsymbol{e}}]}^{\OFC}.
\end{align*}
with this notation, whenever $\R=\Fp$, the natural transformation
$f$ coincides with the one defined by Equation (\ref{eq:intro-def-f}).
We conclude the following.
\begin{lem}
\label{lem:definition-f-ker-f}The functor $\CFR$ of Definition \ref{def:CFR}
is naturally isomorphic to the functor $\ker\left(f\right)$. In particular,
our notation is consistent with that of Theorem \hyperref[thm:A]{A}.
\end{lem}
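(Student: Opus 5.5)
The plan is to compare, objectwise in $P\in\mathcal{C}$, the cellular chain complex of the graph $\Rep_{\F}\left(P,\Lambda\right)$ with the complex $CX_{1}\stackrel{f}{\to}CX_{0}$ built from induced constant functors. Since the graph has no cells in degree $\ge2$, its first homology equals the kernel of the cellular differential $\partial_{1}:C_{1}\to C_{0}$. So by Definition \ref{def:CFR}, $\CFR\left(P\right)=\ker\left(\partial_{1}\left(P\right)\right)$, where
\[
C_{0}\left(P\right)=\R\Rep_{\F}\left(P,\F_{\boldsymbol{1}}\right)\oplus\R\Rep_{\F}\left(P,\F_{\boldsymbol{2}}\right),\qquad C_{1}\left(P\right)=\R\Rep_{\F}\left(P,\F_{\boldsymbol{e}}\right),
\]
and $\partial_{1}$ is given by the formula for $f$ displayed just before Definition \ref{def:const-functor}. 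For this I will fix the orientation of each edge $\left[\varphi\right]_{\F_{\boldsymbol{e}}}$ from $\left[\iota_{S'}^{S}\varphi\right]_{\F_{\boldsymbol{1}}}$ to $\left[\varphi\right]_{\F_{\boldsymbol{2}}}$.

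Next I will identify these modules with the induced functors using \cite[Lemma 4.7]{Yal22}. That lemma gives $\underline{\R}_{\OFC[\H]}^{\OFC}\left(P\right)$ as the span of the classes $\left[\varphi\right]_{\H}$ with $\varphi\left(P\right)\in\Ob\left(\OFC[\H]\right)$. The claim to check is that this span is all of $\R\Rep_{\F}\left(P,\H\right)$. This should hold because $\mathcal{C}$ contains $S'$ and is closed under $\F$-conjugation and overgroups, so $\varphi\left(P\right)\cong_{\F}P\in\mathcal{C}$ gives $\varphi\left(P\right)\in\mathcal{C}$. For $\H=\F_{\boldsymbol{1}}$ the target lies in $S$ and the same argument applies, reading $\Rep_{\F}\left(P,\F_{\boldsymbol{1}}\right)$ with codomain $S$. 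Under this identification the generator $\lambda\otimes\overline{\varphi}$ corresponds to $\lambda\left[\varphi\right]_{\H}$, so $\partial_{1}$ becomes exactly the $f$ of Equation (\ref{eq:intro-def-f}) when $\R=\Fp$.

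I then have to check naturality. The functor structure of $\CFR$ comes from Lemma \ref{lem:functor-to-graph}: a morphism $\overline{\alpha}:Q\to P$ acts by precomposition $\left[\psi\right]_{\H}\mapsto\left[\psi\alpha\right]_{\H}$. The induced functor acts on $\lambda\otimes\overline{\psi}$ by precomposition as well, so the identifications above commute with the structure maps, and $f$ is natural. It follows that $\ker\left(f\right)$, computed pointwise in the abelian functor category, is naturally isomorphic to $\CFR$.

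The main obstacle I expect is making the Yalçın identification precise. One has to verify that the map $\R\otimes_{\R\OFC[\H]}\R\OFC\left(P,-\right)\to\R\Rep_{\F}\left(P,\H\right)$ is well defined and bijective. Well-definedness should be the routine part: tensoring over $\OFC[\H]$ identifies exactly the morphisms that differ by post-composition with an $\H$-isomorphism. Injectivity and surjectivity are where the equivalence relation defining $\Rep$ has to be matched with the tensor relation, and that is the step I would check most carefully. Since \cite[Lemma 4.7]{Yal22} is cited in the text just before the statement, I would invoke it directly for this point.
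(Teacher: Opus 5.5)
Your proposal is correct and follows the paper's own argument. You identify $H_{1}$ of the graph $\Rep_{\F}\left(P,\Lambda\right)$ with the kernel of its cellular differential, and you use \cite[Lemma 4.7]{Yal22} to identify the chain modules with the induced constant functors, under which that differential becomes $f$. Your remark that closure of $\mathcal{C}$ under $\F$-conjugation makes the span in that lemma all of $\R\Rep_{\F}\left(P,\H\right)$ supplies the equality the paper uses implicitly (it writes only $\subseteq$) when it rewrites $CX_{0}$ and $CX_{1}$ as induced functors.
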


Moreover we also have the following
\begin{lem}
\label{lem:coker-f}With notation as in Lemma \ref{lem:definition-f-ker-f}
$\coker\left(f\right)\cong\underline{\R}$.
\end{lem}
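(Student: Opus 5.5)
We compute $\coker(f)$ pointwise, identifying it with $H_0$ of the graph $\Rep_{\F}(P,\Lambda)$, and then check naturality. For every $P\in\mathcal{C}$, the complex $CX_1(P)\stackrel{f}{\to}CX_0(P)$ is the cellular chain complex of the subgraph of $\Rep_{\F}(P,\Lambda)$ spanned by the classes $[\varphi]_{\H}$ with $\varphi(P)\in\mathcal{C}$. Since $\mathcal{C}$ is closed under $\F$-conjugation and $P\in\mathcal{C}$, every $\varphi(P)$ lies in $\mathcal{C}$, so this subgraph is all of $\Rep_{\F}(P,\Lambda)$. Hence $\coker(f)(P)\cong H_0(\Rep_{\F}(P,\Lambda);\R)$, which is the free $\R$-module on the connected components of the graph. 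The lemma is therefore equivalent to the claim that $\Rep_{\F}(P,\Lambda)$ is nonempty and connected for every $P\in\mathcal{C}$, together with a check that the resulting identifications are compatible with morphisms of $\OFC$.

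\emph{Nonemptiness.} We must produce an $\F$-morphism $P\to S$. The inclusion of $P$ into $S$ gives the vertex $[\iota_P^S]_{\F_{\boldsymbol{1}}}$.

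\emph{Connectivity.} This is the main step. Take a vertex $[\psi]_{\F_{\boldsymbol{1}}}$ with $\psi\in\Hom_{\F}(P,S)$. Since $\F=\langle\F_{\boldsymbol{1}},\F_{\boldsymbol{2}}\rangle_S$, we can write $\psi=\psi_k\circ\cdots\circ\psi_1$, where each $\psi_i$ is a morphism of $\F_{\boldsymbol{1}}$ or of $\F_{\boldsymbol{2}}$ (restricted appropriately). Set $\theta_i:=\psi_i\cdots\psi_1$, with $\theta_0$ the inclusion of $P$. We argue by induction on $k$ that $[\theta_i]_{\F_{\boldsymbol{1}}}$ is connected to $[\iota_P^S]_{\F_{\boldsymbol{1}}}$.
\begin{itemize}
\item If $\psi_i\in\F_{\boldsymbol{1}}$, then $[\theta_i]_{\F_{\boldsymbol{1}}}=[\theta_{i-1}]_{\F_{\boldsymbol{1}}}$ by the definition of $\sim$.
\item If $\psi_i\in\F_{\boldsymbol{2}}$, then $\theta_{i-1}(P)$ and $\theta_i(P)$ both lie in $S'$. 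The edges $[\theta_{i-1}]_{\F_{\boldsymbol{e}}}$ and $[\theta_i]_{\F_{\boldsymbol{e}}}$ join $[\theta_{i-1}]_{\F_{\boldsymbol{1}}}$ and $[\theta_i]_{\F_{\boldsymbol{1}}}$, respectively, to the common vertex $[\theta_{i-1}]_{\F_{\boldsymbol{2}}}=[\theta_i]_{\F_{\boldsymbol{2}}}$.
\end{itemize}
It remains to handle the vertices $[\varphi]_{\F_{\boldsymbol{2}}}$. Each such vertex is the endpoint of the edge $[\varphi]_{\F_{\boldsymbol{e}}}$, whose other endpoint is an $\F_{\boldsymbol{1}}$-vertex. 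So every vertex is connected to the base vertex.

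The expected subtlety is that the standard generation argument for $\langle\F_{\boldsymbol{1}},\F_{\boldsymbol{2}}\rangle_S$ composes restrictions of morphisms. One must check that the intermediate images $\theta_i(P)$ stay inside $S'$ whenever an $\F_{\boldsymbol{2}}$-factor is applied. This holds because a morphism of $\F_{\boldsymbol{2}}$ has domain and codomain in $S'$, so the factorisation can be chosen with each $\psi_i$ defined on $\theta_{i-1}(P)$.

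\emph{Naturality.} Define the augmentation $\varepsilon:CX_0\to\underline{\R}$ by sending every basis element $[\varphi]_{\H}$, for $\H\in\{\F_{\boldsymbol{1}},\F_{\boldsymbol{2}}\}$, to $1$. Precomposition by a morphism of $\OFC$ maps basis elements to basis elements, so $\varepsilon$ is natural. We have $\varepsilon f=0$, because $\varepsilon(f([\varphi]_{\F_{\boldsymbol{e}}}))=-1+1$. Finally, $\varepsilon$ induces the isomorphism $H_0\cong\R$ at each object, since the graph is nonempty and connected. Therefore $\coker(f)\cong\underline{\R}$ naturally.
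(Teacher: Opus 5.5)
Your proposal is correct and follows essentially the paper's argument. The paper factors an $\F$-isomorphism into alternating isomorphisms of $\F_{\boldsymbol{1}}$ and $\F_{\boldsymbol{2}}$ and uses a chain of equalities in $\coker(f)(P)$ to show that every basis class coincides with the class of the inclusion $P\hookrightarrow S$; this is your connectivity of $\Rep_{\F}(P,\Lambda)$ in algebraic form. Your augmentation $\varepsilon$ also makes explicit two points the paper leaves implicit: this common class generates a free $\R$-module of rank one rather than a proper quotient of $\R$, and the identification with $\underline{\R}$ is natural in $P$.
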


\begin{proof}
Fix $P\in\mathcal{C}$, let $\varphi:P\bjarrow Q$ be any $\F$-isomorphism
and for every $x\in CX_{0}\left(P\right)$ denote by $\overline{x}\in\coker\left(f\right)\left(P\right)$
its natural projection. Since $\F=\left\langle \F_{1},\F_{2}\right\rangle _{S}$,
then we can write $\varphi=\psi_{\infty}\theta_{n}\psi_{n}\theta_{n-1}\psi_{n-1}\cdots\psi_{1}$
where the $\psi_{i}$ and $\theta_{i}$ are isomorphisms in $\F_{\boldsymbol{1}}$
and $\F_{\boldsymbol{2}}$ respectively. For every $i=1,\dots,n$,
define $\varphi_{i}^{1}:=\iota^{S}\theta_{i}\psi_{i}\cdots\psi_{1}$
and $\varphi_{i}^{2}:=\iota^{S'}\psi_{i}\theta_{i-1}\cdots\psi_{1}$
where $\iota^{S}$ and $\iota^{S'}$ denote the natural inclusions
into $S$ and $S'$ respectively. The result follows from the identities
below
\begin{align*}
\overline{\left(\left[\iota^{S}\varphi\right]_{\F_{\boldsymbol{1}}},0\right)} & =\overline{\left(\left[\varphi_{n}^{1}\right]_{\F_{\boldsymbol{1}}},0\right)}=\overline{\left(0,-\left[\varphi_{n}^{2}\right]_{\F_{\boldsymbol{2}}}\right)}=\overline{\left(\left[\varphi_{n-1}^{1}\right]_{\F_{\boldsymbol{1}}},0\right)}=\cdots=\overline{\left(\left[\iota^{S}\right]_{\F_{\boldsymbol{1}}},0\right)}.
\end{align*}
\end{proof}
Fix a functor $M:\OFC^{\text{op}}\to\R\mod{}$, let $P_{*,*}$ be
a Cartan-Eilenberg projective resolution of the chain complex $CX_{*}$
and define de double cocomplex $Q^{*,*}:=\Nat_{\OFC}\left(P_{*,*},M\right)$.
We depict $CX_{*}$ , $P_{*,*}$ and $Q^{*,*}$in the following diagram
for future reference.
\begin{align}
\xymatrix{0\ar@{->}[d] & 0\ar@{->}[d] & 0\ar@{->}[d]\\
CX_{1}\ar@{->}[d]_{f} & P_{0,1}\ar@{->}[d]_{d_{0,1}^{v}}\ar@{->>}[l]_{\varepsilon_{1}} & P_{1,1}\ar@{->}[d]_{d_{1,1}^{v}}\ar@{->}[l]_{d_{1,1}^{h}} & \ar@{->}[l]_{d_{2,1}^{h}}\cdots\\
CX_{0}\ar@{->}[d] & P_{0,0}\ar@{->}[d]\ar@{->>}[l]_{\varepsilon_{0}} & P_{1,0}\ar@{->}[d]\ar@{->}[l]_{d_{1,0}^{h}} & \ar@{->}[l]_{d_{2,0}^{h}}\cdots\\
0 & 0 & 0
}
, &  & \xymatrix{ & 0 & 0\\
0\ar@{->}[r] & Q^{0,1}\ar@{->}[u]\ar@{->}[r]^{d_{h}^{0,1}} & Q^{1,1}\ar@{->}[u]\ar@{->}[r]^{d_{h}^{1,1}} & \cdots\\
0\ar@{->}[r] & Q^{0,0}\ar@{->}[u]^{d_{v}^{0,0}}\ar@{->}[r]^{d_{h}^{0,0}} & Q^{1,0}\ar@{->}[u]^{d_{v}^{1,0}}\ar@{->}[r]^{d_{h}^{1,0}} & \cdots\\
 & 0\ar@{->}[u] & 0\ar@{->}[u]
}
.\label{eq:def-P-and-Q}
\end{align}

\begin{rem}
\label{rem:switched-indexes}The indexes of $P_{*,*}$, as appearing
in Equation (\ref{eq:def-P-and-Q}), are switched with respect to
the usual Definition of Cartan-Eilenberg projective resolution (see
\cite[Definition 5.7.1]{Weibel94} for reference). This unusual choice
is made so that later results (see Equation (\ref{eq:long-exact-sequence}))
arise more naturally.
\end{rem}

For every $n\ge0$, the total cocomplex $\Tot\left(Q\right)^{*}$
of $Q^{*,*}$ satisfies
\begin{align}
\Tot\left(Q\right)^{n} & :=Q^{n-1,1}\oplus Q^{n,0}, & d^{n} & :\underset{\;\;\;\;\left(x,y\right)\;\;\;\;\,\longrightarrow\,\left(d_{h}^{n-1,1}\left(x\right)+d_{v}^{n,0}\left(y\right),d_{h}^{n,0}\left(y\right)\right)}{\Tot\left(Q\right)^{n}\:\to\:\:\;\;\;\;\;\;\;\;\Tot\left(Q\right)^{n+1}\:\;\;\;\;\;\;\;\;}.\label{eq:def-tot}
\end{align}
It is well known (see \cite[Theorem I.2.15]{McCleary85}) that, taking
vertical and horizontal filtrations of $Q^{*,*}$, one can obtain
two spectral sequences converging to $\Tot\left(Q\right)^{*}$. Since
$P_{*,*}$ is a Cartan-Eilenberg projective resolution, then these
spectral sequences can be simplified (see \cite[Paragraph 5.7.9]{Weibel94})
in order to obtain the following.
\begin{prop}
\label{prop:spectral-sequences}There exist two converging spectral
sequences $\lui{I}{E_{2}^{s,t}}\Rightarrow H^{s+t}\left(\Tot\left(Q\right)^{*}\right)$
and $\lui{II}{E_{2}^{s,t}}\Rightarrow H^{s+t}\left(\Tot\left(Q\right)^{*}\right)$
where, for every $s,t\ge0$ we define
\begin{align*}
\lui{I}{E_{2}^{s,t}} & :=H^{t}\left(\Ext_{\OFC}^{s}\left(CX_{*},M\right)\right), & \lui{II}{E_{2}^{s,t}} & :=\begin{cases}
\Ext_{\OFC}^{s}\left(\coker\left(f\right),M\right) & \text{if }t=0\\
\Ext_{\OFC}^{s}\left(\ker\left(f\right),M\right) & \text{if }t=1\\
0 & \text{else}
\end{cases}.
\end{align*}
Here $\Ext_{\OFC}^{s}\left(-,M\right)$ is the $s^{\text{th}}$ right
derived functor of $\Nat_{\OFC}\left(-,M\right)$ and $\Ext_{\OFC}^{s}\left(CX_{*},M\right)$
is the chain cocomplex obtained applying $\Ext_{\OFC}^{s}\left(-,M\right)$
to $CX_{*}$.
\end{prop}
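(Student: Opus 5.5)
We derive both spectral sequences from the standard double-complex machinery (see \cite[Theorem I.2.15]{McCleary85} and \cite[Paragraph 5.7.9]{Weibel94}), adapted to the switched indexing of Remark \ref{rem:switched-indexes}. The double cocomplex $Q^{*,*}=\Nat_{\OFC}\left(P_{*,*},M\right)$ is concentrated in the two rows $t=0,1$. It lies in the first quadrant, so both filtrations of $\Tot\left(Q\right)^{*}$ are bounded in each degree, and both spectral sequences converge to $H^{*}\left(\Tot\left(Q\right)^{*}\right)$. The remaining work is to identify the $E_{1}$ and $E_{2}$ pages in each case using the defining properties of a Cartan-Eilenberg resolution.

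\textbf{First spectral sequence.} We filter so that the $E_{1}$-page is obtained by first taking cohomology with respect to $d_{h}$, i.e.\ along each row $t$. Row $t$ of $P_{*,*}$ is $P_{*,t}\to CX_{t}$. In a Cartan-Eilenberg resolution each column of the original complex (here each row, after our switch) is a projective resolution, so $P_{*,t}\to CX_{t}$ is a projective resolution of $CX_{t}$. Hence
\[
H^{s}\left(Q^{*,t},d_{h}\right)=\Ext_{\OFC}^{s}\left(CX_{t},M\right).
\]
By naturality of the comparison maps, the induced differential from $d_{v}$ is $\Ext^{s}\left(f,M\right)$. Taking cohomology in the $t$-direction then gives $\lui{I}{E_{2}^{s,t}}=H^{t}\left(\Ext_{\OFC}^{s}\left(CX_{*},M\right)\right)$.

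\textbf{Second spectral sequence.} Here we first take cohomology with respect to $d_{v}$. The key Cartan-Eilenberg property is that, for each fixed $s$, the vertical complex $P_{s,*}$ is split exact in the sense that its cycles, boundaries and homology are all projective. Moreover, $H_{t}^{v}\left(P_{*,*}\right)$ gives projective resolutions of $H_{t}\left(CX_{*}\right)$. Since the vertical complex $P_{s,*}$ splits, applying the additive functor $\Nat_{\OFC}\left(-,M\right)$ commutes with taking homology:
\[
H^{t}\left(Q^{s,*},d_{v}\right)\cong\Nat_{\OFC}\left(H_{t}^{v}\left(P_{s,*}\right),M\right).
\]
Taking $d_{h}$-cohomology then yields $\Ext_{\OFC}^{s}\left(H_{t}\left(CX_{*}\right),M\right)$. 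Finally, because $CX_{*}$ is concentrated in degrees $0,1$ with differential $f$, we have $H_{0}=\coker\left(f\right)$, $H_{1}=\ker\left(f\right)$, and $H_{t}=0$ otherwise. This gives the stated $\lui{II}{E_{2}^{s,t}}$.

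\textbf{Main obstacle.} The main point needing care is the index bookkeeping forced by the switched convention. We must check that the sign conventions in $d^{n}$ of Equation (\ref{eq:def-tot}) define a genuine total complex, i.e.\ that anticommutativity is absorbed into the sign of $d_{v}$. We must also check that the ``rows are resolutions / columns are split'' properties of a Cartan-Eilenberg resolution are applied to the correct indices. Everything else is a direct citation of \cite[5.7.9]{Weibel94}.
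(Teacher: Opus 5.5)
Your proposal is correct and takes the same route as the paper. The paper observes that $P_{*,*}$ is an injective Cartan--Eilenberg resolution in the opposite category and cites \cite[Paragraph 5.7.9]{Weibel94} with $F=\Nat_{\OFC}(-,M)$; you unpack that same argument directly, using that the rows $P_{*,t}$ resolve $CX_{t}$, that $\Nat_{\OFC}(-,M)$ commutes with the homology of the split vertical complexes, and that $H^{v}_{t}(P_{*,*})$ resolves $H_{t}(CX_{*})$. One wording fix: $P_{s,*}$ is split, not split exact, since its homology $\coker\left(d_{s,1}^{v}\right)$ and $\ker\left(d_{s,1}^{v}\right)$ need not vanish.
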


\begin{proof}
Since $P_{*,*}$ is a Cartan-Eilenberg projective resolution of $CX_{*}$,
then it is an injective resolution in the opposite category. The two
spectral sequences in the statement are now exactly the two spectral
sequences appearing in \cite[Paragraph 5.7.9]{Weibel94} with $\boldsymbol{A}$
the cochain complex dual to $CX_{*}$ and $F=\Nat_{\OFC}\left(-,M\right)$.
\end{proof}
\begin{rem}
\label{rem:comparison-ext-groups}Let $F:\left(\R\Mod\right)^{\OFC^{\text{op}}}\to\R\OFC\Mod$
be the well known equivalence of categories already mentioned in Remark
\ref{rem:equivalence-of-index-cat-RC-mod}. There exist natural isomorphisms
$\Ext_{\OFC}^{s}\left(A,B\right)\cong\Ext_{\R\OFC}^{s}\left(F\left(A\right),F\left(B\right)\right)$
where $\Ext_{\R\OFC}^{s}$ denotes the $s^{\text{th}}$ $\Ext$ group.
This provides a more usual notion to interpret the results of Proposition
\ref{prop:spectral-sequences}.
\end{rem}

Lemmas \ref{lem:definition-f-ker-f} and \ref{lem:coker-f} allow
us to reinterpret the terms $\lui{II}{E_{2}^{s,t}}$ in terms of the
constant functor. The following allows us to further simplify these
terms.
\begin{lem}
\label{lem:relation-ext-lim}Let $P\in\mathcal{C}$, and let $\H$
be any fusions subsystem of $\F$ over $P$. The following holds for
every $n\ge0$
\[
\Ext_{\OFC}^{n}\left(\underline{\R}_{\OFC[\mathcal{H}]}^{\OFC},M\right)\cong\limn_{\OFC[\mathcal{H}]}\left(M\downarrow_{\OFC[\mathcal{H}]}^{\OFC}\right).
\]
\end{lem}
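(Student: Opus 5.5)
The plan is to use that induction is left adjoint to restriction and preserves projectives; this is an Eckmann--Shapiro argument. Write $\uparrow:=\uparrow_{\OFC[\H]}^{\OFC}$ and $\downarrow:=\downarrow_{\OFC[\H]}^{\OFC}$. Via the equivalence of Remark \ref{rem:equivalence-of-index-cat-RC-mod}, induction is the functor $-\otimes_{\R\OFC[\H]}\R\OFC$ and restriction is $\Hom_{\R\OFC}\left(\R\OFC,-\right)$. These give the adjunction
\[
\Nat_{\OFC}\left(N\uparrow,M\right)\cong\Nat_{\OFC[\H]}\left(N,M\downarrow\right),
\]
natural in $N$ and $M$. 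By Definition \ref{def:const-functor}, $\underline{\R}_{\OFC[\H]}^{\OFC}=\underline{\R}\uparrow$, where $\underline{\R}$ is the constant functor on $\OFC[\H]$.

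First, take a projective resolution $P_{*}\to\underline{\R}$ of right $\R\OFC[\H]$-modules. Its induction $P_{*}\uparrow$ consists of projectives, because induction is left adjoint to the exact functor $\downarrow$. The key point is that $P_{*}\uparrow$ is still a resolution of $\underline{\R}\uparrow$. This requires $-\otimes_{\R\OFC[\H]}\R\OFC$ to be exact, that is, $\R\OFC$ must be flat (ideally free) as a left $\R\OFC[\H]$-module. To check this, I would decompose
\[
\R\Hom_{\OFC}\left(P,Q\right)
\]
for $P\in\OFC[\H]$ via the left action of $\Hom_{\OFC[\H]}$ by precomposition. This is the step I expect to be the main obstacle, since morphisms of $\OF[\H]$ are not all isomorphisms and the action is only a category action. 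If it fails, I would fall back on computing $\Ext$ through a resolution of $\underline{\R}\uparrow$ obtained by inducing the standard bar resolution. I would then verify acyclicity objectwise, using the description in \cite[Lemma 4.7]{Yal22} of $\underline{\R}\uparrow\left(P\right)$ as the span of classes $\left[\varphi\right]_{\H}$. Concretely, evaluated at $P$, the induced bar resolution splits as a direct sum over $\Rep_{\F}\left(P,\H\right)$ of nerve-type complexes of comma categories. Each of these should be contractible because it has a terminal-type object $\varphi\left(P\right)$.

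Granting this, applying $\Nat_{\OFC}\left(-,M\right)$ and the adjunction gives an isomorphism of cochain complexes
\[
\Nat_{\OFC}\left(P_{*}\uparrow,M\right)\cong\Nat_{\OFC[\H]}\left(P_{*},M\downarrow\right).
\]
Taking cohomology yields
\[
\Ext_{\OFC}^{n}\left(\underline{\R}\uparrow,M\right)\cong\Ext_{\OFC[\H]}^{n}\left(\underline{\R},M\downarrow\right).
\]

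Finally, I would identify $\Nat_{\OFC[\H]}\left(\underline{\R},-\right)$ with $\lim_{\OFC[\H]}$. A natural transformation from the constant functor $\underline{\R}$ is the same as a compatible family of elements, that is, an element of the inverse limit. Hence its right derived functors are $\limn_{\OFC[\H]}$, which gives the stated isomorphism for every $n\ge0$.
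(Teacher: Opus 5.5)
Your route is correct in outline but differs from the paper's. The paper never proves a Shapiro lemma for fusion orbit categories directly. It realizes $\H$ and $\F$ by discrete groups $H\ge P$ and $G'\ge S$ (Leary--Stancu). It then forms the amalgam $G=G'*_{P}H$, so that $\FSG=\F$ while $\H$ is realized by the subgroup $H\le G$. Finally it quotes Yal\c{c}{\i}n's Shapiro isomorphism for this subgroup inclusion, and ends with the same identification $\Ext^{n}_{\OFC[\H]}(\underline{\R},M\downarrow)\cong\limn_{\OFC[\H]}(M\downarrow)$ that you use. Your intrinsic Eckmann--Shapiro argument avoids the realizations by infinite groups and the amalgam results altogether. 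It also produces the explicit description of induction as a by-product. The price is that exactness of induction carries the whole content, so you should prove it rather than grant it.

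Your primary plan for that step looks at the wrong module structure. Right modules are contravariant functors, so $\R\OFC$ is a left $\R\OFC[\H]$-module via \emph{post}-composition, with the $\H$-object as the target. Precomposition on $\Hom_{\OFC}(P,Q)$ with $P\in\OFC[\H]$ is the right-module structure, which is what coinduction would need.

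On the correct side there is no obstacle. For $X\in\mathcal{C}$, choose a representative $\psi$ of each class in $\Rep_{\F}(X,\H)$. Let $\bar\psi\colon X\to\psi(X)$ be its corestriction; this is an $\F$-isomorphism, and $\psi(X)\in\mathcal{C}$ by closure under $\F$-conjugation. Then $\theta\mapsto\theta\bar\psi$ gives bijections
\[
\bigsqcup_{[\psi]_{\H}\in\Rep_{\F}(X,\H)}\Hom_{\OFC[\H]}\left(\psi(X),R\right)\longrightarrow\Hom_{\OFC}\left(X,R\right),
\]
natural in $R\in\OFC[\H]$.
\begin{itemize}
\item Surjectivity is just the definition of $\sim$.
\item Injectivity holds because $[\theta\bar\psi]_{\H}=[\psi]_{\H}$ for every $\H$-morphism $\theta$ out of $\psi(X)$, so different classes have disjoint images. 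Within one class, $\bar\psi$ is onto, so $\theta$ is determined by $\theta\bar\psi$.
\end{itemize}
Hence $\R\OFC$ is projective as a left $\R\OFC[\H]$-module, and $N\uparrow(X)\cong\bigoplus_{[\psi]_{\H}}N(\psi(X))$, which is visibly exact in $N$. This is the formula of Lemma~\ref{lem:Description-induction}. Your bar-resolution fallback also works: each component of the comma category has the initial object $(\psi(X),\bar\psi)$, so each nerve is contractible.
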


\begin{proof}
We know from \cite{LearyStancu07}, that there exist discrete groups
$H\ge P$ and $G'\ge S$ such that $\FAB{P}{H}=\H$ and $\FSG[G']=\F$.
Moreover, $P$ (resp. $S$) is the unique up to $H$-conjugacy (resp.
$G'$-conjugacy), maximal $p$-subgroup of $H$ (resp. $G'$) We write
$P\in\Syl_{p}\left(H\right)$ (resp. $S\in\Syl_{p}\left(G'\right)$)
to refer to this property.

Use the natural inclusion of $P$ into $S$ to define the amalgam
$G:=G'*_{P}H$. We know from \cite[Thereom 3.1]{ClellandParker10}
(see also \cite[Theorem 1]{Robinson07}) that $S\in\Syl_{p}\left(G\right)$
and that $\FSG=\left\langle \F,\H\right\rangle _{S}=\F$. Moreover,
we can view $H$ as a subgroup of $G$. With notation as in Remark
\ref{rem:comparison-ext-groups}, we can use Shapiro's isomorphism
(see \cite[Proposition 4.8]{Yal22}) to conclude that
\[
\Ext_{\R\OFC}^{n}\left(F\left(\underline{\R}_{\OFC[\mathcal{H}]}^{\OFC}\right),F\left(M\right)\right)\cong\Ext_{\R\OFC}^{n}\left(F\left(\underline{\R}\downarrow_{\OFC[\mathcal{H}]}^{\OFC}\right),F\left(M\downarrow_{\OFC[\mathcal{H}]}^{\OFC}\right)\right).
\]
The result follows from the well known isomorphism 
\[
\Ext_{\OFC}^{n}\left(\underline{\R}\downarrow_{\OFC[\mathcal{H}]}^{\OFC},M\downarrow_{\OFC[\mathcal{H}]}^{\OFC}\right)\cong\limn_{\OFC[\mathcal{H}]}\left(M\downarrow_{\OFC[\mathcal{H}]}^{\OFC}\right).
\]
\end{proof}
For every $\mathcal{E}\in\Lambda\cup\left\{ \F\right\} $, define
$M^{\mathcal{E}}:=\lim_{\OFC[\mathcal{E}]}\left(M\downarrow_{\OFC[\mathcal{E}]}^{\OFC}\right)$.
Since $\mathcal{C}$ contains $S'$ and is closed under taking overgroups,
then $M^{\F_{\boldsymbol{e}}}$ and $M^{\F_{\boldsymbol{2}}}$ can
be identified with their projections on $M\left(S'\right)$. Similarly
$M^{\F_{\boldsymbol{1}}}$ and $M^{\F}$ can be identified with their
projection on $M\left(S\right)$. The universal maps $M^{\F_{\boldsymbol{2}}}\to M^{\F_{\boldsymbol{e}}}$
and $M^{\F}\to M^{\F_{\boldsymbol{1}}}$ correspond (under this identification)
to the respective natural inclusions. Moreover, the universal map
$M^{\F_{\boldsymbol{1}}}\to M^{\F_{\boldsymbol{e}}}$ corresponds
to the restriction of the map $M\left(\overline{\iota_{S'}^{S}}\right):M\left(S\right)\to M\left(S'\right)$.
Here $\overline{\iota_{S'}^{S}}$ denotes the equivalence class in
$\OFC$ of the natural inclusion $\iota_{S'}^{S}:S'\hookrightarrow S$.
We denote by $M^{\F_{\boldsymbol{1}}}\overline{\iota_{S'}^{S}}$ the
image of the universal map $M^{\F_{\boldsymbol{1}}}\to M^{\F_{\boldsymbol{e}}}$
and by $a\overline{\iota_{S'}^{S}}$ the image of any $a\in M^{\F_{\boldsymbol{1}}}$
via this map. 
\begin{lem}
\label{lem:redefining-lim1}Let $f^{*}:\underset{\OFC}{\Nat}\left(CX_{0},M\right)\to\underset{\OFC}{\Nat}\left(CX_{1},M\right)$
be the image of $f$ (see Lemma \ref{lem:definition-f-ker-f}) via
the contraviariant $\Nat_{\OFC}\left(-,M\right)$ functor. The following
isomorphisms hold
\begin{align*}
\ker\left(f^{*}\right) & \cong M^{\F}, & \coker\left(f^{*}\right) & \cong M^{\F_{\boldsymbol{e}}}/\left(M^{\F_{\boldsymbol{1}}}\overline{\iota_{S'}^{S}}+M^{\F_{\boldsymbol{2}}}\right).
\end{align*}
\end{lem}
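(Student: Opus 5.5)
Identify $\Nat_{\OFC}\left(CX_{1},M\right)$ and $\Nat_{\OFC}\left(CX_{0},M\right)$ explicitly via the case $n=0$ of Lemma \ref{lem:relation-ext-lim}, then compute the kernel and cokernel of $f^{*}$ under these identifications.

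For every $\H\in\Lambda$, Lemma \ref{lem:relation-ext-lim} with $n=0$ gives $\Nat_{\OFC}\left(\underline{\R}_{\OFC[\H]}^{\OFC},M\right)\cong M^{\H}$. I would make this concrete through the adjunction between induction and restriction. A natural transformation $\eta$ out of $\underline{\R}_{\OFC[\H]}^{\OFC}$ corresponds to a natural transformation $\underline{\R}\downarrow_{\OFC[\H]}\to M\downarrow_{\OFC[\H]}$, that is, to a compatible family in $M$ over $\OFC[\H]$. Since $S_{\H}\in\mathcal{C}$ is terminal in $\OFC[\H]$ up to the relevant identifications, this family is determined by the single element $\eta\left(\left[\iota^{S_{\H}}\right]_{\H}\right)\in M\left(S_{\H}\right)$. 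This is exactly the identification of $M^{\H}$ with a subset of $M\left(S_{\H}\right)$ described before the statement, and it yields
\[
\Nat_{\OFC}\left(CX_{0},M\right)\cong M^{\F_{\boldsymbol{1}}}\oplus M^{\F_{\boldsymbol{2}}},\qquad\Nat_{\OFC}\left(CX_{1},M\right)\cong M^{\F_{\boldsymbol{e}}}.
\]

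Next I would transport $f^{*}$ through these identifications. Take $\left(a,b\right)\in M^{\F_{\boldsymbol{1}}}\oplus M^{\F_{\boldsymbol{2}}}$, viewed as a pair of natural transformations $\left(\eta_{a},\eta_{b}\right)$. Then $f^{*}\left(a,b\right)=\left(\eta_{a},\eta_{b}\right)\circ f$, and evaluating it on the generator $\left[\iota^{S'}\right]_{\F_{\boldsymbol{e}}}\in CX_{1}\left(S'\right)$ gives
\[
-\eta_{a}\left(\left[\iota_{S'}^{S}\right]_{\F_{\boldsymbol{1}}}\right)+\eta_{b}\left(\left[\iota^{S'}\right]_{\F_{\boldsymbol{2}}}\right)=-a\overline{\iota_{S'}^{S}}+b.
\]
The middle step uses naturality of $\eta_{a}$ along the morphism $\overline{\iota_{S'}^{S}}$ in $\OFC$, which sends $\left[\iota^{S}\right]_{\F_{\boldsymbol{1}}}$ to $\left[\iota_{S'}^{S}\right]_{\F_{\boldsymbol{1}}}$. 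Hence $f^{*}$ becomes the map $\left(a,b\right)\mapsto b-a\overline{\iota_{S'}^{S}}$ into $M^{\F_{\boldsymbol{e}}}$, using the universal maps $M^{\F_{\boldsymbol{1}}}\to M^{\F_{\boldsymbol{e}}}$ and $M^{\F_{\boldsymbol{2}}}\hookrightarrow M^{\F_{\boldsymbol{e}}}$. The cokernel formula follows at once, since the image is $M^{\F_{\boldsymbol{1}}}\overline{\iota_{S'}^{S}}+M^{\F_{\boldsymbol{2}}}$.

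For the kernel, the most conceptual route avoids element chasing. Since $\Nat_{\OFC}\left(-,M\right)$ is left exact, it carries the exact sequence $CX_{1}\stackrel{f}{\to}CX_{0}\to\coker\left(f\right)\to0$ to an exact sequence, so $\ker\left(f^{*}\right)\cong\Nat_{\OFC}\left(\coker\left(f\right),M\right)$. By Lemma \ref{lem:coker-f} this equals $\Nat_{\OFC}\left(\underline{\R},M\right)=\lim_{\OFC}M=M^{\F}$. As a cross-check of the identifications, the kernel is $\left\{ \left(a,b\right)\,|\,b=a\overline{\iota_{S'}^{S}}\right\} $, which projects isomorphically onto the elements $a\in M^{\F_{\boldsymbol{1}}}$ whose restriction to $S'$ lies in $M^{\F_{\boldsymbol{2}}}$. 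This should be $M^{\F}$ because $\F=\left\langle \F_{\boldsymbol{1}},\F_{\boldsymbol{2}}\right\rangle _{S}$.

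The main obstacle is the careful bookkeeping in identifying $\Nat\left(\underline{\R}_{\OFC[\H]}^{\OFC},M\right)$ with $M^{\H}\subseteq M\left(S_{\H}\right)$ compatibly with $f$. In particular the sign and the restriction along $\overline{\iota_{S'}^{S}}$ must come out exactly as stated. I would handle this by working with the explicit description of $\underline{\R}_{\OFC[\H]}^{\OFC}\left(P\right)$ from \cite[Lemma 4.7]{Yal22} and evaluating on the generators $\left[\varphi\right]_{\H}$.
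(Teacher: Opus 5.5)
Your proposal is correct, and for the identifications and the cokernel it matches the paper. The paper evaluates natural transformations at $\left[\Id_{S}\right]_{\F_{\boldsymbol{1}}}$, $\left[\Id_{S'}\right]_{\F_{\boldsymbol{2}}}$ and $\left[\Id_{S'}\right]_{\F_{\boldsymbol{e}}}$ to get isomorphisms $h$ and $l$, and writes $f^{*}=l^{-1}gh$ with $g\left(a,b\right)=-a\overline{\iota_{S'}^{S}}+b$, exactly as you do. (A small correction: $S_{\H}$ is not terminal in $\OFC[\H]$, only weakly terminal, since every object maps to it. That is all you need for the limit to embed in $M\left(S_{\H}\right)$.)

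The genuine difference is the kernel. The paper stays in the explicit model. It writes $\ker\left(g\right)=\left\{ \left(a,a\overline{\iota_{S'}^{S}}\right)\mid a\overline{\iota_{S'}^{S}}\in M^{\F_{\boldsymbol{2}}}\right\}$. It then asserts, by ``a simple calculation'' using $\F=\left\langle \F_{\boldsymbol{1}},\F_{\boldsymbol{2}}\right\rangle _{S}$, that the set of $a\in M^{\F_{\boldsymbol{1}}}$ with $a\overline{\iota_{S'}^{S}}\in M^{\F_{\boldsymbol{2}}}$ is $M^{\F}$. Implicitly this checks that such an $a$ extends to a compatible family on all of $\OFC$, by factoring $\F$-morphisms into chains of $\F_{\boldsymbol{1}}$- and $\F_{\boldsymbol{2}}$-morphisms.

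Your route, $\ker\left(f^{*}\right)\cong\Nat_{\OFC}\left(\coker\left(f\right),M\right)\cong\Nat_{\OFC}\left(\underline{\R},M\right)=M^{\F}$, is formal. It follows from left exactness of $\Nat_{\OFC}\left(-,M\right)$ and Lemma \ref{lem:coker-f}. The generation hypothesis then enters only once, through the connectivity argument in that lemma, and no further element chase is needed. The paper's version shows directly that the isomorphism is the projection $\left(a,b\right)\mapsto a$ into $M\left(S\right)$. Your cross-check recovers this too: the isomorphism induced by the augmentation $CX_{0}\to\underline{\R}$ sends a compatible family $m$ to $\left(m_{S},m_{S}\overline{\iota_{S'}^{S}}\right)$.
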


\begin{proof}
Define $M^{\boldsymbol{1},\boldsymbol{2}}:=M^{\F_{\boldsymbol{1}}}\oplus M^{\F_{\boldsymbol{2}}}$.
From Lemma \ref{lem:relation-ext-lim} and additivity of the $\underset{\OFC}{\Nat}\left(-,M\right)$
functor, we know that there exist isomorphisms
\begin{alignat*}{4}
h:\underset{\left(\alpha,\beta\right)\vphantom{\left(\alpha\left(\left[\Id_{S}\right]_{\F_{\boldsymbol{1}}}\right),\beta\left(\left[\Id_{S'}\right]_{\F_{\boldsymbol{2}}}\right)\right)}}{\underset{\OFC}{\Nat}\left(CX_{0},M\right)}\;\;\underset{\longrightarrow\vphantom{\left(\alpha\left(\left[\Id_{S}\right]_{\F_{\boldsymbol{1}}}\right),\beta\left(\left[\Id_{S'}\right]_{\F_{\boldsymbol{2}}}\right)\right)}}{\bjarrow\vphantom{\underset{\OFC}{\Nat}}}\underset{\left(\alpha\left(\left[\Id_{S}\right]_{\F_{\boldsymbol{1}}}\right),\beta\left(\left[\Id_{S'}\right]_{\F_{\boldsymbol{2}}}\right)\right)}{M^{\boldsymbol{1},\boldsymbol{2}}\vphantom{\underset{\OFC}{\Nat}}} &  & \hspace{0.5cm} & \text{and} & \hspace{1cm} &  & l:\underset{\gamma\vphantom{\gamma\left(\left[\Id_{S'}\right]_{\F_{\boldsymbol{e}}}\right)}}{\underset{\OFC}{\Nat}\left(CX_{1},M\right)}\;\underset{\longrightarrow\vphantom{\gamma\left(\left[\Id_{S'}\right]_{\F_{\boldsymbol{e}}}\right)}}{\bjarrow\vphantom{\underset{\OFC}{\Nat}}}\underset{\gamma\left(\left[\Id_{S'}\right]_{\F_{\boldsymbol{e}}}\right)}{M^{\F_{\boldsymbol{e}}}\vphantom{\underset{\OFC}{\Nat}}}.
\end{alignat*}
Define the morphism $g:M^{\boldsymbol{1},\boldsymbol{2}}\to M^{\F_{\boldsymbol{e}}}$
sending $\left(a,b\right)$ to $-a\overline{\iota_{S'}^{S}}+b$. It
is evident that $f^{*}=l^{-1}gh$. Since $h$ and $l$ are isomorphisms
it follows that $\ker\left(f^{*}\right)\cong\ker\left(g\right)$ and
$\coker\left(f^{*}\right)\cong\coker\left(g\right)$. The second isomorphism
follows.

Since $\F=\left\langle \F_{\boldsymbol{1}},\F_{\boldsymbol{2}}\right\rangle _{S}$,
then a simple calculation leads to

\[
\ker\left(g\right)=\left\{ \left(a,a\overline{\iota_{S'}^{S}}\right)\in M^{\boldsymbol{1},\boldsymbol{2}}\mid a\overline{\iota_{S'}^{S}}\in M^{\F_{\boldsymbol{2}}}\right\} \cong\left\{ a\in M^{\F_{\boldsymbol{1}}}\mid a\overline{\iota_{S'}^{S}}\in M^{\F_{\boldsymbol{2}}}\right\} =M^{\F}.
\]

The first isomorphism follows.
\end{proof}
As an immediate corollary we obtain the following.
\begin{cor}
\label{cor:redef-Hn-total-complex}Assume that $\limn_{\OFC[\mathcal{H}]}\left(M\downarrow_{\OFC[\mathcal{H}]}^{\OFC}\right)=0$
for every $n\ge1$ and every $\mathcal{H}\in\Lambda$. Then
\[
H^{n}\left(\Tot\left(Q\right)^{*}\right)\cong\begin{cases}
\ker\left(f^{*}\right)\cong M^{\F} & \text{if }n=0\\
\coker\left(f^{*}\right)\cong M^{\F_{\boldsymbol{e}}}/\left(M^{\F_{\boldsymbol{1}}}\overline{\iota_{S'}^{S}}+M^{\F_{\boldsymbol{2}}}\right) & \text{if }n=1\\
0 & \text{else}
\end{cases}
\]
\end{cor}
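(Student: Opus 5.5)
The plan is to compute $H^{n}\left(\Tot\left(Q\right)^{*}\right)$ with the first spectral sequence of Proposition \ref{prop:spectral-sequences}. Its $E_{2}$ page is $\lui{I}{E_{2}^{s,t}}=H^{t}\left(\Ext_{\OFC}^{s}\left(CX_{*},M\right)\right)$. Here the cocomplex $\Ext_{\OFC}^{s}\left(CX_{*},M\right)$ has only two terms, $\Ext_{\OFC}^{s}\left(CX_{0},M\right)$ in degree $t=0$ and $\Ext_{\OFC}^{s}\left(CX_{1},M\right)$ in degree $t=1$, with the differential induced by $f$. So $\lui{I}{E_{2}^{s,t}}$ vanishes for $t\ne 0,1$.

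First I compute these $\Ext$ groups. Since $CX_{0}$ and $CX_{1}$ are direct sums of functors of the form $\underline{\R}_{\OFC[\H]}^{\OFC}$ with $\H\in\Lambda$, additivity of $\Ext$ and Lemma \ref{lem:relation-ext-lim} give
\[
\Ext_{\OFC}^{s}\left(CX_{0},M\right)\cong\limn[s]_{\OFC[\F_{\boldsymbol{1}}]}\oplus\limn[s]_{\OFC[\F_{\boldsymbol{2}}]},\qquad\Ext_{\OFC}^{s}\left(CX_{1},M\right)\cong\limn[s]_{\OFC[\F_{\boldsymbol{e}}]},
\]
with $M$ restricted appropriately in each case. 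The lemma applies because $\F_{\boldsymbol{1}}$ is over $S\in\mathcal{C}$, the other two systems are over $S'\in\mathcal{C}$, and all are subsystems of $\F$. By the hypothesis of the corollary, every one of these vanishes for $s\ge1$. Hence $\lui{I}{E_{2}^{s,t}}=0$ whenever $s\ge1$.

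The only surviving column is $s=0$, where the cocomplex is $\Nat_{\OFC}\left(CX_{0},M\right)\xrightarrow{f^{*}}\Nat_{\OFC}\left(CX_{1},M\right)$. This gives $\lui{I}{E_{2}^{0,0}}=\ker\left(f^{*}\right)$ and $\lui{I}{E_{2}^{0,1}}=\coker\left(f^{*}\right)$, with every other entry zero. There is one point to check: that the identification of $\Ext^{0}$ with $\Nat$ respects the induced differential, i.e.\ that the map on $\Ext^{0}$ induced by $f$ is exactly $f^{*}$. This holds by naturality of $\Ext^{0}=\Nat$ in the first variable.

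With a single nonzero column, all differentials $d_{r}$ for $r\ge2$ go between zero groups, so the sequence collapses at $E_{2}$. There are also no extension problems, since each total degree contains at most one nonzero entry. Therefore $H^{0}\left(\Tot\left(Q\right)^{*}\right)\cong\ker\left(f^{*}\right)$, $H^{1}\left(\Tot\left(Q\right)^{*}\right)\cong\coker\left(f^{*}\right)$, and $H^{n}\left(\Tot\left(Q\right)^{*}\right)=0$ for $n\ge2$. Lemma \ref{lem:redefining-lim1} then identifies $\ker\left(f^{*}\right)\cong M^{\F}$ and $\coker\left(f^{*}\right)\cong M^{\F_{\boldsymbol{e}}}/\left(M^{\F_{\boldsymbol{1}}}\overline{\iota_{S'}^{S}}+M^{\F_{\boldsymbol{2}}}\right)$, which completes the argument.

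The main obstacle is bookkeeping rather than substance. One must make sure the indexing convention of Remark \ref{rem:switched-indexes} puts the surviving terms in total degrees $0$ and $1$ as claimed, and that the hypotheses of Lemma \ref{lem:relation-ext-lim} hold for each $\H\in\Lambda$.
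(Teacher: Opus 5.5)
Your proposal is correct and is the same argument as the paper's: the paper also uses Lemma~\ref{lem:relation-ext-lim} to see that the hypothesis makes the first spectral sequence of Proposition~\ref{prop:spectral-sequences} sharp. It is then concentrated in the column $s=0$, so $H^{0}$ and $H^{1}$ of the total complex are $\ker\left(f^{*}\right)$ and $\coker\left(f^{*}\right)$, which Lemma~\ref{lem:redefining-lim1} identifies as stated. You simply spell out the collapse and the absence of extension problems, which the paper leaves implicit.
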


\begin{proof}
From Lemma \ref{lem:relation-ext-lim}, we know that the assumption
in the statement is equivalent to saying that the spectral sequence
$\lui{I}{E_{2}^{s,t}}\Rightarrow H^{s+t}\left(\Tot\left(Q\right)^{*}\right)$
of Proposition \ref{prop:spectral-sequences} is sharp. The result
follows from Lemma \ref{lem:redefining-lim1}.
\end{proof}
\medskip{}

Let $d_{2}^{n,1}:\lui{II}{E_{2}^{n,1}}\to\lui{II}{E_{2}^{n+2,0}}$
denote the non zero differentials of the spectral sequence $\lui{II}{E_{2}^{s,t}}\Rightarrow H^{s+t}\left(\Tot\left(Q\right)^{*}\right)$
of Proposition \ref{prop:spectral-sequences}. Since $\lui{II}{E_{2}^{s,t}}=0$
for every $t\not=0,1$, then (see \cite[Example I.1.D]{McCleary85})
there exists a well known long exact sequence of the form
\begin{equation}
\cdots\rightarrow H^{n+1}\left(\Tot\left(Q\right)^{*}\right)\to\Ext_{\OFC}^{n}\left(\ker\left(f\right),M\right)\stackrel{d_{2}^{n,1}}{\to}\Ext_{\OFC}^{n+2}\left(\coker\left(f\right),M\right)\to H^{n+2}\left(\Tot\left(Q\right)^{*}\right)\rightarrow\cdots.\label{eq:long-exact-sequence}
\end{equation}
Lemmas \ref{lem:definition-f-ker-f} and \ref{lem:coker-f} allow
us to rewrite the above exact sequence in terms of $\Nat_{\OFC}\left(\CFR,M\right)$
and $\limn[2]_{\OFC}\left(M\right)$. If $\limn_{\OFC[\mathcal{H}]}\left(M\downarrow_{\OFC[\mathcal{H}]}^{\OFC}\right)=0$
for every $n\ge1$ and every $\mathcal{H}\in\Lambda$, then we can
also apply Corollary \ref{cor:redef-Hn-total-complex} in order to
obtain the short exact sequence
\begin{equation}
0\to\limn[1]_{\OFC}\left(M\right)\to\coker\left(f^{*}\right)\stackrel{\varUpsilon}{\to}\Nat_{\OFC}\left(\CFR,M\right)\stackrel{d_{2}^{0,1}}{\to}\limn[2]_{\OFC}\left(M\right)\to0,\label{eq:short-exact-sequence}
\end{equation}
and, for every $n\ge1$, an isomorphisms of the form
\begin{equation}
\Ext_{\OFC}^{n}\left(\CFR,M\right)\cong\limn[n+2]_{\OFC}\left(M\right).\label{eq:isomorphims}
\end{equation}
The reminder of this section is dedicated to computing the morphism
$\varUpsilon$ of Equation (\ref{eq:short-exact-sequence}). More
precisely we prove that it can be taken as in Theorem \hyperref[thm:A]{A}. 

With notation as in Equation (\ref{eq:def-P-and-Q}), we define the
groups
\begin{align}
K:=K^{0,1} & :=\left\{ \left(\alpha,\beta\right)\in Q^{0,1}\oplus Q^{1,0}\mid d_{h}^{0,1}\left(\alpha\right)+d_{v}^{1,0}\left(\beta\right)=0\right\} ,\label{eq:def-K}\\
K^{2,0} & :=\left(\ker\left(d_{h}^{2,0}\right)\cap\ker\left(d_{v}^{2,0}\right)\right),\nonumber 
\end{align}
and
\begin{align}
I:=I^{0,1} & :=\left\{ \left(d_{v}^{0,0}\left(\gamma\right),d_{h}^{0,0}\left(\gamma\right)+\delta\right)\in Q^{0,1}\oplus Q^{1,0}\mid\left(\gamma,\delta\right)\in Q^{0,0}\oplus\ker\left(d_{v}^{1,0}\right)\right\} ,\label{eq:def-I}\\
I^{2,0} & :=d_{h}^{1,0}\left(\ker\left(d_{v}^{1,0}\right)\right)\nonumber 
\end{align}
The following holds.
\begin{lem}
\label{lem:definition-d2}For every $\left(\alpha,\beta\right)\in K$
let $\overline{\overline{\left(\alpha,\beta\right)}}\in K^{0,1}/I^{0,1}$
and $\overline{d_{h}^{1,0}\left(\beta\right)}\in K^{2,0}/I^{2,0}$
be the corresponding projections. There exist isomorphisms 
\begin{align*}
\lui{II}{E_{2}^{0,1}} & \cong K^{0,1}/I^{0,1} & \text{and} &  & \lui{II}{E_{2}^{2,0}} & \cong K^{2,0}/I^{2,0}.
\end{align*}
Moreover, identifying $\lui{II}{E_{2}^{0,1}}$ and $\lui{II}{E_{2}^{2,0}}$
with their images under these isomorphims, then
\[
d_{2}^{0,1}\left(\overline{\overline{\left(\alpha,\beta\right)}}\right):=\overline{d_{h}^{1,0}\left(\beta\right)}.
\]
\end{lem}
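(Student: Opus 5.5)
The plan is to compute $\lui{II}{E_{2}}$ and $d_{2}$ directly from the double cocomplex $Q^{*,*}$, using the explicit description of a filtration spectral sequence: $E_{r}^{p,q}=Z_{r}^{p,q}/\left(B_{r-1}^{p,q}+Z_{r-1}^{p+1,q-1}\right)$, with $d_{r}$ induced by the total differential (as in \cite[Theorem I.2.15]{McCleary85}). The spectral sequence $\lui{II}{E}$ comes from filtering $\Tot\left(Q\right)^{*}$ by the horizontal index $s$. With this filtration, $E_{1}$ is the vertical cohomology of $Q^{*,*}$. Since $P_{*,*}$ is Cartan-Eilenberg, the columns split, so $\Nat_{\OFC}\left(-,M\right)$ commutes with vertical (co)homology. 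Hence $E_{1}^{s,1}=\Nat\left(H_{1}\left(P_{s,*}\right),M\right)$ and $E_{1}^{s,0}=\Nat\left(H_{0}\left(P_{s,*}\right),M\right)$. These are resolutions of $\ker\left(f\right)$ and $\coker\left(f\right)$ respectively, which recovers Proposition \ref{prop:spectral-sequences}.

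First, fix conventions. The total differential in Equation (\ref{eq:def-tot}) is the plain sum $d_{h}+d_{v}$, so $d^{2}=0$ forces the squares of $Q^{*,*}$ to anticommute, $d_{h}d_{v}=-d_{v}d_{h}$. I will insert this sign into the Cartan-Eilenberg resolution if necessary, and check every subsequent identity with it.

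Second, compute $\lui{II}{E_{2}^{0,1}}$. Here $E_{1}^{0,1}=Q^{0,1}/d_{v}^{0,0}\left(Q^{0,0}\right)$, since row $1$ is the top row. No horizontal boundaries land in column $0$, so $E_{2}^{0,1}$ consists of the classes $\left[\alpha\right]$ such that $d_{h}^{0,1}\left(\alpha\right)\in\img\left(d_{v}^{1,0}\right)$. Equivalently, there is a $\beta$ with $d_{h}^{0,1}\left(\alpha\right)+d_{v}^{1,0}\left(\beta\right)=0$, that is $\left(\alpha,\beta\right)\in K$. The map $K\to E_{2}^{0,1}$, $\left(\alpha,\beta\right)\mapsto\left[\alpha\right]$, is therefore surjective. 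Its kernel consists of the pairs with $\alpha=d_{v}\left(\gamma\right)$. For such a pair the defining condition of $K$ together with anticommutation gives $d_{v}\left(\beta-d_{h}\gamma\right)=0$, so $\beta=d_{h}\left(\gamma\right)+\delta$ with $\delta\in\ker\left(d_{v}^{1,0}\right)$. This kernel is exactly $I$, proving $\lui{II}{E_{2}^{0,1}}\cong K^{0,1}/I^{0,1}$. The same computation shows that $I\subseteq K$.

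Third, compute $\lui{II}{E_{2}^{2,0}}$. Here $E_{1}^{2,0}=\ker\left(d_{v}^{2,0}\right)$ and $E_{1}^{1,0}=\ker\left(d_{v}^{1,0}\right)$, and $d_{1}$ is induced by $d_{h}$. Hence $E_{2}^{2,0}=\left(\ker d_{h}^{2,0}\cap\ker d_{v}^{2,0}\right)/d_{h}^{1,0}\left(\ker d_{v}^{1,0}\right)=K^{2,0}/I^{2,0}$.

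Fourth, identify $d_{2}$. An element of $Z_{2}^{0,1}$ is a total cochain $\left(\alpha,\beta\right)\in Q^{0,1}\oplus Q^{1,0}$ whose total differential lies in filtration $\ge2$. By Equation (\ref{eq:def-tot}) that differential is $\left(d_{h}\alpha+d_{v}\beta,\,d_{h}\beta\right)$, so the condition is exactly $\left(\alpha,\beta\right)\in K$. The differential $d_{2}$ sends the class of $\left(\alpha,\beta\right)$ to the class of this differential, namely $d_{h}^{1,0}\left(\beta\right)$. I will check directly that $d_{h}\beta\in K^{2,0}$. We have $d_{h}d_{h}\beta=0$, and $d_{v}d_{h}\beta=-d_{h}d_{v}\beta=d_{h}d_{h}\alpha=0$. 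I will also check well-definedness: for $\left(d_{v}\gamma,d_{h}\gamma+\delta\right)\in I$ we get $d_{h}\left(d_{h}\gamma+\delta\right)=d_{h}\delta\in I^{2,0}$.

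I expect the main obstacle to be bookkeeping rather than mathematics. One must match the abstract $Z_{r}/B_{r}$ description (with the switched indices of Remark \ref{rem:switched-indexes}) to the concrete groups $K$ and $I$, and make sure the sign conventions agree. An error there would only change $d_{2}$ by a global sign, which does not affect the isomorphism statements.
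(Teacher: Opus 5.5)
Your proposal is correct and is essentially the paper's argument. The paper simply invokes the standard description of $E_2$ and $d_2$ for the spectral sequence of a double cocomplex filtered by the horizontal degree. You carry out that computation explicitly in this two-row situation, and you correctly note that the total differential forces $d_h d_v=-d_v d_h$.

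Your sign worry is also unnecessary: the $Z_r/(B_{r-1}+Z_{r-1})$ description gives $Z_2^{0,1}=K$, $E_2^{0,1}=K/I$, $E_2^{2,0}=K^{2,0}/I^{2,0}$, and $d_2$ induced by $(\alpha,\beta)\mapsto d_h^{1,0}(\beta)$ exactly as stated, with no sign ambiguity.
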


\begin{proof}
This is just a particular case of the well known description of spectral
sequences arising from double cocomplexes. See either \cite[Exercise 5.1.2]{Weibel94}
or \cite[Proposition 3.A.3]{PraderioPhD24} for the more general case
and a detailed proof.
\end{proof}
For every $\left(\alpha,\beta\right)\in\ker\left(d^{1}\right)$ let
$\overline{\left(\alpha,\beta\right)}\in H^{1}\left(\Tot\left(Q\right)^{*}\right)$
be its projection. With notation as in Equation (\ref{eq:def-tot}),
we have that $\ker\left(d^{1}\right)\subseteq K$ and that $\img\left(d^{0}\right)\subseteq I$.
This allows us to define the morphism
\begin{equation}
\Theta:\underset{\overline{\left(\alpha,\beta\right)}\vphantom{\overline{\overline{\left(\alpha,\beta\right)}}}}{H^{1}\left(\Tot\left(Q\right)^{*}\right)}\underset{\longrightarrow\vphantom{\overline{\overline{\left(\alpha,\beta\right)}}}}{\to\vphantom{H^{1}\left(\Tot\left(Q\right)^{*}\right)}}\underset{\overline{\overline{\left(\alpha,\beta\right)}}}{K/I\vphantom{H^{1}\left(\Tot\left(Q\right)^{*}\right)}}.\label{eq:def-Theta}
\end{equation}

\begin{lem}
\label{lem:cokernel-Theta}Assume that $\limn_{\OFC[\mathcal{H}]}\left(M\downarrow_{\OFC[\mathcal{H}]}^{\OFC}\right)=0$
for every $n\ge1$ and every $\mathcal{H}\in\Lambda$. Then $\coker\left(\Theta\right)\cong\limn[2]_{\OFC}\left(M\right)$.
\end{lem}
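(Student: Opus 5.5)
The plan is to identify $\Theta$ with the edge map of the spectral sequence ${}^{II}E$ and then read off its cokernel from the long exact sequence (\ref{eq:long-exact-sequence}). Since ${}^{II}E_2^{s,t}=0$ for $t\neq0,1$, the only nonzero differentials are $d_2^{n,1}$, and ${}^{II}E_\infty={}^{II}E_3$. In total degree $1$ the filtration of $H^1(\Tot(Q)^*)$ has two pieces: the subobject ${}^{II}E_\infty^{1,0}={}^{II}E_2^{1,0}$ (nothing hits it, and it maps to zero) and the quotient ${}^{II}E_\infty^{0,1}=\ker(d_2^{0,1})\subseteq{}^{II}E_2^{0,1}$. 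The natural map $H^1(\Tot(Q)^*)\to{}^{II}E_2^{0,1}$ is the composite $H^1\twoheadrightarrow{}^{II}E_\infty^{0,1}\hookrightarrow{}^{II}E_2^{0,1}$. Its image is therefore exactly $\ker(d_2^{0,1})$, so its cokernel is ${}^{II}E_2^{0,1}/\ker(d_2^{0,1})\cong\img(d_2^{0,1})$.

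First step: verify that $\Theta$, composed with the isomorphism $K^{0,1}/I^{0,1}\cong{}^{II}E_2^{0,1}$ of Lemma \ref{lem:definition-d2}, is this edge map. Concretely, a class in $H^1(\Tot(Q)^*)$ is represented by $(\alpha,\beta)\in Q^{0,1}\oplus Q^{1,0}$ with $d_h^{0,1}\alpha+d_v^{1,0}\beta=0$ and $d_h^{1,0}\beta=0$. The edge map sends it to the class of $(\alpha,\beta)$ in the $E_2$-description of Lemma \ref{lem:definition-d2}, that is, to the class modulo $I$. This is precisely the definition of $\Theta$ in (\ref{eq:def-Theta}). The check is routine once one unwinds the standard description of $E_r$ for a two-row double complex (the same reference \cite[Exercise 5.1.2]{Weibel94} used in Lemma \ref{lem:definition-d2}).

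The image of $\Theta$ can also be seen directly. Given $\overline{\overline{(\alpha,\beta)}}$ with $(\alpha,\beta)\in K$, it lies in $\img\Theta$ if and only if we can modify $(\alpha,\beta)$ by an element $(d_v^{0,0}\gamma,d_h^{0,0}\gamma+\delta)$ of $I$ so that $d_h^{1,0}$ of the second coordinate vanishes. Since $d_h^{1,0}d_h^{0,0}=0$, this happens if and only if $d_h^{1,0}\beta\in d_h^{1,0}(\ker d_v^{1,0})=I^{2,0}$, that is, if and only if $d_2^{0,1}\overline{\overline{(\alpha,\beta)}}=0$ by Lemma \ref{lem:definition-d2}. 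Hence $\img\Theta=\ker d_2^{0,1}$ and $\coker\Theta\cong\img d_2^{0,1}$.

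Second step: show that $d_2^{0,1}$ is surjective onto ${}^{II}E_2^{2,0}$. In (\ref{eq:long-exact-sequence}) the next term after ${}^{II}E_2^{2,0}$ is $H^2(\Tot(Q)^*)$, which vanishes by Corollary \ref{cor:redef-Hn-total-complex} under our hypothesis. Therefore $\img d_2^{0,1}={}^{II}E_2^{2,0}=\Ext^2_{\OFC}(\coker f,M)$.

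Finally, by Lemma \ref{lem:coker-f} we have $\coker f\cong\underline{\R}$, and $\Ext^2_{\OFC}(\underline{\R},M)\cong\limn[2]_{\OFC}(M)$ by the standard identification, which is the case $\H=\F$ of Lemma \ref{lem:relation-ext-lim}. Combining the two steps gives $\coker\Theta\cong\limn[2]_{\OFC}(M)$.

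The main obstacle is the bookkeeping in the first step. The indices of $P_{*,*}$ are switched (Remark \ref{rem:switched-indexes}), and the sign conventions of $\Tot$ in (\ref{eq:def-tot}) must be matched with the description of $K/I$ and of $d_2$. The direct image computation above sidesteps most of this and should be the route actually used, with the vanishing of $H^2(\Tot)$ supplying surjectivity.
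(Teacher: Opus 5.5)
Your proof is correct and follows the paper's argument. The paper likewise shows $\img(\Theta)=\ker(d_2^{0,1})$ by rewriting $\ker(d^1)+I$ as the pairs $(\alpha,\beta)\in K$ with $\beta\in\ker(d_v^{1,0})+\ker(d_h^{1,0})$, which is equivalent to your criterion $d_h^{1,0}(\beta)\in I^{2,0}$; it then reads off $\coker(\Theta)\cong\limn[2]_{\OFC}(M)$ from the surjectivity of $d_2^{0,1}$ encoded in (\ref{eq:short-exact-sequence}), which is exactly the vanishing of $H^2(\Tot(Q)^*)$ you invoke, so your edge-map paragraph is a harmless extra.
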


\begin{proof}
Identify $\lui{II}{E_{2}^{0,1}}$ with $K/I$ as in Lemma \ref{lem:definition-d2}.
From the exact sequence in Equation (\ref{eq:short-exact-sequence}),
it suffices to prove that $\img\left(\Theta\right)=\ker\left(d_{2}^{0,1}\right)$.
It is immediate from Equation (\ref{eq:def-Theta}), that $\img\left(\Theta\right)$
is equal to the projection $\overline{\overline{\ker\left(d^{1}\right)+I}}$
of $\ker\left(d^{1}\right)+I$ in $K/I$. On the other hand, we can
rewrite $\ker\left(d^{1}\right)+I$ as
\begin{align*}
\ker\left(d^{1}\right)+I & =\left\{ \left(\alpha,\beta+\delta\right)\in Q^{0,1}\oplus Q^{1,0}\mid d_{h}^{0,1}\left(\alpha\right)+d_{v}^{1,0}\left(\beta\right)=d_{v}^{1,0}\left(\delta\right)=d_{h}^{1,0}\left(\beta\right)=0\right\} ,\\
 & =\left\{ \left(\alpha,\beta\right)\in Q^{0,1}\oplus\left(\ker\left(d_{v}^{1,0}\right)+\ker\left(d_{h}^{1,0}\right)\right)\mid d_{h}^{0,1}\left(\alpha\right)+d_{v}^{1,0}\left(\beta\right)=0\right\} .
\end{align*}
It follows from Lemma \ref{lem:definition-d2}, that $\overline{\overline{\ker\left(d^{1}\right)+I}}=\ker\left(d_{2}^{0,1}\right)$.
This concludes the proof.
\end{proof}
\medskip{}

Before proceeding we need to make a brief parenthesis concerning the
Cartan-Eilenberg projective resolution $P_{*,*}$ of Equation (\ref{eq:def-P-and-Q}).

Let 
\begin{align*}
\CFR & \stackrel{\varepsilon^{k}}{\twoheadleftarrow}\ker\left(d_{0,1}^{v}\right)\stackrel{d_{1}^{k}}{\leftarrow}\cdots & , &  & \img\left(f\right) & \stackrel{\varepsilon^{i}}{\twoheadleftarrow}\img\left(d_{0,1}^{v}\right)\stackrel{d_{1}^{i}}{\leftarrow}\cdots & \text{and} &  & \coker\left(f\right) & \stackrel{\varepsilon^{c}}{\twoheadleftarrow}\coker\left(d_{0,1}^{v}\right)\stackrel{d_{1}^{c}}{\leftarrow}\cdots,
\end{align*}
be the chain complexes arising from $CX_{*}\stackrel{\varepsilon_{*}}{\twoheadleftarrow}P_{*,*}$
by restricting and taking quotients as appropriate. From the properties
of the Cartan-Eilenberg projective resolution (see \cite[Definition 5.7.1]{Weibel94}),
we know that these chain complexes are projective resolutions. Following
the usual arguments (see \cite[Proposition 6.24 and Theorem 10.45]{Rot79}),
we can use the above projective resolutions in combination with the
Horseshoe Lemma in order to build a Cartan-Eilenberg projective resolutions
$CX_{*}\stackrel{\eta_{*}}{\twoheadleftarrow}R_{*,*}$. More precisely
we can proceed as follows. For every $n\ge0$ define
\begin{align*}
R_{n,1} & :=\ker\left(d_{n,1}^{v}\right)\oplus\img\left(d_{n,1}^{v}\right), & R_{n,0} & :=\img\left(d_{n,1}^{v}\right)\oplus\coker\left(d_{n,1}^{v}\right),
\end{align*}
as well as the morphisms $d'{}_{n,1}^{v}:R_{n,1}\to R_{n,0}$ which
send every $\left(a,b\right)\in R_{n,1}$ to $\left(\left(-1\right)^{n}b,0\right)$.
The following hold trivially
\begin{align*}
\ker\left(d'{}_{n,1}^{v}\right) & =\ker\left(d_{n,1}^{v}\right), & \img\left(d'{}_{n,1}^{v}\right) & =\img\left(d_{n,1}^{v}\right), & \coker\left(d'{}_{n,1}^{v}\right) & =\coker\left(d_{n,1}^{v}\right).
\end{align*}
Let $\pi:CX_{0}\twoheadrightarrow\coker\left(f\right)$ denote the
natural projection. Since both $\img\left(d_{0,1}^{v}\right)$ and
$\coker\left(d_{0,1}^{v}\right)$ are projective, then we can take
morphisms $\sigma_{0}:\img\left(d_{0,1}^{v}\right)\to CX_{1}$ and
$\rho_{0}:\coker\left(d_{0,1}^{v}\right)\to CX_{0}$ such that $f\sigma_{0}=\varepsilon^{i}$
and that $\pi\rho_{0}=\varepsilon^{c}$. Define $\eta_{1}:R_{0,1}\to CX_{1}$
and $\eta_{0}:R_{0,0}\to CX_{0}$ by setting
\begin{align*}
\eta_{1}\left(\left(a,b\right)\right) & :=\varepsilon^{k}\left(a\right)+\sigma_{0}\left(b\right), & \eta_{0}\left(\left(b,\overline{c}\right)\right) & :=\varepsilon^{i}\left(b\right)+\rho_{0}\left(\overline{c}\right).
\end{align*}
Fix representatives $0\in\left[CX_{1}/\ker\left(f\right)\right]$
of the cosets $CX_{1}/\ker\left(f\right)$. For every $b\in\img\left(d_{0,1}^{v}\right)$,
there exist unique $x_{b}\in\ker\left(f\right)$ and $y_{b}\in\left[CX_{1}/\ker\left(f\right)\right]$
such that $\sigma_{0}\left(b\right)=x_{b}+y_{b}$. We conclude that
$\left(a,b\right)\in\ker\left(\eta_{1}\right)$ if and only $y_{b}=0$
and $\varepsilon^{k}\left(a\right)=-x_{b}$. Since $\img\left(\varepsilon^{k}\right)=\ker\left(f\right)$,
then we also deduce that, whenever $y_{b}=0$, there exists $a\in\ker\left(d_{0,1}^{v}\right)$
such that $\varepsilon^{k}\left(a\right)=-x_{b}$. In other words
$\left(a,b\right)\in\ker\left(\eta_{1}\right)$. We conclude that
the natural projection $R_{0,1}\twoheadrightarrow\img\left(d_{0,1}^{v}\right)$
restricts to a projection $\pi_{0}^{i}:\ker\left(\eta_{1}\right)\twoheadrightarrow\ker\left(\varepsilon^{i}\right)$.
Similarly we obtain that the natural projection $R_{0,0}\twoheadrightarrow\coker\left(d_{0,1}^{v}\right)$
restricts to a projection $\pi_{0}^{c}:\ker\left(\eta_{0}\right)\twoheadrightarrow\ker\left(\varepsilon^{c}\right)$.

Define
\begin{align*}
d'{}_{0,1}^{h} & :=\eta_{1}, & d'{}_{0,0}^{h} & :=\eta_{0}, & d_{0}^{i} & :=\varepsilon^{i}, & d_{0}^{c} & :=\varepsilon^{c}.
\end{align*}
For $n\ge1$ assume that the natural projections $R_{n-1,1}\twoheadrightarrow\img\left(d_{n-1,1}^{v}\right)$
and $R_{n-1,0}\twoheadrightarrow\coker\left(d_{n-1,1}^{v}\right)$
restrict, respectively, to epimorphisms $\pi_{n-1}^{i}:\ker\left(d'{}_{n-1,1}^{h}\right)\twoheadrightarrow\ker\left(d_{n-1}^{i}\right)$
and $\pi_{n-1}^{c}:\ker\left(d'{}_{n-1,0}^{h}\right)\twoheadrightarrow\ker\left(d_{n-1}^{c}\right)$.
Take morphisms $\sigma_{n}:\img\left(d_{n,1}^{v}\right)\to\ker\left(d'{}_{n,1}^{h}\right)$
and $\rho_{n}:\coker\left(d_{n,1}^{v}\right)\to\ker\left(d'{}_{n,1}^{h}\right)$
such that $\pi_{n-1}^{i}\sigma_{n}=d_{n}^{i}$ and that $\pi_{n-1}^{c}\rho_{n}=d_{n}^{c}$.
Define 
\begin{align*}
d'{}_{n,1}^{h}\left(\left(a,b\right)\right) & :=d_{n}^{k}\left(a\right)+\sigma_{n}\left(b\right), & d'{}_{n,0}^{h}\left(\left(b,\overline{c}\right)\right) & :=d_{n}^{i}\left(b\right)+\rho_{n}\left(\overline{c}\right).
\end{align*}
The same arguments as before show that the natural projections $R_{n,1}\twoheadrightarrow\img\left(d_{n,1}^{v}\right)$
and $R_{n,0}\twoheadrightarrow\coker\left(d_{n,1}^{v}\right)$ lead
to projections $\pi_{n}^{i}:\ker\left(d'{}_{n,1}^{h}\right)\twoheadrightarrow\ker\left(d_{n}^{i}\right)$
and $\pi_{n}^{c}:\ker\left(d'{}_{n,0}^{h}\right)\twoheadrightarrow\ker\left(d_{n}^{c}\right)$.
This allows us to define the differentials $d'{}_{n,1}^{h}$ and $d'{}_{n,0}^{h}$
inductively. It is now easy to see that $\eta_{1}$ and $\eta_{0}$
are surjective and that $\ker\left(d'{}_{n-,i}^{h}\right)=\img\left(d'{}_{n,i}^{h}\right)$
for every $n\ge1$ and $i=0,1$. We conclude that $CX_{*}\stackrel{\eta_{*}}{\twoheadleftarrow}R_{*,*}$
is a Cartan-Eilenberg projective resolution. 
\begin{notation}
\label{nota:redefinition-P}For the reminder of this section we take
the Cartan-Eilenberg resolution $CX_{*}\stackrel{\varepsilon_{*}}{\twoheadleftarrow}P_{*,*}$
of Equation (\ref{eq:def-P-and-Q}) to be equal to the Cartan-Eilenberg
resolution $CX_{*}\stackrel{\eta_{*}}{\twoheadleftarrow}R_{*,*}$
constructed above.
\end{notation}

We also introduce the following.
\begin{defn}
For every $n\ge0$, we define
\begin{align*}
Q_{k}^{n} & :=\Nat_{\OFC}\left(\ker\left(d_{n,1}^{v}\right),M\right), & Q_{i}^{n} & :=\Nat_{\OFC}\left(\img\left(d_{n,1}^{v}\right),M\right), & Q_{c}^{n} & :=\Nat_{\OFC}\left(\coker\left(d_{n,1}^{v}\right),M\right),
\end{align*}
and we identify them as subgroups of $Q^{n,1}$ and $Q^{n,0}$ as
appropriate.

We also denote by $d_{k}^{n}:Q_{k}^{n}\to Q_{k}^{n+1}$ the image
of $d_{n+1}^{k}:\ker\left(d_{n+1,1}^{v}\right)\to\ker\left(d_{n,1}^{v}\right)$
under the $\Nat_{\OFC}\left(-,M\right)$ functor. We define $d_{i}^{n}:Q_{i}^{n}\to Q_{i}^{n+1}$
and $d_{c}^{n}:Q_{c}^{n}\to Q_{c}^{n+1}$ analogously.

Finally, for every $\alpha\in Q^{n,1}$ and $\beta\in Q^{n,0}$ we
denote by $\alpha_{k}\in Q_{k}^{n}$, $\alpha_{i}\in Q_{i}^{n}\le Q^{n,1}$,
$\beta_{i}\in Q_{i}^{n}\le Q^{n,1}$ and $\beta_{c}\in Q_{c}^{n}$
the unique elements such that $\alpha=\alpha_{k}+\alpha_{i}$ and
$\beta=\beta_{i}+\beta_{c}$.
\end{defn}

With the given notation the following result is immediate.
\begin{lem}
With notation as in Equation (\ref{eq:def-P-and-Q}), the following
isomorphisms hold for every $n\ge0$
\begin{align*}
\coker\left(d_{v}^{n,0}\right) & \cong Q_{k}^{n}, & \ker\left(d_{v}^{n,0}\right) & \cong Q_{c}^{n}.
\end{align*}
\end{lem}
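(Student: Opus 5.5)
The plan is to compute $d_{v}^{n,0}$ explicitly using the split form of the Cartan--Eilenberg resolution fixed in Notation \ref{nota:redefinition-P}, and then read off its kernel and cokernel. By construction, $P_{n,1}=R_{n,1}=\ker\left(d_{n,1}^{v}\right)\oplus\img\left(d_{n,1}^{v}\right)$ and $P_{n,0}=R_{n,0}=\img\left(d_{n,1}^{v}\right)\oplus\coker\left(d_{n,1}^{v}\right)$. The vertical differential is $d'{}_{n,1}^{v}\left(\left(a,b\right)\right)=\left(\left(-1\right)^{n}b,0\right)$, so it is the composite of the projection onto $\img\left(d_{n,1}^{v}\right)$, multiplication by $\left(-1\right)^{n}$, and the inclusion of $\img\left(d_{n,1}^{v}\right)$ as a summand of $R_{n,0}$.

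\textbf{Step 1: decompose $Q^{n,0}$ and $Q^{n,1}$.} Since $\Nat_{\OFC}\left(-,M\right)$ is additive, it turns these direct sums into direct sums. This gives $Q^{n,1}=Q_{k}^{n}\oplus Q_{i}^{n}$ and $Q^{n,0}=Q_{i}^{n}\oplus Q_{c}^{n}$, which is exactly the identification fixed in the Definition preceding the statement. Under it, every $\beta\in Q^{n,0}$ is written $\beta=\beta_{i}+\beta_{c}$ and every $\alpha\in Q^{n,1}$ is written $\alpha=\alpha_{k}+\alpha_{i}$.

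\textbf{Step 2: compute $d_{v}^{n,0}$.} The map $d_{v}^{n,0}$ is precomposition with $d'{}_{n,1}^{v}$. For $\left(a,b\right)\in R_{n,1}$ we get
\[
\left(\beta\circ d'{}_{n,1}^{v}\right)\left(\left(a,b\right)\right)=\beta\left(\left(\left(-1\right)^{n}b,0\right)\right)=\left(-1\right)^{n}\beta_{i}\left(b\right).
\]
Hence, in the decompositions of Step 1, $d_{v}^{n,0}\left(\beta_{i}+\beta_{c}\right)=\left(-1\right)^{n}\beta_{i}$, viewed in the summand $Q_{i}^{n}\le Q^{n,1}$. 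In matrix form, $d_{v}^{n,0}$ is $0$ from $Q_{c}^{n}$ and $\pm\Id$ from $Q_{i}^{n}$ onto the $Q_{i}^{n}$-summand of $Q^{n,1}$.

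\textbf{Step 3: read off kernel and cokernel.} Because $\pm\Id$ is an isomorphism, $\ker\left(d_{v}^{n,0}\right)=Q_{c}^{n}$ and $\img\left(d_{v}^{n,0}\right)=Q_{i}^{n}$. Therefore $\coker\left(d_{v}^{n,0}\right)=\left(Q_{k}^{n}\oplus Q_{i}^{n}\right)/Q_{i}^{n}\cong Q_{k}^{n}$, with the isomorphism induced by $\alpha\mapsto\alpha_{k}$.

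I do not expect a real obstacle. The only point needing care is the bookkeeping in Step 2: one must check that the summand identifications in Step 1 are the ones induced by the inclusions and projections of the direct sums $R_{n,1}$ and $R_{n,0}$. Once that is verified, the sign $\left(-1\right)^{n}$ plays no role in the kernel or cokernel.
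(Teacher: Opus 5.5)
Your proposal is correct and is essentially the argument the paper has in mind. The paper only calls the lemma immediate from the split Cartan--Eilenberg resolution of Notation \ref{nota:redefinition-P}; it later uses exactly your Step 2 observation, in the proof of Lemma \ref{lem:def-iso-Phi}, that $d_{v}^{n,0}$ sends $Q_{i}^{n}\le Q^{n,0}$ isomorphically onto $Q_{i}^{n}\le Q^{n,1}$ and has kernel $Q_{c}^{n}$.
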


\medskip{}

Using Notation \ref{nota:redefinition-P} we can now prove the following
\begin{lem}
\label{lem:kernel-theta}There is an isomorphism $\ker\left(\Theta\right)\cong\limn[1]_{\OFC}\left(M\right)$.
\end{lem}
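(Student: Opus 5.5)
The plan is to identify $\Theta$ with the edge map $H^{1}\left(\Tot\left(Q\right)^{*}\right)\to\lui{II}{E_{2}^{0,1}}$ of the second spectral sequence of Proposition \ref{prop:spectral-sequences}. Its kernel should then be the bottom filtration piece $\lui{II}{E_{\infty}^{1,0}}$. No differential enters or leaves position $(1,0)$, so this equals $\lui{II}{E_{2}^{1,0}}=\Ext_{\OFC}^{1}\left(\coker\left(f\right),M\right)$. I will not invoke the abstract filtration directly. Instead I will compute $\ker\left(\Theta\right)$ by hand, using the explicit Cartan-Eilenberg resolution of Notation \ref{nota:redefinition-P}.

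\emph{Step 1: reduce cocycles.} Let $\overline{\left(\alpha,\beta\right)}\in\ker\left(\Theta\right)$. By Equation (\ref{eq:def-I}) we may write $\left(\alpha,\beta\right)=\left(d_{v}^{0,0}\left(\gamma\right),d_{h}^{0,0}\left(\gamma\right)+\delta\right)$ with $\gamma\in Q^{0,0}$ and $\delta\in\ker\left(d_{v}^{1,0}\right)$. Subtracting $d^{0}\left(\gamma\right)$, see Equation (\ref{eq:def-tot}), the class is represented by $\left(0,\delta\right)$. The cocycle condition then reads $d_{h}^{1,0}\left(\delta\right)=0$ and $d_{v}^{1,0}\left(\delta\right)=0$. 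By the preceding lemma, $\ker\left(d_{v}^{n,0}\right)\cong Q_{c}^{n}$. Hence we get a surjection
\[
\ker\left(d_{h}^{1,0}\right)\cap Q_{c}^{1}\twoheadrightarrow\ker\left(\Theta\right),\qquad\delta\mapsto\overline{\left(0,\delta\right)}.
\]
Conversely, every such $\overline{\left(0,\delta\right)}$ lies in $\ker\left(d^{1}\right)$ and in $I$, so it lies in $\ker\left(\Theta\right)$.

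\emph{Step 2: compute the kernel of this surjection.} $\left(0,\delta\right)=d^{0}\left(\gamma\right)$ means $d_{v}^{0,0}\left(\gamma\right)=0$ and $\delta=d_{h}^{0,0}\left(\gamma\right)$, that is, $\gamma\in Q_{c}^{0}$. So $\ker\left(\Theta\right)\cong\left(\ker\left(d_{h}^{1,0}\right)\cap Q_{c}^{1}\right)/d_{h}^{0,0}\left(Q_{c}^{0}\right)$.

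\emph{Step 3: identify the horizontal differential on $Q_{c}^{*}$.} I will check that $d_{h}^{n,0}$ restricted to $Q_{c}^{n}$ lands in $Q_{c}^{n+1}$ and coincides with $d_{c}^{n}$. Dually, the composite of $d'{}_{n+1,0}^{h}$ with the projection $R_{n,0}\twoheadrightarrow\coker\left(d_{n,1}^{v}\right)$ must equal $d_{n+1}^{c}$ on the $\coker$-summand and vanish on the $\img$-summand. This follows from the construction: $d'{}_{n+1,0}^{h}\left(\left(b,\overline{c}\right)\right)=d_{n+1}^{i}\left(b\right)+\rho_{n+1}\left(\overline{c}\right)$, together with $\pi_{n}^{c}\rho_{n+1}=d_{n+1}^{c}$. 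I expect this bookkeeping to be the main obstacle. One must verify that elements of $Q_{c}^{n}$, viewed as maps on $R_{n,0}$ that kill the $\img$-summand, pull back correctly. One must also check the compatibility of the identifications $Q_{c}^{n}\subseteq Q^{n,0}$ and the sign conventions $\left(-1\right)^{n}$ in $d'{}_{n,1}^{v}$.

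\emph{Step 4: conclude.} Granting Step 3, $\ker\left(\Theta\right)\cong H^{1}\left(Q_{c}^{*},d_{c}^{*}\right)$. Because $\coker\left(d_{*,1}^{v}\right)$ is a projective resolution of $\coker\left(f\right)$, this equals $\Ext_{\OFC}^{1}\left(\coker\left(f\right),M\right)$. By Lemma \ref{lem:coker-f} this is $\Ext_{\OFC}^{1}\left(\underline{\R},M\right)$. By the standard identification of $\Ext$ out of the constant functor with higher limits (the case $\H=\F$ of Lemma \ref{lem:relation-ext-lim}), it is isomorphic to $\limn[1]_{\OFC}\left(M\right)$.
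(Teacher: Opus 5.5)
Your proof is correct and follows essentially the same route as the paper. Both compute $\ker(\Theta)=(I\cap\ker(d^{1}))/\img(d^{0})$, reduce to classes $\overline{(0,\delta)}$ with $\delta\in\ker(d_{v}^{1,0})\cap\ker(d_{h}^{1,0})=Q_{c}^{1}\cap\ker(d_{h}^{1,0})$, and use the explicit Cartan--Eilenberg resolution to identify the result with $H^{1}(Q_{c}^{*},d_{c}^{*})\cong\Ext^{1}_{\OFC}(\coker(f),M)\cong\limn[1]_{\OFC}(M)$. Your Steps 2 and 3 are in fact more careful than the paper's write-up, which omits the quotient by $d_{h}^{0,0}(Q_{c}^{0})$ and states the final identification with an off-by-one index.
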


\begin{proof}
A simple calculation shows that
\[
I\cap\ker\left(d^{1}\right)=\left\{ \left(d_{v}^{0,0}\left(\gamma\right),d_{h}^{0,0}\left(\gamma\right)+\delta\right)\in Q^{0,1}\oplus Q^{1,0}\mid\left(\gamma,\delta\right)\in Q^{0,0}\oplus\left(\ker\left(d_{v}^{1,0}\right)\cap\ker\left(d_{h}^{1,0}\right)\right)\right\} .
\]
It follows that
\[
\ker\left(\Theta\right)\cong I\cap\ker\left(d^{1}\right)/\img\left(d^{1}\right)\cong\ker\left(d_{v}^{1,0}\right)\cap\ker\left(d_{h}^{1,0}\right).
\]
From the construction of $P_{*,*}$ given in Notation \ref{nota:redefinition-P},
we deduce that
\[
\ker\left(d_{v}^{1,0}\right)\cap\ker\left(d_{h}^{1,0}\right)=\Nat_{\OFC}\left(\coker\left(d_{0,1}^{v}\right),M\right)\cap\ker\left(d_{h}^{1,0}\right)=\ker\left(\left(d_{1}^{c}\right)^{*}\right),
\]
where $\left(d_{1}^{c}\right)^{*}$ denotes the image of $d_{1}^{c}:\coker\left(d_{1,1}^{v}\right)\to\coker\left(d_{0,1}^{v}\right)$
via the $\Nat_{\OFC}\left(-,M\right)$ functor. The result follows.
\end{proof}
Using the construction of Notation \ref{nota:redefinition-P} we also
obtain the following.
\begin{lem}
\label{lem:def-iso-Phi}Adopt the notation of Equations (\ref{eq:def-K})
and (\ref{eq:def-I}) and Lemma \ref{lem:definition-d2}, and let
$\phi:\ker\left(d_{k}^{1}\right)\bjarrow\Nat_{\OFC}\left(\CFR,M\right)$
be an isomorphism deriving from left exactness of the $\Nat_{\OFC}\left(-,M\right)$
functor. Then $\alpha_{k}\in\ker\left(d_{k}^{1}\right)$ for every
$\left(\alpha,\beta\right)\in K$ and the following is an isomorphism
\[
\Phi:\underset{\overline{\overline{\left(\alpha,\beta\right)}}}{K/I\vphantom{\Nat_{\OFC}\left(\CFR,M\right)}}\;\underset{\vphantom{\overline{\overline{\left(\alpha,\beta\right)}}}\longrightarrow}{\bjarrow\vphantom{\Nat_{\OFC}\left(\CFR,M\right)}}\underset{\phi\left(\alpha_{k}\right)\vphantom{\overline{\overline{\left(\alpha,\beta\right)}}}}{\Nat_{\OFC}\left(\CFR,M\right)}.
\]
\end{lem}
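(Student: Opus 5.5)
The plan is to work directly with the explicit Cartan--Eilenberg resolution of Notation \ref{nota:redefinition-P}. In this resolution $P_{n,1}=\ker(d_{n,1}^{v})\oplus\img(d_{n,1}^{v})$ and $P_{n,0}=\img(d_{n,1}^{v})\oplus\coker(d_{n,1}^{v})$. The vertical differential is $(a,b)\mapsto((-1)^{n}b,0)$, and the horizontal differentials are triangular, of the form $d_{n}^{k}(a)+\sigma_{n}(b)$ and $d_{n}^{i}(b)+\rho_{n}(\overline{c})$.

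\textbf{Step 1: compute the dual differentials.} Applying $\Nat_{\OFC}(-,M)$ gives
\[
Q^{n,1}=Q_{k}^{n}\oplus Q_{i}^{n},\qquad Q^{n,0}=Q_{i}^{n}\oplus Q_{c}^{n}.
\]
I would write out the dual differentials. The map $d_{v}^{n,0}$ sends $\beta=\beta_{i}+\beta_{c}$ to $(0,(-1)^{n}\beta_{i})$, so it kills $Q_{c}^{n}$ and maps $Q_{i}^{n}$ isomorphically onto the $Q_{i}^{n}$ summand of $Q^{n,1}$. The map $d_{h}^{n,1}$ sends $\alpha$ to an element whose $Q_{k}^{n+1}$-component is $d_{k}^{n}(\alpha_{k})$, plus a $Q_{i}^{n+1}$-component depending on both $\alpha_{k}$ and $\alpha_{i}$ through $\sigma_{n+1}^{*}$ and $(d^{i})^{*}$.

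\textbf{Step 2: well-definedness and $\alpha_{k}\in\ker(d_{k}^{1})$.} Take $(\alpha,\beta)\in K$. Projecting the equation $d_{h}^{0,1}(\alpha)+d_{v}^{1,0}(\beta)=0$ onto the $Q_{k}^{1}$-summand gives $d_{k}^{0}(\alpha_{k})=0$, since $d_{v}^{1,0}$ has no $Q_{k}$-component. (The statement writes $\ker(d_{k}^{1})$, but the index convention needs checking; what matters is that $\alpha_{k}$ is a cocycle in $\Nat(\ker(d_{0,1}^{v}),M)$.) By left exactness, the cocycles of $\Nat(\ker(d_{*,1}^{v}),M)$ in degree $0$ are exactly $\Nat(\CFR,M)$, and this is how $\phi$ is defined. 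Next, elements of $I$ have first coordinate $d_{v}^{0,0}(\gamma)$, which lies in $Q_{i}^{0}$, so their $k$-component is $0$. Hence $\Phi$ is a well-defined homomorphism.

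\textbf{Step 3: surjectivity.} Given a cocycle $\alpha_{k}$, take $\alpha=\alpha_{k}$. The obstruction $d_{h}^{0,1}(\alpha_{k})$ then lies in $Q_{i}^{1}$. Since $d_{v}^{1,0}$ restricted to $Q_{i}^{1}$ is an isomorphism onto $Q_{i}^{1}\le Q^{1,1}$, I can choose $\beta\in Q_{i}^{1}$ with $d_{v}^{1,0}(\beta)=-d_{h}^{0,1}(\alpha_{k})$. This gives $(\alpha,\beta)\in K$ mapping to $\phi(\alpha_{k})$.

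\textbf{Step 4: injectivity (expected main obstacle).} Suppose $\alpha_{k}=0$, so $\alpha=\alpha_{i}\in Q_{i}^{0}$. Choose $\gamma\in Q_{i}^{0}\le Q^{0,0}$ with $d_{v}^{0,0}(\gamma)=\alpha$. Then $(\alpha,\beta)-(d_{v}^{0,0}\gamma,d_{h}^{0,0}\gamma)=(0,\beta')$ with $(0,\beta')\in K$, so $d_{v}^{1,0}(\beta')=0$, i.e.\ $\beta'\in\ker(d_{v}^{1,0})$. It follows that $(0,\beta')\in I$ with $\gamma=0$ and $\delta=\beta'$. The difficulty here is bookkeeping the signs $(-1)^{n}$ and confirming that $d_{v}$ is really an isomorphism on the $Q_{i}$ summands. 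The latter holds because $d'^{v}_{n,1}$ is the identity on the $\img$ summand up to sign. Once this is checked, $\Phi$ is an isomorphism.
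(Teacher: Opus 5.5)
Your proposal is correct and is essentially the paper's argument. Both rely on the split Cartan--Eilenberg resolution of Notation~\ref{nota:redefinition-P}, on $d_v^{n,0}$ killing $Q_c^n$ and mapping $Q_i^n$ isomorphically onto $Q_i^n\le Q^{n,1}$, and on the $Q_k$-component of $d_h^{0,1}(\alpha)$ being the image of $\alpha_k$ under the map induced by $d_1^k$; the paper just packages your well-definedness, surjectivity and injectivity checks as the isomorphism from $K/I$ onto the group $A$ of normalized representatives $(\alpha_k,-\alpha_k\sigma_1)$. You are also right to flag the index: the relevant condition is $\alpha_k d_1^k=0$, which is $d_k^0(\alpha_k)=0$ in the paper's own convention, so $\ker(d_k^1)$ in the statement is an indexing slip.
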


\begin{proof}
The vertical differentials $d_{v}^{n,0}$ send $Q_{i}^{n}\le Q^{n,0}$
isomorphically to $Q_{i}^{n}\le Q^{n,1}$ and satisfy $\ker\left(d_{v}^{n,0}\right)=Q_{c}^{n}$.
Define now
\begin{align*}
A & :=\left\{ \left(\alpha_{k},-\alpha_{k}\sigma_{1}\right)\in Q_{k}^{0}\oplus Q_{i}^{1}\le Q^{0,1}\oplus Q^{1,0}\mid\alpha_{k}d_{1}^{k}=0\right\} .
\end{align*}
It follows from the construction of $P_{*,*}$ in Notation \ref{nota:redefinition-P},
that the morphism $K/I\to A$ sending $\overline{\overline{\left(\alpha,\beta\right)}}$
to $\left(\alpha_{k},-\alpha_{k}\sigma_{1}\right)$ is an isomorphism.
In particular $\alpha_{k}\in\ker\left(d_{k}^{1}\right)$. Since the
projection of the elements of $A$ onto their first component leads
to an isomorphism $A\to\ker\left(d_{k}^{1}\right)$, then the result
follows. 
\end{proof}
Under some stronger assumptions, we also obtain the following isomorphism.
\begin{lem}
\label{lem:def-iso-Psi}Assume that $\limn[1]_{\OFC[\H]}\left(M\downarrow_{\OFC[\H]}^{\OFC}\right)=0$
for every $\H\in\Lambda$. The following is an isomorphism. 
\begin{align*}
\Psi: & \underset{\overline{\alpha}\vphantom{\overline{\left(\alpha\varepsilon_{1},0\right)}}}{\coker\left(f^{*}\right)\vphantom{H^{1}\left(\Tot\left(Q\right)^{*}\right)}}\underset{\vphantom{\overline{\left(\alpha\varepsilon_{1},0\right)}}\longrightarrow}{\vphantom{H^{1}\left(\Tot\left(Q\right)^{*}\right)}\bjarrow}\;\underset{\overline{\left(\alpha\varepsilon_{1},0\right)}}{H^{1}\left(\Tot\left(Q\right)^{*}\right)}.
\end{align*}
\end{lem}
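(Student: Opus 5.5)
We compute $H^{1}\left(\Tot\left(Q\right)^{*}\right)$ directly from the bottom row of the double cocomplex and compare the result with $\coker\left(f^{*}\right)$. By Lemma \ref{lem:relation-ext-lim}, the hypothesis $\limn[1]_{\OFC[\H]}\left(M\downarrow_{\OFC[\H]}^{\OFC}\right)=0$ for every $\H\in\Lambda$ says that $\Ext_{\OFC}^{1}\left(CX_{j},M\right)=0$ for $j=0,1$. Equivalently, the rows $Q^{*,0}$ and $Q^{*,1}$ are exact in degree $1$. Moreover, $\ker\left(d_{h}^{0,j}\right)\cong\Nat_{\OFC}\left(CX_{j},M\right)$ via precomposition with $\varepsilon_{j}$.

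\textbf{Well-definedness.} Take $\alpha\in\Nat_{\OFC}\left(CX_{1},M\right)$. Then $\alpha\varepsilon_{1}\in Q^{0,1}$ satisfies $d_{h}^{0,1}\left(\alpha\varepsilon_{1}\right)=0$. By Equation (\ref{eq:def-tot}), $\left(\alpha\varepsilon_{1},0\right)\in\Tot\left(Q\right)^{1}=Q^{0,1}\oplus Q^{1,0}$ is a cocycle. Now suppose $\alpha=f^{*}\left(\gamma\right)=\gamma f$ with $\gamma\in\Nat\left(CX_{0},M\right)$. Then $\alpha\varepsilon_{1}=\gamma f\varepsilon_{1}=\gamma\varepsilon_{0}d_{0,1}^{v}=d_{v}^{0,0}\left(\gamma\varepsilon_{0}\right)$, and $d_{h}^{0,0}\left(\gamma\varepsilon_{0}\right)=0$. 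Hence $\left(\alpha\varepsilon_{1},0\right)=d^{0}\left(\gamma\varepsilon_{0}\right)$ (up to the sign convention in $d^{0}$), so $\Psi$ is a well-defined linear map.

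\textbf{Surjectivity.} Let $\left(x,y\right)$ be a $1$-cocycle, so that $d_{h}^{1,0}\left(y\right)=0$ and $d_{h}^{0,1}\left(x\right)+d_{v}^{1,0}\left(y\right)=0$. Exactness of the bottom row at $Q^{1,0}$ gives $y=d_{h}^{0,0}\left(z\right)$. Subtracting $d^{0}\left(z\right)=\left(d_{v}^{0,0}z,d_{h}^{0,0}z\right)$ replaces $\left(x,y\right)$ by $\left(x',0\right)$ with $d_{h}^{0,1}\left(x'\right)=0$. Then $x'=\alpha\varepsilon_{1}$ for a unique $\alpha$, so the class lies in the image of $\Psi$.

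\textbf{Injectivity.} Suppose $\left(\alpha\varepsilon_{1},0\right)=d^{0}\left(z\right)$ for some $z\in Q^{0,0}$. Then $d_{h}^{0,0}\left(z\right)=0$, so $z=\gamma\varepsilon_{0}$ for some $\gamma\in\Nat\left(CX_{0},M\right)$. This gives $\alpha\varepsilon_{1}=d_{v}^{0,0}\left(z\right)=\gamma\varepsilon_{0}d_{0,1}^{v}=\gamma f\varepsilon_{1}$. Since $\varepsilon_{1}$ is an epimorphism, $\alpha=f^{*}\left(\gamma\right)$, so $\overline{\alpha}=0$ in $\coker\left(f^{*}\right)$.

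The only step that needs care is the sign and index bookkeeping in Equation (\ref{eq:def-tot}), which is transposed per Remark \ref{rem:switched-indexes}. We must check that $d^{0}\left(z\right)$ really has components $\left(d_{v}^{0,0}z,d_{h}^{0,0}z\right)$ and fix the sign of $\alpha$ accordingly. We must also check that $\varepsilon_{0}d_{0,1}^{v}=f\varepsilon_{1}$ under Notation \ref{nota:redefinition-P}. For this we verify $\eta_{0}d'{}_{0,1}^{v}\left(a,b\right)=\varepsilon^{i}\left(b\right)=f\sigma_{0}\left(b\right)=f\eta_{1}\left(a,b\right)$, using $f\varepsilon^{k}=0$; the identity is immediate from the definitions.
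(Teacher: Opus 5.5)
Your proof is correct and is the paper's argument written out in full. Exactness of the bottom row at $Q^{1,0}$ (i.e.\ $\ker(d_h^{1,0})=\img(d_h^{0,0})$) moves every $1$-cocycle to the form $(\gamma,0)$, and left exactness of $\Nat_{\OFC}(-,M)$ together with $f\varepsilon_1=\varepsilon_0 d^v_{0,1}$ identifies the resulting quotient with $\coker(f^*)$.

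Your injectivity step, which forces $d_h^{0,0}(z)=0$, identifies that quotient as $\ker(d_h^{0,1})/d_v^{0,0}(\ker d_h^{0,0})$. This is the right quotient and is slightly more careful than the paper's write-up: quotienting by all of $\ker(d_h^{0,1})\cap\img(d_v^{0,0})$, as the paper writes, would additionally kill a copy of $\operatorname{lim}^1_{\OFC}(M)$.
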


\begin{proof}
The assumption immediately implies that $\ker\left(d_{h}^{1,0}\right)=\img\left(d_{h}^{0,0}\right)$.
It follows that, for every $\overline{\left(\alpha,\beta\right)}\in H^{1}\left(\Tot\left(Q\right)^{*}\right)$,
there exists a unique $\overline{\gamma}\in\ker\left(d_{h}^{0,1}\right)/\left(\ker\left(d_{h}^{0,1}\right)\cap\img\left(d_{v}^{0,0}\right)\right)$
such that $\overline{\left(\alpha,\beta\right)}=\overline{\left(\gamma,0\right)}$.
The result follows from left exactness of the $\Nat_{\OFC}\left(-,M\right)$
functor and the identities $f\varepsilon_{1}=\varepsilon_{0}d_{0,1}^{v}$.
\end{proof}
\begin{rem*}
The isomorphism of Lemma \ref{lem:def-iso-Psi} was already mentioned
(but not explicitly described) in Corollary \ref{cor:redef-Hn-total-complex}.
\end{rem*}
We can now prove the main result of this section.
\begin{prop}
\label{prop:theorem-0}Assume that $\limn_{\OFC[\mathcal{H}]}\left(M\downarrow_{\OFC[\mathcal{H}]}^{\OFC}\right)=0$
for every $n\ge1$ and every $\mathcal{H}\in\Lambda$. Then Equations
(\ref{eq:isomorphims}) and (\ref{eq:short-exact-sequence}) hold
with $\varUpsilon$ the morphism that, for every $\alpha\in\Nat_{\OFC}\left(CX_{1},M\right)$
sends the equivalence class $\overline{\alpha}\in\coker\left(f^{*}\right)$
to the restriction $\varUpsilon\left(\overline{\alpha}\right):=\alpha_{|\CFR}$
of $\alpha$ to $\CFR$.
\end{prop}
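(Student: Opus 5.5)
The plan is to identify $\varUpsilon$ with the composite $\Phi\circ\Theta\circ\Psi$, where $\Psi$ is the isomorphism of Lemma \ref{lem:def-iso-Psi}, $\Theta$ is the map of Equation (\ref{eq:def-Theta}) and $\Phi$ is the isomorphism of Lemma \ref{lem:def-iso-Phi}. Under the hypotheses, Equation (\ref{eq:isomorphims}) is already established from the long exact sequence (\ref{eq:long-exact-sequence}) and Corollary \ref{cor:redef-Hn-total-complex}. So only the description of $\varUpsilon$ in (\ref{eq:short-exact-sequence}) needs proof.

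\textbf{Step 1: exactness of the sequence with $\Phi\Theta\Psi$ in the middle.} Lemma \ref{lem:kernel-theta} gives $\ker(\Theta)\cong\limn[1]_{\OFC}(M)$, and Lemma \ref{lem:cokernel-Theta} gives $\coker(\Theta)\cong\limn[2]_{\OFC}(M)$. Since $\Psi$ and $\Phi$ are isomorphisms, the sequence
\[
0\to\limn[1]_{\OFC}(M)\to\coker(f^{*})\xrightarrow{\Phi\Theta\Psi}\Nat_{\OFC}(\CFR,M)\to\limn[2]_{\OFC}(M)\to0
\]
is exact. Moreover, $\Theta$ is by construction the edge map $H^{1}(\Tot)\to\lui{II}{E_{2}^{0,1}}$ appearing in (\ref{eq:long-exact-sequence}). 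Hence this sequence is the one in (\ref{eq:short-exact-sequence}), and it suffices to compute $\Phi\Theta\Psi$.

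\textbf{Step 2: computing the composite.} Take $\alpha\in\Nat_{\OFC}(CX_{1},M)$.
\begin{itemize}
\item By Lemma \ref{lem:def-iso-Psi}, $\Psi(\overline{\alpha})=\overline{(\alpha\varepsilon_{1},0)}$.
\item Then $\Theta\Psi(\overline{\alpha})=\overline{\overline{(\alpha\varepsilon_{1},0)}}\in K/I$.
\item Applying $\Phi$ gives $\phi\big((\alpha\varepsilon_{1})_{k}\big)$, where $(\alpha\varepsilon_{1})_{k}$ is the restriction of $\alpha\varepsilon_{1}$ to the summand $\ker(d_{0,1}^{v})$ of $P_{0,1}=R_{0,1}$.
\end{itemize}
With the resolution of Notation \ref{nota:redefinition-P}, $\varepsilon_{1}=\eta_{1}$ satisfies $\eta_{1}(a,0)=\varepsilon^{k}(a)$. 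Hence
\[
(\alpha\varepsilon_{1})_{k}=\alpha\circ\iota_{X}\circ\varepsilon^{k},
\]
with $\iota_{X}:\CFR\hookrightarrow CX_{1}$ the inclusion. Here $\phi$ is the isomorphism $\ker(d_{k}^{1})\cong\Nat_{\OFC}(\CFR,M)$ coming from left exactness applied to the resolution $\CFR\stackrel{\varepsilon^{k}}{\twoheadleftarrow}\ker(d_{0,1}^{v})\leftarrow\cdots$. It sends $\beta\circ\varepsilon^{k}$ to $\beta$. Therefore $\Phi\Theta\Psi(\overline{\alpha})=\alpha\iota_{X}=\alpha_{|\CFR}$. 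This is well defined on $\coker(f^{*})$, since $f\iota_{X}=0$ gives $(\gamma f)_{|\CFR}=0$.

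\textbf{Main obstacle.} I expect the delicate point to be Step 1's identification of $\Theta$ with the map in the long exact sequence. That map should be the edge homomorphism $H^{1}(\Tot)\twoheadrightarrow E_{\infty}^{0,1}\hookrightarrow E_{2}^{0,1}$ of the second spectral sequence. I would check this directly through the explicit description of $E_{2}^{0,1}=K/I$ in Lemma \ref{lem:definition-d2}: the filtration quotient sends a cocycle $(\alpha,\beta)$ to the class of its $Q^{0,1}$-component modulo $I$. If this matching is problematic, the fallback is to define $\varUpsilon:=\Phi\Theta\Psi$ outright. Step 1 already shows this yields an exact sequence of the stated shape, which is all Theorem \hyperref[thm:A]{A} requires.
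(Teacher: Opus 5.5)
Your proposal is correct and follows essentially the same route as the paper. The paper also writes $\varUpsilon=\Phi\Theta\Psi$ and concludes from Lemmas \ref{lem:cokernel-Theta} and \ref{lem:kernel-theta}, using that $\Phi$ and $\Psi$ are isomorphisms. Your Step 2 additionally makes explicit two points the paper leaves implicit: the computation $\Phi\Theta\Psi(\overline{\alpha})=\phi\bigl(\alpha\iota_{X}\varepsilon^{k}\bigr)=\alpha\iota_{X}$ (via $\eta_{1}(a,0)=\varepsilon^{k}(a)$), and the identification of $\Theta$ with the edge map.
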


\begin{proof}
We only need to prove that Equation (\ref{eq:short-exact-sequence})
holds with $\varUpsilon$ as defined as the rest has already been
proven.

Let $\Theta$, $\Phi$ and $\Psi$ be as in Equation (\ref{eq:def-Theta})
and Lemmas \ref{lem:def-iso-Phi} and \ref{lem:def-iso-Psi} respectively.
The morphism $\varUpsilon$ of the statement can be alternatively
defined as the composition $\varUpsilon:=\Phi\Theta\Psi$. Since $\Psi$
and $\Phi$ are isomorphisms, then the result follows from Lemmas
\ref{lem:cokernel-Theta} and \ref{lem:kernel-theta}.
\end{proof}
As a corollary of Proposition \ref{prop:theorem-0} we can finally
prove Theorem \hyperref[thm:A]{A}.
\begin{proof}[Proof of Theorem A]
Let $\mathcal{E}\in\Lambda\cup\left\{ \F\right\} $. By assumption
$\mathcal{C}$ contains all $\mathcal{E}$-centric-radical subgroups
of $S$. It follows from \cite[Proposition 10.5]{Yal22} (see also
\cite[Corollary 3.6]{BLO03}) that, for every $n\ge1$
\[
\limn_{\OFC[\mathcal{E}]}\left(M^{*}\downarrow_{\OFC[\mathcal{E}]}^{\OF}\right)\cong\limn_{\OFc[\mathcal{E}]}\left(M^{*}\downarrow_{\OFc[\mathcal{E}]}^{\OF}\right).
\]
We conclude that the higher limits $\limn_{\OFC[\mathcal{H}]}\left(M^{*}\downarrow_{\OFC[\mathcal{H}]}^{\OF}\right)$
vanish for every $n\ge1$ and every $\mathcal{H}\in\Lambda$. The
result follows from Proposition \ref{prop:theorem-0} and the second
isomorphism of Lemma \ref{lem:redefining-lim1}.
\end{proof}

\section{Cohomological sharpness via the pruning method}\label{sec:sharpness_via_pruning}

Theorem \hyperref[thm:A]{A} allows us to study higher limits via
the use of the functor $\CFp$. Due to its relation with the amalgamation
method (see Proposition \ref{prop:amalgamation-method}), this functor
vanishes for a great number of known exotic fusion systems. The following
gives sufficient conditions for $\CFp$ to vanish.
\begin{lem}
\label{lem:vanishing-CFR}Adopt Notation \ref{nota:C-included-in-notation}
and let $P\in\mathcal{C}$. Assume that, for every $Q\cong_{\F}P$,
then 
\[
\Hom_{\F_{\boldsymbol{e}}}\left(Q,S'\right)=\Hom_{\F_{\boldsymbol{1}}}\left(Q,S'\right)\cap\Hom_{\F_{\boldsymbol{2}}}\left(Q,S'\right),
\]
and that either of the following holds:
\begin{enumerate}
\item \label{enu:hom-included}$\Hom_{\F}\left(P,S'\right)=\Hom_{\mathcal{\H}}\left(P,S'\right)$
for some $\H\in\left\{ \F_{\boldsymbol{1}},\F_{\boldsymbol{2}}\right\} $,
or
\item \label{enu:aut-included}$\Aut_{\F}\left(P\right)=\Aut_{\F_{\boldsymbol{2}}}\left(P\right)$
and, for every $P\not\cong_{\F_{\boldsymbol{e}}}Q\cong_{\F}P$, then
$\Hom_{\F_{\boldsymbol{e}}}\left(Q,S'\right)=\Hom_{\F_{\boldsymbol{2}}}\left(Q,S'\right)$
(equivalently $\Hom_{\F_{\boldsymbol{2}}}\left(Q,S'\right)\subseteq\Hom_{\F_{\boldsymbol{1}}}\left(Q,S'\right)$).
\end{enumerate}
Then $\textnormal{\ensuremath{\Rep_{\F}\left(P,\Lambda\right)}}$
is a tree. In particular, $\CFR\left(P\right)=0$ for any commutative
ring $\R$.
\end{lem}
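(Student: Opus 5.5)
The plan is to show directly that the graph $\Rep_{\F}\left(P,\Lambda\right)$ of Definition \ref{def:RepF-P-lambda} is connected and has no cycles. Since $\CFR\left(P\right)=H_{1}\left(\Rep_{\F}\left(P,\Lambda\right);\R\right)$ by Definition \ref{def:CFR}, the vanishing of $\CFR\left(P\right)$ then follows at once for every commutative ring $\R$.

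\emph{Connectedness.} I would take this from Lemma \ref{lem:coker-f}. The cokernel of the cellular differential $f$ at $P$ is $H_{0}\left(\Rep_{\F}\left(P,\Lambda\right);\R\right)$, and that lemma identifies it with $\R$, so the graph has exactly one component. If one prefers a direct argument, the factorization $\varphi=\psi_{\infty}\theta_{n}\cdots\psi_{1}$ used in the proof of Lemma \ref{lem:coker-f} exhibits an explicit path from any vertex to the vertex of the inclusion.

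\emph{No multiple edges.} This step uses the first hypothesis. Suppose two edges $\left[\varphi\right]_{\F_{\boldsymbol{e}}}$ and $\left[\psi\right]_{\F_{\boldsymbol{e}}}$ have the same two endpoints. Then there are isomorphisms $\theta_{2}\in\F_{\boldsymbol{2}}$ and $\theta_{1}\in\F_{\boldsymbol{1}}$ from $\varphi\left(P\right)$ to $\psi\left(P\right)$, both satisfying $\theta\varphi=\psi$ on $P$. Because $\varphi$ is injective, this forces $\theta_{1}=\theta_{2}=\psi\varphi^{-1}$. Applying the hypothesis with $Q:=\varphi\left(P\right)\cong_{\F}P$ shows that this common map lies in $\Hom_{\F_{\boldsymbol{1}}}\left(Q,S'\right)\cap\Hom_{\F_{\boldsymbol{2}}}\left(Q,S'\right)=\Hom_{\F_{\boldsymbol{e}}}\left(Q,S'\right)$. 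Hence the two edges coincide. Consequently any connected graph in which all edges share one common vertex, after deleting leaves, is a tree.

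\emph{Case (\ref{enu:hom-included}).} Say $\Hom_{\F}\left(P,S'\right)=\Hom_{\H}\left(P,S'\right)$ with $\H=\F_{\boldsymbol{1}}$; the case $\H=\F_{\boldsymbol{2}}$ is symmetric. For any $\varphi,\psi\in\Hom_{\F}\left(P,S'\right)$, both maps lie in $\H$, so $\psi\varphi^{-1}:\varphi\left(P\right)\to\psi\left(P\right)$ is an $\H$-isomorphism. Therefore $\Rep_{\F}\left(P,\H\right)$ is a single vertex. Every edge is incident to this vertex and there are no multiple edges, so the graph is a star and hence a tree.

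\emph{Case (\ref{enu:aut-included}).} Here I split the edges $\left[\psi\right]_{\F_{\boldsymbol{e}}}$ according to whether $\psi\left(P\right)\cong_{\F_{\boldsymbol{e}}}P$.
\begin{enumerate}
\item If $\psi\left(P\right)\cong_{\F_{\boldsymbol{e}}}P$, write $\psi=\beta\gamma$ with $\beta$ an $\F_{\boldsymbol{e}}$-isomorphism and $\gamma\in\Aut_{\F}\left(P\right)=\Aut_{\F_{\boldsymbol{2}}}\left(P\right)$. Then $\left[\psi\right]_{\F_{\boldsymbol{2}}}=\left[\iota\right]_{\F_{\boldsymbol{2}}}$, so all such edges meet the single $\F_{\boldsymbol{2}}$-vertex of the inclusion.
\item If $Q:=\psi\left(P\right)\not\cong_{\F_{\boldsymbol{e}}}P$, consider the $\F_{\boldsymbol{2}}$-vertex $\left[\psi\right]_{\F_{\boldsymbol{2}}}$. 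Any edge $\left[\psi'\right]_{\F_{\boldsymbol{e}}}$ at this vertex has $\psi'=\theta\psi$ for some $\theta\in\Hom_{\F_{\boldsymbol{2}}}\left(Q,S'\right)$. By hypothesis $\Hom_{\F_{\boldsymbol{2}}}\left(Q,S'\right)=\Hom_{\F_{\boldsymbol{e}}}\left(Q,S'\right)$, so $\theta\in\F_{\boldsymbol{e}}$ and $\left[\psi'\right]_{\F_{\boldsymbol{e}}}=\left[\psi\right]_{\F_{\boldsymbol{e}}}$. Thus this vertex has degree one; it is a leaf.
\end{enumerate}
Removing all these leaves together with their edges leaves a connected graph in which every edge meets one common vertex. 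By the no-multiple-edge step it is a star, and reattaching leaves to a tree keeps it a tree.

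The main obstacle I expect is the bookkeeping of the equivalence relations. One must check that the relevant comparison maps land in $S'$, so that the hypotheses on $\Hom\left(Q,S'\right)$ actually apply. One must also check that the leaves identified in Case (\ref{enu:aut-included}) are genuine leaves: every other edge at such a vertex has image $\F_{\boldsymbol{2}}$-conjugate to $Q$, hence $\F_{\boldsymbol{e}}$-conjugate to $Q$, hence still not $\F_{\boldsymbol{e}}$-conjugate to $P$.
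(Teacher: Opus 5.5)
Your proposal is correct and follows essentially the same route as the paper. Both arguments use connectedness from $\F=\left\langle \F_{\boldsymbol{1}},\F_{\boldsymbol{2}}\right\rangle _{S}$, the uniqueness of the edge joining $[\varphi]_{\F_{\boldsymbol{1}}}$ and $[\varphi]_{\F_{\boldsymbol{2}}}$ (which you derive explicitly from the intersection hypothesis, whereas the paper only asserts it), a star in Case (1), and in Case (2) the removal of the degree-one $\F_{\boldsymbol{2}}$-vertices before reducing to the star argument, which the paper phrases via the full subgraph $X$ with $W=\{[\iota_{P}^{S'}]_{\F_{\boldsymbol{2}}}\}$.
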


\begin{proof}
Since $\F=\left\langle \F_{\boldsymbol{1}},\F_{\boldsymbol{2}}\right\rangle _{S}$
then $\Rep_{\F}\left(P,\Lambda\right)$ is connected. For every $\varphi\in\Hom_{\F}\left(P,S'\right)$,
there exists a unique edge (namely $\left[\varphi\right]_{\F_{\boldsymbol{e}}}$)
connecting the vertices $\left[\varphi\right]_{\F_{\boldsymbol{1}}}$
and $\left[\varphi\right]_{\F_{\boldsymbol{2}}}$ of $\textnormal{\textnormal{\ensuremath{\Rep_{\F}\left(P,\Lambda\right)}}}$.

Assume that Condition (\ref{enu:hom-included}) holds. Then $\textnormal{\ensuremath{\Rep_{\F}\left(P,\H\right)}}$
has a single element. Since $\textnormal{\textnormal{\ensuremath{\Rep_{\F}\left(P,\Lambda\right)}}}$
is a bipartite graph, one of the partitions has a single vertex and
there is at most one edge between any two vertices, then it is a tree. 

Assume that Condition (\ref{enu:aut-included}) holds. Let $\varphi\in\Hom_{\F}\left(P,S'\right)$
such that $\varphi\left(P\right)\not\cong_{\F_{\boldsymbol{e}}}Q$.
Then $\left[\varphi\right]_{\F_{\boldsymbol{e}}}=\left[\varphi\right]_{\F_{\boldsymbol{2}}}$.
In particular, the vertex $\text{\ensuremath{\left[\varphi\right]_{\F_{\boldsymbol{2}}}}}$
of $\textnormal{\textnormal{\ensuremath{\Rep_{\F}\left(P,\Lambda\right)}}}$
is connected to a single other vertex in $\textnormal{\ensuremath{\Rep_{\F}\left(P,\Lambda\right)}}$.
Let $X\subseteq\textnormal{\ensuremath{\Rep_{\F}\left(P,\boldsymbol{\Lambda}\right)}}$
be the full subgraph with vertexes $V\left(X\right)=\Rep_{\F}\left(P,\F_{\boldsymbol{1}}\right)\sqcup W$
where
\begin{align*}
W & :=\left\{ \left[\varphi\right]_{\F_{\boldsymbol{2}}}\in\Rep_{\F}\left(P,\F_{\boldsymbol{2}}\right)\mid\varphi\left(P\right)\cong_{\F_{\boldsymbol{e}}}P\right\} =\left\{ \left[\varphi\right]_{\F_{\boldsymbol{2}}}\in\Rep_{\F}\left(P,\F_{\boldsymbol{2}}\right)\mid\varphi\left(P\right)=P\right\} .
\end{align*}
From the above, then $\textnormal{\ensuremath{\Rep_{\F}\left(P,\Lambda\right)}}$
is a tree if and only if $X$ is also a tree. From the first half
of the assumption, we know that $W=\left\{ \left[\iota_{P}^{S'}\right]_{\F_{\boldsymbol{2}}}\right\} $.
The same arguments used in Part (\ref{enu:hom-included}) prove that
$X$ is a tree. The result follows.
\end{proof}
The following provides us with sufficient conditions for the base
assumption of Lemma \ref{lem:vanishing-CFR} to be satisfied.
\begin{lem}
\label{lem:intersection-of-fusion-systems.}Let $\H\subseteq\F$ be
a saturated fusion systems over a $p$-group $S$, let $P\le S$ be
fully $\F$-normalized and define $\mathcal{N}:=N_{\F}\left(P\right)\cap\H$.
Then $N_{\H}\left(P\right)\subseteq\mathcal{N}$ and the morphisms
of both fusion systems coincide in $\F$-centrics. In particular
\[
\OFC[\mathcal{N}]=\OFC[N_{\H}\left(P\right)],
\]
for every family $\mathcal{C}$ of $\F$-centric subgroups of $S$.
\end{lem}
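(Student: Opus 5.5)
The plan is to compare the two subsystems morphism by morphism. First, $P$ is also fully $\H$-normalized: its $\H$-conjugates are among its $\F$-conjugates, and $|N_S(P)|$ is maximal among the latter. Hence $N_{\H}(P)$ is a (saturated) fusion system over $N_S(P)$, the same group on which $N_{\F}(P)$, and hence $\mathcal{N}$, are defined.

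The inclusion $N_{\H}(P)\subseteq\mathcal{N}$ comes straight from Definition~\ref{def:normalizer-fs}. A morphism $\varphi\in\Hom_{N_{\H}(P)}(A,B)$ lies in $\Hom_{\H}(A,B)$. By definition it has an extension $\hat\varphi\in\Hom_{\H}(AP,BP)\subseteq\Hom_{\F}(AP,BP)$ with $\hat\varphi(P)=P$. So $\varphi$ lies in $N_{\F}(P)$, and also in $\H$, hence in $\mathcal{N}$.

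For the reverse inclusion on $\F$-centric objects, take $A,B\le N_S(P)$ with $A\in\Fc$ and $\varphi\in\Hom_{\mathcal{N}}(A,B)$. Then $\varphi$ is a morphism of $\H$, and there is some $\hat\varphi\in\Hom_{\F}(AP,BP)$ extending $\varphi$ and normalizing $P$. The issue is that $\hat\varphi$ need not lie in $\H$. To fix this, use the extension axiom in $\H$. Let $\psi:=\varphi$ viewed as a map $A\to\varphi(A)$. Then $N_\psi\supseteq$ the image of $P$ in $N_S(A)$ \emph{modulo} the correction needed because $P$ need not normalize $A$.

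So I will first reduce to the case $P\le A$. Since $\hat\varphi$ is an $\F$-morphism from $AP$, and $A$ is $\F$-centric (so $\hat\varphi$ is determined up to $Z(A)$), it is natural to replace $A$ by $AP$. The key step is then: given $\varphi\in\Hom_{\H}(A,S)$ with $A$ $\F$-centric, $A\le AP$, and an $\F$-extension $\hat\varphi$ to $AP$, show that $\hat\varphi$ is already in $\H$ up to an element of $Z(A)$. For $x\in P\le N_S(A)$ (first treating the case $A\trianglelefteq AP$, then iterating up the subnormal chain $A\trianglelefteq N_{AP}(A)\trianglelefteq\cdots$), we have $\varphi c_x\varphi^{-1}=c_{\hat\varphi(x)}$ on $\varphi(A)$. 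Hence $x\in N_\varphi$ computed in $\H$. After composing with an $\H$-isomorphism that makes $\varphi(A)$ fully $\H$-normalized (harmless, since we only need membership in $\H$), the extension axiom gives $\tilde\varphi\in\Hom_{\H}(N_\varphi,S)$ extending $\varphi$.

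Two $\F$-morphisms that extend $\varphi$ on $A\langle x\rangle$ differ on $A\langle x\rangle$ by conjugation by an element of $C_S(\varphi(A))=Z(\varphi(A))$, using centricity, by the standard uniqueness lemma for extensions of centric morphisms. So $\hat\varphi|_{A\langle x\rangle}=c_z\tilde\varphi|_{A\langle x\rangle}\in\H$ with $z\in Z(\varphi(A))\le S$. Iterating along the chain of normalizers yields $\hat\varphi\in\Hom_{\H}(AP,BP)$ with $\hat\varphi(P)=P$, i.e.\ $\varphi\in N_{\H}(P)$.

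This gives equality of morphism sets between $\F$-centric objects, hence $\OFC[\mathcal{N}]=\OFC[N_{\H}(P)]$ for $\mathcal{C}\subseteq\Fc$, since the orbit categories are quotients by the same $\Aut_Q(Q)$.

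The main obstacle I expect is the iteration step. At each stage the intermediate group must remain $\F$-centric (fine, as overgroups of centrics are centric) and must be fully $\H$-normalized after adjustment. The adjustment has to be compatible with keeping $P$ invariant, so I will need to track carefully that the final composite still sends $P$ to $P$.
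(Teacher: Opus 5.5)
Your argument is essentially the paper's. The paper takes $R$ maximal with $A\le R\le AP$ and $\hat\varphi|_R\in\mathcal{H}$, extends $\hat\varphi|_R$ inside $\mathcal{H}$ to $N_{AP}(R)$, and compares the result with $\hat\varphi$ using uniqueness of extensions from $\mathcal{F}$-centric subgroups; this forces $N_{AP}(R)=R$, hence $R=AP$, which is exactly your iteration up the normalizer chain. As in the paper, your worry about $P$-invariance is moot, because it is $\hat\varphi$ itself that is shown to lie in $\mathcal{H}$.

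The one step to tighten is the ``harmless'' adjustment. Composing with an arbitrary $\mathcal{H}$-isomorphism onto a fully $\mathcal{H}$-normalized conjugate can shrink $N_\varphi$, so you must fix it in one of two ways. Either use directly that $\mathcal{H}$-centric subgroups are receptive in the saturated system $\mathcal{H}$ (the paper cites Stancu's Proposition 4.4 here). Or take that isomorphism to be the restriction of some $\alpha\in\operatorname{Hom}_{\mathcal{H}}(N_S(\varphi(A)),S)$, and apply the extension axiom and the uniqueness lemma to $\alpha\hat\varphi$.
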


\begin{proof}
Since $\H\subseteq\F$, then the inclusion $N_{\H}\left(P\right)\subseteq\mathcal{N}$
follows.

Let $S':=N_{S}\left(P\right)$, let $Q\le S'$ be $\F$-centric, let
$\varphi:Q\to S'$ be a morphism in $\mathcal{N}$ and let $\hat{\varphi}:QP\to S'$
be a morphism in $N_{\F}\left(P\right)$ lifting $\varphi$. Let $P\le R\le QP$
be a maximal subgroup such that the restriction $\psi:R\to S'$ of
$\hat{\varphi}$ to $R$ is a morphism in $\H$. Such a group exists
because $\varphi$ is a morphism in $\H$. Since $P$ is $\F$-centric
then $R$ is also $\F$-centric and, in particular, it is $\H$-centric.
It follows from \cite[Proposition 4.4]{Stancu03}, that $\psi$ can
be lifted to a morphism $\hat{\psi'}:N_{\psi}\to S$ in $\H$. Since
$\iota_{S'}^{S}\hat{\varphi}$ lifts $\psi$, then $N_{QP}\left(R\right)\le N_{\psi}$.
We conclude that $\psi$ can be lifted to a morphism $\hat{\psi}:N_{QP}\left(R\right)\to S$
in $\H$.

Denote by $\theta$ the restriction of $\hat{\varphi}$ to $N_{QP}\left(R\right)$.
Both $\hat{\psi}$ and $\iota_{S'}^{S}\theta$ lift $\iota_{S'}^{S}\psi$.
Since $R$ is $\F$-centric and $\F$ is saturated, then there exists
$x\in S$ such that $\theta\left(y\right)=\left(\hat{\psi}\left(y\right)\right)^{x}$
for every $y\in N_{QP}\left(R\right)$ (see \cite[Theorem 4.9]{Link07}
for detail). It follows that $\iota_{S'}^{S}\theta$ (and therefore
$\theta$) is a morphism in $\H$. From maximality of $R$ we conclude
that $R=N_{QP}\left(R\right)=QP$ and, therefore, $\theta=\hat{\varphi}$.
In particular $\hat{\varphi}$ is a morphism in $\H$ and, therefore,
in $N_{\H}\left(P\right)$. It follows that $\varphi$ is also a morphism
in $N_{\H}\left(P\right)$ thus concluding the proof.
\end{proof}
\begin{cor}
\label{cor:whole-vanishing-of-CFR}Let $\H\subseteq\F$ be saturated
fusion systems over a $p$-group $S$, let $\mathcal{C}$ be a family
of subgroups of $S$ closed under $\F$-conjugation and taking overgroups,
let $P\in\mathcal{C}$ be fully $\F$-normalized and define $\Lambda:=\left\{ \H,N_{\F}\left(P\right),N_{\H}\left(P\right)\right\} $.
Assume that $\F=\left\langle \H,\Aut_{\F}\left(P\right)\right\rangle _{S}$.
Then $\Rep_{\F}\left(Q,\Lambda\right)$ is a tree for every $Q\not<_{\F}P$.
In particular $\CFR\left(Q\right)=0$ for every such $Q$ and commutative
ring $\R$ and, if $P$ is minimal in $\mathcal{C}$ (under inclusion),
then $\CFR=0$.
\end{cor}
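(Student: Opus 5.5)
The plan is to apply Lemma \ref{lem:vanishing-CFR} with $\F_{\boldsymbol{1}}:=\H$, $\F_{\boldsymbol{2}}:=N_{\F}\left(P\right)$ and $\F_{\boldsymbol{e}}:=N_{\H}\left(P\right)$, over $S$ and $S':=N_{S}\left(P\right)$. First I will check that Notation \ref{nota:C-included-in-notation} applies. We have $\F_{\boldsymbol{e}}\subseteq\F_{\boldsymbol{1}}\cap\F_{\boldsymbol{2}}$. Since $\mathcal{C}$ is closed under overgroups and $P\in\mathcal{C}$, we also have $S'\in\mathcal{C}$. Moreover $\left\langle \H,N_{\F}\left(P\right)\right\rangle _{S}=\F$, because $\Aut_{\F}\left(P\right)\subseteq N_{\F}\left(P\right)\subseteq\F$.

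Next comes the base hypothesis of Lemma \ref{lem:vanishing-CFR}, namely that for $Q'\cong_{\F}Q$
\[
\Hom_{N_{\H}\left(P\right)}\left(Q',S'\right)=\Hom_{\H}\left(Q',S'\right)\cap\Hom_{N_{\F}\left(P\right)}\left(Q',S'\right).
\]
This is exactly Lemma \ref{lem:intersection-of-fusion-systems.}, provided $Q'$ is $\F$-centric. Here I will assume, as in the intended applications, that $\mathcal{C}\subseteq\Fc$ (or at least $P\in\Fc$, so that every overgroup of $P$ is centric). This is where some care is needed, because Lemma \ref{lem:intersection-of-fusion-systems.} only gives equality on $\F$-centrics.

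Then I split $Q\not<_{\F}P$ into two cases.
\begin{itemize}
\item \textbf{Case 1: no $\F$-conjugate of $Q$ is contained in $P$.} I claim condition (\ref{enu:hom-included}) holds with the choice $\H$, i.e. $\Hom_{\F}\left(Q,S'\right)=\Hom_{\H}\left(Q,S'\right)$. Every $\F$-morphism is a composite of $\H$-morphisms and restrictions of elements of $\Aut_{\F}\left(P\right)$. Any factor coming from $\Aut_{\F}\left(P\right)$ would have domain equal to an $\F$-conjugate of $Q$ (or of a subgroup image) sitting inside $P$, which this case excludes. Hence every factor lies in $\H$.
\item \textbf{Case 2: $Q\cong_{\F}P$.} I use condition (\ref{enu:aut-included}), replacing $P$ by $Q$ if needed so that the representative is $P$ itself. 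First, $\Aut_{\F}\left(P\right)=\Aut_{N_{\F}\left(P\right)}\left(P\right)$ holds trivially. Second, take $Q'\cong_{\F}P$ with $Q'\le S'$ and $Q'\not\cong_{N_{\H}\left(P\right)}P$; I need $\Hom_{N_{\F}\left(P\right)}\left(Q',S'\right)\subseteq\Hom_{\H}\left(Q',S'\right)$. If $Q'\ne P$, then $Q'$ does not normalize-extend to something containing $P$ via $P$-automorphisms. Any morphism in $N_{\F}\left(P\right)$ from $Q'$ extends to $Q'P\gneq P$, so by the same decomposition argument as in Case 1, applied to the strictly larger group $Q'P$, it lies in $\H$. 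If instead $Q'\cong_{N_{\H}\left(P\right)}P$ then $Q'=P$, since $P$ is normal in $S'$ and hence only conjugate to itself within $N_{\H}\left(P\right)$.
\end{itemize}

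The main obstacle is making the decomposition argument in Case 1 rigorous. Morphisms of $\left\langle \H,\Aut_{\F}\left(P\right)\right\rangle _{S}$ are composites of restrictions, and I must track that the intermediate images stay outside the $\F$-class of subgroups of $P$. I would do this by noting that the intermediate images are $\F$-conjugates of $Q$, and that $Q$ being not $\F$-subconjugate to $P$ is an $\F$-invariant property.

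Finally, the conclusion follows. Lemma \ref{lem:vanishing-CFR} gives that $\Rep_{\F}\left(Q,\Lambda\right)$ is a tree, so $\CFR\left(Q\right)=0$. If $P$ is minimal in $\mathcal{C}$, then every $Q\in\mathcal{C}$ satisfies $Q\not<_{\F}P$, because $\mathcal{C}$ is closed under $\F$-conjugation. Hence $\CFR=0$.
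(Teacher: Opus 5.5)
Your proposal is correct and follows the paper's proof. It uses the same choice $\F_{\boldsymbol 1}=\H$, $\F_{\boldsymbol 2}=N_\F(P)$, $\F_{\boldsymbol e}=N_\H(P)$, the same split into condition (\ref{enu:hom-included}) when $Q$ is not $\F$-subconjugate to $P$ and condition (\ref{enu:aut-included}) when $Q\cong_\F P$, and the same key point that an $N_\F(P)$-extension to $Q'P\gneq P$ already lies in $\H$. Your extra assumption $\mathcal{C}\subseteq\Fc$ (the paper implicitly needs the same centricity, since it cites Lemma \ref{lem:intersection-of-fusion-systems.}) can actually be dropped: for $Q'\cong_\F Q$ either $Q'=P$ (trivial) or $Q'\not\le P$, and then any $N_\F(P)$-extension $\hat\varphi:Q'P\to S'$ has domain of order $>|P|$, so your decomposition argument puts $\hat\varphi$ in $\H$ and hence $\varphi\in N_\H(P)$.
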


\begin{proof}
In order to simplify notation we define $\F_{\boldsymbol{1}}:=\H$,
$\F_{\boldsymbol{2}}:=N_{\F}\left(P\right)$, $\F_{\boldsymbol{e}}:=N_{\H}\left(P\right)$
and $S':=N_{S}\left(P\right)$. Notice that this notation is consistent
with Notation \ref{nota:C-included-in-notation}.

Let $Q$ be as in the statement. If $Q\not\cong_{\F}P$, then $\Hom_{\F}\left(Q,S\right)=\Hom_{\F_{\boldsymbol{1}}}\left(Q,S\right)$.
In this case the result follows from Lemmas \ref{lem:vanishing-CFR}(\ref{enu:hom-included})
and \ref{lem:intersection-of-fusion-systems.}. We can therefore assume
without loss of generality that $Q\cong_{\F}P$.

If $Q\not\le S'$ then $\Hom_{\F_{\boldsymbol{e}}}\left(Q,S'\right)=\Hom_{\F_{\boldsymbol{2}}}\left(Q,S'\right)=\emptyset$.
Assume that $P\not=Q\le S'$. Let $\varphi\in\Hom_{\F_{\boldsymbol{2}}}\left(Q,S'\right)$
and let $\hat{\varphi}:QP\to S'$ be a morphism in $\F_{\boldsymbol{2}}$
lifting $\varphi$. Since $P\lneq QP$, then it is not $\F$-conjugate
to $P$. It follows that $\hat{\varphi}$ (and therefore $\varphi$)
is a morphism in $\F_{\boldsymbol{1}}$. We conclude from Lemmas \ref{lem:vanishing-CFR}(\ref{enu:aut-included})
and \ref{lem:intersection-of-fusion-systems.}, that $\Rep_{\F}\left(P,\Lambda\right)$
is a tree. It follows from Lemma \ref{lem:functor-to-graph} that
$\Rep_{\F}\left(Q,\Lambda\right)$ is a tree. This concludes the proof.
\end{proof}
As a consequence of Corollary \ref{cor:whole-vanishing-of-CFR} we
obtain the following noteworthy result.
\begin{prop}
\label{prop:sharpness-2-essentials}Let $\F$ be a saturated fusion
system over a $p$-group $S$, let $M^{*}:\OF^{\operatorname{op}}\to\Fp\mod{}$
be the contravariant part of a Mackey functor over $\F$ with coefficients
in $\Fp$ and let $P,Q\in\Fcr$ be fully $\F$-normalized. Assume
that $P\trianglelefteq S$, that $\F=\left\langle N_{\F}\left(P\right),\Aut_{\F}\left(Q\right)\right\rangle _{S}$
and that $Q$ is minimal (under inclusion) in $\Fcr$. Then $\limn_{\OFc}\left(M^{*}\downarrow_{\OFc}^{\OF}\right)=0$
for every $n\ge2$. In particular, $\F$ satisfies cohomological sharpness.
\end{prop}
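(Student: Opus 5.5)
Set $\H:=N_{\F}\left(P\right)$, which is saturated and realizable because $P\trianglelefteq S$ is fully normalized and $\F$-centric (Lemma \ref{lem:sharpness-for-normalizers}); note that $\H$ is a fusion system over $N_S(P)=S$. Since $\F=\left\langle \H,\Aut_{\F}\left(Q\right)\right\rangle _{S}$ with $Q$ fully $\F$-normalized, we are in the situation of Corollary \ref{cor:whole-vanishing-of-CFR}. Put $\F_{\boldsymbol{1}}:=\H$, $\F_{\boldsymbol{2}}:=N_{\F}\left(Q\right)$, $\F_{\boldsymbol{e}}:=N_{\H}\left(Q\right)$ and $S':=N_S(Q)$, and take $\mathcal{C}$ to be the smallest family containing $\Fcr$ that is closed under $\F$-conjugation and overgroups. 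This means $\mathcal{C}$ is the set of overgroups of $\F$-conjugates of members of $\Fcr$; it lies inside $\Fc$ and contains $S'\ge Q$. Because $Q$ is minimal in $\Fcr$, it is also minimal in $\mathcal{C}$: every element of $\mathcal{C}$ contains some $\F$-centric-radical subgroup, which can be moved by $\F$-conjugation to a fully normalized one. Corollary \ref{cor:whole-vanishing-of-CFR} then gives $\CFp=0$. Here a little care is needed, since minimality only applies within the $\F$-conjugacy classes at the bottom of $\mathcal{C}$. If this causes trouble, I would use the Corollary in its local form, $\CFp(R)=0$ for all $R\not<_{\F}Q$, and check that no element of $\mathcal{C}$ is $\F$-subconjugate to $Q$ properly.

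Next I would verify the hypotheses of Theorem \hyperref[thm:A]{A}. For $\mathcal{E}\in\Lambda$, the systems $\F_{\boldsymbol{1}}=N_{\F}(P)$ and $\F_{\boldsymbol{2}}=N_{\F}(Q)$ are normalizers of fully normalized $\F$-centric subgroups, so they are saturated and satisfy sharpness by Lemma \ref{lem:sharpness-for-normalizers}. The system $\F_{\boldsymbol{e}}=N_{\H}(Q)$ is the normalizer of an $\H$-centric subgroup in the saturated system $\H$. By Lemma \ref{lem:intersection-of-fusion-systems.}, $Q$ is fully $\H$-normalized, since $N_S(Q)$ is already maximal; hence $\F_{\boldsymbol{e}}$ is also saturated and sharp. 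The restriction $M^{*}\downarrow$ of a Mackey functor over $\F$ is a Mackey functor over each subsystem, so all higher limits over $\OFc[\mathcal{E}]$ vanish for $\mathcal{E}\in\Lambda$.

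The containment $\Fcr[E]\subseteq\mathcal{C}$ is the step I expect to be the main obstacle. For $\mathcal{E}=\F$ it holds by construction. For $N_{\F}(P)$ and $N_{\F}(Q)$, Lemma \ref{lem:centric-radical-contained} shows that their centric-radicals contain $P$, respectively $Q$. A group containing $Q\in\Fcr$ lies in $\mathcal{C}$. A group containing $P$ lies in $\mathcal{C}$ provided $P\in\Fcr$, which is assumed. For $N_{\H}(Q)$, Lemma \ref{lem:centric-radical-contained} applied inside $\H$ again shows its centric-radicals contain $Q$. For $\H$ itself one also needs $\H$-centricity to imply $\F$-centricity, which holds because $\H$-centric-radicals contain $P$ and $P$ is $\F$-centric.

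With these hypotheses in place, Theorem \hyperref[thm:A]{A}(\ref{enu:thm-a-iso}) with $\CFp=0$ gives $\limn[n]_{\OFc}(M)=0$ for $n\ge3$. The exact sequence in Theorem \hyperref[thm:A]{A}(\ref{enu:thm-A-exact-seq}) surjects $\Nat_{\OFC}(\CFp,M)=0$ onto $\limn[2]_{\OFc}(M)$, so $\limn[2]_{\OFc}(M)=0$. For cohomological sharpness it remains to treat $\limn[1]$ when $M^*=H^j(-;\Fp)$. By Theorem \hyperref[thm:A]{A}(\ref{enu:thm-A-exact-seq}), $\limn[1]_{\OFc}(M)\cong\coker(f^*)=M^{\F_{\boldsymbol{e}}}/(M^{\F_{\boldsymbol{1}}}\overline{\iota}+M^{\F_{\boldsymbol{2}}})$, and I would show this vanishes by invoking \cite[Lemma 3.6]{Pra26}, as the introduction indicates. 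The key input is the stable elements theorem: the restriction $H^j(B\F_{\boldsymbol{1}})\to H^j(B\F_{\boldsymbol{e}})$ is split by a transfer/stable-elements idempotent, which is available because all three systems are saturated.
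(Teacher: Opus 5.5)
Your proof that $\limn_{\OFc}(M)=0$ for $n\ge2$ is the paper's argument. It uses the same $\F_{\boldsymbol{1}}=N_{\F}(P)$, $\F_{\boldsymbol{2}}=N_{\F}(Q)$, $\F_{\boldsymbol{e}}=N_{\F_{\boldsymbol{1}}}(Q)$ and the same $\mathcal{C}$. It then applies Lemma \ref{lem:centric-radical-contained} for the centric-radical containments, Lemma \ref{lem:sharpness-for-normalizers} for the vanishing over each $\F_x$, Corollary \ref{cor:whole-vanishing-of-CFR} for $\CFp=0$, and both parts of Theorem \hyperref[thm:A]{A}. Two small corrections:
\begin{itemize}
\item Your minimality worry is unnecessary. $\Fcr$, and hence $\mathcal{C}$, is closed under $\F$-conjugation, so no member of $\mathcal{C}$ is properly $\F$-subconjugate to $Q$.
\item That $Q$ is fully $N_{\F}(P)$-normalized follows directly from $N_{\F}(P)\subseteq\F$ being a system over the same $S$. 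It does not come from Lemma \ref{lem:intersection-of-fusion-systems.}.
\end{itemize}

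Your final paragraph, however, misdescribes the step to cohomological sharpness. Vanishing of $\coker(f^{*})=M^{\F_{\boldsymbol{e}}}/(M^{\F_{\boldsymbol{1}}}\overline{\iota_{S'}^{S}}+M^{\F_{\boldsymbol{2}}})$ is a surjectivity statement about $M^{\F_{\boldsymbol{1}}}\oplus M^{\F_{\boldsymbol{2}}}\to M^{\F_{\boldsymbol{e}}}$. A splitting of the restriction $H^{j}(B\F_{\boldsymbol{1}})\to H^{j}(B\F_{\boldsymbol{e}})$ gives at most injectivity of that restriction, and says nothing about this surjectivity. Such a splitting is also not available in general. $\F_{\boldsymbol{e}}$ lives over $N_S(Q)$, which can be a proper subgroup of $S$, and a transfer argument then loses a factor divisible by $p$. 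Compare Proposition \ref{prop:sufficient-condition-for-splitting.}, where $Q\trianglelefteq S'$ is assumed precisely to make the index prime to $p$; saturation alone does not provide this.

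The paper never analyses $\coker(f^{*})$ at this point. It only needs \cite[Lemma 3.6]{Pra26} to upgrade the vanishing of $\limn[i]_{\OFc}(H^{j}(-;\Fp))$ for all $i\ge2$ and $j\ge0$ to full cohomological sharpness. This follows from the spectral sequence (\ref{eq:cohomology-spectral-sequence}):
\begin{itemize}
\item The stable elements theorem for $\F$ itself makes the edge map $H^{n}(B\F;\Fp)\to\lim_{\OFc}(H^{n}(-;\Fp))$ an isomorphism, which forces $E_{\infty}^{1,*}=0$.
\item Since $E_{2}^{i,*}=0$ for $i\ge2$, no differential enters or leaves column $1$, so $E_{2}^{1,*}=E_{\infty}^{1,*}=0$.
\end{itemize}
So apply that lemma to the first part of the statement, as the paper does, rather than routing through $\coker(f^{*})$ and a transfer splitting.
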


\begin{proof}
Cohomological sharpness follows from the first part of the statement
and \cite[Lemma 3.6]{Pra26}.

Define the fusion systems
\begin{align*}
\F_{\boldsymbol{1}} & :=N_{\F}\left(P\right), & \F_{\boldsymbol{2}} & :=N_{\F}\left(Q\right), & \F_{\boldsymbol{e}} & :=N_{\F_{\boldsymbol{1}}}\left(Q\right),
\end{align*}
the set $\Lambda=\left\{ \F_{\boldsymbol{1}},\F_{\boldsymbol{2}},\F_{\boldsymbol{e}}\right\} $
and let $\mathcal{C}$ be the smallest family of subgroups of $S$
containing $\Fcr$ and closed under $\F$-conjugates and taking overgroups.
This notation is consistent with Notation \ref{nota:C-included-in-notation}.

It follows from Lemma \ref{lem:centric-radical-contained}, that $\Fcr_{x}\subseteq\mathcal{C}$
for every $x\in\Ob\left(\T\right)$. From Lemma \ref{lem:sharpness-for-normalizers},
we know that $\F_{x}$ satisfies sharpness. From Corollary \ref{cor:whole-vanishing-of-CFR}
(applied with $P=Q$), we know that $\CFp=0$. The result follows
from applying Theorem \hyperref[thm:A]{A}.
\end{proof}
\begin{rem}
The above proposition can, in fact, be proven using \cite[Theorem B]{Pra26}
instead of Theorem \hyperref[thm:A]{A}. However, in doing so, the
groups $G_{x}$ realizing each $\F_{x}$ need to be the ones given
by \cite[Proposition 4.3]{BCGLO05}.
\end{rem}

We are now ready to prove Theorem \hyperref[thm:B]{B}.
\begin{proof}[\foreignlanguage{english}{Proof of Theorem B}]
Assume that $\left|S\right|\ge p^{4}$.

If $\left|\mathcal{A}\right|=0$ then $\F=\H$ and the result is obvious.
Fix $n\ge1$ and assume that $\F\not=\H$ and that the statement holds
for $\left|\mathcal{A}\right|=n-1$. Fix $P\in\mathcal{A}$ and define
\[
\F_{\boldsymbol{1}}:=\left\langle \H,\Aut_{\F}\left(Q\right)\mid Q\in\mathcal{A}\backslash P\right\rangle _{S}.
\]
From Proposition \ref{prop:pruning-method}, we know that $\F_{\boldsymbol{1}}$
is saturated. It follows from induction hypothesis, that $\F_{\boldsymbol{1}}$
satisfies cohomological sharpness whenever $\H$ does.

If $C_{S}\left(Q\right)\not\le Q$ for every $Q\lneq P$ then $\Fc_{\boldsymbol{1}}=\Fc$.
If $P\cong p_{+}^{1+2}$, then it follows from \cite[Lemma 5.20]{GraPar25},
that $\Fc_{\boldsymbol{1}}=\Fc$. In both cases $\Fcr_{\boldsymbol{1}}\subseteq\Fc$.

By taking an $\F_{\boldsymbol{1}}$-conjugate if necessary, we can
assume without loss of generality that $P$ is fully $\F$-normalized.
Define the fusion systems $\F_{\boldsymbol{2}}:=N_{\F}\left(P\right)$
and $\F_{\boldsymbol{e}}:=N_{\F_{\boldsymbol{1}}}\left(P\right)$.
Since $\F_{\boldsymbol{1}}$ and $\F$ are saturated, it follows from
Alperin's fusion theorem that $P\not\in\Fc$ implies $\F_{\boldsymbol{1}}=\F$.
We conclude that $P$ is $\F$-centric. From Lemma \ref{lem:sharpness-for-normalizers},
we know that $\F_{\boldsymbol{2}}$ and $\F_{\boldsymbol{e}}$ satisfy
cohomological sharpness. From Lemma \ref{lem:centric-radical-contained},
we know that $\Fcr_{\boldsymbol{2}},\Fcr_{\boldsymbol{e}}\subseteq\Fc$.

Define the set $\Lambda:=\left\{ \F_{\boldsymbol{1}},\F_{\boldsymbol{2}},\F_{\boldsymbol{e}}\right\} $
and the family of groups $\mathcal{C}=\Fc$. This notation is consistent
with Notation \ref{nota:C-included-in-notation}. From Corollary \ref{cor:whole-vanishing-of-CFR},
then $\CFp=0$. If $\H$ satisfies cohomological sharpness, then we
can apply Theorem \hyperref[thm:A]{A} to deduce that $\limi_{\OFc}\left(H^{j}\left(-;\Fp\right)\right)=0$
for every $i\ge2$ and every $j\ge0$. The result follows from \cite[Lemma 3.6]{Pra26}.

Assume that $\left|S\right|\le p^{3}$.

Redefine $\H:=N_{\F}\left(S\right)$ and $\mathcal{A}:=\Fc\backslash\left\{ S\right\} $.
From Alperin's fusion theorem, we know that $\F=\left\langle \H,\Aut_{\F}\left(Q\right)\mid Q\in\mathcal{A}\backslash P\right\rangle _{S}$.
From Lemma \ref{lem:sharpness-for-normalizers}, we know that $\H$
satisfies cohomological sharpness. By construction, every $P\in\mathcal{A}$
is abelian. In the arguments above we have used that $\left|S\right|\ge p^{4}$
only to apply \cite[Lemma 5.20]{GraPar25} and only when $P\cong p_{+}^{1+2}$.
Thus, after this redefinition, the same arguments as before apply.
It follows that $\F$ satisfies cohomological sharpness thus concluding
the proof. 
\end{proof}
During the above we have proven the following.
\begin{cor}
\label{cor:sharpness-groups-of-order-p3}Let $\F$ be a saturated
fusion system over any $p$-group $S$ of order $\left|S\right|\le p^{3}$.
Then $\F$ satisfies cohomological sharpness.
\end{cor}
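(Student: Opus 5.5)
The plan is to extract this from the second half of the proof of Theorem \hyperref[thm:B]{B}, the case $\left|S\right|\le p^{3}$. Take $\H:=N_{\F}\left(S\right)$ and $\mathcal{A}:=\Fc\backslash\left\{ S\right\}$. Since $S$ is trivially fully $\F$-normalized and $\F$-centric, Lemma \ref{lem:sharpness-for-normalizers} shows that $\H$ satisfies sharpness, and in particular cohomological sharpness. By Alperin's fusion theorem, $\F=\left\langle \H,\Aut_{\F}\left(P\right)\mid P\in\mathcal{A}\right\rangle _{S}$, since every essential subgroup is centric and $\Aut_{\F}\left(S\right)\subseteq\H$.

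Next I would run the induction on $\left|\mathcal{A}\right|$ exactly as in the proof of Theorem \hyperref[thm:B]{B}. At each step I fix $P\in\mathcal{A}$ and set $\F_{\boldsymbol{1}}:=\left\langle \H,\Aut_{\F}\left(Q\right)\mid Q\in\mathcal{A}\backslash P\right\rangle _{S}$. The pruning step needs one of the two hypotheses of Proposition \ref{prop:pruning-method}. If $\left|S\right|\le p^{2}$, then $S$ is abelian, so $\Fc=\left\{ S\right\}$ and $\F=\H$; there is nothing to prove. If $\left|S\right|=p^{3}$, every $P\in\mathcal{A}$ is a proper centric subgroup, hence of order $p^{2}$ and abelian. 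For $Q\lneq P$ we have $\left|Q\right|\le p$ and $P\le C_{S}\left(Q\right)$, so $C_{S}\left(Q\right)\not\le Q$. The second bullet of Proposition \ref{prop:pruning-method} therefore holds, and $\F_{\boldsymbol{1}}$ is saturated. The same observation gives $\Fc_{\boldsymbol{1}}=\Fc$, because adding automorphisms of $P$ cannot change which subgroups of order $\le p$ are centric. Hence $\Fcr_{\boldsymbol{1}}\subseteq\Fc$, and I never need \cite[Lemma 5.20]{GraPar25}, which was the only place $\left|S\right|\ge p^{4}$ entered.

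With $P$ chosen fully $\F$-normalized, I then set $\F_{\boldsymbol{2}}:=N_{\F}\left(P\right)$, $\F_{\boldsymbol{e}}:=N_{\F_{\boldsymbol{1}}}\left(P\right)$ and $\mathcal{C}:=\Fc$. Lemma \ref{lem:sharpness-for-normalizers} gives sharpness for $\F_{\boldsymbol{2}}$ and $\F_{\boldsymbol{e}}$, and Lemma \ref{lem:centric-radical-contained} gives $\Fcr_{\boldsymbol{2}},\Fcr_{\boldsymbol{e}}\subseteq\Fc$. Since $P$ has order $p^{2}$ and is minimal in $\Fc$, Corollary \ref{cor:whole-vanishing-of-CFR} yields $\CFp=0$. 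Theorem \hyperref[thm:A]{A} then gives $\limi_{\OFc}\left(H^{j}\left(-;\Fp\right)\right)=0$ for all $i\ge2$. Vanishing of $\limn[1]$ follows from \cite[Lemma 3.6]{Pra26}.

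The main point needing care is the hypothesis of Theorem \hyperref[thm:A]{A} that the higher limits over $\F_{\boldsymbol{1}}$ vanish. This comes from the induction hypothesis on $\left|\mathcal{A}\right|$ applied to $\F_{\boldsymbol{1}}$, which is again a saturated fusion system over the same $S$ built from $\H$ by fewer pruned automorphism groups. The base of the induction is $\H$ itself.
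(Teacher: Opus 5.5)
Your proposal is correct and is essentially the paper's own argument: the paper reads the corollary off the $\left|S\right|\le p^{3}$ case in the proof of Theorem B. It takes $\mathcal{H}=N_{\mathcal{F}}(S)$ and $\mathcal{A}=\mathcal{F}^{\operatorname{c}}\backslash\{S\}$, and observes that every member of $\mathcal{A}$ is abelian, so the second pruning condition holds and \cite[Lemma 5.20]{GraPar25} is never needed. Your extra checks (that $\mathcal{F}_{\boldsymbol{1}}^{\operatorname{c}}=\mathcal{F}^{\operatorname{c}}$, and that each $P$ of order $p^{2}$ is normal, hence fully normalized and minimal in $\mathcal{F}^{\operatorname{c}}$) only make explicit what the paper leaves implicit; the first holds because in a nonabelian group of order $p^{3}$ centricity depends only on the order.
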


\begin{rem}
Corollary \ref{cor:sharpness-groups-of-order-p3} is a special case
of \cite[Theorem C]{DiazPark15}. However the methods used to prove
it are drastically different.
\end{rem}

\subsection{Cohomological sharpness for polynomial fusion systems}\label{subsec:Polynomial-fs}

Let $p$ be a prime, let $q$ be a power of $p$ and let $1\le n\le p$.
We define $V_{n}\left(q\right)$ as the the group of homogenous polynomials
over $\Fp[q]$ on $2$ variables and degree $n$ and $\Lambda\left(q\right):=\Hom_{\Fp[q]}\left(V_{p}\left(q\right),\Fp[q]\right)$
as its dual. There exists a well known right action of $\GL_{2}\left(q\right)$
on $V_{n}\left(q\right)$ given by setting
\[
\left(f\cdot A\right)\left(x,y\right):=f\left(ax+by,cx+dy\right),
\]
for every $A:=\begin{pmatrix}a & b\\
c & d
\end{pmatrix}\in\GL_{2}\left(q\right)$, every $f\in V_{n}\left(q\right)$ and every $x,y\in\Fp[q]$. This
in turn leads to an action of $\GL_{2}\left(q\right)$ on $\Lambda\left(q\right)$
obtained by setting
\[
\left(\phi\cdot A\right)\left(f\right):=\phi\left(f\cdot A^{-1}\right),
\]
for every $A\in\GL_{2}\left(q\right)$, every $f\in V_{p}\left(q\right)$
and every $\phi\in\Lambda\left(q\right)$. This allows us to define
the following groups
\begin{align*}
P_{n}\left(q\right) & :=V_{n}\left(q\right)\rtimes\GL_{2}\left(q\right), & S_{n}\left(q\right) & \in\Syl_{p}\left(P_{n}\left(q\right)\right),\\
P_{\Lambda}\left(q\right) & :=\Lambda\left(q\right)\rtimes\GL_{2}\left(q\right), & S_{\Lambda}\left(q\right) & \in\Syl_{p}\left(P_{\Lambda}\left(q\right)\right).
\end{align*}

\begin{lem}
\label{lem:polynomial-characteristic}View $V:=V_{n}\left(q\right)$
and $V_{\Lambda}:=\Lambda\left(q\right)$ as subgroups of $S:=S_{n}\left(q\right)$
and $S_{\Lambda}:=S_{\Lambda}\left(q\right)$ respectively. The group
$V_{\Lambda}$ is characteristic in $S_{\Lambda}$ and the group $V$
is characterisitc in $S$ for every $1\le n\le p$.
\end{lem}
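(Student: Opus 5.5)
The plan is to identify $V$ (resp. $V_{\Lambda}$) with the Thompson subgroup $J(S)$ (resp. $J(S_{\Lambda})$), i.e. to show it is the unique abelian subgroup of maximal order. Such a subgroup is automatically characteristic.

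Write $S=V\rtimes U$ with $U\in\Syl_{p}\left(\GL_{2}\left(q\right)\right)$ the lower (or upper) unitriangular matrices. Then $U\cong\left(\Fp[q],+\right)$ has order $q$, and $V$ is an abelian normal subgroup of index $q$. Let $A\le S$ be abelian with $A\not\le V$, and set $\bar{A}:=AV/V\le U$, which is nontrivial. Since $A$ is abelian, $A\cap V\le C_{V}\left(\bar{A}\right)$. Hence
\[
\left|A\right|=\left|A\cap V\right|\cdot\left|\bar{A}\right|\le\left|C_{V}\left(u\right)\right|\cdot q
\]
for any $1\neq u\in\bar{A}$. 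The key computation is therefore $C_{V}\left(u\right)$ for a nontrivial unipotent $u=\begin{pmatrix}1 & 0\\ c & 1\end{pmatrix}$.

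For $V_{n}\left(q\right)$ with $n<p$, the action of $u$ is $f\left(x,y\right)\mapsto f\left(x+cy,y\right)$, which on the basis $x^{i}y^{n-i}$ is a single Jordan block of size $n+1$. Indeed, expanding $\left(x+cy\right)^{i}$, the binomial coefficients $\binom{i}{1}$ are nonzero mod $p$ for $i\le n<p$. So $\left|C_{V}\left(u\right)\right|=q$ and $\left|A\right|\le q^{2}<q^{n+1}=\left|V\right|$ as soon as $n\ge2$. For $n=p$, one checks the same expansion: $x^{p}\mapsto x^{p}+c^{p}y^{p}$, so $V_{p}$ splits into a block of size $p$ and an extra fixed vector. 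Then $\left|C_{V}\left(u\right)\right|=q^{2}$ and $\left|A\right|\le q^{3}<q^{p+1}$ when $p\ge3$. Dualizing gives the same Jordan type on $\Lambda\left(q\right)$, hence the same bound for $S_{\Lambda}$. In all these cases $V$ is the unique abelian subgroup of order $\left|V\right|$. Any automorphism of $S$ permutes abelian subgroups of maximal order, so it fixes $V$, and similarly for $V_{\Lambda}$.

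The main obstacle is the boundary cases where the inequality becomes an equality: $n=1$, and $n=p=2$ (including $\Lambda$ for $p=2$). Here counting alone fails; for $n=1$, $q=p$ odd, $S$ is extraspecial of order $p^{3}$, so I would first recheck the precise conventions of \cite{GPSB26} for which $n$ are used.

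If these cases are needed, I would argue instead that $V=C_{S}\left(\left[S,V\right]\right)$, or more concretely that $V$ is the unique abelian subgroup of maximal order containing $Z\left(S\right)\left[S,S\right]$. Any other abelian $A$ of that order meets $V$ exactly in $C_{V}\left(\bar{A}\right)$ with $\bar{A}=U$. I would then use the explicit commutator formula $\left[f,u\right]$ to show that such an $A$ cannot be normalized by $U$. Hence $A$ is not normal in $S$, while $V$ is, and this separates them up to automorphism when combined with the characteristic subgroup $Z_{2}\left(S\right)$ or $\left[S,S\right]$. I expect this small-rank case analysis, rather than the Jordan-block computation, to be the delicate step.
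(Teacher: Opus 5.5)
Your main argument is correct, and it takes a different route from the paper. The paper does no computation here; it simply cites \cite[Lemmas 2.8(4), 2.15(6) and 2.16(8)]{GPSB26}. Your argument covers $p$ odd with $2\le n\le p$, and $\Lambda(q)$ with $p$ odd. There the bound $|A|\le|C_V(u)|\cdot q$ applies, with $|C_V(u)|=q$ when $n<p$ (one Jordan block) and $|C_V(u)|=q^2$ for $V_p(q)$ and its dual (blocks of sizes $p$ and $1$). This shows that $V$, resp.\ $V_\Lambda$, is the unique abelian subgroup of maximal order, so $V=J(S)$. That is self-contained and gives a concrete reason why $V$ is characteristic. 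The citation instead outsources both the argument and the exact hypotheses.

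The genuine problem is your plan for the boundary cases. It cannot succeed, because the lemma as stated is false there, so those cases must be excluded rather than proved.

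For $n=1$, $S=V_1(q)\rtimes U$ is a Sylow $p$-subgroup of $\mathbb{F}_q^2\rtimes\GL_2(q)\le\GL_3(q)$, so $S\cong UT_3(q)$. Under a suitable identification, $V$ and $UZ(S)$ are the unitriangular matrices whose off-diagonal entries lie in the last column, resp.\ the first row. The automorphism $g\mapsto J\,(g^{-1})^{T}J$, with $J$ the antidiagonal permutation matrix, interchanges them. Each separating property you propose fails here:
\begin{itemize}
\item $[S,V]=Z(S)$, so $C_S([S,V])=S\neq V$.
\item $UZ(S)$ is a second abelian subgroup of order $q^2$ containing $Z(S)[S,S]=Z(S)$.
\item $UZ(S)$ is normal in $S$, since it contains $[S,S]$.
\end{itemize}

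For $p=q=2$ one has $S_2(2)\cong S_\Lambda(2)\cong C_2\times D_8$. There $V$ is one of exactly two elementary abelian subgroups of order $8$, and an outer automorphism of the $D_8$ factor swaps them. For $\Lambda$ with $p=2$ and any $q$, $C_\Lambda(U)$ has order $q^2$, so $C_\Lambda(U)\times U$ is a second abelian subgroup of order $|\Lambda|$; no Thompson-subgroup argument works there.

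Only $V_2(q)$ with $p=2<q$ can be rescued: $C_V(\bar A)$ drops to order $q$ once $|\bar A|\ge 4$, so $|A|\le 2q^2<q^3$.

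So state the lemma for $n\ge 2$ and exclude these $p=2$ cases, presumably matching the hypotheses of the cited lemmas. In that range your proof is complete.
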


\begin{proof}
This is proven in \cite[Lemmas 2.8(4), 2.15(6) and 2.16(8)]{GPSB26}.
\end{proof}
Let $U\le\SL_{2}\left(q\right)$ be the subgroup of lower triangular
matrices. Viewing $U$ as a subgroup of both $S_{n}\left(q\right)$
and $S_{\Lambda}\left(q\right)$ we can further define the groups
\begin{align*}
R_{\Lambda} & :=UZ\left(S_{\Lambda}\left(q\right)\right) & R & :=UZ\left(S_{n}\left(q\right)\right), & Q & :=UZ_{2}\left(S_{n}\left(q\right)\right),
\end{align*}
where $Z_{2}\left(S\right)$ denotes the second center of a group
$S$ (see Notation \ref{nota:central-series}).

In \cite{GPSB26}, Grazian, Parker, Semeraro and van Beek (GPSB) study
the fusion systems $\F$ over $S_{n}\left(q\right)$ and $S_{\Lambda}\left(q\right)$
satisfying $O_{p}\left(\F\right)=\left\{ 1\right\} $. In particular,
they describe three fusion systems $\F^{*}\left(n,q,R\right)$, $\F^{*}\left(n,q,Q\right)$,
$\F^{*}\left(n,q,R\right)_{P}$ over $S_{n}\left(q\right)$ and a
fusion system $\F_{\Lambda}^{*}\left(q\right)$ over $S_{\Lambda}\left(q\right)$
and give the following definition.
\begin{defn}
\label{def:polynomial-fusion-system}A saturated fusion system $\F$
is a polynomial fusion system if there exists $1\le n\le p$ such
that $\F$ is subsystem of either $\F^{*}\left(n,q,R\right)$, $\F^{*}\left(n,q,Q\right)$,
$\F^{*}\left(n,q,R\right)_{P}$ or $\F_{\Lambda}^{*}\left(q\right)$,
has index prime to $p$ with respect to it and satisfies $O_{p}\left(\F\right)=\left\{ 1\right\} $.

Furthermore, GPSB provide the following description of the centric-radical
subgroups.
\end{defn}

\begin{lem}
\label{lem:polynomial-centric-radical}View $V:=V_{n}\left(q\right)$
and $V_{\Lambda}:=\Lambda\left(q\right)$ as subgroups of $S:=S_{n}\left(q\right)$
and $S_{\Lambda}:=S_{\Lambda}\left(q\right)$ respectively. Then 
\begin{align*}
\left(\F_{\Lambda}^{*}\left(q\right)\right)^{\operatorname{cr}} & =\left\{ S_{\Lambda},V_{\Lambda}\right\} \cup R_{\Lambda}^{S},\\
\left(\F^{*}\left(n,q,X\right)\right)^{\operatorname{cr}} & =\left\{ S,V\right\} \cup X^{S},\\
\left(\F^{*}\left(n,q,R\right)_{P}\right)^{\operatorname{cr}} & =\left\{ S\right\} \cup R^{S},
\end{align*}
where $X\in\left\{ Q,R\right\} $, and $Y^{S}$ denotes the set of
subgroups of $S$ that are $S$-conjugate to $Y$.
\end{lem}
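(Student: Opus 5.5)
This lemma is essentially a citation of the classification work of Grazian, Parker, Semeraro and van Beek, so I would prove it by extracting and assembling the relevant statements from \cite{GPSB26} rather than recomputing centric-radicals from scratch. The plan is as follows.

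First, I would recall how each of $\F_{\Lambda}^{*}\left(q\right)$, $\F^{*}\left(n,q,X\right)$ with $X\in\left\{ Q,R\right\}$, and $\F^{*}\left(n,q,R\right)_{P}$ is defined in \cite{GPSB26}. Each is generated by $\Aut_{\F}\left(S\right)$ together with the automorphism groups of a short list of subgroups. These are $V$ (or $V_{\Lambda}$), which is normal, in fact characteristic by Lemma \ref{lem:polynomial-characteristic}, and carries $\Aut_{\F}\left(V\right)$ containing $\SL_{2}\left(q\right)$ acting on the polynomial module. The other generator is the subgroup $X$ (resp. $R_{\Lambda}$), whose outer automorphism group contains $\SL_{2}\left(q\right)$ acting on $X/\Phi\left(X\right)$ or a suitable quotient.

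Next comes the inclusion $\supseteq$. Here I would check that each listed subgroup is $\F$-centric and $\F$-radical.
\begin{itemize}
\item Centricity of $V$ follows from $C_{S}\left(V\right)=V$, since $\GL_{2}\left(q\right)$ acts faithfully. Centricity of $X$ and $R_{\Lambda}$ is a direct commutator computation in $S$.
\item Radicality is immediate from $\Out_{\F}$ containing $\SL_{2}\left(q\right)$ with trivial $O_{p}$, so these subgroups are in fact essential.
\item $S$ is always centric-radical.
\end{itemize}
Since $V$ is characteristic, its only conjugate is itself. For $X$, the $\F$-conjugates lie among the $S$-conjugates, because the essential automorphisms normalize $X$ and the other generators normalize $V$.

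The inclusion $\subseteq$ is the main obstacle: showing that no other subgroup is centric-radical. I would invoke the essential-subgroup classification in \cite{GPSB26}, which says $\mathcal{E}\left(\F\right)\subseteq\left\{ V\right\} \cup X^{S}$. I would then argue the remaining centric-radicals directly. Any $\F$-centric-radical $P$ either contains $V$, or not.
\begin{itemize}
\item If $V\le P<S$ with $P\neq V$, then $P/V$ is a nontrivial $p$-subgroup of $\GL_{2}\left(q\right)$. Its normalizer structure in $N_{\F}\left(V\right)$ (realized by $P_{n}\left(q\right)$) forces $O_{p}\left(\Out_{\F}\left(P\right)\right)\neq1$ unless $P=S$.
\item If $V\not\le P$, I would use that $PV$ properly contains $P$ and is normalized by $N_{S}\left(P\right)$. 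A standard argument (e.g. \cite[Proposition 4.46]{Craven11}-style reasoning via the morphisms generated by $\Aut_{\F}\left(X\right)$) then shows $P$ must be an $\F$-conjugate of $X$.
\end{itemize}
For $\F^{*}\left(n,q,R\right)_{P}$, which is obtained by pruning $V$, the first bullet shows $V$ is no longer radical, giving the third line. I expect to rely mostly on the explicit lemmas in \cite{GPSB26} here rather than fully redoing these calculations.
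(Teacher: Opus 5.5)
\textbf{There is a genuine gap in the third line.} For the first two lines your plan matches the paper, which simply cites \cite[Proposition 4.2]{GPSB26}. Your added direct sketch there is not needed, and it would not stand on its own: $\Out_{\F}(P)$ merely containing $\SL_{2}(q)$ does not force $O_{p}(\Out_{\F}(P))=1$, and the case $V\not\le P$ is only gestured at.

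The third line is the one part the paper actually proves, and your argument for it fails. You say that ``the first bullet shows $V$ is no longer radical''. But that bullet is about overgroups $V<P<S$, and it uses that $N_{\F}(V)$ is realized by $P_{n}(q)$, which fails once the automorphisms of $V$ are pruned. It gives no information about $V$ itself.

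The missing argument runs as follows. By Alperin's theorem and the centric-radical list for $\F^{*}(n,q,R)$, the pruned system is generated by the automorphisms of $S$ and of the $S$-conjugates of $R$. Since $V$ lies in no such conjugate, every $\F^{*}(n,q,R)_{P}$-automorphism of $V$ is the restriction of one of $S$. As $V\trianglelefteq S$ is abelian with $C_{S}(V)=V$, the image of $\Aut_{S}(S)$ is a nontrivial normal $p$-subgroup $\Aut_{S}(V)\cong S/V$ of $\Out_{\F^{*}(n,q,R)_{P}}(V)=\Aut_{\F^{*}(n,q,R)_{P}}(V)$. Hence $V$ is not radical.

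Removing $V$ from the list is also not enough on its own: you must know that pruning changes nothing else. A priori, $\Aut_{\F^{*}(n,q,R)_{P}}(P)$ could be a proper subgroup of $\Aut_{\F^{*}(n,q,R)}(P)$ whose outer automorphism group has a different $O_{p}$. Also, $\F^{*}(n,q,R)_{P}$ could a priori have centric subgroups that are not $\F^{*}(n,q,R)$-centric.

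The paper records exactly the needed fact: for every centric subgroup other than $V$, the automorphism groups in the two systems coincide. One way to justify it is this. Take a chain of restrictions of automorphisms of $S$, $V$ and the conjugates of $R$, starting at an $\F^{*}(n,q,R)_{P}$-centric subgroup $P$, and suppose it involves an automorphism of $V$. The subgroup just before the first such step is an $\F^{*}(n,q,R)_{P}$-conjugate $P'\le V$ of $P$. Then $V\le C_{S}(P')\le P'$ forces $P'=V$, hence $P=V$.

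Your proposal does not address this step, and without it the equality in the third line does not follow.
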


\begin{proof}
The centric-radicals for $\F_{\Lambda}^{*}\left(q\right)$ and $\F^{*}\left(n,q,X\right)$
are given in \cite[Proposition 4.2]{GPSB26}. On the other hand, $\F^{*}\left(n,q,R\right)_{P}$
is obtained from $\F^{*}\left(n,q,R\right)$ by ``pruning'' the
automorphisms of $V$ as in Proposition \ref{prop:pruning-method}
(see \cite[Notation 4.4]{GPSB26}). From Alperins fusion theorem and
the description of $\F^{*}\left(n,q,R\right)$-centric-radical groups,
we deduce that all the $\F^{*}\left(n,q,R\right)_{P}$-automorphisms
of $V$ can be lifted to $\F^{*}\left(n,q,R\right)_{P}$-automorphisms
of $S$. On the other hand, the $\F^{*}\left(n,q,R\right)_{P}$-automorphism
of every other $\F^{*}\left(n,q,R\right)_{P}$-centric element coincide
with its $\F^{*}\left(n,q,R\right)$-automorphisms. The result follows.
\end{proof}
\begin{rem}
The last identity of Lemma \ref{lem:polynomial-centric-radical} is
also noted (with less explanation) in the first paragraph of the proof
of \cite[Proposition 4.5]{GPSB26}.
\end{rem}

The centric-radical groups for all the other polynomial fusion systems
are given by the following known result.
\begin{lem}[{\foreignlanguage{english}{\cite[Lemma 2.2(a)]{OliverRuiz21}}}]
\label{lem:index-p-prime-preserves-centric-radicals}Let $\F$ be
a saturated fusion system over a $p$-group $S$ and let $\H\subseteq\F$
be a fusion subsystem of index prime to $p$. Then $\Fcr=\Fcr[H]$.
\end{lem}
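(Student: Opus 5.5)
We prove the equality by checking separately that "centric" and "radical" mean the same thing in $\F$ and in $\H$. Recall the standard meaning of index prime to $p$ (as in \cite{BCGLO05}, \cite{OliverRuiz21}): $\H$ is a saturated subsystem of $\F$ over the same $p$-group $S$, and $\Aut_{\H}\left(P\right)\ge O^{p'}\left(\Aut_{\F}\left(P\right)\right)$ for every $P\le S$. We will also use the known consequence \cite[Section 5]{BCGLO05} that $\F=\left\langle \H,\Aut_{\F}\left(S\right)\right\rangle _{S}$, and more precisely that every morphism $\varphi\in\Hom_{\F}\left(P,S\right)$ factors as $\varphi=\alpha|_{\psi\left(P\right)}\circ\psi$ with $\psi\in\Hom_{\H}\left(P,S\right)$ and $\alpha\in\Aut_{\F}\left(S\right)$. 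This factorization is the one input we take from the literature; if needed, we would rederive it from the Frattini-type argument in \cite{BCGLO05}.

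\emph{Centricity.} Since $\H\subseteq\F$, every $\H$-conjugate of $P$ is an $\F$-conjugate, so $P\in\Fc$ implies $P\in\Fc[H]$. For the converse, let $P$ be $\H$-centric and let $Q\cong_{\F}P$. By the factorization, $Q=\alpha\left(Q'\right)$ for some $Q'\cong_{\H}P$ and some $\alpha\in\Aut_{\F}\left(S\right)$. Because $\alpha$ is an automorphism of $S$, we have $C_{S}\left(Q\right)=\alpha\left(C_{S}\left(Q'\right)\right)\le\alpha\left(Q'\right)=Q$. Hence $P\in\Fc$, and the two centric families coincide.

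\emph{Radicality.} Fix $P\le S$ and set $A:=\Aut_{\F}\left(P\right)$ and $B:=\Aut_{\H}\left(P\right)$. We have $\Aut_{P}\left(P\right)\le B\le A$ and $O^{p'}\left(A\right)\le B$. Since $O^{p'}\left(A\right)$ is normal in $A$ with quotient a $p'$-group, every subgroup $B$ containing it is normal in $A$ of $p'$-index. We now show $O_{p}\left(A\right)=O_{p}\left(B\right)$.
\begin{itemize}
\item[(i)] $O_{p}\left(A\right)$ is a normal $p$-subgroup of $A$, so its image in the $p'$-group $A/O^{p'}\left(A\right)$ is trivial. Thus $O_{p}\left(A\right)\le O^{p'}\left(A\right)\le B$, and it is normal in $B$, which gives $O_{p}\left(A\right)\le O_{p}\left(B\right)$.
\item[(ii)] $O_{p}\left(B\right)$ is characteristic in $B$, and $B$ is normal in $A$, so $O_{p}\left(B\right)$ is a normal $p$-subgroup of $A$. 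This gives $O_{p}\left(B\right)\le O_{p}\left(A\right)$.
\end{itemize}
Therefore $\Aut_{P}\left(P\right)=O_{p}\left(A\right)$ if and only if $\Aut_{P}\left(P\right)=O_{p}\left(B\right)$, that is, $P$ is $\F$-radical if and only if it is $\H$-radical.

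Combining the two parts gives $\Fcr=\Fcr[H]$. The only step that is not a short group-theoretic check is the factorization of $\F$-morphisms through $\H$ and $\Aut_{\F}\left(S\right)$, used in the centricity part; everything else is elementary.
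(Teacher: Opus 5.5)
Your split into a centric part and a radical part is the natural route. The paper gives no argument of its own here; it simply imports the lemma from Oliver--Ruiz. Your centric part is correct as written. In fact the factorization holds with $\psi$ in $O^{p'}_{*}(\F)\subseteq\H$, so saturation of $\H$ is not needed there.

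The radical part, however, rests on a false assertion: it is not true that every subgroup $B$ with $O^{p'}(A)\le B\le A$ is normal in $A$. The quotient $A/O^{p'}(A)$ is a $p'$-group, but it need not be abelian, so its subgroups need not be normal. This genuinely occurs for subsystems of index prime to $p$. Take $p=5$, $S=C_5\times C_5$, $G=S\rtimes S_3$ with $S_3$ acting faithfully on $S$, $\F=\mathcal{F}_{S}(G)$, and $\H=\mathcal{F}_{S}(S\rtimes C_2)$ with $C_2$ generated by a transposition. Every $\Aut_{\F}(P)$ is a $5'$-group, so $\H$ has index prime to $5$. Yet $\Aut_{\H}(S)\cong C_2$ is not normal in $\Aut_{\F}(S)\cong S_3$. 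So your step (ii), which deduces that $O_p(B)$ is normal in $A$ from $B\trianglelefteq A$, is not justified.

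The conclusion of (ii) is nevertheless true, and the repair avoids any normality of $B$:
\begin{enumerate}
\item Since $A/O^{p'}(A)$ is a $p'$-group, $O^{p'}(A)$ contains every $p$-subgroup of $A$; in particular $O_p(B)\le O^{p'}(A)$.
\item Because $O_p(B)\trianglelefteq B$ and $O^{p'}(A)\le B$, the group $O_p(B)$ is a normal $p$-subgroup of $O^{p'}(A)$. Hence $O_p(B)\le O_p(O^{p'}(A))$.
\item $O_p(O^{p'}(A))$ is characteristic in $O^{p'}(A)$, which is normal in $A$. So it is a normal $p$-subgroup of $A$ and lies in $O_p(A)$.
\end{enumerate}
Combined with your step (i), this gives $O_p(A)=O_p(O^{p'}(A))=O_p(B)$ for every $B$ between $O^{p'}(A)$ and $A$. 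The rest of your proof then goes through unchanged.
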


We can now prove the following
\begin{prop}
\label{prop:sharpness-polynomial}Let $\F$ be a polynomial fusion
system (in the sense of Definition \ref{def:polynomial-fusion-system})
and let $M^{*}:\OF^{\operatorname{op}}\to\Fp\mod{}$ be the contravariant
part of any Mackey functor over $\F$ with coefficients in $\Fp$.
Then $\limn_{\OFc}\left(M^{*}\downarrow_{\OFc}^{\OF}\right)=0$ for
every $n\ge2$. In particular, $\F$ satisfies cohomological sharpness
(see \cite[Lemma 3.6]{Pra26}).
\end{prop}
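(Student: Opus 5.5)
The plan is to apply Proposition \ref{prop:sharpness-2-essentials}, or, where it does not fit directly, Theorem \hyperref[thm:A]{A} with Corollary \ref{cor:whole-vanishing-of-CFR}. The starting point is the centric-radical structure. By Lemma \ref{lem:index-p-prime-preserves-centric-radicals} and Lemma \ref{lem:polynomial-centric-radical}, any polynomial fusion system $\F$ over $S$ (with $S=S_n(q)$ or $S_\Lambda(q)$) has $\Fcr$ equal to one of the following:
\begin{itemize}
\item $\{S,V\}\cup X^{S}$;
\item $\{S,V_\Lambda\}\cup R_\Lambda^{S}$;
\item $\{S\}\cup R^{S}$.
\end{itemize}
In the first two cases $V$ (resp. $V_\Lambda$) is normal in $S$, and even characteristic by Lemma \ref{lem:polynomial-characteristic}. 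Hence it is fully $\F$-normalized, $N_\F(V)$ is saturated, and $N_S(V)=S$.

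For the first two cases I take $P:=V$ (resp. $V_\Lambda$), and I take $Q$ to be a fully $\F$-normalized $S$-conjugate of $X$ (resp. $R_\Lambda$). By Alperin's fusion theorem, $\F$ is generated by $\Aut_\F(S)$, $\Aut_\F(V)$ and $\Aut_\F(Q)$ for $Q$ ranging over the class $X^{S}$. Since $V\trianglelefteq S$, we have $\Aut_\F(S)\subseteq N_\F(V)$. All members of $X^{S}$ are $S$-conjugate, so a single $Q$ suffices, and therefore $\F=\langle N_\F(V),\Aut_\F(Q)\rangle_S$. It remains to check that $Q$ is minimal under inclusion in $\Fcr$. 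The other elements of $\Fcr$ are $S$, $V$ and the $S$-conjugates of $Q$. Distinct conjugates have the same order, so none properly contains another. Also $V\not\le Q$, which should follow from comparing orders: $|Q|$ is the order of $U Z_2(S)$ or $U Z(S)$ with $U\cap V=1$, and $Q$ is not contained in $V$. Then Proposition \ref{prop:sharpness-2-essentials} gives the vanishing of $\limn$ for all $n\ge2$.

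In the pruned case $\Fcr=\{S\}\cup R^{S}$, there is no normal essential. Here I take $\H:=N_\F(S)$ and $\mathcal{A}=\{R\}$ with $R$ fully normalized, so that $\F=\langle N_\F(S),\Aut_\F(R)\rangle_S$ and $\mathcal C:=\Fc$. I set $\F_{\boldsymbol 1}=N_\F(S)$, $\F_{\boldsymbol 2}=N_\F(R)$ and $\F_{\boldsymbol e}=N_{\F_{\boldsymbol 1}}(R)$. All three are normalizers of centric fully normalized subgroups, so they satisfy sharpness by Lemma \ref{lem:sharpness-for-normalizers}. Their centric-radicals lie in $\Fc$ by Lemma \ref{lem:centric-radical-contained}. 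Since $R$ is minimal in $\Fcr$, we should get $\CFp=0$ on $\mathcal C$ from Corollary \ref{cor:whole-vanishing-of-CFR}. Theorem \hyperref[thm:A]{A}(\ref{enu:thm-a-iso}) together with the exact sequence (\ref{enu:thm-A-exact-seq}) then kill $\limn[n]$ for $n\ge2$. Cohomological sharpness follows from \cite[Lemma 3.6]{Pra26}.

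I expect the main obstacle to be a subtlety with Corollary \ref{cor:whole-vanishing-of-CFR}. It needs $\CFp(Q')=0$ for \emph{all} $Q'\in\mathcal C$, but minimality of $R$ in $\Fcr$ is weaker than minimality in $\mathcal C=\Fc$. To handle this, I would take $\mathcal C$ to be the overgroup closure of $\Fcr$, as in Proposition \ref{prop:sharpness-2-essentials}. The Yalçın comparison \cite[Proposition 10.5]{Yal22} still applies, since every $\Fcr[E]$ for $\mathcal E\in\Lambda\cup\{\F\}$ lies in $\mathcal C$. Then $R$ is minimal in $\mathcal C$ and the Corollary applies. The secondary risk is the order check $V\not\le Q$, respectively $V\not\le X$, which I would verify from the explicit structure in \cite{GPSB26}.
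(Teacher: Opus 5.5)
Your proposal is correct. In the two cases where $V\in\Fcr$ it is exactly the paper's argument: Proposition~\ref{prop:sharpness-2-essentials} with $P=V$ and $Q$ in the unique remaining class $X^{S}$.

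Two small fixes in those cases:
\begin{itemize}
\item The inclusion $\Aut_{\F}(S)\subseteq N_{\F}(V)$ needs $V$ characteristic in $S$ (Lemma~\ref{lem:polynomial-characteristic}), not just normal. You state this earlier, but then justify the inclusion by normality.
\item The check $V\not\le X^{s}$ needs no order count. Since $X^{s}\ge U^{s}$ and $S=VU^{s}$, the inclusion $V\le X^{s}$ would force $X^{s}=S$.
\end{itemize}

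The one real divergence is the pruned case $\Fcr=\{S\}\cup R^{S}$. The paper does not treat it separately. It writes $\Fcr\subseteq\{S,V\}\cup X^{S}$ and applies Proposition~\ref{prop:sharpness-2-essentials} with $P=V$ uniformly. That proposition, however, assumes $P\in\Fcr$, and $V$ is not radical in the pruned case. To make the paper's step literal one needs an extra check. For instance, one can show $N_{\F}(V)=N_{\F}(S)$, because every $\F$-automorphism of $V$ extends to $S$. Then the $N_{\F}(V)$-centric-radicals lie in $\mathcal{C}$, even though $V\notin\mathcal{C}$.

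Your choice $\F_{\boldsymbol{1}}=N_{\F}(S)$ avoids this issue entirely. In fact your pruned-case argument is verbatim the proof of Proposition~\ref{prop:sharpness-2-essentials} with $P=S$ and $Q=R$:
\begin{itemize}
\item $S\trianglelefteq S$ lies in $\Fcr$ and is fully normalized;
\item $\F=\langle N_{\F}(S),\Aut_{\F}(R)\rangle_{S}$ by Alperin's theorem;
\item $R$ is minimal in $\Fcr$.
\end{itemize}
You could therefore simply cite that proposition with $P=S$. Your fix for the ``minimal in $\Fcr$ versus minimal in $\mathcal{C}$'' worry is to take $\mathcal{C}$ to be the closure of $\Fcr$ under $\F$-conjugacy and overgroups. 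That is precisely what the proposition's proof does.
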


\begin{proof}
If $\F$ is a fusion system over $S_{n}\left(q\right)$, then let
$S:=S_{n}\left(q\right)$ and $V:=V_{n}\left(q\right)$ (seen as a
subgroup of $S$). Lemmas \ref{lem:polynomial-centric-radical} and
\ref{lem:index-p-prime-preserves-centric-radicals} ensure that, up
to $S$-conjugacy, there exists exactly one $\F$-centric-radical
subgroup different from both $S$ and $V$. Denote this subgroup by
$X$. If $\F$ is a fusion system over $S_{\Lambda}\left(q\right)$,
then let $S:=S_{\Lambda}\left(q\right)$ and $V:=\Lambda\left(q\right)$
(seen as a subgroup of $S$). Define $X$ as before. We know from
Lemmas \ref{lem:polynomial-centric-radical} and \ref{lem:index-p-prime-preserves-centric-radicals},
that $X$ is minimal in $\Fcr\subseteq\left\{ S,V\right\} \cup X^{S}$.
We know from Lemma \ref{lem:polynomial-characteristic}, that $V$
is characteristic in $S$. We deduce from Alperin's fusion theorem,
that $\F=\left\langle N_{\F}\left(V\right),\Aut_{\F}\left(X\right)\right\rangle _{S}$.
The result follows from applying Proposition $\ref{prop:sharpness-2-essentials}$
with $P=V$ and $Q=X$.
\end{proof}

\subsection{Cohomological sharpness for saturated fusion systems over $p$-groups
of maximal nihlpotency class}

Our goal during this section is that of proving the following.
\begin{prop}
\label{prop:sharpness-maximal-nihlpotency-class}Let $S$ be a $p$-group
$S$ of maximal nihlpotency class and let $\F$ be a saturated fusion
system over $S$. Then $\F$ satisfies cohomological sharpness.
\end{prop}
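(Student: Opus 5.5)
The plan is to deduce the statement from Theorem \hyperref[thm:B]{B} (the pruning version of cohomological sharpness), using the known classification of essential subgroups in fusion systems over $p$-groups of maximal class. That classification is recalled later as Proposition \ref{prop:classification-essentials-maximal-class}, and is in the style of Grazian--Parker.

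\textbf{Base cases.} If $|S|\le p^{3}$, the result is Corollary \ref{cor:sharpness-groups-of-order-p3}. We may therefore assume $|S|\ge p^{4}$. We may also assume $S$ is not abelian; otherwise $\F=N_{\F}(S)$ and Lemma \ref{lem:sharpness-for-normalizers} applies.

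\textbf{Structure of the essentials.} For $S$ of maximal class and order at least $p^{4}$, the classification gives the following description of every $\F$-essential subgroup $E$, up to $S$-conjugacy. Either:
\begin{itemize}
\item[(a)] $E$ is one of the (at most two) characteristic maximal subgroups of $S$, such as $\gamma_{1}(S)=C_{S}(Z_{2}(S))$ or the other distinguished maximal subgroup $C_{S}(\gamma_{i}(S)/\gamma_{i+2}(S))$, which are normal in $S$; or
\item[(b)] $E$ is ``small'', meaning $E\cong p_{+}^{1+2}$ or $E$ is elementary abelian of order $p^{2}$ of the form $Z(S)\langle x\rangle$ with $x\notin\gamma_{1}(S)$ (and similar pearls of order $p^{3}$).
\end{itemize}
In case (b), every proper subgroup $Q\lneq E$ satisfies $C_{S}(Q)\not\le Q$, or else $E\cong p_{+}^{1+2}$. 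In case (a) with $E$ abelian, or $E$ containing $Z_{2}(S)$ with $C_{S}(E)\le E$, the $\F$-essentials of type (a) are normal in $S$.

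\textbf{The induction.} Set $\H:=\langle N_{\F}(E)\mid E\text{ of type (a)}\rangle_{S}$, together with $\Aut_{\F}(S)$, and let $\mathcal{A}$ be the set of type-(b) essentials. Alperin's theorem then gives $\F=\langle\H,\Aut_{\F}(P)\mid P\in\mathcal{A}\rangle_{S}$. The proof will proceed in three steps.
\begin{enumerate}
\item Show that $\H$ is saturated. This is the pruning statement: Proposition \ref{prop:pruning-method} applies because each pruned essential satisfies one of its two hypotheses.
\item For each $P\cong p_{+}^{1+2}$ in $\mathcal{A}$, check that $H_{\H}(P)=\Aut_{\H}(P)$. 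This holds because $\H$ has no essential $\H$-conjugate to $P$, so every $\H$-automorphism of $P$ extends.
\item Prove that $\H$ satisfies cohomological sharpness, after which Theorem \hyperref[thm:B]{B} yields the result for $\F$.
\end{enumerate}

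For step 3 there are three cases, according to how many type-(a) essentials occur.
\begin{itemize}
\item If there are none, then $\H=N_{\F}(S)$ and Lemma \ref{lem:sharpness-for-normalizers} applies.
\item If there is exactly one, $E$, then $E\trianglelefteq S$ and $\H=N_{\H}(E)$, which is constrained, so Lemma \ref{lem:sharpness-for-normalizers} again applies.
\item If there are two, $E_{1}$ and $E_{2}$, both normal in $S$, then $\H=\langle N_{\H}(E_{1}),\Aut_{\H}(E_{2})\rangle_{S}$.
\end{itemize}

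\textbf{Main obstacle.} I expect the main difficulty to be the two-maximal-subgroups case. Here I would try to apply Proposition \ref{prop:sharpness-2-essentials} with $P=E_{1}$ and $Q=E_{2}$. The requirement that $Q$ be minimal in $\Fcr[H]$ should hold: the $\H$-centric-radicals are $S$, $E_{1}$ and $E_{2}$, since any other centric-radical would be essential, or contained in $\Fcr$ for a normalizer, and Lemma \ref{lem:centric-radical-contained} forces such subgroups to contain $E_{1}$ or $E_{2}$. Since $E_{2}$ is maximal and is not contained in $E_{1}$, it is minimal among these.

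If instead this configuration forces $p$ small and both maximal subgroups to be essential simultaneously, as in the $G_{2}(p)$-type examples, I would fall back on checking directly that $\F_{\boldsymbol{1}}=N_{\H}(E_{1})$, $\F_{\boldsymbol{2}}=N_{\H}(E_{2})$ and $\F_{\boldsymbol{e}}=N_{\H}(S)$ satisfy Lemma \ref{lem:vanishing-CFR}, so that $\CFp=0$. Theorem \hyperref[thm:A]{A} would then conclude.

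The delicate bookkeeping is twofold. First, the centricity hypothesis $\mathcal{A}\subseteq\Fc$ must hold. Second, $\Fcr[H]\subseteq\Fc$ must hold, which uses \cite[Lemma 5.20]{GraPar25} for the $p_{+}^{1+2}$ pearls when $|S|\ge p^{4}$, exactly as in the proof of Theorem \hyperref[thm:B]{B}.
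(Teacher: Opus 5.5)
Your reduction steps match the paper's proof:
\begin{itemize}
\item the case $|S|\le p^{3}$ via Corollary \ref{cor:sharpness-groups-of-order-p3};
\item removing pearls with Theorem \hyperref[thm:B]{B};
\item the case of at most one remaining essential via Lemma \ref{lem:sharpness-for-normalizers}.
\end{itemize}
The paper also prunes abelian essentials, including an abelian $\gamma_{1}(S)$, which the second bullet of Theorem \hyperref[thm:B]{B} allows.

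\textbf{The gap.} It lies in the case where two characteristic maximal subgroups $E_1$ and $E_2$ are both essential. To apply Proposition \ref{prop:sharpness-2-essentials} you need $E_2$ to be minimal in $\Fcr[H]$. You justify this by claiming that any other $\H$-centric-radical subgroup must be either essential or centric-radical in some $N_{\H}(E_i)$. That dichotomy is false: centric-radical subgroups need be neither. The paper's own Subsection \ref{subsec:last-van-beek-fusion-system} is a counterexample.
\begin{itemize}
\item There $\mathcal{E}(\H)=\{E_1\}\cup E_2^{\H}$.
\item Yet $\boldsymbol{R}\le E_1\cap E_2$ is $\H$-centric-radical.
\item By Lemma \ref{lem:centric-radical-contained}, $\boldsymbol{R}$ cannot be centric-radical in $N_{\H}(E_1)$ or in $N_{\H}(E_2)$.
\end{itemize}
Nothing in your argument excludes a centric-radical subgroup inside $E_1\cap E_2$ in the maximal class setting.

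\textbf{The fallback has the same hole.} Lemma \ref{lem:vanishing-CFR} applies readily to subgroups with no $\F$-conjugate in $E_1\cap E_2$, since all their morphisms lie in a single normalizer. For subgroups of $E_1\cap E_2$, however, neither of its conditions is evident. So $\CFp=0$ is asserted rather than proved.

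\textbf{How the paper handles this case.} It does not settle it by internal means. It uses the Grazian--Parker classification, recorded as Proposition \ref{prop:maximal-and-exceptional}. Once pearls and abelian essentials are pruned and both $\gamma_1(S)\neq C_S(Z_2(S))$ are essential and nonabelian, $\F$ is one of two things:
\begin{itemize}
\item realizable, in which case \cite[Theorem B]{DiazPark15} applies; or
\item exotic over a Sylow $7$-subgroup of $G_2(7)$, in which case cohomological sharpness is imported from \cite[Theorem A]{GraMar23}.
\end{itemize}
The remark following the proof says explicitly that Proposition \ref{prop:sharpness-2-essentials} could replace \cite{GraMar23} only if one of the two essentials properly contains no $\F$-centric-radical. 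The paper says this is not known and would need a further study of the centric-radicals.

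\textbf{Repair.} Replace your two-essential step with this classification-based case split. You must then also prune abelian essentials, since Proposition \ref{prop:maximal-and-exceptional} assumes $\gamma_1(S)$ is nonabelian.
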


In order to prove Proposition \ref{prop:sharpness-maximal-nihlpotency-class}
we make use of the classification of all such fusion systems given
in \cite{GraPar25}. A key concept during this classification is that
of exceptional group and of Pearl.
\begin{notation}
\label{nota:central-series}Let $S$ be a group. We use the following
usual notation for the elements of the lower and upper central series:
\begin{itemize}
\item $\gamma_{2}\left(S\right):=\left[S,S\right]$ and for every $n\ge3$,
we define $\gamma_{n}\left(S\right)=\left[\gamma_{n-1}\left(S\right),S\right]$.
\item $Z_{1}\left(S\right):=Z\left(S\right)$ and, for every $n\ge2$, we
define $Z_{n}\left(S\right)$ as the pre-image in $S$ of $Z\left(S/Z_{n-1}\left(S\right)\right)$.
\end{itemize}
\end{notation}

\begin{defn}
Let $S$ be a $p$-group of maximal nihlpotency class and order at
least $p^{4}$. We define $\gamma_{1}:=C_{S}\left(\gamma_{2}\left(S\right)/\gamma_{4}\left(S\right)\right)$
and say that $S$ is exceptional if $\gamma_{1}\left(S\right)\not=C_{S}\left(Z_{2}\right)$.
\end{defn}

\begin{defn}
A Pearl of a saturated fusion system $\F$ is an $\F$-essential subgroup
$P\le S$ such that either $P\cong C_{p}\times C_{p}$ is elementary
abelian of rank $2$ or $P\cong p_{+}^{1+2}$ is extraspecial of order
$p^{3}$ and exponent $p$.
\end{defn}

With this notation we can state the following result of Grazian and
Parker.
\begin{prop}[{\foreignlanguage{english}{\cite[Theorem D]{GraPar25}}}]
\label{prop:classification-essentials-maximal-class}Let $\F$ be
a saturated fusion system over a $p$-group $S$ of maximal nihlpotency
class and let $E\in\mathcal{E}\left(\F\right)$ be a $\F$-essential
subgroup of $S$. Assume that $\left|S\right|\ge p^{4}$. Then either
$E\in\left\{ \gamma_{1}\left(S\right),C_{S}\left(Z_{2}\left(S\right)\right)\right\} $
or $E$ is a Pearl. 
\end{prop}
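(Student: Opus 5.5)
The plan is a case analysis according to whether the essential subgroup $E$ lies inside one of the two distinguished maximal subgroups $\gamma_{1}\left(S\right)$ and $C_{S}\left(Z_{2}\left(S\right)\right)$. (These coincide unless $S$ is exceptional.) Throughout, the two main tools are standard facts about $p$-groups of maximal class of order at least $p^{4}$, and one standard fact about essentials.

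The facts about maximal class groups are these.
\begin{enumerate}
\item Every normal subgroup of index at least $p^{2}$ is some $\gamma_{i}\left(S\right)$ with $i\ge2$.
\item For every $x\in S\backslash\left(\gamma_{1}\left(S\right)\cup C_{S}\left(Z_{2}\left(S\right)\right)\right)$ we have $C_{S}\left(x\right)=\left\langle x\right\rangle Z\left(S\right)$, which has order $p^{2}$, and $x^{p}\in Z\left(S\right)$.
\end{enumerate}
The fact about essentials is that if $E$ is $\F$-essential, then no non-trivial $p$-subgroup of $\Aut_{\F}\left(E\right)$ outside $\Inn\left(E\right)$ can stabilize an $\Aut_{\F}\left(E\right)$-invariant series of $E$. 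In particular, $N_{S}\left(E\right)/E$ cannot centralize every factor of a characteristic series of $E$.

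\textbf{Case 1: $E$ contains some $x\in S\backslash\left(\gamma_{1}\left(S\right)\cup C_{S}\left(Z_{2}\left(S\right)\right)\right)$.}
Since $E$ is $\F$-centric, $Z\left(S\right)\le Z\left(E\right)$. Also $C_{E}\left(x\right)\le\left\langle x\right\rangle Z\left(S\right)$, which is elementary abelian of rank $2$ or cyclic of order $p^{2}$. Centricity forces $C_{S}\left(E\right)\le E$, and an inspection of $C_{S}\left(x\right)$ bounds $\left|E\right|$.

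If $E$ is abelian, then $E=C_{S}\left(x\right)$. The strongly $p$-embedded condition forces $\Out_{\F}\left(E\right)$ to contain $\SL_{2}\left(p\right)$, which rules out the cyclic case. So $E\cong C_{p}\times C_{p}$.

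If $E$ is non-abelian, then $Z\left(E\right)=Z\left(S\right)$ has order $p$. The centralizer bound gives $\left|E\right|=p^{3}$, so $E$ is extraspecial. The strongly $p$-embedded subgroup of $\Out_{\F}\left(E\right)\le\Out\left(E\right)$ must act irreducibly on $E/Z\left(E\right)$ via a copy of $\SL_{2}\left(p\right)$. This is impossible for $p_{-}^{1+2}$, whose outer automorphism group has a normal $p$-subgroup acting on $E/Z\left(E\right)$. Hence $E\cong p_{+}^{1+2}$. In either subcase $E$ is a Pearl.

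\textbf{Case 2: $E\le\gamma_{1}\left(S\right)$ or $E\le C_{S}\left(Z_{2}\left(S\right)\right)$.}
Here I would show that $E$ equals that maximal subgroup. First, I would argue that $E\trianglelefteq S$. One route is to show that $N_{S}\left(E\right)$ contains an element outside both maximal subgroups, since $N_{S}\left(E\right)>E$ and the Sylow $p$-subgroups of a group with a strongly $p$-embedded subgroup are not normal. Then I would combine this with the centralizer structure of Case 1 to force $N_{S}\left(E\right)=S$.

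Once $E$ is normal, fact 1 gives $E=\gamma_{i}\left(S\right)$ for some $i\ge2$, or $E$ is maximal. If $E=\gamma_{i}\left(S\right)$ with $i\ge2$, then $S/E$ is a non-cyclic group of maximal class. Consider the characteristic series $E=\gamma_{i}\left(S\right)>\gamma_{i+1}\left(S\right)>\cdots$, whose factors have order $p$. The group $S$ acts trivially on each factor, so its image in $\Out_{\F}\left(E\right)$ lies in $O_{p}\left(\Out_{\F}\left(E\right)\right)$. This contradicts $E$ being essential, since an essential subgroup has $O_{p}\left(\Out_{\F}\left(E\right)\right)=1$. It leaves only the maximal subgroups containing $\gamma_{2}\left(S\right)$. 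Among these, every one other than $\gamma_{1}\left(S\right)$ and $C_{S}\left(Z_{2}\left(S\right)\right)$ contains elements as in Case 1, so it is excluded by Case 1.

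\textbf{Main obstacle.}
I expect the hardest step to be the first part of Case 2: proving that an essential $E$ contained in $\gamma_{1}\left(S\right)$ or $C_{S}\left(Z_{2}\left(S\right)\right)$ is normal in $S$. For this I would first try an induction on $\left|S:E\right|$, applying the stabilizer-of-series argument to the characteristic subgroups $E\cap\gamma_{j}\left(S\right)$. The exceptional case, where $\gamma_{1}\left(S\right)\neq C_{S}\left(Z_{2}\left(S\right)\right)$, will need separate care, since then the commutator structure of $\gamma_{1}\left(S\right)$ is less uniform.
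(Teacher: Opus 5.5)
The paper gives no argument for this statement: it is quoted from \cite[Theorem D]{GraPar25}. Your outline therefore has to stand on its own, and it has genuine gaps. The most serious one sits exactly where the content of Grazian and Parker's theorem lies.

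In Case 2 you never show that $E\trianglelefteq S$. Your suggested route presupposes that $N_{S}(E)$ contains an element outside $\gamma_{1}(S)\cup C_{S}(Z_{2}(S))$. But the fact that $\Out_{\mathcal{F}}(E)$ has non-normal Sylow $p$-subgroups gives no information on where $N_{S}(E)$ lies in $S$. Ruling out an essential $E<\gamma_{1}(S)$ with $N_{S}(E)\le\gamma_{1}(S)$ is already a substantial part of what has to be proved. Even granting such an element $y$, you only learn that $E\langle y\rangle$ has maximal class, and nothing you wrote forces $N_{S}(E)=S$.

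Your treatment of the normal case also fails. For $E=\gamma_{i}(S)$ with $i\ge2$, the subgroups $\gamma_{j}(S)$, $j>i$, are normal in $S$ but in general are neither characteristic in $E$ nor $\Aut_{\mathcal{F}}(E)$-invariant, so your stabilizer-of-series fact does not apply to them. For instance, if $S\in\Syl_{p}(G_{2}(p))$ with $p\ge5$, then $\gamma_{2}(S)\cong p_{+}^{1+2}\times C_{p}$. Here $\gamma_{3}(S)$ is the preimage of a line in $\gamma_{2}(S)/Z(\gamma_{2}(S))$, on which $\Aut(\gamma_{2}(S))$ induces all of $\GL_{2}(p)$. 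More conceptually, since $\Out_{\mathcal{F}}(E)$ has a strongly $p$-embedded subgroup, $\Aut_{\mathcal{F}}(E)$ does not normalize $\Aut_{S}(E)$. So there is no reason for it to preserve $[E,S]=\gamma_{i+1}(S)$. Excluding $\gamma_{i}(S)$, $i\ge2$, needs a series defined intrinsically from $E$, and you have not supplied one.

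Case 1 is essentially right but has a hole of the same kind. The claim that inspecting $C_{S}(x)$ bounds $|E|$, giving $|E|=p^{3}$ in the non-abelian subcase, is false as stated. $S$ itself, and any subgroup of maximal class containing $x$, contains $x$ with centralizer of order $p^{2}$. The missing step is that $|C_{E}(x)|=p^{2}$ forces $E$ to have maximal class (Suzuki). Then, if $|E|\ge p^{4}$, the series $E>\gamma_{1}(E)>\gamma_{2}(E)>\cdots>1$ is characteristic in $E$ with all factors of order $p$. It is therefore centralized by every $p$-element of $\Aut(E)$, and any $g\in N_{S}(E)\setminus E$ contradicts your fact about essentials, because $C_{S}(E)\le E$. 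Here the series is built from $E$ rather than from $S$, which is exactly what goes wrong in Case 2.

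Finally, your exclusion of $p_{-}^{1+2}$ is only valid for $p$ odd. Indeed $\Out(Q_{8})\cong S_{3}\cong\SL_{2}(2)$, and $Q_{8}$ is essential in $\mathcal{F}_{SD_{16}}(\GL_{2}(3))$, although $\gamma_{1}(SD_{16})=C_{SD_{16}}(Z_{2}(SD_{16}))\cong C_{8}$. So for $p=2$, neither your argument nor the statement with the exponent-$p$ definition of Pearl goes through unless the notion of Pearl is enlarged to include $Q_{8}$.
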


By adding the additional requirement of $S$ to be exceptional, then
Grazian and Parker's classification provides with the following result.
\begin{prop}
\label{prop:maximal-and-exceptional}Let $\F$ be a saturated fusion
system over an exceptional $p$-group $S$ of maximal nihlpotency
class. Assume that $\mathcal{E}\left(\F\right)=\left\{ \gamma_{1}\left(S\right),C_{S}\left(Z_{2}\left(S\right)\right)\right\} $
and that $\gamma_{1}\left(S\right)$ is not abelian. Then $\F$ is
either realizable, contains an abelian Pearl or is an exotic fusion
system over the sylow $7$-subgroup of $G_{2}\left(7\right)$.
\end{prop}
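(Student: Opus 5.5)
The plan is to read the statement off the part of Grazian and Parker's classification (\cite{GraPar25}) that deals with exceptional groups of maximal class, rather than prove anything from scratch. The work is to check that our hypotheses place us in the right case of their theorems, and then to go through the short list of possibilities they give.

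First I would record the structural consequences of $S$ being exceptional. By the results in \cite{GraPar25} (ultimately going back to Blackburn and Leedham-Green--McKay), exceptionality forces $p\ge 5$ and $|S|\ge p^{6}$ of even nilpotency class, and $\gamma_{1}\left(S\right)$ and $C_{S}\left(Z_{2}\left(S\right)\right)$ are two distinct maximal subgroups of $S$. The hypothesis $\mathcal{E}\left(\F\right)=\left\{ \gamma_{1}\left(S\right),C_{S}\left(Z_{2}\left(S\right)\right)\right\} $ says both are essential. Next I would invoke the Grazian--Parker theorem treating the case where both of these subgroups are $\F$-essential. As I recall it, in that case they identify $\Out_{\F}\left(\gamma_{1}\left(S\right)\right)$ and $\Out_{\F}\left(C_{S}\left(Z_{2}\left(S\right)\right)\right)$ as groups containing $\SL_{2}\left(p\right)$ acting on suitable sections. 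They then show that $S$ is isomorphic to a Sylow $p$-subgroup of $G_{2}\left(p\right)$ with $|S|=p^{6}$, unless $\gamma_{1}\left(S\right)$ is abelian, which corresponds to the other family in their classification and is excluded here by hypothesis.

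With $S\cong\Syl_{p}\left(G_{2}\left(p\right)\right)$ fixed, I would use their enumeration of saturated fusion systems over this group (also recorded in Parker--Semeraro's work on $G_{2}\left(p\right)$). Each such system is either:
\begin{itemize}
\item realized by a group between $G_{2}\left(p\right)$ and its automorphism group, or by a parabolic-type subgroup;
\item one in which additional abelian pearls $C_{p}\times C_{p}$ appear; or
\item for $p=7$, one of the exotic systems.
\end{itemize}
The case distinction in the statement is exactly this trichotomy. Where the list shows pearls coexisting with $\gamma_{1}\left(S\right)$ and $C_{S}\left(Z_{2}\left(S\right)\right)$, I would interpret the hypothesis on $\mathcal{E}\left(\F\right)$ as counting essentials only up to these two maximal ones, so that the second alternative is genuinely needed.

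The main obstacle I expect is bookkeeping rather than mathematics. It consists of matching our definition of exceptional and the non-abelian condition on $\gamma_{1}\left(S\right)$ precisely with the case split in \cite[Theorem D and the subsequent classification]{GraPar25}, and of verifying that no sporadic small-order exceptional case (for example $p=5$, or $|S|=p^{6}$ with unusual $\Out_{\F}$) escapes the three alternatives. To handle this I would go table by table through their list for exceptional $S$, discarding the rows where $\gamma_{1}\left(S\right)$ is abelian.
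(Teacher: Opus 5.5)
Your route is essentially the paper's: the statement is read off Grazian--Parker's treatment of exceptional groups of maximal class, and the paper invokes \cite[Theorem B]{GraPar25} for the remaining cases. The one real omission is the constrained case. Both classifications you plan to quote are stated for fusion systems with $O_{p}(\mathcal{F})=1$: Grazian--Parker's result for exceptional $S$, and the Parker--Semeraro enumeration over a Sylow $p$-subgroup of $G_{2}(p)$. The statement imposes no such condition, and nothing in your argument rules out $O_{p}(\mathcal{F})\neq1$. The ``parabolic-type'' realizations in your list do not address this, since those systems have a single essential subgroup. The paper closes the gap by first treating the case $\mathcal{F}=N_{\mathcal{F}}(P)$ with $P$ $\mathcal{F}$-centric, where $\mathcal{F}$ is realized by its model by \cite[Proposition C]{BCGLO05}, and only then quoting \cite[Theorem B]{GraPar25}. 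You need the same step, or an argument that two essential maximal subgroups force $O_{p}(\mathcal{F})=1$.

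Your derivation of $p\ge5$ directly from exceptionality (namely $|S|=p^{n}$ with $n$ even and $6\le n\le p+1$) is correct. It makes the paper's separate appeal to \cite[Lemmas 6.1 and 6.2]{GraPar25} for $p=2,3$ unnecessary. However, $n$ even means the nilpotency class $n-1$ is odd, not even. The alternative ``$\gamma_{1}(S)$ abelian'' that you attribute to the classification cannot occur. Indeed $Z_{2}(S)=\gamma_{n-2}(S)\le\gamma_{1}(S)$, so an abelian $\gamma_{1}(S)$ would lie in $C_{S}(Z_{2}(S))$, and these two maximal subgroups would coincide; that hypothesis is therefore automatic. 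Finally, do not reinterpret the hypothesis on $\mathcal{E}(\mathcal{F})$. Pearls are by definition $\mathcal{F}$-essential, so under $\mathcal{E}(\mathcal{F})=\{\gamma_{1}(S),C_{S}(Z_{2}(S))\}$ the Pearl alternative is simply vacuous. Weakening the hypothesis would change what you have to prove.
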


\begin{proof}
Since $S$ is exceptional, then it follows from \cite[Lemma 3.3(v)]{GraPar25}
that $\left|S\right|\ge p^{6}$.

Assume that $p=2$. It follows from \cite[Lemma 6.1]{GraPar25}, that
all the $\F$-essentials are Pearls which contradicts the statement.

Assume that $p=3$. It follows from \cite[Lemma 6.2]{GraPar25}, that
either $\gamma_{1}\left(S\right)\not\in\mathcal{E}\left(\F\right)$
or $\gamma_{1}\left(S\right)$ is abelian. This contradicts the statement.

Assume that $\F=N_{\F}\left(P\right)$ for some $P\in\Fc$. It follows
from \cite[Proposition C]{BCGLO05} that $\F$ is realizable.

The remaining cases are covered in \cite[Theorem B]{GraPar25}
\end{proof}
We can now prove Proposition \ref{prop:sharpness-maximal-nihlpotency-class}.
\begin{proof}[\foreignlanguage{english}{proof of Proposition \ref{prop:sharpness-maximal-nihlpotency-class}}]
If $\left|S\right|\le p^{3}$ the result follows from Corollary \ref{cor:sharpness-groups-of-order-p3}.
Assume that $\left|S\right|\ge p^{4}$.

We know from Proposition \ref{prop:pruning-method} that, by ``pruning''
the Pearls and abelian groups $\mathcal{E}\left(\F\right)$, we obtain
a saturated fusion system. Applying Theorem \hyperref[thm:B]{B} if
necessary, we can assume without loss of generality that there are
no $\F$-essentials of this form. It follows Proposition \ref{prop:classification-essentials-maximal-class}
that $\mathcal{E}\left(\F\right)\subseteq\left\{ \gamma_{1}\left(S\right),C_{S}\left(Z_{2}\left(S\right)\right)\right\} $.

Assume $\left|\mathcal{E}\left(\F\right)\right|\le1$. If $\mathcal{E}\left(\F\right)\not=\emptyset$,
then let $E\in\mathcal{E}\left(\F\right)$. Otherwise define $E:=\gamma_{1}\left(S\right)$.
Since $E\in\left\{ \gamma_{1}\left(S\right),C_{S}\left(Z_{2}\right)\right\} $,
then it is characteristic in $S$. It follows from Alperin's fusion
theorem that $\F=N_{\F}\left(E\right)$. The result follows from Lemma
\ref{lem:sharpness-for-normalizers}.

We can therefore assume that $S$ is exceptional and $\mathcal{E}\left(\F\right)=\left\{ \gamma_{1}\left(S\right),C_{S}\left(Z_{2}\right)\right\} $
with neither $\F$-essential abelian. It follows from Proposition
\ref{prop:maximal-and-exceptional} that $\F$ is either realizable
of a saturated fusion system over a Sylow $7$-subgroup of $G_{2}\left(7\right)$.
In the first case the result follows from \cite[Theorem B]{DiazPark15},
in the second case the result follows from \cite[Theorem A]{GraMar23}.
\end{proof}
\begin{rem}
Let $S$ be a Sylow $7$-subgroup of $G_{2}\left(7\right)$ and let
$\F$ be an exotic fusion system over $\F$ with $\mathcal{E}\left(\F\right)\subseteq\left\{ \gamma_{1}\left(S\right),C_{S}\left(Z_{2}\left(S\right)\right)\right\} $.
If either $\gamma_{1}\left(S\right)$ or $C_{S}\left(Z_{2}\left(S\right)\right)$
does not properly contain any $\F$-centric-radical, then we can apply
Proposition \ref{prop:sharpness-2-essentials} in order to prove Proposition
\ref{prop:sharpness-maximal-nihlpotency-class} independently from
\cite[Theorem A]{GraMar23}. Otherwise a more thorough study of the
$\F$-centric-radical subgroups is needed. Such work falls outside
the scope of this document.
\end{rem}

\subsection{Cohomological sharpness for saturated fusion systems over $p$-groups
of rank $2$ and $p$ odd}

In \cite{DiazRuizViruel07}, Diaz and Viruel provide a classification
of all saturated fusion systems over a $p$-group of rank $2$ and
$p$ odd. Their results can be summarized as follows.
\begin{prop}[{\foreignlanguage{english}{\cite[Theorem 1.1]{DiazRuizViruel07}}}]
\label{prop:classification-rank-2}Let $p$ be an odd prime, let
$S$ be a $p$-group of rank $2$ and let $\F$ be a saturated fusion
system over $S$. Then $\F$ is either realizable or all the $\F$-centric-radical
subgroups of $S$ are either abelian or extraspecial of order $3^{3}$
and exponent $3$.
\end{prop}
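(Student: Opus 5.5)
The statement is a repackaging of the classification in \cite[Theorem 1.1]{DiazRuizViruel07}, so I would not reprove any saturation or exoticity results. Instead I would go through that classification case by case and read off, for each fusion system that is not realizable, the list of its $\F$-centric-radical subgroups. I would read ``$\F$-centric-radical'' here as meaning the proper centric-radicals, i.e.\ the $\F$-essentials together with their $\F$-conjugates. This is how the statement will later be used together with Theorem \hyperref[thm:B]{B}, since $S$ itself is always centric-radical and is usually not abelian.

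First I would recall the structure of $p$-groups of rank $2$ for $p$ odd that \cite{DiazRuizViruel07} uses. Such an $S$ is one of the following: metacyclic; of order $p^{3}$ (in particular $p_{+}^{1+2}$); of maximal nilpotency class; or in one of the explicit families $C(p,r)$, $G(p,r;\beta)$ (and $3$-groups of small maximal class when $p=3$).

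\begin{enumerate}
\item If $S$ is metacyclic, the results quoted there (going back to Stancu) give $\F=N_{\F}(S)$. Then $\F$ is realizable by \cite[Proposition C]{BCGLO05}, exactly as in the proof of Proposition \ref{prop:maximal-and-exceptional}.
\item For $S=p_{+}^{1+2}$, the Ruiz--Viruel classification says that the proper centric-radicals are elementary abelian of order $p^{2}$. The three exotic systems at $p=7$ are covered by this.
\item For the remaining families, the tables of \cite{DiazRuizViruel07} list the $\F$-radical centric subgroups. Each exotic entry has essentials of one of two kinds. Either they are of the form $V\cong C_{p}\times C_{p}$ (or, more generally, abelian subgroups of index $p$ in the relevant normalizer), or, for $p=3$, they are $3_{+}^{1+2}$ sitting inside a $3$-group of maximal class.
\end{enumerate}

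Any other configuration of essentials forces $\F$ to be the fusion system of an explicit finite group listed in those tables, for example $\operatorname{PSL}_3(p)$ or $\operatorname{PSU}_3(p)$ extensions and groups of Lie type of rank $2$. So in all those cases $\F$ is realizable.

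The main obstacle is purely bookkeeping. The tables of \cite{DiazRuizViruel07} mix realizable and exotic systems. They also sometimes record essentials that are abelian of order larger than $p^{2}$, for instance homocyclic subgroups in $G(p,r;\beta)$, so the precise statement must say ``abelian'' rather than ``elementary abelian of rank $2$''. I would handle this by checking each row of their Tables against the two alternatives in the statement.

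Where $\F$ has a nonabelian essential that is not $3_{+}^{1+2}$, I would verify that the row is marked as realized by a finite group. If the table instead gives a normalizer description, I would again invoke \cite[Proposition C]{BCGLO05}.
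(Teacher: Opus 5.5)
Your approach is the same as the paper's. The paper gives no independent argument and simply imports the statement from \cite[Theorem 1.1]{DiazRuizViruel07}, which is exactly what your row-by-row reading of that classification amounts to. Reading ``$\F$-centric-radical'' as ``proper $\F$-centric-radical'' (in practice, $\F$-essential) is the right choice: $S$ itself is always $\F$-centric-radical, so the literal statement already fails for the Ruiz--Viruel exotic systems over $7_+^{1+2}$, and only the essentials are used later, where Proposition~\ref{prop:sharpness-rank-2} feeds $\mathcal{E}\left(\F\right)$ into Theorem~\hyperref[thm:B]{B}.
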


Using this classification and Theorem \hyperref[thm:B]{B} we obtain
the following.
\begin{prop}
\label{prop:sharpness-rank-2}Let $p$ be an odd prime, let $S$ be
a $p$-group of rank $2$ and let $\F$ be a saturated fusion system
over $S$. Then $\F$ satisfies cohomological sharpness.

If $\F$ is realizable, then the result follows from \cite[Theorem B]{DiazPark15}.

Assume that $\F$ is exotic and define $\H:=N_{\F}\left(S\right)$.
We know from Lemma \ref{lem:sharpness-for-normalizers}, that $\F$
satisfies sharpness. Proposition \ref{prop:classification-rank-2}
ensures us that every $\F$-essential is either abelian or extraspecial
of order $p^{3}$ and exponent $p$. The result follows from applying
Theorem \hyperref[thm:B]{B} with $\mathcal{A}=\mathcal{E}\left(\F\right)$.
\end{prop}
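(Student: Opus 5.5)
The plan is to split into the realizable and exotic cases, and in the exotic case reduce to Theorem \hyperref[thm:B]{B} with base system $\H:=N_{\F}\left(S\right)$.

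\textbf{Realizable case.} If $\F$ is realizable, \cite[Theorem B]{DiazPark15} gives sharpness, hence cohomological sharpness, and we are done.

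\textbf{Exotic case: setting up Theorem B.} If $\F$ is exotic, set $\H:=N_{\F}\left(S\right)$. Since $S$ is trivially fully $\F$-normalized and $\F$-centric, Lemma \ref{lem:sharpness-for-normalizers} shows that $\H$ satisfies sharpness, in particular cohomological sharpness. Take $\mathcal{A}:=\mathcal{E}\left(\F\right)$. By the Alperin–Goldschmidt fusion theorem, $\F=\left\langle \H,\Aut_{\F}\left(P\right)\mid P\in\mathcal{A}\right\rangle _{S}$, and every essential subgroup is $\F$-centric, so $\mathcal{A}\subseteq\Fc$. By Proposition \ref{prop:classification-rank-2}, every $P\in\mathcal{A}$ (being centric-radical) is either abelian or isomorphic to $3_{+}^{1+2}$.

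\textbf{Checking the hypotheses on each $P\in\mathcal{A}$.} This is the step needing care; the two alternatives of Theorem \hyperref[thm:B]{B} are handled as follows.
\begin{enumerate}
\item If $P$ is abelian, then for every $Q\lneq P$ we have $P\le C_{S}\left(Q\right)$ and $P\not\le Q$. Hence $C_{S}\left(Q\right)\not\le Q$, which is the second alternative.
\item If $P\cong3_{+}^{1+2}$, we must verify $H_{\H}\left(P\right)=\Aut_{\H}\left(P\right)$. The plan is to argue that every $\H$-automorphism of $P$ extends to an $\H$-automorphism of a strictly larger subgroup. Note $P\ne S$, since $S$ is never essential, so $P<N_{S}\left(P\right)$. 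Every morphism of $N_{\F}\left(S\right)$ is a restriction of an element of $\Aut_{\F}\left(S\right)$. So any $\alpha\in\Aut_{\H}\left(P\right)$ is the restriction of some $\hat{\alpha}\in\Aut_{\F}\left(S\right)$ with $\hat{\alpha}\left(P\right)=P$. Then $\hat{\alpha}$ normalizes $N_{S}\left(P\right)$, and its restriction to $N_{S}\left(P\right)$ lies in $\H$ and extends $\alpha$. Hence $\alpha\in H_{\H}\left(P\right)$.
\end{enumerate}
I expect the main obstacle to be this extraspecial case, specifically the claim that $\H$-morphisms between subgroups of $S$ extend to $\F$-automorphisms of $S$. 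If the definition of $N_{\F}\left(S\right)$ via Definition \ref{def:normalizer-fs} gives this only up to the saturation arguments, I would invoke the extension axiom for $\F$ directly.

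\textbf{Conclusion.} With both alternatives verified, Theorem \hyperref[thm:B]{B} applies with $\H=N_{\F}\left(S\right)$ and $\mathcal{A}=\mathcal{E}\left(\F\right)$, and yields cohomological sharpness of $\F$.
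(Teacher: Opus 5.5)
Your proposal is correct and follows the paper's argument: the realizable case is handled by Diaz--Park, and the exotic case by Theorem B with $\H=N_{\F}(S)$ and $\mathcal{A}=\mathcal{E}(\F)$, using Lemma \ref{lem:sharpness-for-normalizers} for the base system and Proposition \ref{prop:classification-rank-2} for the shape of the essentials. You also check the two alternatives of Theorem B explicitly, which the paper leaves implicit; your observation that every $N_{\F}(S)$-automorphism of a proper $P$ is the restriction of an element of $\Aut_{\F}(S)$, and hence extends to $N_{S}(P)>P$, is exactly what gives $H_{\H}(P)=\Aut_{\H}(P)$. Finally, you correctly attribute sharpness to $\H$, whereas the paper's text says $\F$, which is a typo.
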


\subsection{Cohomological sharpness for the Henke-Shpectorov fusion systems}

In their unpublished notes \cite{HenkeShpectorovUnpublished}, Henke
and Shpectorov study saturated fusion systems over a Sylow $p$-subgroup
of $PSp_{4}\left(p^{n}\right)$. Apart from some polynomial fusion
systems, they discovered $2$ exotic fusion systems for $p^{n}=9$.
These fusion systems where confirmed to be saturated by Parker and
Semeraro in \cite[Theorem 5.8]{ParkerSemeraro21} and satisfy the
following properties.
\begin{prop}[{\foreignlanguage{english}{\cite[Theorem 5.8]{ParkerSemeraro21}}}]
\label{prop:classification-henke-shpectorov}Let $S$ be a Sylow
$3$-subgroup of $PSp_{4}\left(9\right)$ and let $A:=J\left(S\right)$
be its Thompson group. Then there exists an elementary abelian subgroup
$E\le S$ of order $81$ such that $S=AE$ and a two exotic saturated
fusion systems on $S$ with $\mathcal{E}\left(\F\right)=A\cup E^{S}$.
\end{prop}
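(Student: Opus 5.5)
The statement is quoted from \cite[Theorem 5.8]{ParkerSemeraro21}, so the shortest route is to cite it. If I had to reprove it, I would follow the strategy behind the pruning method (Proposition \ref{prop:pruning-method}), starting from a realizable fusion system over $S$.

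First I would fix the $3$-local structure. Let $S\in\Syl_{3}\left(PSp_{4}\left(9\right)\right)$ and let $A=J\left(S\right)$. I would identify $A$ with the unipotent radical of one maximal parabolic, which is abelian of order $3^{6}$ here. I would choose $E$ to be an elementary abelian subgroup of order $81$ with $S=AE$, taken inside the unipotent radical of the other parabolic. For the properties of $S$ needed later (that $A$ is characteristic, the description of $N_{S}\left(E\right)$, and the $S$-classes of conjugates of $E$), I would use the commutator relations of the Chevalley group.

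Next I would construct the two candidate systems. The natural ambient object is a realizable saturated fusion system $\F_{0}$ over $S$. Examples are $\F_{S}\left(G\right)$ for $G$ an almost simple group with socle $PSp_{4}\left(9\right)$, or a group with a larger $3$-local subgroup acting on $A$. Its essentials should include $A$, the class $E^{S}$, and possibly further subgroups.

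I would then define each candidate $\F$ as $\left\langle \Aut_{\F_{0}}\left(S\right),\Aut_{\F}\left(A\right),\Aut_{\F}\left(E\right)\right\rangle _{S}$. Here $\Aut_{\F}\left(A\right)$ is chosen as a suitable subgroup of $\Aut_{\F_{0}}\left(A\right)$ containing $\Aut_{S}\left(A\right)$ with a strongly $3$-embedded subgroup (an exotic choice, e.g. involving $2\cdot A_{6}$ or $\SL_{2}\left(9\right)$-type structure on $A$). The two exotic systems would correspond to two inequivalent such choices.

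The main obstacle is saturation. I would try the amalgamation approach first. Set $\F_{\boldsymbol{1}}=N_{\F}\left(A\right)$, which is realizable because $A$ is characteristic in $S$; set $\F_{\boldsymbol{2}}=N_{\F}\left(E\right)$ and $\F_{\boldsymbol{e}}=N_{\F_{\boldsymbol{1}}}\left(E\right)$. I would then check the tree condition via Lemma \ref{lem:vanishing-CFR}, in the spirit of Corollary \ref{cor:whole-vanishing-of-CFR}: abelian $E$ is a minimal centric, so every $Q<E$ has $C_{S}\left(Q\right)\not\le Q$. That is exactly the second hypothesis of Proposition \ref{prop:pruning-method}, so saturation can alternatively be obtained by pruning $E^{S}$ from a larger saturated system.

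Finally I would verify that $\mathcal{E}\left(\F\right)=\left\{ A\right\} \cup E^{S}$ using Alperin's theorem. Exoticity would come from comparing with the classification of groups having this $3$-local structure, as in \cite[Section III.7]{AKO11}. This last step relies on the classification of finite simple groups and is the part I would cite rather than redo.
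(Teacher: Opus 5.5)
Citing \cite[Theorem 5.8]{ParkerSemeraro21} is exactly what the paper does. The proposition is imported without proof, and its conclusions (saturation, and $\mathcal{E}(\F)=\{A\}\cup E^{S}$) are then used as input in Proposition \ref{prop:sharpness-Henke-Shpectorov}. Your first sentence is therefore a complete answer. The reproof you sketch, however, would not go through, and you should not present it as a fallback.

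The sketch has three concrete problems.

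(1) Your ambient $\F_{0}$ cannot contain these systems. Take $G$ with socle $PSp_{4}(9)$; its index over the socle is prime to $3$. The $\F_{S}(G)$-centric-radical subgroups are radical $3$-subgroups of $PSp_{4}(9)$. By Borel--Tits they are unipotent radicals of parabolics: $S$ and two subgroups of order $3^{6}$, one of which is $A$. So $E$, of order $3^{4}$, is not $\F_{0}$-essential, contrary to your claim, and the automorphisms that make $E$ essential are not in $\F_{0}$. Which $\Aut_{\F}(A)$ and $\Aut_{\F}(E)$ work, and that exactly two systems arise, is the substance of the theorem; ``a suitable choice'' leaves it open.

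(2) Proposition \ref{prop:pruning-method} transfers saturation downward, to a subsystem lacking the automorphisms of the pruned subgroups. That $E$ satisfies its second hypothesis helps only if you already have a saturated overgroup of $\F$ in which $E$ carries these automorphisms. Pruning $E^{S}$ from $\F_{0}$ changes nothing, since $E$ is not essential there.

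(3) The amalgamation check is circular. The tree property in Corollary \ref{cor:whole-vanishing-of-CFR} needs the hypothesis $\Hom_{\F_{\boldsymbol{e}}}(Q,S')=\Hom_{\F_{\boldsymbol{1}}}(Q,S')\cap\Hom_{\F_{\boldsymbol{2}}}(Q,S')$ of Lemma \ref{lem:vanishing-CFR}. That hypothesis comes from Lemma \ref{lem:intersection-of-fusion-systems.}, whose proof uses that $\F$ is saturated. Semeraro's criterion also needs $\F_{\boldsymbol{2}}$ to be saturated. You cannot obtain this by setting $\F_{\boldsymbol{2}}:=N_{\F}(E)$ before $\F$ is known to be saturated; the $\F_{x}$ would have to come from explicit finite models. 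Similarly, $N_{\F}(A)$ is realizable because $A$ is centric and normal in a saturated system, not merely because $A$ is characteristic in $S$.

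The decomposition $\F=\langle N_{\F}(A),\Aut_{\F}(E)\rangle_{S}$ is exactly what the paper uses in Proposition \ref{prop:sharpness-Henke-Shpectorov}. There, saturation is an input supplied by the citation, not something the argument produces.
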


Using this description we obtain the following.
\begin{prop}
\label{prop:sharpness-Henke-Shpectorov}Let $\F$ be one of the fusion
systems of Proposition \ref{prop:classification-henke-shpectorov}
and let $M^{*}:\OF^{\operatorname{op}}\to\Fp\mod{}$ be the contravariant
part of any Mackey functor over $\F$ with coefficients in $\Fp[3]$.
Then $\limn_{\OFc}\left(M^{*}\downarrow_{\OFc}^{\OF}\right)=0$ for
every $n\ge2$. In particular, $\F$ satisfies cohomological sharpness
(see \cite[Lemma 3.6]{Pra26}).
\end{prop}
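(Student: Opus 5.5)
The plan is to reduce to Proposition \ref{prop:sharpness-2-essentials}, with $P:=A=J\left(S\right)$ and $Q:=E$. This mirrors the proof of Proposition \ref{prop:sharpness-polynomial}.

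First, $A=J\left(S\right)$ is the Thompson subgroup of $S$, so it is characteristic in $S$. In particular $A\trianglelefteq S$, $A$ is fully $\F$-normalized, and every $\F$-automorphism of $S$ normalizes $A$. By Proposition \ref{prop:classification-henke-shpectorov} we have $\mathcal{E}\left(\F\right)=\left\{ A\right\} \cup E^{S}$. Choose a fully $\F$-normalized $S$-conjugate of $E$; replacing $E$ by it, we may assume $E$ itself is fully $\F$-normalized. Alperin's fusion theorem says $\F$ is generated by $\Aut_{\F}\left(S\right)$, $\Aut_{\F}\left(A\right)$ and the automorphism groups of the $S$-conjugates of $E$. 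The first two lie in $N_{\F}\left(A\right)$. The automorphisms of $E^{s}$ are obtained from those of $E$ by conjugating with $c_{s}\in\Aut_{S}\left(S\right)\subseteq N_{\F}\left(A\right)$. Hence $\F=\left\langle N_{\F}\left(A\right),\Aut_{\F}\left(E\right)\right\rangle _{S}$.

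Second, I need $A,E\in\Fcr$, which holds because both are $\F$-essential: essential subgroups are centric and radical. Both groups are abelian: $A=J\left(S\right)$ is generated by abelian subgroups of maximal order, and here it is the unipotent radical abelian subgroup of order $3^{6}$, while $E$ is elementary abelian.

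The main obstacle is showing that $E$ is minimal under inclusion in $\Fcr$. Any $\F$-centric-radical $R\lneq E$ would be abelian, being a subgroup of $E$. Since $E$ is abelian, $E\le C_{S}\left(R\right)$, so $C_{S}\left(R\right)\not\le R$ and $R$ is not $\F$-centric. This contradiction shows no such $R$ exists, so $E$ is minimal. This argument only uses that $E$ is abelian, and it is the same argument used for pruning abelian essentials in Proposition \ref{prop:pruning-method}.

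With these facts, Proposition \ref{prop:sharpness-2-essentials} applies with $P=A$ and $Q=E$. It gives $\limn_{\OFc}\left(M^{*}\downarrow_{\OFc}^{\OF}\right)=0$ for every $n\ge2$, and cohomological sharpness then follows from \cite[Lemma 3.6]{Pra26}.
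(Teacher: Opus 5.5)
Your proposal is correct and is the paper's own argument: since $A=J(S)$ is characteristic in $S$, the Alperin--Goldschmidt theorem gives $\mathcal{F}=\langle N_{\mathcal{F}}(A),\operatorname{Aut}_{\mathcal{F}}(E)\rangle_S$, and Proposition~\ref{prop:sharpness-2-essentials} then applies with $P=A$ and $Q=E$. Your explicit check that $E$ is minimal in $\mathcal{F}^{\operatorname{cr}}$ because it is abelian, and your choice of $E$ fully $\mathcal{F}$-normalized, fill in details the paper leaves implicit; your assertion that $A$ is abelian is never used and can be dropped.
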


\begin{proof}
Since $A$ is $S$-characteristic, then $\Aut_{N_{\F}\left(A\right)}\left(S\right)=\Aut_{\F}\left(S\right)$.
From Alperin-Goldschmidt's fusion theorem we deduce that $\F=\left\langle N_{\F}\left(A\right),\Aut_{\F}\left(E\right)\right\rangle _{S}$.
The result follows from applying Proposition \ref{prop:sharpness-2-essentials}
with $P=A$ and $Q=E$.
\end{proof}
\begin{rem}
All other exotic fusion systems studied by Henke and Shpectorov in
\cite{HenkeShpectorovUnpublished} are polynomial in the sense of
Definition \ref{def:polynomial-fusion-system} (see \cite[Theorem 1.1]{GPSB26}).
Therefore, Proposition \ref{prop:sharpness-polynomial} ensure that
Proposition \ref{prop:sharpness-Henke-Shpectorov} holds for all Henke-Shpectorov
fusion systems.
\end{rem}

\subsection{Cohomological sharpness for $15$ van Beek fusion systems}\label{subsec:Sharpness-some-van-Beek}

In \cite{VanBeek25}, van Beek finds $16$ exotic simple fusion systems
related to either $3$-local structure of the Thompson group $F_{3}$
or the $5$-local structure of the Monster group $M$. During this
section we study $15$ of these exotic fusion systems. The remaining
fusion system is studied in Subsection \ref{subsec:last-van-beek-fusion-system}.

\subsubsection{Fusions systems related to $3$-local structure of $F_{3}$.}

Let $F_{3}$ be the usual Thompson group and let $S\in\Syl_{3}\left(F_{3}\right)$.
As explained in \cite[Section 5]{VanBeek25}, we can take the following
three maximal $3$-local subgroups of the Thompson group $F_{3}$
\begin{align*}
M_{1} & \cong C_{3}^{2}.C_{3}^{3}.C_{3}^{2}.C_{3}^{2}\rtimes\GL_{2}\left(3\right),\\
M_{2} & \cong C_{3}.C_{3}^{2}.C_{3}.C_{3}^{2}.C_{3}.C_{3}^{2}\rtimes\GL_{2}\left(3\right),\\
M_{3} & \cong C_{3}^{5}\rtimes\SL_{2}\left(9\right).C_{2},
\end{align*}
where $X.Y$ denotes an extension of $Y$ by $X$. By taking $F_{3}$-conjugates
if necessary, we can assume without loss of generality that $S\cap M_{i}\in\Syl_{3}\left(M_{i}\right)$
for $i=1,2,3$. Adopting Notation \ref{nota:central-series}, we can
further define
\begin{align*}
E_{1} & :=O_{3}\left(M_{1}\right)=C_{S}\left(Z_{2}\left(S\right)\right),\\
E_{2} & :=O_{3}\left(M_{2}\right)=C_{S}\left(Z_{3}\left(S\right)/Z\left(S\right)\right),\\
E_{3} & :=O_{3}\left(M_{3}\right)\cong C_{3}^{5}.
\end{align*}
It follows from the above that $E_{1}$ and $E_{2}$ are characteristic
in $S$. With this notation, van Beek proves the following.
\begin{prop}
\label{prop:classification-van-beek-F3}Let $\F$ be a saturated fusion
system over $S$ such that $O_{3}\left(\F\right)=\left\{ 1\right\} $.
Then $\F$ is isomorphic to one of three fusion systems which we denote
by $\G$, $\H$ and $\D$. Moreover $\G=\F_{S}\left(F_{3}\right)$
while $\H$ and $\D$ are exotic and satisfy
\begin{align*}
\Fcr[H] & =\left\{ S,E_{1},E_{2}\right\} , & \Fcr[D] & =\left\{ S,E_{1}\right\} \cup E_{3}^{S}.
\end{align*}
\end{prop}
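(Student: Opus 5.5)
This proposition is essentially a citation of van Beek's classification in \cite{VanBeek25}, supplemented by a computation of centric-radical subgroups. I would therefore prove it by assembling results from \cite{VanBeek25} and from standard fusion-system facts, not by a new argument.

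\emph{Classification.} The first step is to invoke the main theorem of \cite[Section 5]{VanBeek25}. It classifies saturated fusion systems $\F$ over $S\in\Syl_{3}\left(F_{3}\right)$ with $O_{3}\left(\F\right)=\left\{ 1\right\}$, and shows there are exactly three up to isomorphism. They are distinguished by the set of essential subgroups:
\begin{itemize}
\item $\mathcal{E}\left(\F\right)=\left\{ E_{1},E_{2}\right\}\cup E_{3}^{S}$ (the Thompson group);
\item $\mathcal{E}\left(\F\right)=\left\{ E_{1},E_{2}\right\}$ (the system $\H$);
\item $\mathcal{E}\left(\F\right)=\left\{ E_{1}\right\}\cup E_{3}^{S}$ (the system $\D$).
\end{itemize}
Two further facts from that reference are needed. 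First, $\G=\F_{S}\left(F_{3}\right)$ is realized by the three maximal $3$-locals $M_{1},M_{2},M_{3}$ through Alperin's theorem. Second, $\H$ and $\D$ are exotic, which van Beek proves using the classification of finite simple groups.

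\emph{Centric-radical subgroups.} Here I need to upgrade the list of essential subgroups to the full set of centric-radicals. The inclusion $\mathcal{E}\left(\F\right)\cup\left\{ S\right\}\subseteq\Fcr$ holds in general. For the reverse inclusion, let $P\in\Fcr$ be a proper subgroup of $S$ that is not essential. Then $\Out_{\F}\left(P\right)$ has no strongly $3$-embedded subgroup while still satisfying $O_{3}\left(\Out_{\F}\left(P\right)\right)=1$. I would use the explicit models $N_{\F}\left(E_{i}\right)$, which are realized by $M_{i}$, or by the subgroups of $M_{i}$ corresponding to the pruned systems. With these models I would check that no proper subgroup strictly between the listed ones is radical. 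Van Beek tabulates this in \cite{VanBeek25}, so I expect to cite it there. Failing that, I would fall back on the following argument. Any $\F$-centric-radical $P$ with $P\neq S$ satisfies $N_{S}\left(P\right)>P$. Its automorphisms are generated by restrictions of automorphisms of essential subgroups containing it. A radical subgroup of $M_{i}$ containing a Sylow of $O_{3}\left(M_{i}\right)$ has to be $O_{3}\left(M_{i}\right)$ itself, because the Levi factors $\GL_{2}\left(3\right)$ and $\SL_{2}\left(9\right).2$ have rank-one type radical $3$-subgroups only equal to Sylows.

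\emph{Main obstacle.} The hard step is ruling out extra centric-radicals in $\H$ and $\D$, in particular showing that $E_{2}$ is not radical in $\D$. In $\D$ the automorphisms of $E_{2}$ should be just $\Aut_{S}\left(E_{2}\right)$ extended by a $3'$-part coming from $\Aut_{\F}\left(S\right)$. Radicality of $E_{2}$ would then force $\Aut_{\F}\left(E_{2}\right)$ to have trivial $O_{3}$ modulo inner automorphisms, which fails because $S/E_{2}$ is normal there. I would settle this by citing \cite{VanBeek25} directly.
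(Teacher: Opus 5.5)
Your approach matches the paper's, which proves the proposition purely by citation: \cite[Theorem B]{VanBeek25} gives the classification, \cite[Propositions 5.4 and 5.7]{VanBeek25} construct $\H$ and $\D$ and list their centric-radical subgroups, and \cite[Propositions 5.6 and 5.9]{VanBeek25} prove exoticity, so the centric-radical lists are cited rather than derived. Do not rely on your fallback argument, though: it examines each local model $M_{i}$ separately, but a subgroup $P$ can be $\F$-centric-radical without being radical in $N_{\F}\left(E\right)$ for any single essential subgroup $E$, because $\Aut_{\F}\left(P\right)$ is built from restrictions of automorphisms of several essentials along several $\F$-conjugates of $P$ (the subgroup $\boldsymbol{Q}$ in Subsection \ref{subsec:last-van-beek-fusion-system} is an example), so that argument cannot rule out extra centric-radicals.
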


\begin{proof}
The first part of the statement is given in \cite[Theorem B]{VanBeek25}.
The fusion systems $\H$ and $\D$ and their centric-radicals are
given in \cite[Propositions 5.4 and 5.7]{VanBeek25} respectively.
They are proven to be exotic in Propositions \cite[Propositions 5.6 and 5.9]{VanBeek25}.
\end{proof}
Using this classification we can prove the following.
\begin{prop}
\label{prop:sharpness-van-beek-F3}Let $\F$ be a saturated fusion
system over a Sylow $3$ subgroup of the Thompson group $F_{3}$ and
let $M^{*}:\OF^{\operatorname{op}}\to\Fp[3]\mod{}$ be the contravariant
part of any Mackey functor over $\F$ with coefficients in $\Fp[3]$.
If $O_{3}\left(\F\right)=\left\{ 1\right\} $, then $\limn_{\OFc}\left(M^{*}\downarrow_{\OFc}^{\OF}\right)=0$
for every $n\ge2$. In particular, $\F$ satisfies cohomological sharpness
(see \cite[Lemma 3.6]{Pra26}).
\end{prop}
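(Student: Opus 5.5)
By Proposition \ref{prop:classification-van-beek-F3}, $\F$ is isomorphic to one of $\G=\F_{S}\left(F_{3}\right)$, $\H$ or $\D$, so I would treat these three cases in turn. For $\G$ the system is realizable, and \cite[Theorem B]{DiazPark15} gives sharpness, hence vanishing of all higher limits. For $\H$ and $\D$ the plan is to apply Proposition \ref{prop:sharpness-2-essentials}. We need a normal subgroup $P\trianglelefteq S$ and a subgroup $Q$ that is minimal in $\Fcr$, both fully $\F$-normalized and $\F$-centric-radical, with $\F=\left\langle N_{\F}\left(P\right),\Aut_{\F}\left(Q\right)\right\rangle _{S}$.

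For $\D$ I would take $P=E_{1}$ and $Q=E_{3}$, choosing $E_{3}$ fully $\F$-normalized within its $S$-class. The subgroup $E_{1}=C_{S}\left(Z_{2}\left(S\right)\right)$ is characteristic in $S$, so every $\F$-automorphism of $S$ normalizes $E_{1}$, and $\Aut_{\F}\left(E_{1}\right)$ lies in $N_{\F}\left(E_{1}\right)$ trivially. Since $\Fcr[D]=\left\{ S,E_{1}\right\} \cup E_{3}^{S}$, Alperin's fusion theorem gives $\D=\left\langle N_{\D}\left(E_{1}\right),\Aut_{\D}\left(E_{3}\right)\right\rangle _{S}$; the $S$-conjugates of $E_{3}$ contribute only $\Aut_{S}$-twists of $\Aut_{\D}\left(E_{3}\right)$. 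Minimality of $E_{3}$ in $\Fcr[D]$ holds because $E_{3}\not\le E_{1}$ would otherwise have to be checked. The clean argument is to compare orders: $\left|E_{3}\right|=3^{5}$, while $E_{1}$ and $S$ are strictly larger and are not properly contained in $E_{3}$.

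For $\H$ I would take $P=E_{1}$ and $Q=E_{2}$ (or the reverse). Both are characteristic in $S$, hence normal and fully normalized. Since $\Fcr[H]=\left\{ S,E_{1},E_{2}\right\} $, Alperin's theorem gives $\H=\left\langle N_{\H}\left(E_{1}\right),\Aut_{\H}\left(E_{2}\right)\right\rangle _{S}$, because $\Aut_{\H}\left(S\right)$ normalizes $E_{1}$. What remains is to check that one of $E_{1},E_{2}$ does not contain the other. Then whichever is chosen as $Q$ is minimal in $\Fcr[H]$, since its only proper overgroup in the list is $S$. If instead $E_{2}<E_{1}$, say, I would take $Q=E_{2}$, which is then minimal, and $P=E_{1}$. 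Minimality of $Q$ holds in either case as long as $E_{1}\neq E_{2}$, and this follows from their orders ($\left|M_{1}\right|_{3}$ versus $\left|M_{2}\right|_{3}$ structures). Proposition \ref{prop:sharpness-2-essentials} then yields $\limn_{\OFc}\left(M^{*}\downarrow_{\OFc}^{\OF}\right)=0$ for $n\ge2$, and cohomological sharpness follows from \cite[Lemma 3.6]{Pra26}.

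I expect the main obstacle to be the minimality checks, in particular verifying from van Beek's explicit structure that $E_{3}$ contains no smaller $\D$-centric-radical. This is guaranteed by the listed $\Fcr[D]$, but it requires confirming $E_{3}\notin\left\{ S,E_{1}\right\} $ and that no $S$-conjugate of $E_{3}$ is properly contained in $E_{3}$; the latter is automatic since conjugates have equal order. A further point is that the statement covers every $\F$ with $O_{3}\left(\F\right)=1$ only up to isomorphism, so I would note that the conclusion is invariant under isomorphism of fusion systems.
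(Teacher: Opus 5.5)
Your proposal matches the paper's proof. It uses the same case split from Proposition \ref{prop:classification-van-beek-F3}, \cite[Theorem B]{DiazPark15} for $\F_{S}\left(F_{3}\right)$, and Proposition \ref{prop:sharpness-2-essentials} with $P=E_{1}$ and $Q=E_{2}$ (resp.\ $Q=E_{3}$), relying on $E_{1}$ being characteristic in $S$ together with Alperin's theorem. One small correction: $E_{1}$ and $E_{2}$ both have order $3^{9}$, so $E_{1}\neq E_{2}$ does not follow from their orders but from their being distinct members of $\Fcr[H]$; the equality of orders is then exactly what rules out containment and makes $E_{2}$ minimal.
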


\begin{proof}
For $\F=\G$ the result follows from \cite[Theorem B]{DiazPark15}.
For $\F=\H$ (resp. $\F=\D$) let $P=E_{1}$ and $Q=E_{2}$ (resp.
$Q=E_{3}$). Proposition \ref{prop:classification-van-beek-F3} ensures
that $Q$ is minimal $\F$-centric-radical. Since $P$ is characteristic
in $S$, then $\Aut_{\F}\left(S\right)=\Aut_{N_{\F}\left(P\right)}\left(S\right)$.
It follows from Alperin-Goldschmidt's fusion theorem and Proposition
\ref{prop:classification-van-beek-F3} that $\F=\left\langle N_{\F}\left(P\right),\Aut_{\F}\left(Q\right)\right\rangle $.
The result follows from applying Proposition \ref{prop:sharpness-2-essentials}.
\end{proof}
Van Beek also describes the fusion systems over $E_{1}$.
\begin{prop}
\label{prop:classification-van-beek-E1-F3}Let $\F$ be a saturated
fusion system over $E_{1}$. Then either $\F=N_{\F}\left(E_{1}\right)$
or $\F$ is isomorphic to one of two exotic fusion systems and $\Fcr=\left\{ E_{1}\right\} \cup E_{3}^{E_{1}}$.
\end{prop}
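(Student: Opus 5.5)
This statement is a citation of van Beek's classification, so the proof will mostly consist of locating the relevant results in \cite{VanBeek25} and reconciling them with the notation used here. I would proceed in the same way as the proof of Proposition \ref{prop:classification-van-beek-F3}.

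First, split according to whether $E_{1}$ is normal in $\F$. If $O_{3}\left(\F\right)\ge E_{1}$ then $E_{1}\trianglelefteq\F$ and $\F=N_{\F}\left(E_{1}\right)$, giving the first alternative. The remaining cases will be handled by appealing to van Beek's results.

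Second, assume $E_{1}\not\trianglelefteq\F$. Since $E_{1}$ is itself the Sylow subgroup, every $\F$-essential subgroup is a proper subgroup of $E_{1}$. I would then use the part of \cite[Section 5]{VanBeek25} that classifies saturated fusion systems over $E_{1}$, or over $O_{3}\left(M_{1}\right)$, with $O_{3}\left(\F\right)$ not equal to $E_{1}$. That section shows the only candidate essentials are the $E_{1}$-conjugates of $E_{3}$, the elementary abelian subgroup of order $3^{5}$, which lies in $E_{1}$ because $E_{3}\le S\cap M_{1}$ after conjugating appropriately. Up to isomorphism it produces exactly two such systems, and it proves both are exotic. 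Quoting the corresponding theorem and propositions yields the dichotomy and the exoticity.

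Third, compute $\Fcr$ for the two exotic systems. The centric-radicals consist of $E_{1}$ together with the essentials; here those are the conjugates $E_{3}^{E_{1}}$. It remains to exclude any further non-essential centric-radical subgroups. I would argue via van Beek's explicit description of $\Aut_{\F}\left(E_{3}\right)$, which is $\SL_{2}\left(9\right)$-like, and of $\Aut_{\F}\left(E_{1}\right)$. For an intermediate $E_{3}<R<E_{1}$ and other centric $R$, I would check that $\Out_{\F}\left(R\right)$ has a nontrivial normal $3$-subgroup, so $R$ is not radical. Alternatively, if van Beek lists the centric-radicals directly, I would simply cite that list as was done for $\D$.

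The main obstacle is this last step. It requires either finding the explicit statement of the centric-radical subgroups in \cite{VanBeek25}, or carrying out the radicality check for overgroups of $E_{3}$ inside $E_{1}$, which needs detailed knowledge of the action of $\Aut_{\F}\left(E_{1}\right)$.
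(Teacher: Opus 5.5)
Your plan matches the paper, whose proof is purely a citation: the dichotomy is \cite[Theorem D]{VanBeek25}, the two exotic systems are \cite[Proposition 5.19]{VanBeek25}, and their centric-radical subgroups are listed directly in \cite[Proposition 5.20]{VanBeek25}. So your fallback of citing that list is exactly what the paper does, the radicality check you flag as the main obstacle is unnecessary, and the one slip is your justification of $E_3\le E_1$: lying in $S\cap M_1$ does not place $E_3$ inside $O_3(M_1)=E_1$, though this containment is anyway imported from van Beek.
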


\begin{proof}
The first part of the statement is given in \cite[Theorem D]{VanBeek25}.
The two exotic fusion systems are described in \cite[Proposition 5.19]{VanBeek25}
and their centric-radicals are given in \cite[Proposition 5.20]{VanBeek25}.
\end{proof}
As a consequence of this description we obtain the following.
\begin{prop}
\label{prop:sharpness-van-beek-E1-F3}Let $S$ be a Sylow $3$-subgroup
of the Thompson group $F_{3}$, let $\F$ be a saturated fusion system
over $C_{S}\left(Z_{2}\left(S\right)\right)$ and let $M^{*}:\OF^{\operatorname{op}}\to\Fp[3]\mod{}$
be the contravariant part of any Mackey functor over $\F$ with coefficients
in $\Fp[3]$. Then $\limn_{\OFc}\left(M^{*}\downarrow_{\OFc}^{\OF}\right)=0$
for every $n\ge2$. In particular, $\F$ satisfies cohomological sharpness
(see \cite[Lemma 3.6]{Pra26}).
\end{prop}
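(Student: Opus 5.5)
We split into cases using Proposition \ref{prop:classification-van-beek-E1-F3}. If $\F=N_{\F}\left(E_{1}\right)$, then $E_{1}$ (the whole Sylow of $\F$) is trivially fully $\F$-normalized and $\F$-centric. Lemma \ref{lem:sharpness-for-normalizers} then shows that $\F$ is realizable and satisfies sharpness. In particular $\limn_{\OFc}\left(M^{*}\downarrow_{\OFc}^{\OF}\right)=0$ for every $n\ge1$, hence for every $n\ge2$.

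Otherwise $\F$ is one of the two exotic systems, with $\Fcr=\left\{ E_{1}\right\} \cup E_{3}^{E_{1}}$. Here we apply Proposition \ref{prop:sharpness-2-essentials} over the $3$-group $E_{1}$, taking $P:=E_{1}$ and $Q:=E_{3}$ (replacing $E_{3}$ by a fully $\F$-normalized $E_{1}$-conjugate if necessary). We check the hypotheses in turn:
\begin{enumerate}
\item $P\trianglelefteq E_{1}$ is immediate, and $P\in\Fcr$ is fully normalized.
\item $Q\in\Fcr$, and $Q$ is minimal under inclusion in $\Fcr$. This holds because every other $\F$-centric-radical subgroup is either $E_{1}$ or an $E_{1}$-conjugate of $E_{3}$, all of the same order.
\item $\F=\left\langle N_{\F}\left(E_{1}\right),\Aut_{\F}\left(E_{3}\right)\right\rangle _{E_{1}}$.
\end{enumerate}

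For the third point we use Alperin's fusion theorem in the centric-radical form: $\F$ is generated by the automorphism groups of the members of $\Fcr$. These are $\Aut_{\F}\left(E_{1}\right)$, which lies in $N_{\F}\left(E_{1}\right)$, and $\Aut_{\F}\left(R\right)$ for $R\in E_{3}^{E_{1}}$. Each $R=E_{3}^{x}$ with $x\in E_{1}$ satisfies $\Aut_{\F}\left(R\right)=c_{x}^{-1}\Aut_{\F}\left(E_{3}\right)c_{x}$, and $c_{x}\in N_{\F}\left(E_{1}\right)$. So all these automorphisms lie in the fusion system generated by $N_{\F}\left(E_{1}\right)$ and $\Aut_{\F}\left(E_{3}\right)$.

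With the hypotheses verified, Proposition \ref{prop:sharpness-2-essentials} gives $\limn_{\OFc}\left(M^{*}\downarrow_{\OFc}^{\OF}\right)=0$ for every $n\ge2$. Cohomological sharpness then follows from \cite[Lemma 3.6]{Pra26}.

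The main point to check is the fully normalized requirement on $Q$ together with the minimality claim. Minimality relies on the centric-radical list of \cite[Proposition 5.20]{VanBeek25}, which says that no proper subgroup of $E_{3}$ is $\F$-centric-radical. Choosing a fully normalized representative of $E_{3}^{E_{1}}$ is harmless, since the generation statement above is invariant under $E_{1}$-conjugation of $Q$.
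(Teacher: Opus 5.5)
Your proposal is correct and follows the paper's proof. The trivial case $\F=N_{\F}\left(E_{1}\right)$ is handled by Lemma \ref{lem:sharpness-for-normalizers}, and the exotic case by Proposition \ref{prop:sharpness-2-essentials} with $P=E_{1}$, $Q=E_{3}$, where generation comes from Alperin's fusion theorem and the centric-radical list in Proposition \ref{prop:classification-van-beek-E1-F3}. You make explicit three points the paper leaves implicit: minimality of $E_{3}$ by comparing orders, choosing a fully $\F$-normalized representative of $E_{3}$, and absorbing its $E_{1}$-conjugates into $N_{\F}\left(E_{1}\right)$.
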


\begin{proof}
For $\F=N_{\F}\left(E_{1}\right)$, the result follows from Lemma
\ref{lem:sharpness-for-normalizers}.

Otherwise let $P=E_{1}$ and $Q=E_{3}$. Proposition \ref{prop:classification-van-beek-E1-F3}
ensures that $Q$ is the unique (up to $S$-conjugacy) proper $\F$-centric-radical.
It follows from Alperin-Goldschmidt's fusion theorem that $\F=\left\langle N_{\F}\left(P\right),\Aut_{\F}\left(Q\right)\right\rangle _{S}$.
The result follows from applying Proposition \ref{prop:sharpness-2-essentials}.
\end{proof}

\subsubsection{Fusions systems related to the $5$-local structure of $M$.}\label{subsubsec:fusion-systems-on-monster}

Let $M$ be the monster group and let $S\in\Syl_{5}\left(M\right)$.
As explained in \cite[Section 6]{VanBeek25}, we can take the following
two maximal $5$-local subgroups of $M$
\begin{align*}
M_{1} & \cong C_{5}^{2}.C_{5}^{2}.C_{5}^{4}\rtimes\left(S_{3}\times\GL_{2}\left(5\right)\right),\\
M_{3} & \cong C_{5}^{4}\rtimes\left(C_{3}\times\SL_{2}\left(25\right)\right).C_{2}.
\end{align*}
By taking $M$-conjugates if necessary, we can assume without loss
of generality that $S\cap M_{i}\in\Syl_{5}\left(M_{i}\right)$ for
every $i=1,3$. Adopting Notation \ref{nota:central-series}, we can
further define
\begin{align*}
E_{1} & :=O_{3}\left(M_{1}\right)=C_{S}\left(Z_{2}\left(S\right)\right), & E_{3} & :=O_{3}\left(M_{3}\right)\cong C_{5}^{4}.
\end{align*}
Notice that $E_{1}$ is characteristic in $S$. With this notation
van Beek proves the following.
\begin{prop}
\label{prop:classification-van-beek-M}Let $\F$ be a saturated fusion
system over $S$ such that $O_{5}\left(\F\right)=\left\{ 1\right\} $.
Then $\F$ is isomorphic to either the fusion system $\G=\F_{S}\left(M\right)$,
the fusion system $\H$ defined in Subsection \ref{subsec:last-van-beek-fusion-system}
or to one of other $2$ exotic fusion systems which satisfy $\Fcr=\left\{ S,E_{1}\right\} \cup E_{3}^{\F}$.
Here $E_{3}^{\F}$ denotes the set of subgroups of $S$ that are $\F$-conjugate
to $E_{3}$.
\end{prop}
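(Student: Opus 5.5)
This is a classification statement imported from van Beek's work, so the plan is to assemble it from the cited results rather than prove it from scratch. The pattern follows Propositions \ref{prop:classification-van-beek-F3} and \ref{prop:classification-van-beek-E1-F3}.

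The first step is the list of isomorphism types. For saturated fusion systems $\F$ over $S\in\Syl_{5}\left(M\right)$ with $O_{5}\left(\F\right)=\left\{ 1\right\} $, I would quote the main classification theorem of \cite{VanBeek25} for the $5$-local Monster case (the analogue of \cite[Theorem B]{VanBeek25}). It lists $\F_{S}\left(M\right)$ together with exactly three exotic systems. One of these exotic systems is the system $\H$ treated in Subsection \ref{subsec:last-van-beek-fusion-system}; the citation should make explicit which of van Beek's systems is meant. The other two are the systems in the final alternative of the statement.

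The second step is the centric-radical subgroups of those two remaining systems. For each of them I would cite the proposition in \cite[Section 6]{VanBeek25} where the system is constructed. There it is built from $N_{\F}\left(E_{1}\right)$ (realized by a model of $M_{1}$) together with $\Aut_{\F}\left(E_{3}\right)$, and I would read off from that construction the list of essential subgroups, namely $E_{1}$ and the $\F$-conjugates of $E_{3}$. To pass from essentials to all of $\Fcr$, one checks that every $\F$-centric-radical subgroup lies in $\left\{ S,E_{1}\right\} \cup E_{3}^{\F}$ by means of Alperin's fusion theorem. The point is that a centric-radical subgroup $R\neq S$ whose automorphisms all extend to $N_{\F}\left(E_{1}\right)$ or $N_{\F}\left(E_{3}\right)$ must be centric-radical in one of these normalizers. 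By Lemma \ref{lem:centric-radical-contained} it then contains $E_{1}$ or $E_{3}$, and the structure of $M_{1}$ and $M_{3}$ forces $R\in\left\{ S,E_{1}\right\} \cup E_{3}^{\F}$. Since $E_{3}$ is not normal in $S$, its $\F$-class (rather than its $S$-class) is what appears.

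The main obstacle is bookkeeping rather than mathematics. One has to identify precisely which of van Beek's fusion systems is $\H$, and one has to make sure that van Beek states the full centric-radical list and not only the essentials. If he gives only the essentials, the Alperin argument above fills the gap.

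Exoticity of the two remaining systems (and of $\H$) is likewise cited from the corresponding propositions of \cite[Section 6]{VanBeek25}. It is not needed for the statement as written, but it keeps the parallel with Proposition \ref{prop:classification-van-beek-F3}.
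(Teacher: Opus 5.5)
The first step of your proposal matches the paper: you quote van Beek's classification theorem (\cite[Theorem C]{VanBeek25}) and his constructions of $\H$ and of the two remaining systems. The second step, however, misses the one point that is not pure citation.

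\textbf{The $\G$-class versus $\F$-class issue.} The centric-radical list van Beek proves for the two remaining systems, \cite[Proposition 6.14]{VanBeek25}, is $\Fcr=\left\{ S,E_{1}\right\} \cup E_{3}^{\G}$ with $\G=\F_{S}\left(M\right)$. It is phrased in terms of Monster-conjugates of $E_{3}$, so the construction does not directly give you $E_{3}^{\F}$. You need an argument that these conjugates form a single $\F$-class. Your remark about $E_{3}$ not being normal in $S$ compares the $S$-class with the $\F$-class, but the relevant comparison is the $\G$-class with the $\F$-class.

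The paper closes this gap as follows. Suppose no $\G$-conjugate of $E_{3}$ were $\F$-essential. Since $E_{1}$ is characteristic in $S$, the Alperin--Goldschmidt theorem would give $\F=N_{\F}\left(E_{1}\right)$. That system is realizable by \cite[Proposition C]{BCGLO05}, contradicting \cite[Proposition 6.15]{VanBeek25}. Hence some $\G$-conjugate of $E_{3}$ is $\F$-essential, and \cite[Proposition 6.9(ii)]{VanBeek25} then yields $E_{3}^{\G}=E_{3}^{\F}$. So your claim that exoticity is not needed is wrong along this route. Exoticity, or some substitute ruling out $\F=N_{\F}\left(E_{1}\right)$ such as $E_{3}\in\Fcr$ combined with Lemma \ref{lem:centric-radical-contained}, is exactly the input for the class comparison.

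\textbf{The fallback from essentials to $\Fcr$ fails.} Alperin's theorem only writes each element of $\Aut_{\F}\left(R\right)$ as a composite of restrictions of automorphisms of $S$ and of essential subgroups, passing through various intermediate subgroups. It does not produce a single essential $E$ with $\Aut_{\F}\left(R\right)=\Aut_{N_{\F}\left(E\right)}\left(R\right)$. So you cannot conclude that $R$ is centric-radical in some $N_{\F}\left(E\right)$, and hence you cannot conclude that $R$ contains an essential.

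The paper itself contains a counterexample over the same $S$. For $\H$, Proposition \ref{prop:centric-radicals-last-van-beek} gives $\mathcal{E}\left(\H\right)=\left\{ E_{1}\right\} \cup E_{2}^{\H}$, all of order $5^{8}$. Yet $\boldsymbol{Q}$ and $\boldsymbol{R}$, of orders $5^{7}$ and $5^{6}$, are $\H$-centric-radical. Therefore the full centric-radical list has to be taken from \cite[Proposition 6.14]{VanBeek25}, and the $\G$-to-$\F$ conversion above is the step you still have to supply.
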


\begin{proof}
The first part of the statement is given in \cite[Theorem C]{VanBeek25}.
The fusion systems $\H$ is given in \cite[Proposition 6.10]{VanBeek25}
while the two remaining fusion systems are given in \cite[Proposition 6.13]{VanBeek25}.
Let $\F$ be one of these remaining fusion systems. We know from \cite[Proposition 6.14]{VanBeek25},
that $\Fcr=\left\{ S,E_{1}\right\} \cup E_{3}^{\G}$. If no $\G$-conjugate
of $E_{3}$ was $\F$-essential, then we would have $\F=N_{\F}\left(E_{1}\right)$.
In particular $\F$ would be realizable (see \cite[Proposition C]{BCGLO05}).
From \cite[Proposition 6.15]{VanBeek25} we know that that is not
the case. We deduce that at least one $\G$-conjugate of $E_{3}$
is $\F$-essential. We conclude from \cite[Proposition 6.9(ii)]{VanBeek25}
that $E_{3}^{\G}=E_{3}^{\F}$. The result follows.
\end{proof}
Using this classification we can prove the following.
\begin{prop}
\label{prop:sharpness-van-beek-M}Let $\F\not\cong\H$ be a saturated
fusion system over a Sylow $5$-subgroup of the Monster and let $M^{*}:\OF^{\operatorname{op}}\to\Fp[5]\mod{}$
be the contravariant part of any Mackey functor over $\F$ with coefficients
in $\Fp[5]$. If $O_{5}\left(\F\right)=\left\{ 1\right\} $, then
$\limn_{\OFc}\left(M^{*}\downarrow_{\OFc}^{\OF}\right)=0$ for every
$n\ge2$. In particular, $\F$ satisfies cohomological sharpness (see
\cite[Lemma 3.6]{Pra26}).
\end{prop}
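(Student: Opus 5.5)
By Proposition \ref{prop:classification-van-beek-M}, and since $\F\not\cong\H$ and $O_{5}\left(\F\right)=\left\{ 1\right\}$, either $\F=\G=\F_{S}\left(M\right)$ or $\F$ is one of the two remaining exotic fusion systems. In the first case $\F$ is realizable, so \cite[Theorem B]{DiazPark15} gives vanishing of all higher limits $\limn_{\OFc}\left(M^{*}\downarrow_{\OFc}^{\OF}\right)$ for $n\ge1$. The remaining two systems will be handled exactly as in Proposition \ref{prop:sharpness-van-beek-F3}, by applying Proposition \ref{prop:sharpness-2-essentials} with $P:=E_{1}$ and $Q:=E_{3}$, after replacing $E_{3}$ by a fully $\F$-normalized $\F$-conjugate.

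The hypotheses of Proposition \ref{prop:sharpness-2-essentials} are checked as follows.

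\emph{$P$ is normal and fully normalized.} Since $E_{1}=C_{S}\left(Z_{2}\left(S\right)\right)$ is characteristic in $S$, we have $P\trianglelefteq S$. In particular $P$ is fully $\F$-normalized and $\Aut_{\F}\left(S\right)=\Aut_{N_{\F}\left(P\right)}\left(S\right)$.

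\emph{Minimality of $Q$.} By Proposition \ref{prop:classification-van-beek-M} we have $\Fcr=\left\{ S,E_{1}\right\} \cup E_{3}^{\F}$. Since $E_{3}\cong C_{5}^{4}$ has order $5^{4}$, which is smaller than $\left|E_{1}\right|$, no member of $\Fcr$ is properly contained in $Q$. Hence $Q$ is minimal in $\Fcr$.

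\emph{Generation.} By the Alperin--Goldschmidt fusion theorem, $\F$ is generated by $\Aut_{\F}\left(S\right)$ together with the automorphism groups of fully normalized representatives of the essential subgroups. These essentials lie among $E_{1}$ and the $\F$-conjugates of $E_{3}$, and every $\F$-conjugate of $E_{3}$ is $\F$-conjugate to the chosen $Q$. The automorphisms of $S$ and of $E_{1}$ lie in $N_{\F}\left(P\right)$. Therefore $\F=\left\langle N_{\F}\left(P\right),\Aut_{\F}\left(Q\right)\right\rangle _{S}$.

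Proposition \ref{prop:sharpness-2-essentials} then gives $\limn_{\OFc}\left(M^{*}\downarrow_{\OFc}^{\OF}\right)=0$ for $n\ge2$. Cohomological sharpness follows from \cite[Lemma 3.6]{Pra26}.

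The only delicate point is the generation step: all $\F$-conjugates of $E_{3}$ must form a single $\F$-class, so that one essential $Q$ suffices. This is why Proposition \ref{prop:classification-van-beek-M} was stated with $E_{3}^{\F}$, which \cite[Proposition 6.9(ii)]{VanBeek25} guarantees.
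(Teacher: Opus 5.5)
Your proposal is correct and follows the paper's proof. The realizable case is handled by Diaz--Park, and the two exotic systems by Proposition~\ref{prop:sharpness-2-essentials} with $P=E_{1}$ and $Q$ a conjugate of $E_{3}$, with generation coming from the classification together with Alperin--Goldschmidt. Your explicit choice of a fully $\F$-normalized conjugate of $E_{3}$ and your order argument for minimality fill in details that the paper leaves implicit.
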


\begin{proof}
For $\F=\G$ the result follows from \cite[Theorem B]{DiazPark15}.
Otherwise, since $P$ is characteristic in $S$, then $\Aut_{\F}\left(S\right)=\Aut_{N_{\F}\left(E_{1}\right)}\left(S\right)$.
It follows from Alperin-Goldschmidt's fusion theorem and Proposition
\ref{prop:classification-van-beek-M} that $\F=\left\langle N_{\F}\left(E_{1}\right),\Aut_{\F}\left(E_{3}\right)\right\rangle _{S}$.
The result follows from applying Proposition \ref{prop:sharpness-2-essentials}
with $P=E_{1}$ and $Q=E_{3}$.
\end{proof}
Van Beek also provides a description of the fusion systems over $E_{1}$.
\begin{prop}
\label{prop:classification-van-beek-E1-M}Let $\F$ be a saturated
fusion system over $E_{1}$. Then either $\F=N_{\F}\left(E_{1}\right)$
or $\F$ is isomorphic to one of nine exotic fusion systems and $\Fcr\subseteq\left\{ E_{1}\right\} \cup E_{3}^{\G}$.
\end{prop}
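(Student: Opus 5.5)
This statement is a classification result quoted from van Beek's paper, so the proof will mostly assemble citations, in the same way as the proof of Proposition \ref{prop:classification-van-beek-E1-F3}. I would organise it in three steps.

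First, I would get the dichotomy from \cite[Theorem D]{VanBeek25} (or the analogous statement there for the $5$-local case). A saturated fusion system $\F$ over $E_{1}$ either has $E_{1}$ normal, i.e.\ $\F=N_{\F}\left(E_{1}\right)$, or it is isomorphic to one of the nine exotic systems constructed in \cite[Section 6]{VanBeek25}. The likely statement there lists these nine systems explicitly, presumably as those with $O_{5}\left(\F\right)\not=E_{1}$.

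Second, I would bound the centric-radical subgroups of each of the nine exotic systems. Where \cite{VanBeek25} lists them directly, I would quote that list. Otherwise I would argue through Alperin--Goldschmidt. The systems are generated by $\Aut_{\F}\left(E_{1}\right)$ together with automorphisms of conjugates of $E_{3}$, so every $\F$-essential lies in $E_{3}^{\G}$. Every $\F$-centric-radical subgroup is then either $E_{1}$, an essential, or a subgroup arising from normalizers of these. To exclude extra centric-radicals, I would use that $E_{3}\cong C_{5}^{4}$ is abelian and self-centralizing. Any centric-radical $R$ with $E_{3}\lneq R\lneq E_{1}$ would then have $\Out_{\F}\left(R\right)$ controlled by $N_{\F}\left(E_{1}\right)$. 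Since $E_{1}$ is characteristic in itself, this forces $O_{p}\left(\Out_{\F}\left(R\right)\right)\not=1$, so such an $R$ is not radical.

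Third, I would identify $\F$-conjugacy classes with the $\G$-conjugacy classes of $E_{3}$, as in the proof of Proposition \ref{prop:classification-van-beek-M}. Here $\G=\F_{S}\left(M\right)$, so the relevant conjugates all lie inside $E_{1}$. This step only requires containment in $E_{3}^{\G}$, which is why the statement uses $\subseteq$.

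The main obstacle is the second step. If van Beek's paper does not list the centric-radicals of all nine systems, the radicality check for intermediate subgroups has to be done by hand. For that I would first try to realise $N_{\F}\left(R\right)$ as the normalizer of $R$ in an explicit finite group ($M_{1}$ or $M_{3}$), and then apply Lemma \ref{lem:centric-radical-contained}.
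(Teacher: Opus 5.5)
Your overall approach matches the paper's: the statement is read off from \cite{VanBeek25}. However, you misplace the main citation and never locate the results that carry the real content. The dichotomy for fusion systems over $E_{1}\le S\in\Syl_{5}(M)$ is \cite[Theorem E]{VanBeek25}; Theorem D is the $3$-local statement used in Proposition \ref{prop:classification-van-beek-E1-F3}. More substantively, the bound on $\Fcr$ needs more than the main theorem, and you do not say where it comes from. The paper gets it by identifying the nine systems in two batches:
\begin{itemize}
\item five are the subsystems of index prime to $5$ of $\D^{*}$ (\cite[Propositions 6.26 and 6.29]{VanBeek25}). They satisfy $\Fcr=\{E_{1}\}\cup E_{3}^{\D^{*}}$ and are exotic by \cite[Proposition 6.30]{VanBeek25};
\item four are the subsystems of index prime to $5$ of the pruned system $\mathcal{L_P}$ (\cite[p.~48 and Proposition 6.32]{VanBeek25}). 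They satisfy $\Fcr=\{E_{1}\}\cup E_{3}^{\mathcal{O}^{5'}(\D^{*})}$.
\end{itemize}
Your Step 2 assumes that these systems are generated by $\Aut_{\F}(E_{1})$ and automorphisms of conjugates of $E_{3}$, so that every essential is an $E_{3}$-conjugate. That is exactly the information these propositions supply, so you assume what has to be cited.

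The hand-made fallback in Step 2 is unnecessary, since the lists are in \cite{VanBeek25}, but as written it does not work. First, restricting to $E_{3}\lneq R\lneq E_{1}$ misses centric subgroups that neither contain nor lie in a conjugate of $E_{3}$. Second, ``$E_{1}$ is characteristic in itself'' gives no reason for $O_{p}(\Out_{\F}(R))\neq1$. The correct argument runs as follows. Let $R\neq E_{1}$ be $\F$-centric and not $\F$-subconjugate to any essential. Then Alperin--Goldschmidt shows every element of $\Aut_{\F}(R)$ is the restriction of an element of $N_{\Aut_{\F}(E_{1})}(R)$. Hence $\Aut_{N_{E_{1}}(R)}(R)$ is a normal $p$-subgroup of $\Aut_{\F}(R)$. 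It strictly contains $\Aut_{R}(R)$ because $N_{E_{1}}(R)>R$ and $C_{E_{1}}(R)\le R$, so $R$ is not radical. Finally, a centric subgroup of an abelian essential must equal it. Even this repaired argument still presupposes the list of essentials. In Step 3, the observation that the relevant conjugates lie inside $E_{1}$ does not show that $\F$-conjugates of $E_{3}$ are $\G$-conjugates. The argument you borrow from Proposition \ref{prop:classification-van-beek-M} via \cite[Proposition 6.9(ii)]{VanBeek25} concerns systems over $S$, so it does not transfer as stated.
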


\begin{proof}
The first part of the statement is given in \cite[Theorem E]{VanBeek25}.
´The first of the nine exotic fusion systems (denoted $\D^{*}$) is
given in \cite[Proposition 6.26]{VanBeek25}. Then \cite[Proposition 6.29]{VanBeek25}
tells us that, up to isomorphism, there exist $5$ fusion subsystems
of $\D^{*}$ of index prime to $5$. Moreover $\Fcr=\left\{ E_{1}\right\} \cup E_{3}^{\D^{*}}$
for all such fusion systems. \cite[Proposition 6.30]{VanBeek25} ensures
that they are all exotic.

In \cite[Page 48]{VanBeek25} the saturated fusion subsystem $\mathcal{L_{P}}\subseteq\D^{*}$
is defined by ``pruning'' (see Proposition \ref{prop:pruning-method})
one of the fusion subsystems of $\D^{*}$ of index prime to $5$.
Finally \cite[Proposition 6.32]{VanBeek25}, tells us that, up to
isomorphism, there exist $4$ fusion subsystems of $\mathcal{L_{P}}$
of index prime to $5$. Moreover all these fusion systems are exotic
and satisfy $\Fcr=\left\{ E_{1}\right\} \cup E_{3}^{O^{5'}\left(\D^{*}\right)}$.
This concludes the proof.
\end{proof}
As a consequence of this description we obtain the following.
\begin{prop}
\label{prop:sharpness-van-beek-E1-M}Let $S$ be a Sylow $5$-subgroup
of the Monster group $M$ and let $\F$ be a saturated fusion system
over $C_{S}\left(Z_{2}\left(S\right)\right)$. Then $\F$ satisfies
cohomological sharpness.
\end{prop}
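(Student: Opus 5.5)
We will argue by cases, following Proposition \ref{prop:classification-van-beek-E1-M} and mirroring the pattern of Propositions \ref{prop:sharpness-van-beek-E1-F3} and \ref{prop:sharpness-van-beek-F3}, but combining Proposition \ref{prop:sharpness-2-essentials} with Theorem \hyperref[thm:B]{B}. Notably, the statement only claims cohomological sharpness and not vanishing of $\limn[n]$ for general Mackey functors. This suggests that for some of the nine exotic systems the argument must pass through Theorem \hyperref[thm:B]{B}, which only transfers cohomological sharpness.

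\textbf{Step 1.} If $\F=N_{\F}\left(E_{1}\right)$, then $E_{1}$ is the whole Sylow, so $\F$ is constrained. By Lemma \ref{lem:sharpness-for-normalizers} it satisfies sharpness, and in particular cohomological sharpness.

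\textbf{Step 2.} Otherwise $\F$ is one of the nine exotic systems and $\Fcr\subseteq\left\{ E_{1}\right\} \cup E_{3}^{\G}$. For the five index-prime-to-$5$ subsystems of $\D^{*}$ (including $\D^{*}$ itself), Proposition \ref{prop:classification-van-beek-E1-M} gives $\Fcr=\left\{ E_{1}\right\} \cup E_{3}^{\D^{*}}$. We set $P=E_{1}$, which is trivially normal in the Sylow $E_{1}$, and take $Q$ a fully $\F$-normalized conjugate of $E_{3}$. The $\F$-centric-radical subgroups other than $E_{1}$ are all conjugates of $E_{3}$ and of the same order, so $Q$ is minimal in $\Fcr$. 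By Alperin--Goldschmidt, $\F=\left\langle N_{\F}\left(E_{1}\right),\Aut_{\F}\left(Q_{i}\right)\right\rangle $ with the $Q_{i}$ running over representatives of the $\F$-classes of $E_{3}$-conjugates. If there is a single $\F$-class, Proposition \ref{prop:sharpness-2-essentials} applies directly.

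If there are several $\F$-classes, we use Theorem \hyperref[thm:B]{B} instead, with $\H=N_{\F}\left(E_{1}\right)$, which satisfies cohomological sharpness by Lemma \ref{lem:sharpness-for-normalizers}, and $\mathcal{A}$ the set of representatives of the essential $E_{3}$-classes. The key input is the second hypothesis of Theorem \hyperref[thm:B]{B}: for every $R\lneq E_{3}$ we need $C_{E_{1}}\left(R\right)\not\le R$. Since $E_{3}\cong C_{5}^{4}$ is abelian, $E_{3}\le C_{E_{1}}\left(R\right)$, so $C_{E_{1}}\left(R\right)\not\le R$ holds automatically. Hence every abelian $P\in\mathcal{A}$ satisfies the hypothesis.

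\textbf{Step 3.} For the four subsystems of index prime to $5$ in the pruned system $\mathcal{L_{P}}$, we have $\Fcr=\left\{ E_{1}\right\} \cup E_{3}^{O^{5'}\left(\D^{*}\right)}$. The same argument applies verbatim: the essentials are abelian conjugates of $E_{3}$, so Theorem \hyperref[thm:B]{B} with $\H=N_{\F}\left(E_{1}\right)$ yields cohomological sharpness.

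The main obstacle is verifying the generation hypothesis $\F=\left\langle \H,\Aut_{\F}\left(P\right)\mid P\in\mathcal{A}\right\rangle $ together with $\mathcal{A}\subseteq\Fc$. Centricity follows from $E_{3}$ being centric-radical, and generation from Alperin--Goldschmidt, since $\Aut_{\F}\left(E_{1}\right)\subseteq\H$. One must also check that Theorem \hyperref[thm:B]{B} does not require each intermediate pruned system to have the listed form, but only saturation. That is supplied inside its proof via Proposition \ref{prop:pruning-method}, whose hypothesis is the same abelian condition.
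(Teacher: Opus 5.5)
Your proposal is correct and follows the paper's proof. After dispatching $\F=N_{\F}(E_1)$ via Lemma \ref{lem:sharpness-for-normalizers}, the paper uses Alperin--Goldschmidt to write $\F=\langle N_{\F}(E_1),\Aut_{\F}(A)\mid A\in\mathcal{A}\rangle_{E_1}$ with $\mathcal{A}$ a family of conjugates of $E_3$, and applies Theorem \hyperref[thm:B]{B}, whose second hypothesis holds because $E_3$ is abelian. Your separate single-class case via Proposition \ref{prop:sharpness-2-essentials} is unnecessary, since Theorem \hyperref[thm:B]{B} handles it uniformly, though in that case it does give the stronger vanishing of $\limn[n]$ for $n\ge2$ for every Mackey functor.
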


\begin{proof}
For $\F=N_{\F}\left(E_{1}\right)$, the result follows from Lemma
\ref{lem:sharpness-for-normalizers}.

From Proposition \ref{prop:classification-van-beek-M} and Alperin-Goldschmidt's
fusion theorem we know that there exist $\mathcal{A}\subseteq E_{3}^{\G}$
such that $\F=\left\langle N_{\F}\left(E_{1}\right),\Aut_{\F}\left(A\right)\mid A\in\mathcal{A}\right\rangle _{E_{1}}$.
Since $E_{3}$ is abelian, then the result follows from applying Theorem
\hyperref[thm:B]{B}.
\end{proof}

\section{Cohomological sharpness via matching normalizers}\label{sec:sharpness_via_normalizers}

As shown during Section \ref{sec:sharpness_via_pruning}, Theorem
\hyperref[thm:A]{A} is an efficient tool towards studying higher
limits in whenever $\CFp$ vanishes. However, this functor does not
always vanish. The Benson-Solomon fusion systems are a prime example
of this (see \cite[Subsection 4.3]{Pra26}). Theorem \hyperref[thm:C]{C}
provides us with methods towards studying higher limits in this cases.
\begin{notation}
\label{nota:with-normalizers}Notation \ref{nota:C-included-in-notation}
is adopted, $\F$, $\F_{\boldsymbol{1}}$, $\F_{\boldsymbol{2}}$
and $\F_{\boldsymbol{e}}$ are saturated and $Q\le S'$ is a fully
$\F$-normalized element in $\mathcal{C}$. Moreover, for every $x\in\Ob\left(\T\right)$,
we define the fusion system $\G_{x}:=N_{\F_{x}}\left(Q\right)$ over
$N_{S_{x}}\left(Q\right)$. We also define the fusion system $\G:=\left\langle \G_{\boldsymbol{1}},\G_{\boldsymbol{2}}\right\rangle _{N_{S}\left(Q\right)}$,
the set $\Xi:=\left\{ \G_{\boldsymbol{1}},\G_{\boldsymbol{2}},\G_{\boldsymbol{e}}\right\} $
and let $CY_{0}$ and $CY_{1}$ be as in Equation (\ref{eq:CY0-CY1}).
\end{notation}

\begin{proof}[\foreignlanguage{english}{Proof of Theorem C}]
The second part of the statement follows from the first and \cite[Lemma 3.6]{Pra26}.

Throughout the rest of this proof we adopt Notation \ref{nota:with-normalizers}.
This is consistent with the notation in the statement.

Since $Q$ is fully $\F$-normalized and $\F$-centric then it is
fully $\H$-normalized and $\H$-centric for every $\H\in\Lambda\cup\left\{ \F\right\} $.
Since each such $\H$ is saturated, then it follows from Lemma \ref{lem:sharpness-for-normalizers}
that every $\mathcal{E}\in\Xi\cup\left\{ \G\right\} $ satisfies sharpness.

It follows from Lemma \ref{lem:centric-radical-contained}, that $\Fcr[E]\subseteq\mathcal{C}$.
We can therefore apply Theorem \hyperref[thm:A]{A}(\ref{enu:thm-a-iso})
with $\Xi$ in place of $\Lambda$. It follows from Shapiro's isomorphism
(see \cite[Proposition 4.8]{Yal22}) that, for every $n\ge3$,
\[
\Ext_{\OFC}^{n}\left(\CFp,M^{*}\downarrow_{\OFC}^{\OF}\right)\cong\Ext_{\OFC}^{n}\left(\CFp[\Xi],M^{*}\downarrow_{\OFC[\G]}^{\OF}\right)\cong\limn_{\G}\left(M^{*}\downarrow_{\OFc[\G]}^{\OF}\right)=0.
\]
It follows from Theorem \hyperref[thm:A]{A}(\ref{enu:thm-a-iso})
(applied with $\Lambda$) that $\limn_{\F}\left(M\right)=0$ for every
$n\ge3$.

Apply Theorem \hyperref[thm:A]{A}(\ref{enu:thm-A-exact-seq}) with
$\Xi$ in place of $\Lambda$. Since $\limn[2]_{\G}\left(M^{*}\downarrow_{\OFc[\G]}^{\OF}\right)=0$,
then we deduce that the morphism
\[
\Psi:\Nat_{\OFC}\left(CY_{1},M\downarrow_{\OFC[\G]}^{\OFc}\right)\twoheadrightarrow\Nat_{\OFC}\left(\CFp[\Xi],M\downarrow_{\OFC[\G]}^{\OFc}\right),
\]
arising from the natural inclusion $\CFp[\Xi]\hookrightarrow CY_{1}$
is an epimorphism.

Let $\xi:\Nat_{\OFC}\left(CY_{1}\uparrow_{\OFC[\G]}^{\OFC},M\downarrow_{\OFC}^{\OFc}\right)\twoheadrightarrow\Nat_{\OFC}\left(CX_{1},M\downarrow_{\OFC}^{\OFc}\right)$
be such that $\xi\pi_{1}^{*}=\Id$. From adjointness of the pair $\left(\downarrow_{\OFC[\G]}^{\OFC},\uparrow_{\OFC[\G]}^{\OFC}\right)$,
we deduce that the following diagram commutes
\[
\xymatrix{\Nat_{\OFC}\left(CX_{1},M\downarrow_{\OFC}^{\OFc}\right)\ar@{->}[d]^{\varUpsilon}\ar@/^{2pc}/@{^{(}->}[r]^{\pi_{1}^{*}} & \Nat_{\OFC}\left(CY_{1}\uparrow_{\OFC[\G]}^{\OFC},M\downarrow_{\OFC}^{\OFc}\right)\ar@/^{2pc}/@{->>}[l]_{\xi}\ar@{->>}[d]^{\tilde{\Psi}}\\
\Nat_{\OFC}\left(\CFp,M\downarrow_{\OFC}^{\OFc}\right)\ar@{^{(}->>}[r]_{\Gamma^{*}} & \Nat_{\OFC}\left(\CFp[\Xi]\uparrow_{\OFC[\G]}^{\OFC},M\downarrow_{\OFC}^{\OFc}\right)
}
,
\]
where $\tilde{\Psi}$ is the image of $\Psi$ via the adjointness
isomorphism and $\Gamma^{*}$ results from applying the functor $\Nat_{\OFC}\left(-,M\downarrow_{\OFC}^{\OFc}\right)$
to the isomorphism $\Gamma$ of Equation (\ref{eq:def-gamma}). Since
both $\tilde{h}$ and $\left(\Gamma^{*}\right)^{-1}$ are epimorphisms,
then it follows that $\varUpsilon\xi=\left(\Gamma^{*}\right)^{-1}\tilde{\Psi}$
is also an epimorphism. In particular $\varUpsilon$ is an epimorphism.
It follows from Theorem \hyperref[thm:A]{A}(\ref{enu:thm-A-exact-seq})
(applied with $\Lambda$) that $\limn[2]_{\F}\left(M\right)=0$. This
concludes the proof.
\end{proof}
The following provides sufficient conditions for the monomorphism
$\pi_{1}^{*}$ of Theorem \hyperref[thm:C]{C} to split.
\begin{prop}
\label{prop:sufficient-condition-for-splitting.}With notation as
in Theorem \hyperref[thm:C]{C}. Assume that $M=H^{j}\left(-;\Fp\right)$
for some $j\ge0$, that the fusion system $\F_{\boldsymbol{e}}$ is
realizable and that $Q\trianglelefteq S'$. Then $\pi_{1}^{*}$ splits.

Since $Q\in\mathcal{C}$, then $\Fcr[G]_{\boldsymbol{e}}\subseteq\mathcal{C}$
(see Lemma \ref{lem:centric-radical-contained}). Let $\H\in\left\{ \G_{\boldsymbol{e}},\F_{\boldsymbol{e}}\right\} $.
Since $\H$ is saturated and $\Fcr[H]\subseteq\mathcal{C}$, then
Alperin-Goldschmidt's fusion theorem gives us a natural isomorphism
\[
M^{\H}:=\lim_{\OFC[\H]}\left(H^{j}\left(-;\Fp\right)\right)\cong\lim_{\OF[\H]}\left(H^{j}\left(-;\Fp\right)\right).
\]
Recall the natural isomorphism $M^{\H}\cong\Nat_{\OFC[\H]}\left(\underline{\Fp},H^{j}\left(-;\Fp\right)\right)$.
From adjointness of the pair $\left(\uparrow_{\OFC[\H]}^{\OFc},\downarrow_{\OFC[\H]}^{\OFc}\right)$,
we deduce that $\pi_{1}^{*}$ splits if and only if the universal
map $M^{\F_{\boldsymbol{e}}}\to M^{\G_{\boldsymbol{e}}}$ splits.

Let $G$ be a finite group such that $S'\in\Syl_{p}\left(G\right)$
and that $\F_{\boldsymbol{e}}=\F_{S'}\left(G\right)$. Then $\G_{\boldsymbol{e}}$
is realized by $N_{G}\left(Q\right)$. We know from the stable elements
theorem, that there exist natural isomorphism $H^{j}\left(G;\Fp\right)\cong M^{\F_{\boldsymbol{e}}}$
and $H^{j}\left(N_{G}\left(Q\right);\Fp\right)\cong M^{\G_{\boldsymbol{e}}}$.
We deduce that $\pi_{1}^{*}$ splits if and only if the morphism $\operatorname{Res}:H^{j}\left(G;\Fp\right)\to H^{j}\left(N_{G}\left(Q\right);\Fp\right)$
deriving from the natural inclusion $N_{G}\left(Q\right)\subseteq G$
splits. Let $\operatorname{tr}:H^{j}\left(N_{G}\left(Q\right);\Fp\right)\to H^{j}\left(G;\Fp\right)$
denote the transfer map and define $n:=\left[G:N_{G}\left(Q\right)\right]$.
It is well known that the composition $\operatorname{tr}\operatorname{Res}$
equals multiplication by $n$. Since $p\not\mid n$ whenever $Q\trianglelefteq S'$,
then this composition is an isomorphism. The result follows.
\end{prop}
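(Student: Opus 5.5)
The plan is to reduce the splitting of $\pi_{1}^{*}$ to the splitting of a restriction map in ordinary group cohomology, and then split that restriction with the transfer.

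\textbf{Step 1: rewrite $\pi_{1}^{*}$ as a map between limits.} By definition $CX_{1}=\underline{\Fp}_{\OFC[\F_{\boldsymbol{e}}]}^{\OFC}$, and $CY_{1}\uparrow_{\OFC[\G]}^{\OFC}$ is the induction to $\OFC$ of $\underline{\Fp}_{\OFC[\G_{\boldsymbol{e}}]}$, by transitivity of induction. The epimorphism $\pi_{1}$ is the map induced by the inclusion $\G_{\boldsymbol{e}}\subseteq\F_{\boldsymbol{e}}$. I will apply the adjunction between induction and restriction together with the natural isomorphism
\[
\Nat_{\OFC[\H]}\left(\underline{\Fp},M\downarrow\right)\cong\lim_{\OFC[\H]}\left(M\downarrow\right)=:M^{\H}
\]
for $\H\in\left\{ \G_{\boldsymbol{e}},\F_{\boldsymbol{e}}\right\}$. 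Under these identifications $\pi_{1}^{*}$ becomes the universal map $M^{\F_{\boldsymbol{e}}}\to M^{\G_{\boldsymbol{e}}}$. In particular, $\pi_{1}^{*}$ splits if and only if this universal map splits.

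\textbf{Step 2: pass from $\OFC[\H]$ to the full orbit category.} Since $Q\in\mathcal{C}$ and $\mathcal{C}$ is closed under overgroups, Lemma \ref{lem:centric-radical-contained} gives $\Fcr[G]_{\boldsymbol{e}}\subseteq\mathcal{C}$. The hypotheses of Theorem C give $\Fcr_{\boldsymbol{e}}\subseteq\mathcal{C}$. Alperin--Goldschmidt's fusion theorem then identifies each limit over $\OFC[\H]$ with the limit over $\OF[\H]$, and these identifications are compatible with restriction. So $M^{\H}$ is the usual module of stable elements in $H^{j}\left(S_{\H};\Fp\right)$.

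\textbf{Step 3: identify the map with restriction in group cohomology.} Choose a finite group $G$ with $S'\in\Syl_{p}\left(G\right)$ and $\F_{\boldsymbol{e}}=\F_{S'}\left(G\right)$. Then $\G_{\boldsymbol{e}}=N_{\F_{\boldsymbol{e}}}\left(Q\right)$ is realized by $N_{G}\left(Q\right)$, and $N_{S'}\left(Q\right)$ is a Sylow $p$-subgroup of $N_{G}\left(Q\right)$ because $Q\trianglelefteq S'$ forces $N_{S'}\left(Q\right)=S'$. The Cartan--Eilenberg stable elements theorem gives natural isomorphisms $H^{j}\left(G;\Fp\right)\cong M^{\F_{\boldsymbol{e}}}$ and $H^{j}\left(N_{G}\left(Q\right);\Fp\right)\cong M^{\G_{\boldsymbol{e}}}$, both through restriction to $S'$. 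Under these isomorphisms the universal map becomes $\operatorname{Res}:H^{j}\left(G;\Fp\right)\to H^{j}\left(N_{G}\left(Q\right);\Fp\right)$.

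\textbf{Step 4: split the restriction.} Since $S'\le N_{G}\left(Q\right)$, the index $n:=\left[G:N_{G}\left(Q\right)\right]$ is prime to $p$. Hence $\operatorname{tr}\circ\operatorname{Res}=n\cdot\Id$ is invertible on an $\Fp$-vector space, so $n^{-1}\operatorname{tr}$ is a left inverse of $\operatorname{Res}$. Transporting this left inverse back through Steps 1--3 gives the required splitting $\xi$ of $\pi_{1}^{*}$.

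\textbf{Main obstacle.} I expect the delicate point to be Step 1: checking that the adjunction isomorphisms genuinely carry $\pi_{1}^{*}$ to the universal map between limits, rather than to some twisted version. This requires unwinding the description of induced constant functors from \cite[Lemma 4.7]{Yal22} and confirming that $\pi_{1}$ sends $\left[\varphi\right]_{\G_{\boldsymbol{e}}}$ to $\left[\varphi\right]_{\F_{\boldsymbol{e}}}$. Once that is settled, the remaining steps are formal naturality checks.
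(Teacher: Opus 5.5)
Your proposal is correct and follows the paper's argument step for step. The adjunction reduces the splitting of $\pi_{1}^{*}$ to that of the universal map $M^{\mathcal{F}_{\boldsymbol{e}}}\to M^{\mathcal{G}_{\boldsymbol{e}}}$. Alperin--Goldschmidt and the stable elements theorem then identify this map with $\operatorname{Res}\colon H^{j}(G;\mathbb{F}_{p})\to H^{j}(N_{G}(Q);\mathbb{F}_{p})$, and $[G:N_{G}(Q)]^{-1}\operatorname{tr}$ supplies the left inverse. Your extra checks (that $N_{S'}(Q)=S'$ is Sylow in $N_{G}(Q)$, and that $\pi_{1}$ sends $[\varphi]_{\mathcal{G}_{\boldsymbol{e}}}$ to $[\varphi]_{\mathcal{F}_{\boldsymbol{e}}}$) only make explicit points the paper leaves implicit.
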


\begin{rem}
The proof of Proposition \ref{prop:sufficient-condition-for-splitting.}
depends heavily on the assumption $M=H^{j}\left(-;\Fp\right)$. The
same result may not be valid if $M$ cannot be lifted to a global
Makey functor or if $\lim_{\OFc[\FSG]}\left(M^{*}\right)\not\cong M^{*}\left(G\right)$
for some realizable fusion subsystem.
\end{rem}

The following provides sufficient conditions for the morphism $\Gamma$
of Theorem \hyperref[thm:C]{C} to be an isomorphism.
\begin{lem}
\label{lem:Description-induction}Adopt Notation \ref{nota:with-normalizers}
and let $P\in\mathcal{C}$. The following holds for every commutative
ring $\R$
\[
\CFR[\Xi]\uparrow_{\OFC[\G]}^{\OFC}\left(P\right)\cong\bigoplus_{\left[\varphi\right]_{\G}\in\Rep_{\F}\left(P,\G\right)}\CFR[\Xi]\left(\varphi\left(P\right)\right).
\]
\end{lem}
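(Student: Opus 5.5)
We will compute the induced module directly from its definition as a tensor product (Remark \ref{rem:equivalence-of-index-cat-RC-mod}):
\[
\CFR[\Xi]\uparrow_{\OFC[\G]}^{\OFC}\left(P\right)=\CFR[\Xi]\otimes_{\R\OFC[\G]}\R\Hom_{\OFC}\left(P,-\right).
\]
Here $\R\Hom_{\OFC}\left(P,-\right)$ restricted to $\OFC[\G]$ is the left $\R\OFC[\G]$-module with basis $\overline{\varphi}\in\Hom_{\OFC}\left(P,R\right)$ for $R\le N_{S}\left(Q\right)$ in $\mathcal{C}$. The plan is to decompose this left module into a direct sum of representable modules, one for each class in $\Rep_{\F}\left(P,\G\right)$. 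The Yoneda isomorphism $\CFR[\Xi]\otimes_{\R\OFC[\G]}\R\Hom_{\OFC[\G]}\left(R,-\right)\cong\CFR[\Xi]\left(R\right)$ then yields the claimed direct sum.

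First, we refine the decomposition. Every morphism $\varphi:P\to R$ in $\F$ with $R\le N_{S}\left(Q\right)$ factors as $\varphi=\iota_{\varphi\left(P\right)}^{R}\circ\tilde{\varphi}$, where $\tilde{\varphi}:P\bjarrow\varphi\left(P\right)$. Since $\mathcal{C}$ is closed under $\F$-conjugation, $\varphi\left(P\right)\in\mathcal{C}$, so $\varphi\left(P\right)$ is an object of $\OFC[\G]$. Thus, in the left $\R\OFC[\G]$-module $\bigoplus_{R}\R\Hom_{\OFC}\left(P,R\right)$, every basis element $\overline{\varphi}$ is the image of $\overline{\tilde{\varphi}}$ under the $\G$-morphism $\overline{\iota}$. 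Two such isomorphisms $\tilde{\varphi}$ and $\tilde{\psi}$ lie in the same $\OFC[\G]$-orbit exactly when $\tilde{\psi}=\theta\tilde{\varphi}$ for some $\G$-isomorphism $\theta$, that is, when $\left[\varphi\right]_{\G}=\left[\psi\right]_{\G}$. Choosing a representative $\varphi$ for each class $\left[\varphi\right]_{\G}$, we obtain a morphism of left modules
\[
\bigoplus_{\left[\varphi\right]_{\G}}\R\Hom_{\OFC[\G]}\left(\varphi\left(P\right),-\right)\to\R\Hom_{\OFC}\left(P,-\right)\downarrow_{\OFC[\G]},\qquad\overline{\alpha}\mapsto\overline{\alpha\tilde{\varphi}}.
\]
Surjectivity follows from the factorization above.

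The main obstacle is injectivity. We must show that $\overline{\alpha\tilde{\varphi}}=\overline{\beta\tilde{\psi}}$ in $\OFC$, for $\alpha,\beta$ in $\G$, forces $\left[\varphi\right]_{\G}=\left[\psi\right]_{\G}$ and $\overline{\alpha}=\overline{\beta}$ in $\OFC[\G]$. Equality in $\OFC$ means $c_{s}\alpha\tilde{\varphi}=\beta\tilde{\psi}$ for some $s\in R\le N_{S}\left(Q\right)$. Since $c_{s}$ lies in $\G$, this gives $\tilde{\psi}\tilde{\varphi}^{-1}=\beta^{-1}c_{s}\alpha$ restricted to $\varphi\left(P\right)$, which is a $\G$-isomorphism. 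Hence the two classes coincide, so $\psi=\varphi$ as chosen representatives. It then remains to deduce $\overline{\alpha}=\overline{\beta}$ from $c_{s}\alpha=\beta$ on $\varphi\left(P\right)$, which holds since $c_{s}\in\Aut_{R}\left(R\right)$. Left multiplication by $\overline{\tilde{\varphi}}$ is injective here because $\tilde{\varphi}$ is an isomorphism, so no cancellation issues arise.

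Finally, we tensor with $\CFR[\Xi]$. Tensor products commute with direct sums, and Yoneda gives $\CFR[\Xi]\otimes\R\Hom_{\OFC[\G]}\left(\varphi\left(P\right),-\right)\cong\CFR[\Xi]\left(\varphi\left(P\right)\right)$, which yields the stated isomorphism. We will check that the identification is independent of the chosen representatives up to the canonical isomorphisms $\CFR[\Xi]\left(\theta\right)$, and that it is compatible with the description of \cite[Lemma 4.7]{Yal22} for constant functors.
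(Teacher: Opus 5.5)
Your proof is correct and is essentially the paper's argument: the paper's one-line proof just cites the general description of induced functors in \cite[Proposition 4.5]{Yal22}, while you re-derive that description here by decomposing the restriction of $\R\Hom_{\OFC}\left(P,-\right)$ to $\OFC[\G]$ into representables $\R\Hom_{\OFC[\G]}\left(\varphi\left(P\right),-\right)$ indexed by $\Rep_{\F}\left(P,\G\right)$ and then applying Yoneda. Your checks of well-definedness, surjectivity and injectivity are the needed ones; they use that $\mathcal{C}$ is closed under $\F$-conjugation and that $c_{s}\in\G$ for $s\in R\le N_{S}\left(Q\right)$.
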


\begin{proof}
This is immediate from the description of induction of functors given,
for example, in \cite[Proposition 4.5]{Yal22}.
\end{proof}
\begin{cor}
\label{cor:relation-induction-no-induction}Adopt Notation \ref{nota:with-normalizers}
and let $P\in\mathcal{C}$. Assume that $\Hom_{\G}\left(P,N_{S}\left(Q\right)\right)=\Hom_{\F}\left(P,N_{S}\left(Q\right)\right)$.
The following isomorphism holds for every commutative ring $\R$
\[
\CFR[\Xi]\uparrow_{\OFC[\G]}^{\OFC}\left(P\right)\cong\CFR[\Xi]\left(P\right).
\]
Moreover, if $P=Q$, then the morphism $\Gamma$ of Equation (\ref{eq:def-gamma})
leads to an isomorphism
\[
\CFR[\Xi]\uparrow_{\OFC[\G]}^{\OFC}\left(Q\right)\cong\CFR\left(Q\right).
\]
\end{cor}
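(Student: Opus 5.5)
The first isomorphism follows directly from Lemma \ref{lem:Description-induction}. That lemma gives
\[
\CFR[\Xi]\uparrow_{\OFC[\G]}^{\OFC}\left(P\right)\cong\bigoplus_{\left[\varphi\right]_{\G}\in\Rep_{\F}\left(P,\G\right)}\CFR[\Xi]\left(\varphi\left(P\right)\right),
\]
so it suffices to show that $\Rep_{\F}\left(P,\G\right)$ consists of a single class, namely that of the inclusion. Under the hypothesis $\Hom_{\G}\left(P,N_{S}\left(Q\right)\right)=\Hom_{\F}\left(P,N_{S}\left(Q\right)\right)$, every $\varphi\in\Hom_{\F}\left(P,N_{S}\left(Q\right)\right)$ lies in $\G$. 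The corestriction $\theta:=\varphi\circ\left(\iota_{P}^{\varphi\left(P\right)}\right)^{-1}:P\to\varphi\left(P\right)$ is then an isomorphism in $\G$ satisfying $\theta\left(x\right)=\varphi\left(x\right)$, so $\left[\varphi\right]_{\G}=\left[\iota_{P}^{N_{S}\left(Q\right)}\right]_{\G}$. If $\Hom_{\F}\left(P,N_{S}\left(Q\right)\right)$ is empty, both sides vanish. Otherwise the sum has exactly one summand, $\CFR[\Xi]\left(P\right)$, which gives the first isomorphism.

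For the second statement, take $P=Q$. By the first part, the induced functor at $Q$ is $\CFR[\Xi]\left(Q\right)$, embedded via the summand indexed by the identity class. It then suffices to show that $\Gamma_{Q}:x\otimes\overline{\Id}\mapsto x$ is a bijection $\CFR[\Xi]\left(Q\right)\to\CFR\left(Q\right)$, viewed as a map $H_{1}\left(\Rep_{\G}\left(Q,\Xi\right)\right)\to H_{1}\left(\Rep_{\F}\left(Q,\Lambda\right)\right)$. This map is induced by the graph morphism $\Rep_{\G}\left(Q,\Xi\right)\to\Rep_{\F}\left(Q,\Lambda\right)$, $\left[\psi\right]_{\G_{x}}\mapsto\left[\psi\right]_{\F_{x}}$, which is the map coming from the projections $\pi_{0},\pi_{1}$. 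I would prove that this graph morphism is an isomorphism, after which $\Gamma_{Q}$ is an isomorphism on $H_{1}$.

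Surjectivity on vertices and edges needs every $\psi\in\Hom_{\F}\left(Q,S_{x}\right)$ to be $\F_{x}$-equivalent to one with image in $N_{S_{x}}\left(Q\right)$. By the hypothesis applied with $P=Q$, every $\F$-morphism $Q\to N_{S}\left(Q\right)$ lies in $\G$ and hence normalizes $Q$. Consequently $Q$ is the only $\F$-conjugate of $Q$ contained in $N_{S}\left(Q\right)$, and $\Hom_{\F}\left(Q,N_{S}\left(Q\right)\right)=\Aut_{\F}\left(Q\right)=\Aut_{\G}\left(Q\right)$. Next, take any $R=\psi\left(Q\right)\le S_{x}$. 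Since $Q$ is fully $\F$-normalized, it is fully $\F_{x}$-normalized, and saturation of $\F_{x}$ gives an $\F_{x}$-conjugate of $R$ lying in $N_{S}\left(Q\right)$ together with a transporter. This is the step I expect to be the main obstacle. One has to check that $Q$ being the unique conjugate in $N_{S}\left(Q\right)$ forces every $\F$-conjugate of $Q$ inside $S_{x}$ to be $\F_{x}$-conjugate to $Q$; I would use Sylow-type arguments in $N_{S_{x}}\left(R\right)$, together with the fact that $N_{S}\left(Q\right)\ge$ the relevant normalizers. If that route fails, I would add the assumption that $Q$ is weakly $\F$-closed, as in the intended applications.

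Injectivity reduces to comparing equivalence relations. Two automorphisms $\alpha,\beta\in\Aut_{\F}\left(Q\right)$ satisfy $\left[\alpha\right]_{\F_{x}}=\left[\beta\right]_{\F_{x}}$ if and only if $\beta\alpha^{-1}\in\Aut_{\F_{x}}\left(Q\right)$. Because $N_{\F_{x}}\left(Q\right)=\G_{x}$ has the same automorphism group of $Q$, this holds if and only if $\beta\alpha^{-1}\in\Aut_{\G_{x}}\left(Q\right)$, that is, $\left[\alpha\right]_{\G_{x}}=\left[\beta\right]_{\G_{x}}$. Incidence is preserved since the map is defined through the same representatives. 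Hence the graph morphism is an isomorphism, and $\Gamma$ is an isomorphism at $Q$.
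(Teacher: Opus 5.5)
Your treatment of the first isomorphism and your injectivity argument for $P=Q$ are correct and agree with the paper. The gap is the surjectivity step, which you flag yourself. The argument you sketch there is circular. The saturation property you invoke (the extension axiom, or the fact that a fully $\F_{x}$-normalized $Q$ receives a morphism $N_{S_{x}}(R)\to N_{S_{x}}(Q)$ in $\F_{x}$ sending $R$ onto $Q$) can only be applied once you already know $R\cong_{\F_{x}}Q$. That is exactly what you are trying to prove. For a subgroup $R$ that is merely $\F$-conjugate to $Q$, saturation of $\F_{x}$ gives nothing. The promised ``Sylow-type arguments'' are not supplied. The fallback of adding weak closure as a hypothesis proves a weaker statement than the one asked.

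The missing idea is that the fact you did establish already forces $Q$ to be weakly $\F$-closed in $S$. That fact is that $Q$ is the only $\F$-conjugate of $Q$ contained in $N_{S}(Q)$. Suppose $R\le S$ with $R\cong_{\F}Q$ and $R\neq Q$, and put $T:=\langle Q,R\rangle\le S$. Then $Q\not\trianglelefteq T$, since otherwise $R\le T\le N_{S}(Q)$. So $N_{T}(Q)<T$, and since $T$ is a $p$-group there is $t\in N_{T}(N_{T}(Q))\setminus N_{T}(Q)$. The subgroup ${}^{t}Q\le N_{T}(Q)\le N_{S}(Q)$ is then an $\F$-conjugate of $Q$ different from $Q$, a contradiction. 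Consequently every $\psi\in\Hom_{\F}(Q,S_{x})$ has image $Q$. Hence $\Rep_{\F}(Q,\F_{x})$ is just the coset space of $\Aut_{\F_{x}}(Q)$ in $\Aut_{\F}(Q)$, and surjectivity is immediate; your injectivity argument then finishes the proof. This is exactly the paper's route. It deduces weak closure of $Q$ from the hypothesis together with $\G\subseteq N_{\F}(Q)$. It then identifies $\Rep_{\G}(Q,\G_{x})$ and $\Rep_{\F}(Q,\F_{x})$ with $\Aut_{\G}(Q)/\Aut_{\G_{x}}(Q)=\Aut_{\F}(Q)/\Aut_{\F_{x}}(Q)$.
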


\begin{proof}
Under the given assumptions we have that $\Rep_{\F}\left(P,\G\right)=\left\{ \left[\Id_{P}\right]_{\G}\right\} $.
The first isomorphism follows from Lemma \ref{lem:Description-induction}.
If $P=Q$ then, the inclusion $\G\subseteq N_{\F}\left(Q\right)$
tells us that $Q$ is weakly $\F$-closed. It follows that, for every
$x\in\Ob\left(\T\right)$, then 
\[
\Rep_{\G}\left(Q,\G_{x}\right)=\Aut_{\G}\left(Q\right)/\Aut_{\G_{x}}\left(Q\right)=\Aut_{\F}\left(Q\right)/\Aut_{\F_{x}}\left(Q\right)\cong\Rep_{\F}\left(Q,\F_{x}\right).
\]
We conclude that $\Rep_{\G}\left(Q,\Xi\right)\cong\Rep_{\F}\left(Q,\Lambda\right)$.
The result follows.
\end{proof}
As an application of the above tools results we prove cohomological
sharpness for the van Beek fusion system defined in \cite[Proposition 6.10]{VanBeek25}.
This completes the proof of Theorem \hyperref[thm:D]{D}.

\subsection{Cohomological sharpness for the last van Beek fusion system}\label{subsec:last-van-beek-fusion-system}

Let us start by recalling and complementing some of the notation introduced
in Subsubsection \ref{subsubsec:fusion-systems-on-monster}.

Let $M$ be the monster group and let $S\in\Syl_{5}\left(M\right)$
a subgroup of order $5^{9}$. As explained in \cite[Section 6]{VanBeek25},
there exist three maximal $5$-local subgroups $M_{1},M_{2},M_{4}\le M$
satisfying
\begin{align*}
M_{1} & \cong C_{5}^{2}.C_{5}^{2}.C_{5}^{4}\rtimes\left(S_{3}\times\GL_{2}\left(5\right)\right),\\
M_{2} & \cong5_{+}^{1+6}\rtimes C_{4}.J_{2}.C_{2},\\
M_{4} & \cong C_{5}^{3}.C_{5}^{3}.\left(C_{2}\times\operatorname{PSL}_{3}\left(5\right)\right),
\end{align*}
where $X.Y$ denotes an extension of $Y$ by $X$. By taking $M$-conjugates
if necessary, we can assume without loss of generality that $S\cap M_{i}\in\Syl_{5}\left(M_{i}\right)$.
Define the subgroups of $S$
\begin{align*}
E_{1} & :=O_{5}\left(M_{1}\right)\cong C_{5}^{2}.C_{5}^{2}.C_{5}^{4}, & \boldsymbol{Q} & :=O_{5}\left(M_{2}\right)\cong5_{+}^{1+6}, & \boldsymbol{R} & :=O_{5}\left(M_{4}\right)\cong C_{5}^{3}.C_{5}^{3}.
\end{align*}
The group $E_{1}$ can alternatively be defined as $E_{1}=C_{S}\left(Z_{2}\left(S\right)\right)$
and, in particular, it is $S$-characteristic. Since $E_{1}$ is maximal
in $S$, then it follows from Alperin's fusion theorem that $E_{1}$
is weakly closed with respect to any saturated fusion system over
$S$. On the other hand, the group $\boldsymbol{Q}$ is the unique
extraspecial subgroup of $S$ of order $5^{7}$. In particular, $\boldsymbol{Q}$
is also weakly closed with respect to any fusion system over $S$.

Take the subgroup $X\triangleleft M_{2}$ with $X\cong5_{+}^{1+6}\rtimes C_{4}$
and let $X\trianglelefteq H\le M_{2}$ be the maximal subgroup such
that $H/X\cong\left(A_{5}\times D_{10}\right).C_{2}$. Define $E_{2}\le O_{5}\left(H\right)$
to be the largest subgroup such that
\[
N_{M}\left(E_{2}\right)=N_{M_{2}}\left(E_{2}\right)\cong5_{+}^{1+6}.C_{5}\rtimes\left(C_{2}\times\GL_{2}\left(5\right)\right).
\]
With this setup, it is possible to define the fusion system $\H$
over $S$ as 
\[
\H:=\left\langle \Aut_{M}\left(E_{1}\right),\Aut_{M}\left(E_{2}\right),\Aut_{M}\left(S\right)\right\rangle _{S}.
\]

The $\H$-centric-radical subgroups are then given by the following.
\begin{prop}[{\foreignlanguage{english}{\cite[Proposition 6.10]{VanBeek25}}}]
\label{prop:centric-radicals-last-van-beek}For every $P\le S$ let
$P^{\H}$ denote the set of all $\H$-conjugates of $P$. Then $\H$
is a saturated fusion system with $\mathcal{E}\left(\H\right)=\left\{ E_{1}\right\} \cup E_{2}^{\H}$
and $\Fcr[H]=\left\{ S,E_{1},\boldsymbol{Q}\right\} \cup E_{2}^{\H}\cup\boldsymbol{R}^{\H}$.
\end{prop}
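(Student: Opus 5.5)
The plan is to follow van Beek's strategy in \cite[Proposition 6.10]{VanBeek25}, recast in the language of Section \ref{sec:trees-of-fusion-systems}: realize $\H$ as the fusion system of an amalgam and obtain saturation from Proposition \ref{prop:amalgamation-method}. I would set $G_{\boldsymbol{1}}:=N_{M}\left(E_{1}\right)$, $G_{\boldsymbol{2}}:=N_{M}\left(E_{2}\right)$ and $G_{\boldsymbol{e}}:=N_{G_{\boldsymbol{1}}}\left(E_{2}\right)$, arranged (after $M$-conjugation) so that $S_{\boldsymbol{e}}=S_{\boldsymbol{2}}=N_{S}\left(E_{2}\right)$ and $S_{\boldsymbol{1}}=S$. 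Since $E_{1}$ is $S$-characteristic, $\Aut_{M}\left(S\right)\subseteq\F_{S}\left(G_{\boldsymbol{1}}\right)$. Hence $\H=\left\langle \F_{S}\left(G_{\boldsymbol{1}}\right),\F_{S_{\boldsymbol{2}}}\left(G_{\boldsymbol{2}}\right)\right\rangle _{S}$ is the fusion system of $G:=G_{\boldsymbol{1}}*_{G_{\boldsymbol{e}}}G_{\boldsymbol{2}}$ by \cite[Theorem 3.1]{ClellandParker10}. Condition (1) of Proposition \ref{prop:amalgamation-method} holds because each $G_{x}$ is a finite subgroup of $M$.

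Condition (2) asks that $\Fcr_{x}\subseteq\H^{\operatorname{c}}$ for each $x$. By Lemma \ref{lem:centric-radical-contained}, every $\F_{x}$-centric-radical contains $E_{1}$ (respectively $E_{2}$), and both of these are $\F_{S}\left(M\right)$-centric. Any overgroup of an $\F_{S}\left(M\right)$-centric subgroup is again centric, and $\H\subseteq\F_{S}\left(M\right)$, so these groups are also $\H$-centric.

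Condition (3) asks that $\Gamma^{P}/C_{G}\left(P\right)$ be a tree for every $P\in\H^{\operatorname{c}}$. Equivalently, by Semeraro's version, $\Rep_{\H}\left(P,\Lambda\right)$ must be a tree. I would first reduce to the few subgroups $P$ lying in both an $E_{1}$-overgroup family and an $E_{2}^{\H}$-overgroup family: for the other $P$, every morphism into $S_{\boldsymbol{2}}$ already lies in one $\F_{x}$, and Lemma \ref{lem:vanishing-CFR}(\ref{enu:hom-included}) applies. For the remaining $P$, which contain $E_{1}\cap E_{2}$ up to conjugacy, I would use that $\boldsymbol{Q}$ and $E_{1}$ are weakly closed. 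The required equality $\Hom_{\F_{\boldsymbol{e}}}=\Hom_{\F_{\boldsymbol{1}}}\cap\Hom_{\F_{\boldsymbol{2}}}$ should then follow from the explicit structure $N_{M}\left(E_{2}\right)\cong5_{+}^{1+6}.C_{5}\rtimes\left(C_{2}\times\GL_{2}\left(5\right)\right)$, in the manner of Lemma \ref{lem:intersection-of-fusion-systems.}, after which Lemma \ref{lem:vanishing-CFR}(\ref{enu:aut-included}) gives the tree property.

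With saturation in hand, the essentials and centric-radicals are determined inside $\F_{S}\left(M\right)$. By Alperin--Goldschmidt, $\mathcal{E}\left(\H\right)\subseteq\left\{ E_{1}\right\} \cup E_{2}^{\H}$. Both types are essential: $\Out_{\H}\left(E_{1}\right)\supseteq S_{3}\times\GL_{2}\left(5\right)$ and $\Out_{\H}\left(E_{2}\right)\supseteq\GL_{2}\left(5\right)$, and each contains a strongly $5$-embedded subgroup. The centric-radicals are found by running through the $\F_{S}\left(M\right)$-centric subgroups. For $P\in\left\{ \boldsymbol{Q},\boldsymbol{R}\right\} $, I would compute $\Aut_{\H}\left(P\right)$ as the subgroup of $\Aut_{M}\left(P\right)$ generated by restrictions from $N_{\H}$ of the overgroups $E_{1}$, $E_{2}$ and $S$, and check that $O_{5}$ of it is $\Inn\left(P\right)$. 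For $\boldsymbol{Q}$, for example, this should give $O_{5}\left(\Aut_{\H}\left(\boldsymbol{Q}\right)\right)=\Inn\left(\boldsymbol{Q}\right)$, since $\left(A_{5}\times D_{10}\right).C_{2}$ acts with trivial $O_{5}$. The remaining candidates are excluded by showing they are not radical.

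I expect the main obstacle to be the tree condition in the third paragraph, and to a lesser extent confirming that $\boldsymbol{R}$ stays radical even though $\Aut_{M}\left(\boldsymbol{R}\right)$ is pruned to a smaller group. For the tree condition I would first try an explicit coset count in $\Rep_{\H}\left(P,\Lambda\right)$ for $P=\boldsymbol{Q}$ and $P=N_{S}\left(E_{2}\right)$. If that becomes unwieldy, I would fall back on the computer verification used by van Beek.
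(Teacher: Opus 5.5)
Your saturation argument fails at hypothesis (3) of Proposition \ref{prop:amalgamation-method}, so the proposal does not prove the statement. You reformulated (3) as the requirement that $\Rep_{\mathcal H}(P,\Lambda)$ be a tree for every $P\in\mathcal{H}^{\operatorname{c}}$. For your amalgam $G_{\boldsymbol 1}=N_M(E_1)$, $G_{\boldsymbol 2}=N_M(E_2)$, $G_{\boldsymbol e}=G_{\boldsymbol 1}\cap G_{\boldsymbol 2}$, this already fails at the $\mathcal H$-centric subgroup $P=\boldsymbol Q$.

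Since $\boldsymbol Q$ is weakly closed, $\Hom_{\mathcal H}(\boldsymbol Q,S)=\Aut_{\mathcal H}(\boldsymbol Q)=:A$. Write $A_x:=\Aut_{G_x}(\boldsymbol Q)$. Then $\Rep_{\mathcal H}(\boldsymbol Q,\Lambda)$ is the coset graph with vertex set $A_{\boldsymbol 1}\backslash A\sqcup A_{\boldsymbol 2}\backslash A$ and edge set $A_{\boldsymbol e}\backslash A$. A vertex $A_x\varphi$ has degree $|A_x:A_{\boldsymbol e}|$. The relevant groups are as follows:
\begin{itemize}
\item $\boldsymbol Q\not\le E_1$: otherwise $\boldsymbol Q=E_1\cap E_2\ge\boldsymbol R$ by orders, contradicting $\boldsymbol R\not\le\boldsymbol Q$.
\item Hence $E_1\boldsymbol Q=S$ and $N_{G_{\boldsymbol 1}}(\boldsymbol Q)=N_M(S)$.
\item $\boldsymbol Q$ is characteristic in $E_2$, so $N_{G_{\boldsymbol 2}}(\boldsymbol Q)=N_M(E_2)$.
\item $G_{\boldsymbol e}=N_M(S)\cap N_M(E_2)$, because $E_1E_2=S$.
\end{itemize}
Since $C_M(\boldsymbol Q)=Z(\boldsymbol Q)\le G_{\boldsymbol e}$, indices are preserved in $\Aut(\boldsymbol Q)$. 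Thus $|A_{\boldsymbol 1}:A_{\boldsymbol e}|=3$ (the three $\Aut_M(S)$-conjugates of $E_2$). Likewise $|A_{\boldsymbol 2}:A_{\boldsymbol e}|=6$ (the Sylow $5$-subgroups of $N_M(E_2)/E_2\cong C_2\times\GL_2(5)$).

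A finite graph in which every vertex has degree at least $3$ contains a cycle. So $H_1(\Rep_{\mathcal H}(\boldsymbol Q,\Lambda);\mathbb{F}_5)\neq0$, and $\Rep_{\mathcal H}(\boldsymbol Q,\Lambda)$ is not a tree. Conceptually, a finite group acting on a finite tree fixes a vertex, which would force $A=A_{\boldsymbol 1}$ or $A=A_{\boldsymbol 2}$; but these two subgroups are incomparable. The same failure occurs at $\boldsymbol R$, where $\Aut_{G_{\boldsymbol 1}}(\boldsymbol R)$ and $\Aut_{G_{\boldsymbol 2}}(\boldsymbol R)$ both properly contain $\Aut_{G_{\boldsymbol e}}(\boldsymbol R)$.

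Your reduction step hides this. $\boldsymbol Q$ lies in no conjugate of $E_1$, yet $\Aut_M(S)\subseteq\mathcal F_{\boldsymbol 1}$ restricts to it non-trivially. So it is false that all morphisms from $\boldsymbol Q$ into $S$ lie in a single $\mathcal F_x$, and Lemma \ref{lem:vanishing-CFR}(\ref{enu:hom-included}) does not apply. Because the hypothesis is genuinely false, neither a coset count nor a computer check can rescue it. This is the same obstruction the paper meets in Subsection \ref{subsec:last-van-beek-fusion-system}: there $C_{\Lambda}^{p',p}$ is non-zero at $\boldsymbol Q$ for the decomposition $\{N_{\mathcal H}(E_1),N_{\mathcal H}(\boldsymbol R)\}$, which is why Theorem C is needed.

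The paper gives no proof of this statement; it is quoted from \cite{VanBeek25}. If you want an argument built from the paper's tools, get saturation from pruning rather than amalgamation. We have $\mathcal H\subseteq\mathcal{F}_{S}(M)$. Suppose every $\mathcal{F}_{S}(M)$-essential subgroup is $\mathcal{F}_{S}(M)$-conjugate to $E_1$, $E_2$ or $E_3\cong C_5^4$; this input has to come from the analysis of the $5$-fusion of $M$. Then Alperin--Goldschmidt gives $\mathcal{F}_{S}(M)=\langle\mathcal H,\Aut_M(P)\mid P\in E_3^{\mathcal{F}_{S}(M)}\rangle_S$. Since $E_3$ is abelian, every $Q\lneq E_3$ satisfies $C_S(Q)\ge E_3\not\le Q$. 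Proposition \ref{prop:pruning-method} then shows that $\mathcal H$ is saturated.

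After that, your outline for $\mathcal E(\mathcal H)$ is essentially fine. However, the determination of $\mathcal{H}^{\operatorname{cr}}$ ("the remaining candidates are excluded by showing they are not radical") is still only a placeholder for a genuine case-by-case or computer verification.
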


As explained in \cite[Page 35]{VanBeek25}, the $\H$-automorphisms
of $S$ conjugate both $E_{2}$ and $\boldsymbol{R}$ to exactly three
subgroups of $S$. Since $E_{2}$ is maximal in $S$, then these are
all the $\H$-conjugates of $E_{2}$ and we can write 
\[
E_{2}^{\H}=\left\{ E_{2}^{1}:=E_{2},E_{2}^{2},E_{2}^{3}\right\} .
\]
We know from \cite[Appendix A]{VanBeek2025WithCode}, that $\Phi\left(\boldsymbol{R}\right)=\left[E_{2},\Phi\left(E_{2}\right)\right]$
is characteristic in $E_{2}$. We deduce that $\boldsymbol{R}=C_{E_{2}}\left(\Phi\left(\boldsymbol{R}\right)\right)$
is also characteristic in $E_{2}$. In particular, $\Aut_{\H}\left(E_{2}\right)$
normalizes $\boldsymbol{R}$. Let $\boldsymbol{R}'$ be an $\Aut_{\H}\left(S\right)$-conjugate
contained in $E_{2}$. If $\boldsymbol{R}\not=\boldsymbol{R}'$, then
the previous discussion tells us that $\boldsymbol{R}'\le E_{2}^{i}$
for some $i=2,3$. Since $\boldsymbol{Q}=E_{2}\cap E_{2}^{i}$, then
it follows that $\boldsymbol{R}'\le\boldsymbol{Q}$. From the isomorphism
classes of $\boldsymbol{Q}$ and $\boldsymbol{R}$ we deduce that
this is impossible. We conclude that $\boldsymbol{R}$ is the only
$\Aut_{\H}\left(S\right)$-conjugate of $\boldsymbol{R}$ contained
in $E_{2}$. An analogous result holds for the conjugates contained
in $E_{2}^{i}$. From \cite[Lemma 6.6]{VanBeek25}, we know that $\mathcal{O}^{5'}\left(\Aut_{\H}\left(E_{1}\right)\right)$
normalizes every $\Aut_{\H}\left(S\right)$-conjugate of $\boldsymbol{R}$
and that $\mathcal{O}^{5'}\left(\Out_{\H}\left(E_{1}\right)\right)\cong\SL_{2}\left(5\right)$.
From \cite[Table 4]{VanBeek25}, we know that 
\begin{align*}
\Out_{\H}\left(E_{1}\right) & \cong S_{3}\times\GL_{2}\left(5\right), & \Out_{\H}\left(S\right) & \cong S_{3}\times C_{4}^{2}.
\end{align*}
We conclude that the following holds
\begin{lem}
\label{lem:decomposition-aut-in-E1-last-van-beek.}Every automorphism
$\varphi$ in $\Aut_{\H}\left(E_{1}\right)$ can be written as $\varphi=\psi\theta$
with $\theta\in\mathcal{O}^{5'}\left(\Aut_{\H}\left(E_{1}\right)\right)\le N_{\Aut_{\H}\left(E_{1}\right)}\left(\boldsymbol{R}\right)$
and $\psi$ the restriction of an automorphism $\hat{\psi}\in\Aut_{\H}\left(S\right)$.
\end{lem}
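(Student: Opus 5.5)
We will prove the statement by a counting / Frattini-type argument in $\Out_{\H}\left(E_{1}\right)$. Set $A:=\Aut_{\H}\left(E_{1}\right)$ and $N:=\mathcal{O}^{5'}\left(A\right)$. Then $N\trianglelefteq A$, and \cite[Lemma 6.6]{VanBeek25} gives $N\le N_{A}\left(\boldsymbol{R}\right)$. Let $B\le A$ be the subgroup of restrictions to $E_{1}$ of elements of $\Aut_{\H}\left(S\right)$. This restriction is well defined because $E_{1}$ is characteristic in $S$. The claim is exactly $A=BN$. Since $N$ is normal, $BN$ is a subgroup, so it suffices to show $\left|BN\right|=\left|A\right|$, or equivalently that the image of $B$ in $A/N$ is all of $A/N$.

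First, we compute $A/N$. Since $\Inn\left(E_{1}\right)$ is a $5$-group, it is contained in $N$, so $A/N\cong\Out_{\H}\left(E_{1}\right)/\mathcal{O}^{5'}\left(\Out_{\H}\left(E_{1}\right)\right)$. Using $\Out_{\H}\left(E_{1}\right)\cong S_{3}\times\GL_{2}\left(5\right)$ and $\mathcal{O}^{5'}\left(\Out_{\H}\left(E_{1}\right)\right)\cong\SL_{2}\left(5\right)$ (from \cite[Lemma 6.6]{VanBeek25} and \cite[Table 4]{VanBeek25}), we get $A/N\cong S_{3}\times C_{4}$, a group of order $24$.

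Second, we show that $B$ maps onto $A/N$. Here we use saturation. $E_{1}$ is $\H$-centric and normal in $S$, and $\Aut_{S}\left(E_{1}\right)\in\Syl_{5}\left(A\right)$. The Frattini argument gives $A=N_{A}\left(\Aut_{S}\left(E_{1}\right)\right)\cdot N$, because $N$ contains every Sylow $5$-subgroup of $A$ and all of them are $N$-conjugate. Every element of $N_{A}\left(\Aut_{S}\left(E_{1}\right)\right)$ has $N_{\varphi}=S$, so by the extension axiom it extends to an element of $\Aut_{\H}\left(S\right)$. Hence $N_{A}\left(\Aut_{S}\left(E_{1}\right)\right)\le B$, and therefore $A=BN$. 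This argument avoids any order comparison, and the data $\Out_{\H}\left(S\right)\cong S_{3}\times C_{4}^{2}$ serves only as a consistency check: its $5'$-part surjects onto $S_{3}\times C_{4}$ with kernel the $C_{4}$ coming from the torus of $\SL_{2}\left(5\right)$.

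Finally, we write each $\varphi\in A$ as $\varphi=\psi\theta$ with $\psi\in B$, the restriction of some $\hat{\psi}\in\Aut_{\H}\left(S\right)$, and $\theta\in N\le N_{A}\left(\boldsymbol{R}\right)$.

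The main obstacle is justifying that $\Aut_{S}\left(E_{1}\right)$ is Sylow in $A$ and that extensions land in $\Aut_{\H}\left(S\right)$. This needs $E_{1}$ to be fully normalized, which holds because it is normal in $S$, together with saturation of $\H$ from Proposition \ref{prop:centric-radicals-last-van-beek}.
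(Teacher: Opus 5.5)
Your proof is correct, and it takes a more explicit route than the paper. The paper gives no separate argument. It reads the lemma off the data quoted just before it:
\begin{itemize}
\item from \cite[Lemma 6.6]{VanBeek25}, $\mathcal{O}^{5'}(\Aut_{\H}(E_{1}))$ normalizes the conjugates of $\boldsymbol{R}$, and $\mathcal{O}^{5'}(\Out_{\H}(E_{1}))\cong\SL_{2}(5)$;
\item from \cite[Table 4]{VanBeek25}, $\Out_{\H}(E_{1})\cong S_{3}\times\GL_{2}(5)$ and $\Out_{\H}(S)\cong S_{3}\times C_{4}^{2}$.
\end{itemize}
In effect the paper compares $S_{3}\times C_{4}^{2}$ with the quotient $S_{3}\times C_{4}$ and says ``we conclude.'' It never states why restriction from $\Aut_{\H}(S)$ actually maps onto $\Aut_{\H}(E_{1})/\mathcal{O}^{5'}(\Aut_{\H}(E_{1}))$, and orders alone do not give this.

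Your Frattini argument combined with the extension axiom supplies exactly that missing step. It actually proves a general fact: for any $P\trianglelefteq S$ in a saturated fusion system $\F$,
\[
\Aut_{\F}(P)=\{\hat{\psi}|_{P}\mid\hat{\psi}\in\Aut_{\F}(S)\}\cdot\mathcal{O}^{p'}(\Aut_{\F}(P)).
\]
From van Beek you then need only that $\mathcal{O}^{5'}(\Aut_{\H}(E_{1}))$ normalizes $\boldsymbol{R}$. Your first step (computing $A/N\cong S_{3}\times C_{4}$) and the Table 4 comparison are therefore superfluous, although your consistency check is right.

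One small point concerns the Sylow claim, which the Frattini step genuinely needs. The paper's definition of saturation imposes the Sylow axiom only at $S$. So ``$E_{1}$ fully normalized $\Rightarrow\Aut_{S}(E_{1})\in\Syl_{5}(\Aut_{\H}(E_{1}))$'' requires the standard equivalence with the usual definition of saturation. Alternatively, you can check it directly:
\begin{itemize}
\item $E_{1}$ is centric, so $\Out_{S}(E_{1})\cong S/E_{1}$ has order $5$;
\item $5$ is the full $5$-part of $|S_{3}\times\GL_{2}(5)|=2^{6}\cdot3^{2}\cdot5$.
\end{itemize}
The extension step itself needs no such care. It follows directly from the paper's extension axiom, since $N_{\varphi}=S$ for every $\varphi$ normalizing $\Aut_{S}(E_{1})$.
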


We conclude that the $\Aut_{\H}\left(E_{1}\right)$-conjugates of
$\boldsymbol{R}$ coincide with its $\Aut_{\H}\left(S\right)$-conjugates
and, therefore with its $\H$-conjugates. We can therefore write 
\[
\boldsymbol{R}^{\H}=\left\{ \boldsymbol{R}^{1}:=\boldsymbol{R},\boldsymbol{R}^{2},\boldsymbol{R}^{3}\right\} .
\]
The previous discussion is summarized in the following diagram depicting
all the $\H$-centric-radicals ordered by inclusion. Horizontal arrows
denote conjugation in $\H$.
\begin{equation}
\xymatrix{5^{9} &  &  & S\\
5^{8} &  & E_{1}\ar@{->}[ru] & E_{2}=E_{2}^{1}\ar@{<->}[r]\ar@{->}[u] & E_{2}^{2}\ar@{<->}[r]\ar@{->}[lu] & E_{2}^{3}\ar@{->}[llu]\\
5^{7} &  &  &  & \boldsymbol{Q}\ar@{->}[lu]\ar@{->}[u]\ar@{->}[ru]\\
5^{6} &  & \boldsymbol{R}=\boldsymbol{R}^{1}\ar@{<->}[r]\ar@{->}[ruu]\ar@{->}[uu] & \boldsymbol{R}^{2}\ar@{<->}[r]\ar@{->}[ruu]\ar@{->}[luu] & \boldsymbol{R}^{3}\ar@{->}[ruu]\ar@{->}[lluu]
}
\label{eq:hasse-diag-last-van-beek}
\end{equation}
We can now finally prove that cohomological sharpness holds for the
last remaining van Beek fusion system.
\begin{prop}
\label{prop:Sharpness-last-Van-Beek}Let $\H$ be the van Beek fusion
system of Proposition \ref{prop:centric-radicals-last-van-beek}.
Then $\H$ satisfies cohomological sharpness.
\end{prop}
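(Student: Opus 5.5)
We plan to apply Theorem \hyperref[thm:C]{C} with $\mathcal{C}$ the closure of $\Fcr[H]$ under $\H$-conjugation and overgroups, and with
\begin{align*}
\F_{\boldsymbol{1}} & :=N_{\H}\left(E_{1}\right), & \F_{\boldsymbol{2}} & :=N_{\H}\left(\boldsymbol{Q}\right), & \F_{\boldsymbol{e}} & :=N_{\F_{\boldsymbol{1}}}\left(\boldsymbol{Q}\right)
\end{align*}
over $S$, $S'=N_{S}\left(\boldsymbol{Q}\right)=S$ and $S$ respectively. We take $Q:=\boldsymbol{Q}$ as the normalizing subgroup.

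\textbf{Reduction to Theorem C.} Since $E_{1}$ and $\boldsymbol{Q}$ are weakly $\H$-closed, both are fully normalized. The $\Aut_{\H}\left(E_{2}^{i}\right)$ normalize $\boldsymbol{Q}=E_{2}^{i}\cap E_{2}^{j}$, because $\boldsymbol{Q}$ is the unique extraspecial subgroup of order $5^{7}$. Alperin--Goldschmidt together with Proposition \ref{prop:centric-radicals-last-van-beek} then give $\H=\left\langle \F_{\boldsymbol{1}},\F_{\boldsymbol{2}}\right\rangle _{S}$. Each $\F_{x}$ is a normalizer of a fully normalized centric subgroup, so it is saturated and, by Lemma \ref{lem:sharpness-for-normalizers}, satisfies sharpness. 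It is also realizable by \cite[Proposition C]{BCGLO05}. By Lemma \ref{lem:centric-radical-contained}, every $\F_{x}$-centric-radical contains $E_{1}$ or $\boldsymbol{Q}$, hence lies in $\mathcal{C}$.

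For the normalizer data, $\G_{x}=N_{\F_{x}}\left(\boldsymbol{Q}\right)$ gives $\G_{\boldsymbol{2}}=N_{\H}\left(\boldsymbol{Q}\right)$, so $\G=N_{\H}\left(\boldsymbol{Q}\right)$ and the hypothesis $N_{\H}\left(\boldsymbol{Q}\right)=\G$ holds trivially. Since $\boldsymbol{Q}\trianglelefteq S=S'$ and $\F_{\boldsymbol{e}}$ is realizable, Proposition \ref{prop:sufficient-condition-for-splitting.} shows that $\pi_{1}^{*}$ splits for $M=H^{j}\left(-;\Fp[5]\right)$. It then remains to show that $\Gamma$ is an isomorphism, after which Theorem \hyperref[thm:C]{C} gives cohomological sharpness.

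\textbf{Main obstacle: $\Gamma$ is an isomorphism.} We check this objectwise on the centric-radicals $S,E_{1},E_{2}^{i},\boldsymbol{Q},\boldsymbol{R}^{i}$. We expect higher limits to be detected there, and we intend to shrink $\mathcal{C}$ accordingly.
\begin{itemize}
\item[(a)] For $P=\boldsymbol{Q}$ this is Corollary \ref{cor:relation-induction-no-induction}.
\item[(b)] For $P\in\left\{ S,E_{2}^{i}\right\}$, the subgroup $P$ contains $\boldsymbol{Q}$, and all $\H$-morphisms out of $P$ lie in $N_{\H}\left(\boldsymbol{Q}\right)$ by weak closure. Hence $\Hom_{\G}\left(P,S\right)=\Hom_{\H}\left(P,S\right)$, and the first part of Corollary \ref{cor:relation-induction-no-induction} applies. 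For the target, we compute $\Rep_{\H}\left(P,\Lambda\right)$ directly, since $\Rep$ sets in $\F_{\boldsymbol{1}}$ versus $\F_{\boldsymbol{e}}$ are controlled by $\Aut_{\H}\left(S\right)$. The aim is to show that both sides are $H_{1}$ of the same graph.
\item[(c)] For $P=E_{1}$, Lemma \ref{lem:vanishing-CFR}(\ref{enu:hom-included}) with $\H=\F_{\boldsymbol{1}}$ gives $\CFp\left(E_{1}\right)=0$. The left-hand side also vanishes, since $E_{1}\not\ge\boldsymbol{Q}$ and so $\CFp[\Xi]\uparrow$ is computed from $\Rep_{\H}\left(E_{1},\G\right)$. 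We expect this to vanish or to need the same check.
\item[(d)] For $P=\boldsymbol{R}^{i}$, use Lemma \ref{lem:decomposition-aut-in-E1-last-van-beek.}. The $\H$-conjugates of $\boldsymbol{R}$ are the three $\Aut_{\H}\left(S\right)$-conjugates, and $\Aut_{\H}\left(\boldsymbol{R}\right)$ is generated by restrictions from $E_{1}$ (through $O^{5'}$, which normalizes $\boldsymbol{R}$) and from $E_{2}$. From this we compare $\Rep_{\H}\left(\boldsymbol{R},\Lambda\right)$ with $\coprod_{\left[\varphi\right]_{\G}}\Rep_{\G}\left(\varphi\left(\boldsymbol{R}\right),\Xi\right)$ via Lemma \ref{lem:Description-induction}, and we check that $\Gamma$ matches the cycles.
\end{itemize}

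This last comparison, and in particular that cycles through the $O^{5'}\left(\Aut_{\H}\left(E_{1}\right)\right)$-part are accounted for, is where we expect the real work. If it fails, the first fallback is to replace $\mathcal{C}$ by the overgroup closure of $\left\{ S,E_{1},E_{2}^{i},\boldsymbol{Q}\right\}$, which removes the $\boldsymbol{R}^{i}$ at the cost of re-verifying the centric-radical containment hypothesis. The second fallback is to show directly that $\Upsilon$ is surjective at these objects.
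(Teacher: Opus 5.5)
Your plan breaks at (d), and the reason is structural rather than computational: with $\F_{\boldsymbol{2}}=N_{\H}(\boldsymbol{Q})$ the normalizer data is degenerate.

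Indeed $\G_{\boldsymbol{1}}=N_{\F_{\boldsymbol{1}}}(\boldsymbol{Q})=\F_{\boldsymbol{e}}=\G_{\boldsymbol{e}}$ and $\G_{\boldsymbol{2}}=\F_{\boldsymbol{2}}=\G$. So every $\Rep_{\G}(P,\Xi)$ is a star (one $\G_{\boldsymbol{2}}$-vertex, each $\G_{\boldsymbol{1}}$-vertex a leaf), and $\CFp[\Xi]=0$. Hence ``$\Gamma$ is an isomorphism'' means precisely $\CFp=0$. You are really applying Theorem \hyperref[thm:A]{A} with vanishing $\CFp$, and that fails at $\boldsymbol{R}$.

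Since $\Aut_{\H}(S)\subseteq\F_{\boldsymbol{e}}$ permutes $\boldsymbol{R}^{1},\boldsymbol{R}^{2},\boldsymbol{R}^{3}$ transitively, $\Rep_{\H}(\boldsymbol{R},\Lambda)$ is the coset graph of $A:=\Aut_{\H}(\boldsymbol{R})$ with respect to the subgroups $A_{x}:=\Aut_{\F_{x}}(\boldsymbol{R})$. Using $\boldsymbol{R}\boldsymbol{Q}=E_{2}$ and $E_{1}E_{2}=S$ one finds $A_{\boldsymbol{e}}=N_{\Aut_{\H}(S)}(\boldsymbol{R})|_{\boldsymbol{R}}$, $A_{\boldsymbol{2}}=\Aut_{\H}(E_{2})|_{\boldsymbol{R}}$ and $A_{\boldsymbol{1}}\ge\mathcal{O}^{5'}(\Aut_{\H}(E_{1}))|_{\boldsymbol{R}}$.

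The stabilizer of $\boldsymbol{R}$ in $\Out_{\H}(S)\cong S_{3}\times C_{4}^{2}$ has order $32$, so $3\nmid|A_{\boldsymbol{e}}|$. On the other hand, $A_{\boldsymbol{1}}$ and $A_{\boldsymbol{2}}$ contain elements of order $3$ coming from $\SL_{2}(5)$ and $\GL_{2}(5)$; restriction to the centric subgroup $\boldsymbol{R}$ kills no $5'$-element. So every vertex has degree divisible by $3$, and the finite graph has cycles. Therefore $\CFp(\boldsymbol{R})\neq0=\CFp[\Xi]\uparrow_{\OFC[\G]}^{\OFC[\H]}(\boldsymbol{R})$, so $\Gamma$ is not surjective and Theorem \hyperref[thm:C]{C} cannot be applied.

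Neither fallback rescues this. The $\boldsymbol{R}^{i}$ are $\H$-centric-radical, so any $\mathcal{C}$ omitting them violates the hypothesis $\Fcr[H]\subseteq\mathcal{C}$ of Theorems \hyperref[thm:A]{A} and \hyperref[thm:C]{C} outright. That hypothesis is exactly what lets limits over $\OFC[\H]$ compute those over the centric orbit category. Proving $\varUpsilon$ surjective directly is, by Theorem \hyperref[thm:A]{A}(\ref{enu:thm-A-exact-seq}), equivalent to the vanishing of $\lim^{2}$ you are trying to prove.

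The missing idea is to put the $\boldsymbol{R}$-fusion into the second vertex. Take $\F_{\boldsymbol{2}}:=N_{\H}(\boldsymbol{R})$ and $\F_{\boldsymbol{e}}:=N_{\F_{\boldsymbol{1}}}(\boldsymbol{R})$, and keep $Q=\boldsymbol{Q}$. You still have $\H=\langle\F_{\boldsymbol{1}},\F_{\boldsymbol{2}}\rangle_{S}$ because $\boldsymbol{R}$ is characteristic in $E_{2}$. Then $\G_{\boldsymbol{2}}=N_{\H}(E_{2})$ and $\G_{\boldsymbol{e}}=N_{\F_{\boldsymbol{1}}}(E_{2})$, and the check of $\Gamma$ goes as follows:
\begin{itemize}
\item Corollary \ref{cor:whole-vanishing-of-CFR} confines $\CFp[\Xi]$ to $\boldsymbol{Q}$, where Corollary \ref{cor:relation-induction-no-induction} identifies it with $\CFp(\boldsymbol{Q})$ via $\Gamma$.
\item For $P\in\mathcal{C}$ inside $E_{1}\cap E_{2}$, $\CFp(P)=0$ by Lemma \ref{lem:vanishing-CFR}(\ref{enu:aut-included}). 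This uses that $\boldsymbol{R}$ is the only $\H$-conjugate of $\boldsymbol{R}$ in $P$, together with Lemma \ref{lem:decomposition-aut-in-E1-last-van-beek.}.
\item For $P\not\le_{\H}E_{2}$, Lemma \ref{lem:vanishing-CFR}(\ref{enu:hom-included}) applies with $\F_{\boldsymbol{1}}$.
\item For $P\not\le_{\H}E_{1}$, the $\Rep$-graphs for $\Lambda$ and $\Xi$ coincide.
\end{itemize}
The rest of your outline (splitting via Proposition \ref{prop:sufficient-condition-for-splitting.}, and $\G=N_{\H}(\boldsymbol{Q})$) then applies as you intended.
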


\begin{proof}
Let $\mathcal{C}$ be the smallest family of subgroups of $S$ containing
$\Fcr[H]$ and closed under $\H$-conjugacy and taking overgroups.
Define the fusion systems 
\begin{align*}
\F_{\boldsymbol{1}} & :=N_{\H}\left(E_{1}\right), & \F_{\boldsymbol{2}} & :=N_{\H}\left(\boldsymbol{R}\right), & \F_{\boldsymbol{e}} & :=N_{\F_{\boldsymbol{1}}}\left(\boldsymbol{R}\right),\\
\G_{\boldsymbol{1}} & :=N_{\F_{\boldsymbol{1}}}\left(\boldsymbol{Q}\right), & \G_{\boldsymbol{2}} & :=N_{\F_{\boldsymbol{2}}}\left(\boldsymbol{Q}\right), & \G_{\boldsymbol{e}} & :=N_{\F_{\boldsymbol{e}}}\left(\boldsymbol{Q}\right).
\end{align*}
This is consistent with Notation \ref{nota:with-normalizers} taken
with $Q=\boldsymbol{Q}$. We adopt this notation for the reminder
of the proof.

Since $\boldsymbol{R}$ is characteristic in $E_{2}$, then $\Aut_{\H}\left(E_{2}\right)=\Aut_{\F_{\boldsymbol{2}}}\left(E_{2}\right)$.
Likewise $\Aut_{\H}\left(S\right)=\Aut_{\F_{\boldsymbol{1}}}\left(S\right)$.
From Proposition \ref{prop:centric-radicals-last-van-beek}, we conclude
that
\[
\F:=\left\langle \F_{\boldsymbol{1}},\F_{\boldsymbol{2}}\right\rangle _{S}=\H,
\]
and, therefore, $\F$ is saturated. Since both $\boldsymbol{R}$ and
$S$ are contained in $\mathcal{C}$, then $\Fcr_{x}\subseteq\mathcal{C}$
for every $x\in\Ob\left(\T\right)$ (see Lemma \ref{lem:centric-radical-contained}).
From Lemma \ref{lem:sharpness-for-normalizers}, each $\F_{x}$ satisfies
cohomological sharpness. 

Since $\boldsymbol{Q}$ is weakly $\F$-closed, then we also have
that $Q\trianglelefteq S$ and that
\[
\G:=\left\langle \G_{\boldsymbol{1}},\G_{\boldsymbol{2}}\right\rangle _{S}=N_{\F}\left(\boldsymbol{Q}\right).
\]

Since $\boldsymbol{Q}$ is maximal in $E_{2}$, then every morphism
in either $\G_{\boldsymbol{2}}$ or $\G_{\boldsymbol{e}}$ lifts to
a morphisms with source containing $E_{2}$ and normalizing it. Since
$\boldsymbol{R}$ is characteristic in $E_{2}$ it follows that $\G_{\boldsymbol{2}}=N_{\F}\left(E_{2}\right)$
and that $\G_{\boldsymbol{e}}=N_{\F_{1}}\left(E_{2}\right)$. It follows
from Corollary \ref{cor:whole-vanishing-of-CFR} (taken with $\H=\F_{1}$
and $P=Q$) that $\CFp[\Xi]\left(P\right)=0$ for every $P\in\mathcal{C}\backslash\left\{ \boldsymbol{Q}\right\} $.
From Corollary \ref{cor:relation-induction-no-induction}, we also
deduce that $\CFp[\Xi]\uparrow_{\OFC[\G]}^{\OFC}\left(\boldsymbol{Q}\right)\cong\CFp\left(\boldsymbol{Q}\right)$
via the morphism $\Gamma$ of Equation (\ref{eq:def-gamma}).

Since $\mathcal{E}\left(\F\right)=\left\{ E_{1}\right\} \cup E_{2}^{\F}$
and $E_{1}$ is characteristic $S$, then, for every $P\in\mathcal{C}$
satisying $P\not\le_{\F}E_{2}$, the identity $\Hom_{\F}\left(P,S\right)=\Hom_{\F_{\boldsymbol{1}}}\left(P,S\right)$
holds. From Lemma \ref{lem:vanishing-CFR}(\ref{enu:hom-included}),
we deduce that $\CFp\left(P\right)=0$ for every such $P$. Let $P\not\le_{\F}E_{1}$.
From Diagram (\ref{eq:hasse-diag-last-van-beek}), we obtain that
$\Hom_{\F}\left(P,S\right)=\Hom_{\G}\left(P,S\right)$ and that $\Hom_{\F_{x}}\left(P,S\right)=\Hom_{\G_{x}}\left(P,S\right)$
for every $x\in\Ob\left(\T\right)$. We conclude that $\CFp\left(P\right)=\CFp[\Xi]\uparrow_{\OFC[\G]}^{\OFC}\left(P\right)$
for every such $P$. Finally, a simple size comparison reveals that
the only subgroups of $E_{1}\cap E_{2}$ contained in $\mathcal{C}$
are $\boldsymbol{R}$ and $E_{1}\cap E_{2}$. In particular, $\boldsymbol{R}\le P$
for every $P\le E_{1}\cap E_{2}$ such that $P\in\mathcal{C}$. From
Diagram (\ref{eq:hasse-diag-last-van-beek}), we deduce that $\boldsymbol{R}$
is the only $\F$-conjugate of $\boldsymbol{R}$ contained in $P$.
In particular every $\F$-automorphism of $P$ leaves $\boldsymbol{R}$
invariant. we conclude that $\Aut_{\F}\left(P\right)=\Aut_{\F_{\boldsymbol{2}}}\left(P\right)$.
Let $P\not\cong_{\F_{\boldsymbol{e}}}P'\cong_{\F}P$. From Lemma \ref{lem:decomposition-aut-in-E1-last-van-beek.},
we deduce that $P'$ contains a conjugate of $\boldsymbol{R}$ different
than $\boldsymbol{R}$. From Diagram (\ref{eq:hasse-diag-last-van-beek}),
we conclude that $P'\boldsymbol{R}\not\le_{\F}E_{2}$. In particular
$\Hom_{\F_{\boldsymbol{2}}}\left(P',S\right)\subseteq\Hom_{\F_{\boldsymbol{1}}}\left(P',S\right)$.
It follows from Lemma \ref{lem:vanishing-CFR}(\ref{enu:aut-included})
that $\CFp\left(P\right)=\CFp[\Xi]\uparrow_{\OFC[\G]}^{\OFC}\left(P\right)=0$.

We conclude that the morphism $\Gamma$ of Equation \ref{eq:def-gamma}
is an isomorphism.

Finally, Proposition \ref{prop:sufficient-condition-for-splitting.}
allows us to apply Theorem \hyperref[thm:C]{C} with the introduced
notation. The result follows.
\end{proof}
\printbibliography

\end{document}